\renewcommand{\a}{\alpha}
\renewcommand{\b}{\beta}
\newcommand{\e}{\epsilon}
\renewcommand{\l}{\lambda} \renewcommand{\O}{\Omega} 
 \renewcommand{\to}{\rightarrow}
 \newcommand{\s}{\sigma}
\newcommand{\G}{\bar{G}}
\newcommand{\la}{\langle}
\newcommand{\ra}{\rangle}
\newcommand{\leqs}{\leqslant}
\newcommand{\geqs}{\geqslant}
 \newcommand{\vs}{\vspace{3mm}}
\newcommand{\imod}[1]{\allowbreak\mkern4mu({\operator@font mod}\,\,#1)}
\newtheorem{theorem}{Theorem} 
\newtheorem*{conj*}{Conjecture}
\newtheorem{corol}[theorem]{Corollary}
\newtheorem{thm}{Theorem}[section] 
\newtheorem{prop}[thm]{Proposition} 
\newtheorem{lem}[thm]{Lemma}
\newtheorem{cor}[thm]{Corollary}
\theoremstyle{definition}
\newtheorem{rem}[thm]{Remark}
\newtheorem{remk}{Remark}
\newtheorem{exa}[thm]{Example}
\begin{document}

\author{Timothy C. Burness}
 \address{T.C. Burness, School of Mathematics, University of Bristol, Bristol BS8 1UG, UK}
 \email{t.burness@bristol.ac.uk}
 
 \author{Adam R. Thomas}
 \address{A.R. Thomas, Mathematics Institute,
Zeeman Building, University of Warwick, Coventry CV4 7AL, UK}
 \email{adam.r.thomas@warwick.ac.uk}
 
\title{The classification of extremely primitive groups}
\dedicatory{\rm Dedicated to the memory of Jan Saxl}

\begin{abstract}
Let $G$ be a finite primitive permutation group on a set $\O$ with nontrivial point stabilizer $G_{\a}$. We say that $G$ is extremely primitive if $G_{\a}$ acts primitively on each of its orbits in $\O\setminus \{\a\}$. These groups arise naturally in several different contexts and their study can be traced back to work of Manning in the 1920s. In this paper, we determine the almost simple extremely primitive groups with socle an exceptional group of Lie type. By combining this result with earlier work of Burness, Praeger and Seress, this completes the classification of the almost simple extremely primitive  groups. Moreover, in view of results by Mann, Praeger and Seress, our main theorem gives a complete classification of all finite extremely primitive groups, up to finitely many affine exceptions (and it is conjectured that there are no exceptions). Along the way, we also establish several new results on base sizes for primitive actions of exceptional groups, which may be of independent interest. 
\end{abstract}

\date{\today}
\maketitle

\setcounter{tocdepth}{1}
\tableofcontents

\section{Introduction}\label{s:intro}

Let $G \leqs {\rm Sym}(\O)$ be a finite primitive permutation group with point stabilizer $H=G_{\a} \neq 1$. We say that $G$ is \emph{extremely primitive} if $H$ acts primitively on each of its orbits in $\O \setminus \{\a\}$ (this term was    coined by Mann, Praeger and Seress in \cite{MPS}). Equivalently, $G$ is extremely primitive if and only if $H \cap H^x$ is a maximal subgroup of $H$ for all $x \in G \setminus H$. For example, the natural action of $G = {\rm PGL}_{2}(q)$ on the projective line over $\mathbb{F}_q$ is extremely primitive (here $G$ is $2$-transitive and $H$ is a Borel subgroup). These groups arise naturally in several different contexts, including the construction of some of the sporadic simple groups (in particular ${\rm J}_{2}$ and ${\rm HS}$) and the study of permutation groups with restricted movement (see 
\cite{PP}). 

By a theorem of Manning \cite[Corollary I, p.821]{Manning} from 1927, if $G$ is extremely primitive then $H$ acts faithfully on each of its orbits in $\O \setminus \{\a\}$. In particular, $H \cap H^x$ is a core-free maximal subgroup of $H$ for all $x \in G \setminus H$. In turn, this implies that $H$ is itself a primitive permutation group on each of its nontrivial orbits and thus the O'Nan-Scott Theorem imposes strong restrictions on the structure of $H$. In particular, $H$ has at most two minimal normal subgroups, and the socle of $H$ (denoted by ${\rm soc}(H)$) is a direct product of isomorphic simple groups.

Recall that $G$ is \emph{almost simple} if $G_0 \leqs G \leqs {\rm Aut}(G_0)$ for some nonabelian simple group $G_0$, which is the unique minimal normal subgroup of $G$. Also recall that $G$ is \emph{affine} if $\O$ has the structure of a vector space over a prime field $\mathbb{F}_p$ and $G$ acts by affine transformations. A major step towards the classification of the extremely primitive groups is a theorem of Mann, Praeger and Seress \cite{MPS}, which states that each extremely primitive group is either almost simple or of affine type. Furthermore, they classify the affine examples up to the possibility of finitely many extremely primitive groups of the form $G = V{:}H$, where $V = \mathbb{F}_2^d$ and $H \leqs {\rm GL}(V)$ is irreducible and almost simple (as discussed below, it is conjectured that there are no additional examples). In later work \cite{BPS, BPS2}, Burness, Praeger and Seress determined all the extremely primitive almost simple groups with socle an alternating, classical or sporadic group. 

In this paper, we complete the picture by determining the extremely primitive almost simple groups with socle an exceptional group of Lie type. Note that in the statement of Theorem \ref{t:main}, we exclude the Ree groups with socle ${}^2G_2(3)' \cong {\rm L}_{2}(8)$.

\begin{theorem}\label{t:main}
Let $G \leqs {\rm Sym}(\O)$ be an almost simple group with stabilizer $H$ and socle an exceptional group of Lie type. Then $G$ is extremely primitive if and only if $(G,H) = (G_2(4),{\rm J}_{2})$ or $(G_2(4).2,{\rm J}_{2}.2)$.
\end{theorem}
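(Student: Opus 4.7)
The plan is to prove Theorem \ref{t:main} by systematic case analysis through the classification of maximal subgroups $H$ of each almost simple exceptional group $G$, ruling out all pairs $(G,H)$ except the two listed. The central reduction, familiar from \cite{BPS,BPS2}, uses base sizes. By Manning's theorem, extreme primitivity forces $H$ to act primitively and faithfully on each nontrivial orbit in $\O \setminus \{\a\}$. If the base size $b(G,\O)$ equals $2$, then $G$ admits a regular suborbit, so $H$ acts primitively on a set of size $|H|$ with trivial point stabilizer; this requires $1$ to be maximal in $H$, forcing $|H|$ to be prime. Since the point stabilizer in any primitive action of an almost simple exceptional group is essentially never of prime order, $b(G,\O) = 2$ already suffices to rule out extreme primitivity.

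Accordingly, I would first assemble the necessary base-size results for primitive actions of almost simple exceptional groups, extending the existing literature where needed (this is one of the by-products promised in the abstract). The outcome is that $b(G,\O) = 2$ in all but a short explicit list of cases. These residual actions are indexed by the socle $G_0 \in \{{}^2B_2(q), {}^2G_2(q), {}^2F_4(q), G_2(q), {}^3D_4(q), F_4(q), E_6(q), {}^2E_6(q), E_7(q), E_8(q)\}$ together with a restricted family of subgroups $H$, typically maximal parabolic subgroups, maximal-rank subgroups, or small almost simple subgroups such as ${\rm J}_{2} < G_2(4)$.

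For each remaining pair $(G,H)$ I would analyze the suborbits of $G$ on $G/H$ explicitly. The O'Nan--Scott restrictions inherited from the faithful primitive action of $H$ on each orbit already constrain the socle of $H$. In almost every case, one then exhibits an element $x \in G \setminus H$ together with an intermediate subgroup $K$ satisfying $H \cap H^x < K < H$, contradicting extreme primitivity. Where the argument is delicate, particularly for small $q$, I would appeal to computation in \textsc{Magma}, working with concrete permutation representations of $G$ to determine the stabilizers $H_\beta$ of orbit representatives and comparing them against the list of maximal subgroups of $H$ supplied by the Atlas and the exceptional-group literature.

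Finally, the forward direction of the theorem requires verifying that the pairs $(G_2(4),{\rm J}_{2})$ and $(G_2(4).2,{\rm J}_{2}.2)$ are extremely primitive. Both stabilizers are well understood and the relevant permutation representations are small enough that the full suborbit structure can be computed directly; each nontrivial suborbit stabilizer then matches a maximal subgroup of ${\rm J}_{2}$ or ${\rm J}_{2}.2$. The principal obstacle throughout will be the analysis of the positive-rank reductive maximal subgroups of $E_6(q)$, $E_7(q)$ and $E_8(q)$: the relevant families are numerous, their $G$-normalizers intricate, and sharpening the base-size inputs to the level required to trigger the reduction above is where the bulk of the technical work lies.
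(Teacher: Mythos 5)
Your proposal matches the paper's overall strategy: combine Manning's theorem and the O'Nan--Scott constraints on the stabilizer structure with the base-two criterion (since $H\cap H^x=1$ forces $|H|$ prime, impossible for a maximal subgroup of an almost simple group), and for the residual cases exhibit an element $g$ making $H\cap H^g$ non-maximal, falling back on \textsc{Magma} for small $q$ and verifying the two positive examples directly. One misjudgement of emphasis is worth flagging: you suggest that $b(G,\Omega)=2$ disposes of all but a ``short explicit list'' of cases and that the residue consists of parabolics, maximal-rank subgroups, and a few small almost simple subgroups. In fact the paper never computes base sizes for parabolic subgroups or for the large maximal-rank and reductive subgroups at all, because these actions typically have $b(G,H)>2$; they are handled entirely by the structural constraints (Lemma~\ref{l:structure}, e.g.\ $Z(H)\ne 1$ or $\operatorname{soc}(H)$ not a power of a simple group) and by the non-maximality device (Lemma~\ref{l:simple}), and the maximal-rank case alone occupies most of the technical effort. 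So the reduction proceeds in the opposite order from what your sketch suggests: the structural sieve comes first, base-size arguments dispose of the ``small'' stabilizers (roughly $|H|\ll|G|^{1/2}$), and non-maximality or rank/index bookkeeping handles the large ones. Your plan is sound and would reach the same classification, but reorganising around this division of labour, rather than hoping to sharpen base-size inputs to cover more ground, is what makes the case analysis tractable.
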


By combining Theorem \ref{t:main} with the results in \cite{BPS, BPS2}, we can now complete the classification of the almost simple extremely primitive groups. 

\begin{theorem}\label{t:main2}
Let $G \leqs {\rm Sym}(\O)$ be an almost simple group with stabilizer $H$ and socle $G_0$. Then $G$ is extremely primitive if and only if $(G,H)$ is one of the cases in Table \ref{tab:main}.
\end{theorem}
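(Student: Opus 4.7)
The plan is to reduce Theorem \ref{t:main2} to a citation check, since it is essentially a consolidation of Theorem \ref{t:main} with the earlier classification work of Burness, Praeger and Seress. By the Classification of Finite Simple Groups, the socle $G_0$ belongs to one of four families: alternating, sporadic, classical, or exceptional of Lie type. My approach is to partition the argument into these four cases and verify that the union of the resulting lists of $(G,H)$ is precisely Table \ref{tab:main}.

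First I would dispose of the alternating, sporadic and classical cases by quoting the results of \cite{BPS, BPS2}, which, as recalled in the introduction, classify the almost simple extremely primitive groups whose socle lies in one of these families. Here the isomorphism ${}^2G_2(3)' \cong {\rm L}_2(8)$ means that this small exceptional group is already accounted for through the classical treatment in \cite{BPS2}, which is precisely why it is excluded from the statement of Theorem \ref{t:main}. For the remaining case, in which $G_0$ is an exceptional group of Lie type distinct from ${}^2G_2(3)'$, I would apply Theorem \ref{t:main}, which produces exactly the two entries $(G_2(4),{\rm J}_2)$ and $(G_2(4).2,{\rm J}_2.2)$.

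The hard part will not be any new mathematical argument but rather a careful bookkeeping check against the exceptional isomorphisms among small simple groups (for example ${\rm L}_2(8) \cong {}^2G_2(3)'$, ${\rm L}_2(7) \cong {\rm L}_3(2)$, ${\rm L}_4(2) \cong A_8$ and ${\rm U}_4(2) \cong {\rm PSp}_4(3)$), to ensure that every almost simple extremely primitive group is accounted for exactly once and that no candidate slips between the four sources. One also needs to reconcile the naming conventions used in \cite{BPS, BPS2} and in Theorem \ref{t:main} so that the composite table is unambiguous. Once this compatibility is verified, Table \ref{tab:main} is obtained as the union of the tables appearing in \cite{BPS}, \cite{BPS2} and Theorem \ref{t:main}, and Theorem \ref{t:main2} follows.
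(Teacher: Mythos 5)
Your proposal is correct and matches the paper's approach exactly: Theorem \ref{t:main2} is obtained by combining Theorem \ref{t:main} (exceptional socle) with the classifications in \cite{BPS} (classical socle) and \cite{BPS2} (alternating and sporadic socles), together with the bookkeeping over exceptional isomorphisms noted in Remark \ref{r:main2}(i). The paper offers no further proof beyond this consolidation, so your argument is essentially the paper's own.
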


\begin{table}
$$\begin{array}{llcl} \hline
G_0 & H \cap G_0 & \mbox{Rank} & \mbox{Conditions} \\ \hline
{\rm Alt}_n & ({\rm Sym}_{n/2} \wr {\rm Sym}_2) \cap G_0 & \frac{1}{4}(n+2) & n \equiv 2 \imod{4} \\
{\rm Alt}_n & {\rm Alt}_{n-1} & 2 & \mbox{$G={\rm Sym}_n$ or ${\rm Alt}_n$}  \\
{\rm Alt}_6 & {\rm L}_{2}(5) & 2 & \mbox{$G={\rm Sym}_6$ or ${\rm Alt}_6$} \\
{\rm Alt}_5 & D_{10} & 2 &   \\ 
{\rm L}_{2}(q) & P_1 & 2 & \\
{\rm L}_{2}(q) & D_{2(q+1)} & \frac{1}{2}q & \mbox{$G=G_0$, $q+1$ is a Fermat prime} \\
{\rm Sp}_{n}(2) &  {\rm O}_{n}^{\pm}(2) & 2 & n \geqs 6 \\
{\rm U}_{4}(3) & {\rm L}_{3}(4) & 3 & \mbox{See Remark \ref{r:main2}(v)} \\ 
{\rm L}_{3}(4) & {\rm Alt}_6 & 3 & \mbox{See Remark \ref{r:main2}(vi)} \\ 
{\rm L}_{2}(11) & {\rm Alt}_5 & 2 & G=G_0 \\
G_2(4) & {\rm J}_{2} & 3 & \\
{\rm M}_{11} & {\rm Sym}_6  & 2 & \\ 
{\rm M}_{11} & {\rm L}_{2}(11)  &  2 & \\
{\rm M}_{12} & {\rm M}_{11}  & 2 & G=G_0  \\
{\rm M}_{22} & {\rm L}_{3}(4)   & 2 & \\
{\rm M}_{23} & {\rm M}_{22}  & 2 & \\
{\rm M}_{24} & {\rm M}_{23}   & 2 & \\
{\rm J}_{2} & {\rm U}_{3}(3)  & 3 & \\
{\rm HS} & {\rm M}_{22}   &  3 & \\
{\rm HS} & {\rm U}_{3}(5)  & 2 & G=G_0  \\
{\rm Suz} & G_2(4)  & 3 & \\
{\rm McL} & {\rm U}_{4}(3)  & 3 & \\
{\rm Ru} & {}^2F_4(2)  & 3 & \\
{\rm Co}_{2} & {\rm U}_{6}(2)  & 3 & \\
{\rm Co}_{2} & {\rm McL}  & 6 & \\
{\rm Co}_{3} & {\rm McL}  & 2 & \\ \hline  
\end{array}$$
\caption{The extremely primitive almost simple groups}
\label{tab:main}
\end{table}

\begin{remk}\label{r:main2}
Let us make some comments on the statement of Theorem \ref{t:main2}.
\begin{itemize}\addtolength{\itemsep}{0.2\baselineskip}
\item[{\rm (i)}] In view of the isomorphisms
\[
{\rm L}_{2}(4) \cong {\rm L}_{2}(5) \cong {\rm Alt}_5,\;\; {\rm L}_{2}(9) \cong {\rm Alt}_6
\]
we assume $q \geqs 7$ and $q \ne 9$ in the fifth and sixth rows of Table \ref{tab:main} with $G_0 = {\rm L}_{2}(q)$. Similarly, we assume $G_0 \ne {\rm L}_{4}(2), {\rm L}_{3}(2), {}^2G_2(3)'$ since ${\rm L}_{4}(2) \cong {\rm Alt}_8$, ${\rm L}_{3}(2) \cong {\rm L}_{2}(7)$ and ${}^2G_2(3)' \cong {\rm L}_{2}(8)$.
\item[{\rm (ii)}] In the fifth row of the table,  $P_1$ denotes a Borel subgroup of $G$. 
\item[{\rm (iii)}] In the final column we describe the extremely primitive groups with socle $G_0$ and $H \cap G_0$ as given in the second column (if no conditions are recorded, then every almost simple group with socle $G_0$ is extremely primitive).
\item[{\rm (iv)}] In the third column we record the rank of $G$, which is the number of orbits of $H$ on $\O$ (so the almost simple $2$-primitive groups have rank $2$). In the special case $G = G_2(4).\a$ arising in Theorem \ref{t:main}, we have 
\[
|G:H| = 1 + 100 + 315
\]
and the stabilizers for the nontrivial orbits of $H$ are the maximal subgroups ${\rm U}_{3}(3).\a$ and $2^{1+4}{:}{\rm Alt}_5.\a$ of $H = {\rm J}_{2}.\a$.
\item[{\rm (v)}]  In the eighth row, $G_0 = {\rm U}_{4}(3)$ and $(G,H) = (G_0.2^2,{\rm L}_{3}(4).2^2)$ or $(G_0.2,{\rm L}_{3}(4).2)$. More precisely, $G = G_0.\la x,y\ra$ or $G_0.\la y \ra$, where $x$ is an involutory diagonal automorphism (class \texttt{2B} in the notation of \cite{ATLAS}) and $y$ is an involutory graph automorphism with centralizer of type ${\rm O}_{4}^{-}(3)$ (class  \texttt{2F}). 
\item[{\rm (vi)}] In the ninth row, $G_0 = {\rm L}_{3}(4)$ and $(G,H) = (G_0.2^2,{\rm Alt}_6.2^2)$ or 
$(G_0.2, {\rm Alt}_6.2)$, where in the latter case, $G = G_0.\la x \ra$ and $x$ is an involutory graph or graph-field automorphism (classes \texttt{2B} or \texttt{2D}).
\item[{\rm (vii)}] It is worth noting the following unrefinable chain of subgroups of Conway's sporadic group ${\rm Co}_{3}$:
\[
{\rm Co}_{3} > {\rm McL}.2 > {\rm U}_{4}(3).2 > {\rm L}_{3}(4).2 > {\rm PGL}_{2}(9) > 3^2{:}8
\]
For each inclusion $K>H$ in this chain, $K$ is extremely primitive on $K/H$.
\item[{\rm (viii)}] Let us also highlight the remarkable rank $6$ example that arises when  $G = {\rm Co}_{2}$ and $H = {\rm McL}$. Here 
\[
|G:H| = 47104 = 1+ 275+2025+7128+15400+22275
\]
and the respective stabilizers for the nontrivial orbits of $H$ are ${\rm U}_{4}(3)$,  ${\rm M}_{22}$, ${\rm U}_{3}(5)$, $3^4{:}{\rm M}_{10}$ and $2^4{:}{\rm Alt}_7$, each of which is a maximal subgroup of $H$. 
\end{itemize}
\end{remk}

Let $G\leqs {\rm Sym}(\O)$ be a finite primitive permutation group and let $e(G) \geqs 0$ be the largest integer $k$ with the property that for every $\ell$-set $\Delta \subseteq \O$ with $1 \leqs \ell \leqs k$, the pointwise stabilizer $G_{\Delta}$ is nontrivial and acts primitively on its orbits in $\Omega\setminus\Delta$. Note that $G$ is extremely primitive if and only if $e(G) \geqs 1$. Suppose $G = V{:}H$ is affine and $e(G) \geqs 3$, where $V = \mathbb{F}_p^d$. Then $H_{v_1,v_2,v_3}$ is a maximal subgroup of $H_{v_1,v_2}$ for all triples of distinct nonzero vectors in $V$. By setting $v_3 = \l v_1$ if $p>2$ and $v_3 = v_1+v_2$ if $p = 2$, where $1 \ne \l \in \mathbb{F}_p^{\times}$, we get $H_{v_1,v_2,v_3} = H_{v_1,v_2}$ and so   there are no affine groups with $e(G) \geqs 3$. By inspecting the almost simple groups in Table \ref{tab:main}, we obtain the following corollary.

\begin{corol}
Let $G$ be a finite primitive group of degree $n$ with $e(G) \geqs 3$. Then $G$ is $4$-transitive and one of the following holds:
\begin{itemize}\addtolength{\itemsep}{0.2\baselineskip} 
\item[{\rm (i)}] $G \cong {\rm Sym}_{n}$ and $e(G) = n-2$.
\item[{\rm (ii)}] $G \cong {\rm Alt}_{n}$, $n \geqs 6$ and $e(G) = n-3$.
\item[{\rm (iii)}] $(G,n) = ({\rm M}_{12},12)$ or $({\rm M}_{24}, 24)$ and $e(G)=3$.
\end{itemize}
\end{corol}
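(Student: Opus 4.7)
The plan is to combine the reduction just established with a row-by-row inspection of Table~\ref{tab:main}. Since $e(G) \geqs 3$ certainly forces $G$ to be extremely primitive, the theorem of Mann, Praeger and Seress \cite{MPS}, together with the affine elimination carried out in the paragraph above, reduces us to the almost simple case, and Theorem~\ref{t:main2} then places $(G,H)$ in Table~\ref{tab:main}.

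For each row the condition is tested directly: $e(G) \geqs 3$ demands that for every subset $\Delta \subseteq \O$ with $|\Delta| \leqs 3$, the pointwise stabilizer $G_\Delta$ is nontrivial and acts primitively on each of its orbits in $\O \setminus \Delta$. In most rows an obstructing triple $\{\a,\b,\gamma\}$ can be produced with very little effort. For example, the natural action of ${\rm L}_2(q)$ on the projective line is essentially sharply $3$-transitive, so the $3$-point stabilizer is trivial; in the ${\rm M}_{11}$ rows the $3$-point stabilizer is regular of order $8$ on the remaining $8$ points and is therefore imprimitive; and in the ${\rm M}_{23}$ row the $3$-point stabilizer preserves the system of $5$ blocks of size $4$ coming from the Steiner system $S(4,7,23)$. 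The remaining sporadic and Lie-type rows --- those involving ${\rm Sp}_n(2)$, ${\rm U}_4(3)$, ${\rm L}_3(4)$, $G_2(4)$, ${\rm J}_2$, ${\rm HS}$, ${\rm McL}$, ${\rm Suz}$, ${\rm Ru}$, ${\rm Co}_2$, ${\rm Co}_3$, ${\rm M}_{22}$, together with the non-natural ${\rm Alt}$ action on partitions --- are dispatched in the same spirit, using orbit-length data from the \textsc{Atlas}~\cite{ATLAS} or a direct {\sc Magma} computation to exhibit such a triple in each case.

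The cases that should survive are exactly the natural actions of ${\rm Sym}_n$, ${\rm Alt}_n$ (for $n \geqs 6$), ${\rm M}_{12}$ and ${\rm M}_{24}$, all of which are at least $4$-transitive, as required. For the alternating and symmetric cases every $k$-point stabilizer is ${\rm Sym}_{n-k}$ or ${\rm Alt}_{n-k}$ in its natural action on $n-k$ points, which is $2$-transitive and hence primitive whenever $n-k \geqs 2$ (respectively $n-k \geqs 3$), and nontrivial under the same condition, yielding (i) and (ii). For ${\rm M}_{12}$ and ${\rm M}_{24}$, $5$-transitivity ensures that the $k$-point stabilizers for $k \leqs 3$ are ${\rm M}_{11}, {\rm M}_{10}, {\rm M}_9$ or ${\rm M}_{23}, {\rm M}_{22}, {\rm M}_{21}$ in their natural, at least $2$-transitive (hence primitive), actions, so $e(G) \geqs 3$; but the $4$-point stabilizer is either ${\rm M}_8$ acting regularly on $8$ points or ${\rm M}_{20}$ acting on $20$ points with an imprimitive system of $5$ blocks of size $4$ (the pencil of octads through the four fixed points), so $e(G) = 3$ in each, establishing (iii). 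The main obstacle is simply the bulk of the case analysis, but every individual row of Table~\ref{tab:main} requires only routine information about orbit structures and maximal subgroups.
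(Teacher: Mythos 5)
Your argument follows the same route as the paper's, which establishes the corollary by first ruling out affine groups in the paragraph immediately preceding the statement and then declaring that the result follows ``by inspecting the almost simple groups in Table~\ref{tab:main}''; you have made that inspection explicit, and the computations for the surviving rows (${\rm Sym}_n$, ${\rm Alt}_n$ with $n \geqs 6$, ${\rm M}_{12}$, ${\rm M}_{24}$) are all correct, including the ${\rm M}_8$-regular and ${\rm M}_{20}$-pencil obstructions that pin down $e(G)=3$ in case (iii). One small slip worth noting: the claim that in ``the ${\rm M}_{11}$ rows'' the $3$-point stabilizer is regular of order $8$ on the remaining $8$ points is accurate only for the degree-$11$ action; for the degree-$12$ action (stabilizer ${\rm L}_2(11)$) the $3$-point stabilizer is ${\rm Sym}_3$ of order $6$, which is nonetheless imprimitive on its orbit of size $6$, so the conclusion $e(G)<3$ still holds there and the overall argument is unaffected.
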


By combining Theorem \ref{t:main2} with the main results from \cite{MPS} on affine groups, we obtain the following theorem. Note that in part (ii)(a), a prime divisor $r$ of $p^d-1$ is a \emph{primitive prime divisor} if $r$ does not divide $p^i-1$ for all $i=1, \ldots, d-1$ (in other words, the order of $\mbox{$p$ mod $r$}$ is $d$). Also recall that a primitive group is \emph{simply primitive} if it is not $2$-transitive.

\begin{theorem}\label{t:main3}
Let $G \leqs {\rm Sym}(\O)$ be a finite extremely primitive group with point stabilizer $H$. Then either  
\begin{itemize}\addtolength{\itemsep}{0.2\baselineskip}
\item[{\rm (i)}] $G$ is almost simple and $(G,H)$ is one of the cases in Table \ref{tab:main}; or
\item[{\rm (ii)}] $G = V{:}H \leqs {\rm AGL}_{d}(p)$ is affine, $p$ is a prime and one of the following holds:

\vspace{1mm}

\begin{itemize}\addtolength{\itemsep}{0.2\baselineskip}
\item[{\rm (a)}] $H=Z_r.Z_e$, where $e$ divides $d$ and $r$ is a primitive prime divisor of $p^d-1$.
\item[{\rm (b)}] $p=2$ and $H = {\rm SL}_{d}(2)$ with $d \geqs 3$, or $H={\rm Sp}_{d}(2)$ with $d \geqs 4$. 
\item[{\rm (c)}] $p=2$ and $(d,H) = (4,{\rm Alt}_6)$, $(4, {\rm Alt}_7)$, $(6, {\rm U}_{3}(3))$ or $(6,{\rm U}_{3}(3).2)$.
\item[{\rm (d)}] $p=2$ and $(d,H)$ is one of the following:
\[
\begin{array}{llll}
(10,{\rm M}_{12}) & (10,{\rm M}_{22}) & (10,{\rm M}_{22}.2) & (11,{\rm M}_{23}) \\
(11,{\rm M}_{24}) & (22, {\rm Co}_{3}) &  (24,{\rm Co}_{1}) & (2k, {\rm Alt}_{2k+1}) \\
(2k, {\rm Sym}_{2k+1}) &  (2\ell, {\rm Alt}_{2\ell+1}) & (2\ell, {\rm Sym}_{2\ell+1}) & (2\ell, \O_{2\ell}^{\pm}(2)) \\
(2\ell, {\rm O}_{2\ell}^{\pm}(2)) &  (8, {\rm L}_{2}(17)) & (8, {\rm Sp}_{6}(2)) & 
\end{array}
\]
where $k \geqs 2$ and $\ell \geqs 3$.
\item[{\rm (e)}] $p=2$, $H$ is almost simple and $G$ is simply primitive.
\end{itemize}
\end{itemize}
Moreover, every group in parts (i) and (ii)(a,b,c,d) is extremely primitive.
\end{theorem}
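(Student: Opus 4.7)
The plan is to combine Theorem~\ref{t:main2} with the main classification of affine extremely primitive groups established by Mann, Praeger and Seress in \cite{MPS}. The key input is the reduction theorem of \cite{MPS} stated in the introduction, which asserts that every finite extremely primitive group is either almost simple or of affine type $G = V{:}H$ with $V = \mathbb{F}_p^d$ and $H \leqs {\rm GL}(V)$ irreducible. Granting this dichotomy, the proof reduces to assembling the two cases.

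First, assume $G$ is almost simple. Then Theorem~\ref{t:main2} applies and $(G,H)$ appears in Table~\ref{tab:main}, yielding part (i). Second, assume $G$ is affine. Here I would appeal directly to the classification of affine extremely primitive groups carried out in \cite{MPS}. That classification produces the families listed in (ii)(a)--(d), and reduces the remaining possibilities to the case where $p=2$, $H$ is almost simple and irreducible on $V = \mathbb{F}_2^d$, and $G$ is simply primitive; this is precisely the residual class (ii)(e), which by \cite{MPS} consists of at most finitely many examples and is conjectured to be empty.

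For the ``moreover'' clause, the almost simple entries in Table~\ref{tab:main} are verified to be extremely primitive in Theorem~\ref{t:main2} together with \cite{BPS, BPS2}. The affine cases (ii)(a)--(d) are shown to be extremely primitive as part of the classification in \cite{MPS}, so no further verification is required. Case (ii)(e) is the open class and no assertion is made about its members, which is why it is explicitly omitted from the converse.

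Since Theorem~\ref{t:main3} is a compilation result, there is no substantive obstacle within its own proof: all the heavy lifting is encapsulated in Theorem~\ref{t:main2} and in \cite{MPS}. The only care needed is bookkeeping, namely matching the case analysis of \cite{MPS} precisely against the sub-list (a)--(e) (in particular, noting that the single case (a) accommodates both $p = 2$ and $p > 2$, while (b)--(d) arise only for $p = 2$), and ensuring that (ii)(e) is correctly flagged as the sole class excluded from the converse.
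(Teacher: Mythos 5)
Your proposal is correct and follows essentially the same route as the paper: Theorem~\ref{t:main3} is a compilation result obtained by combining the almost simple classification of Theorem~\ref{t:main2} (itself resting on Theorem~\ref{t:main} plus \cite{BPS,BPS2}) with the affine classification from \cite{MPS}, and the paper presents it without a separate proof for precisely the reason you identify. Your bookkeeping remarks about the $p$-restrictions in (a)--(d) and the exclusion of (ii)(e) from the converse are accurate and match the paper's intent.
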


As an immediate corollary we get the following result, which shows that if $G \leqs {\rm Sym}(\O)$ is extremely primitive, then in almost every case $G_{\a}$ acts as a primitive group of almost simple or affine type on each of its orbits in $\O \setminus \{\a\}$ (the exceptions arise in part (iii), where $G_{\a}$ acts as a product-type primitive group).

\begin{corol}\label{c:struc}
Let $G \leqs {\rm Sym}(\O)$ be a finite extremely primitive group with point stabilizer $H$. Then one of the following holds:
\begin{itemize}\addtolength{\itemsep}{0.2\baselineskip}
\item[{\rm (i)}] $H$ is almost simple.
\item[{\rm (ii)}] $H$ is solvable and either 

\vspace{1mm}

\begin{itemize}\addtolength{\itemsep}{0.2\baselineskip}
\item[{\rm (a)}] $G$ is almost simple with socle $G_0$ and $(G,H)$ is recorded in Table \ref{tab:sol}; or
\item[{\rm (b)}] $G \leqs {\rm AGL}_{d}(p)$ is affine and $H=Z_r.Z_e$, where $e$ divides $d$ and $r$ is a primitive prime divisor of $p^d-1$.
\end{itemize}
\item[{\rm (iii)}] $G = {\rm Sym}_n$ or ${\rm Alt}_{n}$, $n \equiv 2 \imod{4}$, $n \geqs 10$ and $H = ({\rm Sym}_{n/2} \wr {\rm Sym}_{2}) \cap G$.
\end{itemize}
\end{corol}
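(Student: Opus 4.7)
The plan is essentially bookkeeping: Theorem~\ref{t:main3} already enumerates every finite extremely primitive group, so I only need to sort the pairs $(G,H)$ arising there according to the isomorphism type of $H$ and verify that each falls into exactly one of cases~(i), (ii), (iii). The two halves of Theorem~\ref{t:main3}, the almost simple and the affine, can be treated separately.

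For $G$ affine, part~(ii) of Theorem~\ref{t:main3} splits into five subfamilies (ii)(a)–(e). Only in (ii)(a), where $H = Z_r.Z_e$ sits inside the normalizer of a Singer cycle in ${\rm GL}_d(p)$, is the stabilizer solvable; this yields case~(ii)(b) of the corollary. Inspection of (ii)(b)–(e) shows that in each of those subfamilies $H$ is almost simple (either a classical group, a Mathieu or sporadic group, an alternating or symmetric group, or an almost simple group in (ii)(e) by hypothesis), so $G$ belongs to case~(i).

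For $G$ almost simple, I would go through Table~\ref{tab:main} row by row and record the structure of $H \cap G_0$. In almost every row $H \cap G_0$ is visibly almost simple, which gives case~(i). The exceptions are the rows with $G_0 \in \{{\rm Alt}_5, {\rm L}_2(q)\}$ whose stabilizers are $D_{10}$, a Borel $P_1$, or $D_{2(q+1)}$, all solvable and to be collected in Table~\ref{tab:sol}; and the ${\rm Alt}_n$ wreath-product row with $H \cap G_0 = ({\rm Sym}_{n/2}\wr{\rm Sym}_2)\cap G_0$, which needs a small case split on $n \equiv 2 \imod 4$. For $n=6$ the group ${\rm Sym}_3 \wr {\rm Sym}_2$ is solvable, so this pair joins Table~\ref{tab:sol} and lands in case~(ii)(a); for $n \geqs 10$ the socle of $H$ is $({\rm Alt}_{n/2})^2$, which is neither simple nor solvable, so $H$ is neither almost simple nor solvable and lands in case~(iii).

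The work is entirely mechanical, so the only real point requiring care is locating the threshold between cases~(ii)(a) and (iii) in the wreath-product row, i.e.\ observing that ${\rm Sym}_k \wr {\rm Sym}_2$ is solvable precisely for $k \leqs 4$; this is what forces the bound $n \geqs 10$ in case~(iii). Once that boundary is drawn and Table~\ref{tab:sol} is assembled from the handful of solvable almost simple pairs identified above, the trichotomy (i)/(ii)/(iii) partitions every group listed in Theorem~\ref{t:main3}, completing the proof.
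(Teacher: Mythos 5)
Your approach is exactly what the paper intends: Theorem~\ref{t:main3} gives a complete list, and the corollary is just a sort of that list by the isomorphism type of~$H$. Your treatment of the affine side is complete, and your identification of the boundary $n\geqs 10$ in the wreath-product family (via the solvability of ${\rm Sym}_3\wr{\rm Sym}_2$ versus ${\rm soc}({\rm Sym}_5\wr{\rm Sym}_2)=({\rm Alt}_5)^2$) is correct.

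There is one omission in your enumeration of the solvable almost-simple cases. The row ${\rm Alt}_n$, ${\rm Alt}_{n-1}$ of Table~\ref{tab:main} includes $n=5$, where the stabilizer is ${\rm Alt}_4$ (or ${\rm Sym}_4$ when $G={\rm Sym}_5$), which is solvable and is a separate row of Table~\ref{tab:sol}. Your list of solvable stabilizers ($D_{10}$, $P_1$, $D_{2(q+1)}$, and ${\rm Sym}_3\wr{\rm Sym}_2\cap G_0$) misses it. One might try to subsume it under the Borel row via ${\rm L}_2(4)\cong{\rm Alt}_5$, since $P_1<{\rm L}_2(4)$ is indeed ${\rm Alt}_4$, but Remark~\ref{r:main2}(i) deliberately restricts the ${\rm L}_2(q)$ rows of Table~\ref{tab:main} to $q\geqs 7$, $q\ne 9$ precisely to avoid this double-counting, so the case must be taken from the ${\rm Alt}_n$, ${\rm Alt}_{n-1}$ row and recorded separately. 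With that added, your Table~\ref{tab:sol} has the correct five rows and the trichotomy is established.
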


\begin{table}
$$\begin{array}{lll} \hline
G_0 & H \cap G_0 & \mbox{Conditions} \\ \hline
{\rm Alt}_6 & ({\rm Sym}_3 \wr {\rm Sym}_2) \cap G_0 & \\
{\rm Alt}_{5} & {\rm Alt}_4 & \\
{\rm Alt}_5 & D_{10} & \\ 
{\rm L}_{2}(q) & P_1 & \\
{\rm L}_{2}(q) & D_{2(q+1)} & \mbox{$G=G_0$, $q+1$ is a Fermat prime} \\
\hline
\end{array}$$
\caption{The extremely primitive almost simple groups with solvable point stabilizer}
\label{tab:sol}
\end{table}

By Theorem \ref{t:main3}, in order to complete the classification of extremely primitive groups, it remains to handle the affine groups $G = V{:}H$ arising in part (ii)(e), where $H \leqs {\rm GL}(V)$ is almost simple and $V = \mathbb{F}_2^d$. Let $\mathcal{M}$ be the set of maximal subgroups of $H$, and for $M \in \mathcal{M}$, let ${\rm fix}(M)$ be the space of vectors in $V$ fixed by $M$. By \cite[Lemma 4.1]{MPS}, we have
\[
\sum_{M \in \mathcal{M}} (|{\rm fix}(M)|-1) \leqs 2^d-1,
\]
with equality if and only if $G$ is extremely primitive. Since $\dim {\rm fix}(M) \leqs d/2$ for each $M \in \mathcal{M}$, it follows that $G$ is not extremely primitive if $|\mathcal{M}|<2^{d/2}$. In this way, upper bounds on $|\mathcal{M}|$, combined with lower bounds on the dimensions of irreducible modules for $H$, play an important role in the analysis. In particular, a theorem of Liebeck and Shalev \cite{LSh96} implies that $|\mathcal{M}|<|H|^{8/5}$ for all sufficiently large almost simple groups $H$ and this is a key ingredient in the proof that there are at most finitely many extremely primitive affine groups arising in part (ii)(e) of Theorem \ref{t:main3}. 

A well known conjecture of G.E. Wall from 1961 asserts that $|\mathcal{M}| < |H|$. Wall's conjecture was originally formulated for all finite groups, but counterexamples have recently been constructed, see \cite{Lub_wall}. However, the conjecture has been established for all sufficiently large alternating and symmetric groups (see \cite{LSh96}) and a theorem of Liebeck, Martin and Shalev \cite{LMS} implies that $|\mathcal{M}|< |H|^{1+o(1)}$ for all almost simple groups $H$ of Lie type. If one assumes Wall's conjecture for almost simple groups, then \cite[Theorem 4.8]{MPS} identifies a very short and explicit list of affine groups that can arise in part (ii)(e) of Theorem \ref{t:main3} (see \cite[Table 2]{MPS}). In each case, $H$ is an almost simple group of Lie type (and defined over the field $\mathbb{F}_2$, with just one exception) and it is conjectured in \cite{MPS} that none of these groups are extremely primitive. In other words, the list of extremely primitive groups in parts (i) and (ii)(a,b,c,d) of Theorem \ref{t:main3} is conjectured to be complete. See Remark \ref{r:main4} below for some additional comments on the cases in \cite[Table 2]{MPS}.

\vs

Let $G \leqs {\rm Sym}(\O)$ be a finite primitive permutation group with stabilizer $H = G_{\a}$. There are several different methods for showing that $G$ is not extremely primitive. As mentioned previously, Manning's result \cite{Manning} implies that if $G$ is extremely primitive, then $H$ acts faithfully and primitively on each of its nontrivial orbits and this imposes strong restrictions on the socle of $H$, via the O'Nan-Scott Theorem (see Lemma \ref{l:structure}, for example). If the socle is compatible with extreme primitivity, then it may be the case that the rank of $G$ and the indices of the maximal subgroups of $H$ are incompatible (for instance, see Lemma \ref{l:char}). In other situations, it may be possible to identify an explicit element $x \in G$ such that $H \cap H^x < H$ is non-maximal (see Lemma \ref{l:simple}, for example). 

Recall that a subset $B$ of $\O$ is a \emph{base} for $G$ if the pointwise stabilizer of $B$ in $G$ is trivial. The \emph{base size} of $G$, denoted $b(G,H)$, is then the minimal size of a base for $G$. If $b(G,H) = 2$, then this implies that there exists $x \in G$ such that $H \cap H^x = 1$, which is maximal in $H$ if and only if $H$ has prime order. Since no maximal subgroup of an almost simple group has prime order, the base-two property rules out extreme primitivity in this situation. This criterion, combined with a probabilistic approach for bounding the base size (see Lemma \ref{l:base}), provides a powerful technique for showing that a given group is not extremely primitive.

There is a substantial literature on bases for almost simple primitive groups, see \cite{Bur07,Bur18,BGS0,BLS,BOW}, for example. In particular, there has been significant interest in determining the primitive permutation groups with a base of size $2$, which is a far-reaching project initiated by Jan Saxl in the 1990s. This remains an open problem, although there has been a lot of  progress in recent years. In order to prove Theorem \ref{t:main}, we will establish several new base results for primitive groups with socle an  exceptional group of Lie type. These results make an important contribution to ongoing efforts to determine all the base-two almost simple groups and they  significantly strengthen some of the results presented in \cite{BLS}, where the general bound $b(G,H) \leqs 6$ is established. We anticipate that Propositions \ref{p:mr_base} and \ref{p:typeIbase}, as well as Theorem \ref{t:typeV}, will be of independent interest and applicable to other problems (see \cite{Bur20, BGL2} for some immediate applications). A systematic study of bases for almost simple exceptional groups will be the subject of a future paper.

\begin{remk}\label{r:main4}
The base-two problem has also been studied for primitive groups of affine type. Here $G = V{:}H$, where $H \leqs {\rm GL}(V)$ is irreducible, and in this setting we have $b(G,H) = 2$ if and only if $H$ has a regular orbit on the module $V$. In particular, determining if $G$ admits a base of size $2$ is a very natural problem in the representation theory of finite groups. For example, it plays an important role in the solution to the famous $k(GV)$ problem \cite{Schmid}, which in turn proves part of a conjecture of Brauer on defect groups of blocks \cite{Brauer}.

In recent work, Lee \cite{Lee1,Lee2,Lee3} has conducted an in-depth study of  base sizes for affine groups of the form $G = V{:}H$, where $H$ is an almost simple group of Lie type. In particular, as a corollary of her much more detailed results, she is able to eliminate some of the extremely primitive candidates in \cite[Table 2]{MPS}. More precisely, if $G = V{:}H$ with $V=\mathbb{F}_2^d$ then Lee proves that $b(G,H)=2$ in each of the following cases $(d,{\rm soc}(H))$ listed in \cite[Table 2]{MPS}:
\[
(40, {\rm PSp}_4(9)), \; (40, {\rm L}_{5}(2)), \; (48, \O_{8}^{\pm}(2)),\; (100, {\rm Sp}_{10}(2)), \; (126, {\rm L}_9(2)),
\]
together with the cases $d = \binom{k}{3}$ and $H={\rm L}_{k}(2)$ with $10 \leqs k \leqs  14$. For the remaining groups, either the precise base size is undetermined, or it is known to be at least $3$. 

By adopting a different approach, we prove in \cite{BTh_aff} that none of the groups recorded in \cite[Table 2]{MPS} are extremely primitive. In particular, this reduces the classification of the affine extremely primitive groups to Wall's conjecture for almost simple groups.  
\end{remk}

\noindent \textbf{Notation.} Let $G$ be a finite group and let $n$ be a positive integer. Our group theoretic notation is standard. In particular, we will write $Z_n$, or just $n$, for a cyclic group of order $n$ and $G^n$ will denote the direct product of $n$ copies of $G$. An unspecified extension of $G$ by a group $H$ will be denoted by $G.H$. If $X$ is a subset of $G$, then $i_n(X)$ is the number of elements of order $n$ in $X$. We adopt the standard notation for simple groups from \cite{KL}. The Fitting subgroup of $G$ will be denoted $F(G)$ and the socle of $G$ is ${\rm soc}(G)$. For positive integers $a$ and $b$, we write $(a,b)$ for the greatest common divisor of $a$ and $b$. Further notation will be introduced as and when needed in the main text.

\vs

\noindent \textbf{Organisation.} In Section \ref{s:prel} we record some preliminary results, which will be needed in the proof of Theorem \ref{t:main}. This includes a discussion of some general techniques for proving that a given primitive group is not extremely primitive. We also present several results on conjugacy classes in almost simple exceptional groups of Lie type, which will be applied repeatedly later in the paper. In Sections \ref{s:parab} and \ref{s:mr} we prove Theorem \ref{t:main} in the cases where a point stabilizer is a maximal parabolic or maximal rank subgroup of $G$, respectively. It is worth noting that the latter subgroups require considerably more effort and the proof of Theorem \ref{t:maxrank} spans almost $30$ pages. In Sections \ref{s:part1}, \ref{s:part2} and \ref{s:part3}, we complete the proof of Theorem \ref{t:main} for the groups with socle $E_8(q)$, $E_7(q)$, $E_6^{\e}(q)$, $F_4(q)$ or $G_2(q)$. Here we organise our analysis in accordance with a key theorem of Liebeck and Seitz on the subgroup structure of exceptional groups (Theorem \ref{t:types}), which partitions the remaining possibilities for $G_{\a}$ into several families. Finally, we complete the proof in Section \ref{s:part4}, where the remaining twisted groups are handled.

\vs

\noindent \textbf{Acknowledgements.} The authors thank David Craven, Martin Liebeck, Alastair Litterick and Jay Taylor for helpful conversations regarding the content of this paper. They also thank the Isaac Newton Institute for Mathematical Sciences for support
and hospitality during the programme \emph{Groups, Representations and Applications: New perspectives}, when some of the work on this paper was undertaken. This work was supported by: EPSRC grant number EP/R014604/1.

\vs\vs\vs

\section{Preliminaries}\label{s:prel}

\subsection{Extremely primitive groups}\label{ss:ep}

Let $G \leqs {\rm Sym}(\O)$ be a finite primitive permutation group with point stabilizer $H = G_{\a}$. In this section, we record three results which can be used to show that $G$ is not extremely primitive. 

By a theorem of Guralnick \cite[Theorem 3]{Gur2000}, $H$ acts faithfully on at least one of its orbits in $\O\setminus\{\a\}$. Moreover, if we assume $G$ is extremely primitive then Manning's theorem \cite[Corollary I, p.821]{Manning} implies that $H$ acts faithfully and primitively on \emph{all} of its orbits in $\O\setminus\{\a\}$. In particular, if $G$ is extremely primitive then we can view $H$ itself as a primitive permutation group and this allows us to apply the O'Nan-Scott Theorem to impose strong restrictions on the structure of $H$.

\begin{lem}\label{l:structure}
Suppose one of the following holds:
 \begin{itemize}\addtolength{\itemsep}{0.2\baselineskip}
 \item[{\rm (i)}] $Z(H)\ne 1$;
 \item[{\rm (ii)}] $F(H)$ is not elementary abelian;
 \item[{\rm (iii)}] $F(H) = Z_p^d$ is elementary abelian and $p^d$ does not divide $|\Omega|-1$;
 \item[{\rm (iv)}] $F(H) = Z_p^d$ is elementary abelian and $H/F(H)$ is not isomorphic to an irreducible subgroup of ${\rm GL}_d(p)$;
 \item[{\rm (v)}] ${\rm soc}(H)$ is not a direct product of isomorphic simple groups.
 \end{itemize}
Then $G$ is not extremely primitive.
\end{lem}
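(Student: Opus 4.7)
The plan is to prove the contrapositive in each case: assuming $G$ is extremely primitive, I derive structural constraints on $H$ that contradict each of (i)--(v). By Manning's theorem, quoted before the lemma, extreme primitivity forces $H$ to act faithfully on every orbit in $\O\setminus\{\a\}$; by the definition of extreme primitivity the action on each such orbit is also primitive. Thus, fixing any nontrivial orbit $\Delta$, one regards $H$ as a faithful finite primitive permutation group and applies the O'Nan--Scott Theorem to the resulting structural types (affine, almost simple, simple diagonal, product action, and twisted wreath).

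The core of the argument is a short list of standard consequences of O'Nan--Scott, valid for any faithful primitive group $K$ on $\Delta$: (a) ${\rm soc}(K)$ is a direct product of isomorphic simple groups; (b) either $K$ is of affine type, in which case $F(K) = {\rm soc}(K) \cong Z_p^d$ is elementary abelian of order $|\Delta|$ and $K/F(K)$ is an irreducible subgroup of ${\rm GL}_d(p)$, or ${\rm soc}(K)$ is nonabelian and $F(K) = 1$. Part (b) is established by two short observations: a nilpotent normal subgroup must meet a nonabelian socle trivially, and therefore lies in its centraliser, which is trivial in each non-affine primitive type; and in the affine case $K = V \rtimes K_0$, a Clifford-type argument shows $O_p(K_0) = 1$ (any normal $p$-subgroup of $K_0$ fixes a nonzero vector in $V$, hence all of $V$ by irreducibility, contradicting faithfulness of $K_0$ on $V$), while a nontrivial $O_q(K)$ for $q \ne p$ would act nontrivially on $V$ and so $V \cdot O_q(K)$ would fail to be nilpotent.

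With (a) and (b) in hand, cases (v), (ii), (iii) and (iv) follow at once. Case (v) contradicts (a). Case (ii) contradicts the fact that $F(H)$ is always elementary abelian for a primitive $H$. In case (iii), (b) places $H$ in the affine case, so every nontrivial $H$-orbit has size $p^d$ and hence $p^d$ divides $|\O|-1$. In case (iv), (b) identifies $H/F(H)$ as an irreducible subgroup of ${\rm GL}_d(p)$, directly contradicting the hypothesis.

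The most delicate case is (i). Here $Z(H)$ is abelian and normal, hence nilpotent, so $Z(H) \leqs F(H)$; if $Z(H) \ne 1$ then $F(H) \ne 1$ and (b) places $H$ in the affine type with $F(H) = V \cong Z_p^d$. Since $Z(H) \subseteq V$ is invariant under $H/V$ and the latter acts irreducibly, one gets $Z(H) = V$, which forces $H/V$ to centralise $V$ and thus, acting faithfully, to be trivial. Hence $H = V$ acts regularly on $\Delta$, and primitivity of a regular abelian action forces $H \cong Z_p$. Since no maximal subgroup of an almost simple group with exceptional socle has prime order, this is absurd in the setting of Theorem~\ref{t:main}. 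This final step, which appeals to the ambient hypotheses of the main theorem rather than to the O'Nan--Scott dichotomy alone, is the one part of the proof I expect to require the most careful framing.
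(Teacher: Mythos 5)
Your proof follows the same overall route as the paper's: invoke Manning's theorem to turn $H$ into a faithful primitive permutation group, apply O'Nan--Scott, and read off the constraints. The paper's own proof is a three-line sketch that records the dichotomy (either $F(H)=1$ with nonabelian socle, or $H$ is affine with $F(H)=Z_p^d$ regular and $H/F(H)\leqs{\rm GL}_d(p)$ irreducible) and then says "the result follows," so your parts (ii)--(v) match what the authors intended. Your arguments for the auxiliary facts (a nilpotent normal subgroup meets a nonabelian socle trivially and hence centralises it; the Clifford-style reason why $O_p$ and $O_{q\ne p}$ of the stabiliser vanish in the affine case) are correct, if slightly more than is needed, since O'Nan--Scott already gives the structure of $F(H)$ directly.

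What is genuinely better in your proof is the handling of (i). You are right that the O'Nan--Scott dichotomy alone does not suffice: if $Z(H)\ne 1$ and $H$ falls into the affine branch, then by irreducibility $Z(H)=F(H)=V$ forces $K=1$, so $H\cong Z_p$ acting regularly on each orbit, which \emph{is} a faithful primitive action. In other words, the lemma as literally stated for an arbitrary finite primitive group $G$ is false --- $G={\rm Sym}_3$ acting on three points, with $H=G_\alpha\cong Z_2$ and $Z(H)=H\ne 1$, is extremely primitive --- and the paper's terse proof does not address this edge case. You correctly close the gap by observing that a maximal subgroup of an almost simple group cannot have prime order (the standard argument is Burnside's normal $p$-complement theorem: a self-normalising Sylow $p$-subgroup of prime order forces a normal $p$-complement, contradicting the structure of an almost simple group), so the case $H\cong Z_p$ never arises in the context in which the lemma is applied. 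Your flagged hesitation about this "requiring the most careful framing" is warranted and, if anything, understated: it is a genuine (if harmless in context) gap in the published statement, and your version is the one that actually compiles.
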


\begin{proof}
As noted above, if $G$ is extremely primitive then $H$ is a primitive permutation group on all of its orbits in $\Omega \setminus \{a\}$ and by applying the O'Nan-Scott Theorem we deduce that either
\begin{itemize}\addtolength{\itemsep}{0.2\baselineskip}
 \item[{\rm (a)}] $F(H)=1$ and ${\rm soc}(H)$ is a direct product of isomorphic nonabelian simple groups; or
 \item[{\rm (b)}] $H = F(H)K$ is an affine group, where $K \leqs {\rm GL}_{d}(p)$ is irreducible and ${\rm soc}(H) = F(H) = Z_p^d$ acts regularly on each $H$-orbit in $\O \setminus \{\a\}$. 
 \end{itemize}
The result follows (also see \cite[Lemma 2.2]{BPS}).
\end{proof}

\begin{rem}\label{r:neww}
Suppose $G$ is extremely primitive with $F(H) \ne 1$. Then as in case (b) in the proof of Lemma \ref{l:structure}, we have $H = F(H)K$ with ${\rm soc}(H) = F(H) = Z_p^d$ and $K \leqs {\rm GL}_{d}(p)$. Since $F(H)$ acts regularly on the $H$-orbits in $\O \setminus \{\a\}$, every nontrivial element in $F(H)$ has a unique fixed point on $\O$ and we deduce that $F(H) \cap K^g = 1$ for all $g \in G$. We thank an anonymous referee for making this observation.
\end{rem}

The next result records an elementary observation which will be useful when we can compute the rank $r$ of $G$ (that is, the number of orbits of $H$ on $\O$) and we know the indices of all the core-free maximal subgroups of $H$. For example, if the character tables of $G$ and $H$ are available and we can compute the fusion of $H$-classes in $G$ (using the \textsf{GAP} Character Table Library \cite{GAPCTL}, for example), then we can use the Orbit Counting Lemma to compute $r$. See the proofs of Lemmas \ref{l:maxrank_e6_3}, \ref{l:maxrank_f4_4}, \ref{t:e6typeI_1}, \ref{l:ndef2} and \ref{l:ndef20}, for example.

\begin{lem}\label{l:char}
Suppose $G$ has rank $r$ and let $\{M_1, \ldots, M_k\}$ be representatives of the $H$-classes of core-free maximal subgroups of $H$. Set $n_i = |H:M_i|$ for $i = 1, \ldots, k$. Suppose there is no $k$-tuple of non-negative integers $[a_1, \ldots, a_k]$ with $\sum_{i}a_i = r-1$ and $\sum_{i}a_in_i = |\O|-1$. Then $G$ is not extremely primitive.
\end{lem}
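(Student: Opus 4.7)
The plan is to prove the contrapositive: assuming that $G$ is extremely primitive, I will construct a $k$-tuple $[a_1,\ldots,a_k]$ of non-negative integers satisfying both prescribed identities, contradicting the hypothesis. The key combinatorial observation is that the rank of $G$ counts the $H$-orbits on $\O$, while extreme primitivity combined with Manning's theorem forces each nontrivial $H$-orbit's point stabilizer to be a core-free maximal subgroup of $H$, hence $H$-conjugate to one of the given $M_i$.

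First I would enumerate the nontrivial $H$-orbits on $\O$: since $G$ has rank $r$, there are exactly $r-1$ such orbits, say $\mathcal{O}_1,\ldots,\mathcal{O}_{r-1}$. For each $j$ I pick a representative $\beta_j = \a^{x_j}$ with $x_j \in G \setminus H$, so that the stabilizer in $H$ of $\beta_j$ is $H \cap H^{x_j}$. Extreme primitivity tells me this intersection is maximal in $H$, and Manning's theorem tells me it is core-free in $H$, so it is $H$-conjugate to a unique $M_{i_j}$ from the given list. The orbit--stabilizer theorem then gives $|\mathcal{O}_j| = n_{i_j}$.

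Finally I would define $a_i$ to be the number of indices $j \in \{1,\ldots,r-1\}$ with $i_j = i$. By construction $\sum_i a_i = r - 1$, and since $\O \setminus \{\a\}$ is the disjoint union of the orbits $\mathcal{O}_j$, summing orbit sizes yields $\sum_i a_i n_i = |\O| - 1$. This is precisely the configuration the hypothesis forbids, so no such $G$ can be extremely primitive. There is no real obstacle to overcome: the argument is entirely formal once Manning's theorem and the definition of extreme primitivity are in hand. The only point requiring care is the verification that each stabilizer $H \cap H^{x_j}$ is genuinely both core-free and maximal in $H$ so that it matches one of the listed representatives $M_i$, and both properties follow immediately from the two cited ingredients.
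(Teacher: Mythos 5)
Your proposal is correct and follows essentially the same approach as the paper: both arguments decompose $\O$ into the $r-1$ nontrivial $H$-orbits, use extreme primitivity (for maximality) together with Manning's theorem (for core-freeness) to identify each point stabilizer with one of the $M_i$ up to $H$-conjugacy, and then read off the two identities from orbit sizes. The paper simply presents the argument directly rather than as an explicit contrapositive.
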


\begin{proof}
Since $G$ has rank $r$, there exist $\b_i \in \O$ such that  
\[
\O = \{\a\} \cup \b_1^H \cup \cdots \cup \b_{r-1}^H
\]
is a disjoint union of $H$-orbits. If $G$ is extremely primitive, then each stabilizer $H_{\b_j}$ is a maximal core-free subgroup of $H$, whence 
\[
|\O| = 1 + \sum_{j=1}^{r-1}|\b_j^H| = 1 + \sum_{j=1}^{r-1}|H:H_{\b_j}| = 1 + \sum_{i=1}^{k}a_in_i 
\]
for some $k$-tuple $[a_1, \ldots, a_k]$ of non-negative integers with $\sum_{i}a_i = r-1$. The result follows.
\end{proof}

The next lemma is  a key tool in the proof of Theorem \ref{t:main}. 

\begin{lem}\label{l:simple}
Let $G = G_0.A$ be an almost simple group with socle $G_0$ and let $H=H_0.A$ be a maximal subgroup of $G$ with $H_0 = H \cap G_0$. Let $K$ be a proper $A$-stable subgroup of $H_0$ and let $\mathcal{M}$ be the set of maximal overgroups of $K$ in $H_0$. Assume that each of the following conditions are satisfied:
\begin{itemize}\addtolength{\itemsep}{0.2\baselineskip}
\item[{\rm (i)}] $H_0$ is a maximal subgroup of $G_0$;
\item[{\rm (ii)}] Each $M \in \mathcal{M}$ is $A$-stable;
\item[{\rm (iii)}] There exists $g \in N_{G_0}(K)$ such that $M^g \not\leqs H_0$ for all $M \in \mathcal{M}$. 
\end{itemize}
Then $H \cap H^{g^{-1}}$ is a non-maximal subgroup of $H$ and thus the action of $G$ on $G/H$ is not extremely primitive.
\end{lem}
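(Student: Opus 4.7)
Set $x = g^{-1}$ and $L = H \cap H^x$. The plan is to verify that $K \leqs L$, that $L$ is a proper subgroup of $H$, and that $L$ is not a maximal subgroup of $H$; the characterisation of extreme primitivity from the introduction (namely $H \cap H^y$ maximal in $H$ for every $y \in G \setminus H$) then shows that $G$ is not extremely primitive on $G/H$, taking $y = x \in G \setminus H$.

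The containment $K \leqs L$ is immediate: $K \leqs H_0 \leqs H$ is given, and $g \in N_{G_0}(K)$ yields $K = K^g \leqs H^x$. For $L < H$, suppose instead that $L = H$; then $H^x = H$, so $g \in N_G(H)$. Maximality of $H$ together with the fact that $H$ is core-free (since $H \cap G_0 = H_0 < G_0$ and $G$ is almost simple) forces $N_G(H) = H$, so $g \in H \cap G_0 = H_0$. But $g \in H_0$ forces $M^g \leqs H_0$ for every $M \leqs H_0$, contradicting (iii).

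The main step is to show that $L$ is not maximal in $H$. Set $N = L \cap H_0 = H_0 \cap H_0^x$ and note $K \leqs N$. A parallel argument using (i) shows $N < H_0$: otherwise $g \in N_{G_0}(H_0) = H_0$ (using (i) and that $H_0$ is core-free in the simple group $G_0$), contradicting (iii). Pick $M \in \mathcal{M}$ with $N \leqs M$; condition (iii) gives $M^g \not\leqs H_0$, and since $M, g \in G_0$ we have $M^g \leqs G_0$, so $M^g \not\leqs H$, whence $M \not\leqs L$. To exhibit a subgroup strictly between $L$ and $H$, I would form $J = \langle L, M \rangle$: since $M \not\leqs L$ we have $L < J$, and to see $J < H$ I would compute $J \cap H_0 = \langle M^L \rangle$ (using $N \leqs M$) and observe that since $L$ normalises $N$, every $L$-conjugate of $M$ lies in the set of maximal overgroups of $N$ in $H_0$. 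Combining the $A$-stability of $M$ from (ii) with that of $K$, one aligns a lift of $A = H/H_0$ inside $N_H(M)$ and $N_H(K)$ with the image of $L$ under the projection $\pi\colon H \to H/H_0$, concluding that $L \leqs N_H(M)$; this forces $\langle M^L \rangle = M < H_0$ and therefore $J < H$.

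The main obstacle is this final alignment step. If two distinct $L$-conjugates of $M$ belonged to $\mathcal{M}$ with $N$ contained in each, their join would equal $H_0$ by the maximality of these subgroups, collapsing $J$ to $H$ and breaking the argument. The two $A$-stability hypotheses, on $K$ and on each $M \in \mathcal{M}$, are precisely what rule out this pathology, by guaranteeing compatible $A$-normalising lifts inside $N_H(K)$ and $N_H(M)$ that can be matched with the image of $L$ in $A$.
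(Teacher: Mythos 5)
Your verifications that $K \leqs L := H \cap H^{g^{-1}}$ and that $L < H$ are correct; in fact the latter argument (via $N_G(H) = H$ and then $g \in H_0$ contradicting (iii)) is essentially one of the two contradiction branches in the paper's own proof. The genuine gap lies in the final step, where you try to exhibit a subgroup strictly between $L$ and $H$. Having picked $M \in \mathcal{M}$ with $N := L \cap H_0 \leqs M$ and shown $M \not\leqs L$, you set $J = \langle L, M \rangle$ and need $J < H$, which (by your Dedekind computation $J \cap H_0 = \langle M^L \rangle$) reduces to $\langle M^L \rangle < H_0$. You assert this follows because ``$L \leqs N_H(M)$'', via ``aligning lifts of $A$ inside $N_H(M)$ and $N_H(K)$ with the image of $L$''. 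This is not an argument: the hypotheses at most give that $N_H(K)$, $N_H(M)$ and $L$ each surject onto $A$ under $H \to H/H_0$, which in no way forces $L$ to normalize $M$. Since $L$ normalizes $N$, it permutes the members of $\mathcal{M}$ that contain $N$, and nothing you have said prevents this permutation from being nontrivial, in which case $\langle M^L \rangle$ could well be all of $H_0$. You explicitly flag this obstacle in your last paragraph but then claim, without proof, that the two $A$-stability hypotheses resolve it; they do not do so on their own.

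The paper avoids this entirely by not constructing anything. It first uses (i) and (ii) to classify all maximal subgroups of $H$ containing $K$: each is either $M.A$ with $M \in \mathcal{M}$ (when it does not contain $H_0$) or $H_0.B$ with $B < A$ maximal (when it does). It then supposes for contradiction that $L = H \cap H^{g^{-1}}$ is maximal, hence of one of these two shapes, and observes that $L \leqs H^{g^{-1}}$ gives $L^g \leqs H$. In the first shape this yields $M^g \leqs H \cap G_0 = H_0$, contradicting (iii); in the second it yields $H_0^g \leqs H$, whence $\langle H_0, H_0^g \rangle \leqs H$ while (i) and $g \notin H_0$ force $\langle H_0, H_0^g \rangle = G_0$, a contradiction. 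Replacing your explicit construction of $J$ by this classification-and-contradiction step would make the argument complete.
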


\begin{proof}
Let $L$ be a maximal subgroup of $H$ containing $K$. If $L$ contains $H_0$ then $L/H_0$ is maximal in $H/H_0 = A$ and thus $L = H_0.B$ with $B<A$ maximal. On the other hand, if $L$ does not contain $H_0$ then $H = H_0L$ and we deduce that $L = (H_0 \cap L).A$. Then since $K \leqs H_0 \cap L$, the maximality of $L$ in $H$ implies that $L = M.A$ for some $M \in \mathcal{M}$ (here we are using (ii)).

Let $g \in N_{G_0}(K)$ be an element satisfying the condition in (iii). Seeking a contradiction, suppose $H \cap H^{g^{-1}}$ is a maximal subgroup of $H$. From the above description of the maximal overgroups of $H$ containing $K$, it follows that either $(M.A)^g$ or $(H_0.B)^g$ must be contained in $H$ for some $M \in \mathcal{M}$ or maximal subgroup $B<A$. If $(M.A)^g < H$ then $M^g < H \cap G_0 = H_0$ and this  contradicts (iii). Similarly, if $(H_0.B)^g < H$ then $\langle H_0, H_0^g \rangle \leqs H$, but $\langle H_0, H_0^g \rangle = G_0$ since $H_0$ is a maximal subgroup of $G_0$ and $g \not \in H_0$ (by (i) and (iii)). In both cases we reach a contradiction and this completes the proof of the lemma.  
\end{proof}

The following example demonstrates how we will apply Lemma \ref{l:simple}.

\begin{exa}\label{e:ex}
Suppose $G = E_8(q)$ and $H = \O_{16}^{+}(q)$ with $q$ even, so $G = G_0$ and $H = H_0$. Set $K = \Omega^+_8(q)^2<H$ and observe that $M = N_{H}(K) = K.2^2$ is the unique maximal overgroup of $K$ in $H$. Since $L=K.({\rm Sym}_{3} \times 2)$ is a maximal subgroup of $G$ (see \cite[Table 5.1]{LSS}), it follows that $N_G(K) = L$. Now $M$ is not normal in $L$, so there exists $g \in N_G(K)$ such that $M^g \ne M$. Since $M$ is the unique maximal overgroup of $K$ in $H$, it follows that $M^g \not\leqs H$ and thus Lemma \ref{l:simple} implies that $H \cap H^{g^{-1}}$ is non-maximal in $H$. We refer the reader to Lemma \ref{l:maxrank_e8_4}, where the general case with $G_0 = E_8(q)$ and $H$ of type $D_8(q)$ is handled.
\end{exa}

\subsection{Base-two groups}\label{ss:base2}

Recall that the \emph{base size} of $G$, denoted by $b(G,H)$, is the smallest size of a subset $B \subseteq \Omega$ such that $\bigcap_{\a \in B}G_{\a} = 1$. In particular, $b(G,H)=2$ if and only if there exists an element $x \in G$ such that $H \cap H^x=1$.

Since no maximal subgroup of an almost simple group has prime order, we obtain the following result.

\begin{lem}\label{l:base2}
If $G$ is almost simple and $b(G,H)=2$, then $G$ is not extremely primitive.
\end{lem}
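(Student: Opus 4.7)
The plan is to simply unfold the two hypotheses and invoke the structural fact recalled in the sentence immediately preceding the lemma. Starting from $b(G,H) = 2$, the definition of base size produces some $\beta \in \Omega \setminus \{\alpha\}$ with $G_\alpha \cap G_\beta = 1$, equivalently an element $x \in G$ with $H \cap H^x = 1$; since $H \neq 1$ by the standing hypothesis from Section~\ref{s:intro} that the primitive action has nontrivial point stabilizer, such an $x$ must lie in $G \setminus H$.

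Next I would argue by contradiction: assume $G$ is extremely primitive and apply the equivalent reformulation of extreme primitivity recalled in the introduction, namely that $H \cap H^x$ is a maximal subgroup of $H$ for every $x \in G \setminus H$. Applied to the particular $x$ produced in the previous step, this says that the trivial subgroup $1 = H \cap H^x$ is maximal in $H$, which forces $|H|$ to be prime. This contradicts the statement, cited just before the lemma, that no maximal subgroup of an almost simple group has prime order, and the proof is complete.

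The only real point of substance is that cited structural fact, which I expect to be the main (and only) obstacle to a self-contained write-up. One way to see it: if $H \leqs G_0$ then maximality combined with a Sylow argument shows $H$ must be a self-normalizing abelian Sylow $p$-subgroup of $G_0$ (otherwise its normalizer grows inside some Sylow $p$-overgroup and forces $H \trianglelefteq G_0$), and Burnside's normal $p$-complement theorem then produces a normal subgroup of the simple group $G_0$, a contradiction; if instead $H \cap G_0 = 1$, then $G = G_0 H$ with $H \cong G/G_0$ of prime order $p$, and maximality of $H$ in $G$ requires $G_0$ to possess no proper nontrivial $H$-invariant subgroup, a condition easily ruled out for any nonabelian simple $G_0$. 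Since the paper treats this fact as standard, in the actual write-up I would simply cite it rather than reproving it.
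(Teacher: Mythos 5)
Your argument is correct and is exactly what the paper intends: the lemma is an immediate consequence of the reformulation of extreme primitivity (namely that $H \cap H^x$ is maximal in $H$ for all $x \in G \setminus H$) combined with the standard fact, cited in the sentence preceding the lemma, that no maximal subgroup of an almost simple group has prime order. Your optional sketch of that cited fact is slightly optimistic in the case $H \cap G_0 = 1$ (ruling out the fixed-point-free possibility $C_{G_0}(H) = 1$ requires Thompson's theorem, so it is not quite ``easy''), but since both you and the paper simply cite the fact rather than reprove it, this does not affect the validity of the proof.
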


As discussed in Section \ref{s:intro}, there is an extensive literature on bases for almost simple primitive groups and there has been a special interest in determining the groups with $b(G,H)=2$. For the exceptional groups of Lie type, the main references are \cite{Bur18,BLS}. In particular, the main theorem of \cite{BLS} states that $b(G,H) \leqs 6$ if $G$ is any almost simple primitive group with socle of exceptional type. This is a key step in the proof of an influential conjecture of Cameron on bases for so-called \emph{non-standard} almost simple primitive groups (see \cite{BLS} and the references therein). 

Probabilistic methods play a key role in the proof of Cameron's base size conjecture. This approach arises from an elementary observation due to Liebeck and Shalev (see the proof of Theorem 1.3 in \cite{LSh99}). Fix a positive integer $c$ and notice that a $c$-tuple of points in $\O$ is \emph{not} a base for $G$ if and only if there exists an element $x \in G$ of prime order fixing each element in the tuple. Now the probability that a given element $x \in G$ fixes a uniformly random element in $\O$ is given by the expression
\[
{\rm fpr}(x,G/H) = \frac{|C_{\O}(x)|}{|\O|} = \frac{|x^G \cap H|}{|x^G|},
\]
which is the \emph{fixed point ratio} of $x$ (here $C_{\O}(x)$ is the set of fixed points of $x$ on $\O$). It follows easily that if $\mathcal{Q}(G,c)$ is the probability that a randomly chosen $c$-tuple in $\O$ is not a base then
\[
\mathcal{Q}(G,c) \leqs \sum_{x \in \mathcal{P}} {\rm fpr}(x,G/H)^c,
\]
where $\mathcal{P}$ is the set of elements of prime order in $G$. In particular, if this upper bound is less than $1$, then $b(G,H) \leqs c$ and thus upper bounds on fixed point ratios can be used to bound the base size. For exceptional groups of Lie type, which are the main focus of this paper, we refer the reader to \cite{LLS2} for a systematic study of fixed point ratios in this setting.

As a special case, we record the following lemma.

\begin{lem}\label{l:base}
Let $x_1, \ldots,x_k$ be representatives of the $G$-classes of elements of prime order in $H$ and set 
\begin{equation}\label{e:QGH}
\mathcal{Q}(G,H) = \sum_{i=1}^{k} |x_i^{G}| \cdot \left(\frac{|x_i^{G} \cap H|}{|x_i^{G}|}\right)^2.
\end{equation}
If $\mathcal{Q}(G,H)<1$ then $b(G,H)=2$.	
\end{lem}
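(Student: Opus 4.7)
The plan is to execute the Liebeck--Shalev probabilistic argument outlined in the paragraphs preceding the lemma, specialised to $c = 2$. Since $G$ is transitive on $\Omega$, I may fix a point $\alpha \in \Omega$ with $G_{\alpha} = H$ and seek a single $\beta \in \Omega$ with $H \cap G_{\beta} = 1$; any such $\beta$ produces a base $\{\alpha,\beta\}$, forcing $b(G,H) \leqs 2$, and the reverse inequality is automatic since $H \neq 1$. My strategy is to bound above the probability, over a uniformly random $\beta \in \Omega$, of the event that $H \cap G_{\beta}$ is nontrivial, and to show that this probability is strictly less than $1$.

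The key reduction is to prime-order elements: if $H \cap G_{\beta} \neq 1$ then, by passing to a suitable power, this intersection contains an element $g \in H$ of prime order with $g\beta = \beta$. A union bound over all such $g$ in $H$ then yields
\[
\mathrm{Prob}\bigl(H \cap G_{\beta} \neq 1\bigr) \leqs \sum_{g \in H,\, |g|\text{ prime}} \frac{|C_{\Omega}(g)|}{|\Omega|} = \sum_{g \in H,\, |g|\text{ prime}} {\rm fpr}(g,G/H).
\]
The next step is to group this sum by $G$-conjugacy class: since ${\rm fpr}(g,G/H) = |g^G \cap H|/|g^G|$ depends only on the $G$-class of $g$, each class $x_i^G$ contributes $|x_i^G \cap H|$ summands, each equal to $|x_i^G \cap H|/|x_i^G|$. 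Collecting terms gives $\sum_{i=1}^{k} |x_i^G \cap H|^2/|x_i^G|$, which is exactly the quantity $\mathcal{Q}(G,H)$ defined in \eqref{e:QGH}.

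The hypothesis $\mathcal{Q}(G,H) < 1$ therefore forces the failure probability to be strictly less than $1$, so at least one $\beta \in \Omega$ must satisfy $H \cap G_{\beta} = 1$, and the conclusion follows. There is no real obstacle in this argument; the only mildly subtle step is the passage to prime-order elements, achieved by raising an arbitrary nontrivial $g \in H \cap G_{\beta}$ to the power $|g|/p$ for a prime $p$ dividing $|g|$, which is entirely standard.
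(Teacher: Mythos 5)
Your proof is correct and follows essentially the same probabilistic argument the paper sketches in the paragraphs before the lemma (the Liebeck–Shalev expected-value/union-bound technique with $c=2$). The one cosmetic difference — you fix the first point $\alpha$ using transitivity and randomise only the second, rather than averaging over a uniformly random $2$-tuple — changes nothing of substance, since the two sums $\sum_{g \in H,\,|g|\text{ prime}} {\rm fpr}(g,G/H)$ and $\sum_{x \in \mathcal{P}} {\rm fpr}(x,G/H)^2$ both collapse to $\sum_i |x_i^G \cap H|^2/|x_i^G| = \mathcal{Q}(G,H)$ once grouped by $G$-class.
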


The next result is \cite[Lemma 2.1]{Bur07}, which is a useful tool for bounding $\mathcal{Q}(G,H)$.

\begin{lem}\label{l:calc}
Suppose $x_{1}, \ldots, x_{m}$ represent distinct $G$-classes such that $\sum_{i}{|x_{i}^{G}\cap H|}\leqs A$ and $|x_{i}^{G}|\geqs B$ for all $i$. Then 
$$\sum_{i=1}^{m} |x_i^{G}| \cdot \left(\frac{|x_i^{G} \cap H|}{|x_i^{G}|}\right)^2 \leqs A^2/B.$$
\end{lem}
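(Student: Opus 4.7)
The plan is to give a direct elementary estimate. Write $a_i = |x_i^G \cap H|$ and $b_i = |x_i^G|$, so the hypotheses read $\sum_{i=1}^m a_i \leqs A$ and $b_i \geqs B$ for each $i$, and the quantity to bound is $\sum_{i=1}^m a_i^2/b_i$.

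First I would replace each denominator by its lower bound: since $b_i \geqs B > 0$, we have $1/b_i \leqs 1/B$, and therefore
\[
\sum_{i=1}^m \frac{a_i^2}{b_i} \;\leqs\; \frac{1}{B}\sum_{i=1}^m a_i^2.
\]
Next I would use non-negativity of the $a_i$ (they are cardinalities of class intersections) to pass from a sum of squares to the square of a sum. Expanding $\bigl(\sum_i a_i\bigr)^2 = \sum_i a_i^2 + 2\sum_{i<j} a_i a_j$ and discarding the non-negative cross terms gives
\[
\sum_{i=1}^m a_i^2 \;\leqs\; \Bigl(\sum_{i=1}^m a_i\Bigr)^2 \;\leqs\; A^2,
\]
where the final inequality uses the hypothesis $\sum_i a_i \leqs A$. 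Combining the two displayed inequalities yields the bound $\sum_i a_i^2/b_i \leqs A^2/B$, as required.

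There is no real obstacle here; the whole content of the lemma is the two estimates above, and the proof is essentially a one-line calculation. The only point worth flagging is that the step $\sum a_i^2 \leqs (\sum a_i)^2$ genuinely requires $a_i \geqs 0$, which is automatic in the present setting since the $a_i$ count elements in an intersection.
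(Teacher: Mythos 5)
Your proof is correct and is the standard elementary argument: bound each $1/b_i$ by $1/B$, then use $\sum a_i^2 \leqs (\sum a_i)^2$ for non-negative $a_i$. The paper does not reprove this lemma (it is quoted from \cite[Lemma 2.1]{Bur07}), but the argument given there is the same one-line calculation, so your proposal matches the intended proof.
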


\begin{rem}\label{r:strategy}
We have now introduced several methods for showing that a given permutation group $G$ with point stabilizer $H = G_{\a}$ is not extremely primitive. Let us briefly summarise how we will apply these techniques in the proof of Theorem \ref{t:main}.

\begin{itemize}\addtolength{\itemsep}{0.2\baselineskip}
\item[{\rm (i)}] First we will seek to apply Lemma \ref{l:structure}, noting that if the structure of $H$ is incompatible, then it can be quickly eliminated. The remaining groups will then be partitioned into two collections, according to the order of $H$. If $|H|$ is small, which will typically mean $|H| \ll |G|^{1/2}$, then it is often possible to force $b(G,H) = 2$ via Lemma \ref{l:base} and a careful analysis of the conjugacy classes of elements of prime order in $H$ (in particular, we are interested in the corresponding $G$-classes of these elements). 

\item[{\rm (ii)}] For the remaining groups, it may be possible to apply Lemma \ref{l:char} in some special cases; typically, this will depend on whether or not we can access the relevant character tables in \cite{GAPCTL}. But in general, our main aim will be to identify a subgroup $K$ of $H_0$ with the desired properties in Lemma \ref{l:simple}. To do this, it will often be convenient to work first with the ambient algebraic groups and then descend to the relevant finite groups by taking the fixed points of a suitable Steinberg endomorphism.

\item[{\rm (iii)}] In order to handle some special cases where the underlying field $\mathbb{F}_q$ is small, we will sometimes use computational methods. See Section \ref{ss:comp} for further details.
\end{itemize}
\end{rem}

\subsection{Conjugacy classes}\label{ss:conj}

Let $G$ be an almost simple group with socle $G_0$, an exceptional group of Lie type over $\mathbb{F}_q$, where $q=p^f$ with $p$ prime. Write $G_0 = (\bar{G}_{\s})'$, where $\bar{G}$ is a simple algebraic group of adjoint type over the algebraic closure $\bar{\mathbb{F}}_p$ and $\s$ is an appropriate Steinberg endomorphism of $\bar{G}$. Recall that $\bar{G}_{\s} = {\rm Inndiag}(G_0)$ is the group of inner-diagonal automorphisms of $G_0$. 

In order to effectively apply the base-two criterion discussed in Section \ref{ss:base2}, we will need detailed information on the centralizers and conjugacy classes of elements of prime order in $G$. Here there is an extensive literature to draw upon and our primary sources will be \cite{LieS} for an in-depth treatment of unipotent classes and \cite{Lubeck} for information on semisimple classes. The centralizers of prime order graph, field and graph-field automorphisms of exceptional groups are described in \cite[Proposition 1.1]{LLS}. We refer the reader to \cite[Chapter 3]{BG} for a convenient source of information on conjugacy classes in the finite classical groups. 

\begin{rem}\label{r:aut}
The terminology we adopt for automorphisms in this paper is fairly  standard, although there are differences in the literature. In particular, we will refer to graph automorphisms of $F_4(q)$ and $G_2(q)$ when $p = 2$ and $3$, respectively. This is consistent with \cite{Carter}, for example, but not \cite{GLS}, where the term graph-field automorphism is preferred.
\end{rem}

The next result gives lower bounds on the sizes of conjugacy classes in $G$, according to the type of elements in the class. Note that in Table \ref{tab:cbds}, we set $\a=(q-1)/q$, $\b=(2,q-1)$, $\gamma=(3,q-1)$ and $\delta=(3,q+1)$.

\begin{table}
\begin{center}
\[
\begin{array}{ll}\hline
x_1 & \; \mbox{ Long root element} \\
x_2 & \; \mbox{ Unipotent element of order $p$, not a long root element} \\
& \; \mbox{ (nor a short root element if $(\bar{G},p) = (F_4,2)$ or $(G_2,3)$)} \\
x_3 & \; \mbox{ Semisimple involution} \\
x_4 & \; \mbox{ Semisimple element of odd prime order} \\
x_5 & \; \mbox{ Prime order element in $G \setminus {\rm Inndiag}(G_0)$} \\ \hline
\end{array}
\]
\caption{The elements $x_i \in G$ in Proposition \ref{p:bounds}}
\label{tab:elts}
\end{center}
\end{table}

\begin{prop}\label{p:bounds}
Let $G$ be an almost simple group with socle $G_0$, an exceptional group of Lie type over $\mathbb{F}_q$, where $q=p^f$ with $p$ prime. Let $x_i \in G$ be an element of prime order, as described in Table \ref{tab:elts}. Then $|x_i^G| > \ell_i$, where the $\ell_i$ are given in Table \ref{tab:cbds}.
\end{prop}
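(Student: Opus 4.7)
The plan is to proceed case by case through the possible socles $G_0 \in \{E_8(q), E_7(q), E_6^{\e}(q), F_4(q), G_2(q)\}$ together with the twisted small-rank groups $^2F_4(q), {}^2B_2(q), {}^2G_2(q)$, and for each type $x_i$ in Table \ref{tab:elts} identify the class of that type in $G$ with the largest centralizer. A lower bound $\ell_i$ for $|x_i^G|$ then follows from $|x_i^G| = |G : C_G(x_i)|$ together with standard order formulas for $|\bar G_{\s}|$ and the index $[G : \bar{G}_{\s} \cap G]$. The key point is that it suffices to bound $|C_{\bar G_{\s}}(x)|$ from above by the order of a suitable finite reductive (or algebraic) subgroup of $\bar G$, and then read off the dominant power of $q$.

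For unipotent elements, I would appeal to the complete classification of unipotent classes in exceptional algebraic groups and the centralizer data compiled by Liebeck and Seitz \cite{LieS}. Long root elements have the largest unipotent centralizer, of shape $q^{1+d} \cdot L$ for an explicit Levi factor $L$ (e.g.\ $L = E_7$ in $E_8$, $L = D_6$ in $E_7$, etc.), which yields $|x_1^G|$ on the nose. For $x_2$, after excluding long roots (and short roots in the two small-characteristic exceptions where $(\bar G, p) = (F_4,2)$ or $(G_2,3)$), the remaining unipotent classes all have strictly smaller centralizers, and one takes the minimum over the (finitely many) next-largest classes in each group; here the short root class in the remaining types gives the worst case and determines the leading term of $\ell_2$.

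For semisimple elements I would use L\"ubeck's tables \cite{Lubeck} of centralizers of semisimple classes in exceptional groups. Involutions $x_3$ in $\bar G_{\s}$ have centralizer equal to a $\s$-stable subsystem subgroup of maximal rank, and the possibilities are enumerated (e.g.\ $D_8(q)$ or $(A_1 E_7)(q)$ in $E_8$; types $A_1 D_6$, $E_6 T_1$, etc.\ in $E_7$). The class with largest centralizer gives $\ell_3$, multiplied by the factor $\b = (2,q-1)$ coming from the index $[\bar G_{\s} : G_0]$ where relevant. For odd-prime semisimple $x_4$, the centralizer $C_{\bar G}(x_4)^\circ$ is a proper connected reductive subgroup of maximal rank strictly smaller than $C_{\bar G}(x_3)^\circ$ for an involution, so one simply takes the sharpest such bound across the classification; the factors $\gamma, \delta$ appear from the cases where a semisimple class of order $3$ has centralizer factoring through $(q-1)$ or $(q+1)$.

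For outer automorphisms $x_5$ I would quote Proposition~1.1 of \cite{LLS}, which gives $C_{\bar G_{\s}}(x_5)$ explicitly when $x_5$ is a prime-order graph, field or graph-field automorphism: the centralizer is (up to small index) of the form $\bar G_{\s^{1/r}}$ in the field case, or a smaller exceptional/classical group over $q$ (resp.\ $q^{1/2}$) in the graph(-field) case. Each such centralizer is bounded above by $q^c$ for an explicit exponent $c$ strictly less than $\dim \bar G$, yielding $\ell_5$. The main obstacle is simply the bookkeeping: one must make sure the bound $\ell_i$ claimed in Table \ref{tab:cbds} is attained (or improved upon) by every class of the relevant type across all the listed exceptional types and for all small $q$; this is done by a finite case-check, with the twisted types $^2E_6, {}^2F_4, {}^2B_2, {}^2G_2$ handled by replacing $q-1$ factors with $q+1$ in the appropriate Levi factors (hence the appearance of $\delta = (3,q+1)$). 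No new idea is required beyond careful extraction of centralizer orders from \cite{LLS, LieS, Lubeck} and the Steinberg endomorphism formula $|\bar G_{\s}| = q^{\dim \bar G} \prod_i (q^{d_i} - \e_i)$.
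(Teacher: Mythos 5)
Your high-level plan coincides with the paper's own one-line justification: the proposition is verified by direct extraction of centralizer orders from \cite{LieS}, \cite{Lubeck} and \cite[Proposition~1.1]{LLS}, case by case through the exceptional types, and you correctly identify which class of each type $x_i$ should dominate. However, two of your general claims are false and would lead you astray if used as stated. First, the assertion that $C_{\bar G}(x_4)^\circ$ for an odd-prime semisimple element is \emph{always} of smaller dimension than $C_{\bar G}(x_3)^\circ$ for an involution fails for $G_2$: there an involution has connected centralizer $A_1\tilde A_1$ (dimension $6$), whereas an order-$3$ semisimple element can have connected centralizer $A_2$ (dimension $8$), so the semisimple odd-prime class can actually be smaller than the involution class; this is reflected in Table~\ref{tab:cbds}, where $\ell_4 = \a q^6 < q^8 = \ell_3$ for $G_2(q)$. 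Your claim is correct for $F_4$, $E_6^{\e}$, $E_7$, $E_8$, but you must treat $G_2$ separately.

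Second, your attribution of the coprimality factors is incorrect. You place $\b = (2,q-1)$ in the involution row and $\gamma,\delta$ in the odd-order semisimple row, but in Table~\ref{tab:cbds} all three appear only in the $\ell_5$ column (outer automorphisms): $\b$ arises in the $E_7$ field-automorphism bound $\frac{1}{\b}q^{133/2}$ because $|{\rm Inndiag}(E_7(q)):E_7(q)| = (2,q-1)$, while $\gamma=(3,q-1)$ and $\delta=(3,q+1)$ arise in the $E_6$ and ${}^2E_6$ graph-automorphism bounds $\frac{\a}{\gamma}q^{26}$ and $\frac{1}{\delta}q^{26}$ for the analogous reason, since an $F_4$-type graph automorphism has its class size affected by the diagonal subgroup. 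Getting these factors attached to the wrong element types would produce incorrect constants in front of the leading power of $q$. These are not fatal to the method—the computation is a finite check—but as written the proposal would yield bounds that are wrong in specific cases, so you should redo the $G_2$ semisimple case and the outer-automorphism bookkeeping for $E_6^{\e}$ and $E_7$ with the correct factor placement.
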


\renewcommand{\arraystretch}{1.2}
\begin{table}
\begin{center}
$$\begin{array}{llllll}\hline
G_0 & \ell_1 & \ell_2 & \ell_3 & \ell_4 & \ell_5 \\ \hline
E_8(q) & q^{58} & q^{92} & q^{112} & \a q^{114} & q^{124} \\
E_7(q) & q^{34} & q^{52} & \frac{\a}{2}q^{54} & \a q^{54} & \frac{1}{\b}q^{133/2} \\ 
E_6(q) & q^{22} & q^{32} & q^{32} & q^{32} & \frac{\a}{\gamma}q^{26}\\
{}^2E_6(q) & \a q^{22} & \a q^{32} & \a q^{32} & \a q^{32} & \frac{1}{\delta}q^{26} \\
F_4(q) & q^{16} & \frac{\a}{2}q^{22} & q^{16} & \a q^{30} & q^{26} \\ 
G_2(q) & \a q^6 & \a q^8 & q^8 & \a q^6 & q^7 \\ 
{}^3D_4(q) & \a q^{10} & q^{16} & q^{16} & \a q^{18} & q^{14} \\
{}^2F_4(q),\, q \geqs 2^3 & \a q^{11} & \a q^{14} & - & \a q^{18} & q^{52/3} \\
{}^2G_2(q),\, q \geqs 3^3 & \a q^4 & \frac{\a}{2}q^5 & \a q^4 & \frac{1}{2}q^6 & q^{14/3} \\ 
{}^2B_2(q) & \a q^3 & - & - & \frac{1}{2}q^4 & q^{10/3} \\ \hline
\end{array}$$
\caption{The lower bounds $|x_i^G| > \ell_i$ in Proposition \ref{p:bounds}}
\label{tab:cbds}
\end{center}
\end{table}
\renewcommand{\arraystretch}{1}

\begin{proof}
This is an entirely straightforward computation, using the available information on conjugacy classes and centralizers in \cite{LieS, Lubeck} and \cite[Proposition 1.1]{LLS}. 

For example, suppose $\bar{G}_{\s} = G_0 = F_4(q)$ and $q$ is odd, so 
\[
|G_0| = q^{24}(q^2-1)(q^6-1)(q^8-1)(q^{12}-1).
\]
If $x \in G_0$ is a long root element, then $x$ is contained in the unipotent class labelled $A_1$ in \cite[Table 22.2.4]{LieS} and we read off
\[
|x^G| = \frac{|G_0|}{q^{15}|{\rm Sp}_{6}(q)|} = (q^4+1)(q^{12}-1)>q^{16}=\ell_1.
\]
The next smallest unipotent class is labelled $\tilde{A}_1$ in $\bar{G}$ (these are the short root elements); since $p$ is odd, this $\bar{G}$-class splits into two $\bar{G}_{\s}$-classes and we have
\[
|x^G| = \frac{|G_0|}{2q^{15}|{\rm SL}_{4}^{\e}(q)|} = \frac{1}{2}q^{3}(q^3+\e)(q^4+1)(q^{12}-1) > \frac{1}{2}(q-1)q^{21} = \ell_2.
\]
Now assume $x \in G_0$ is a semisimple involution. There are two classes of involutions in $G_0$, with $C_{\bar{G}}(x) = A_1C_3$ or $B_4$, and we see that $|x^G|$ is minimal when $C_{\bar{G}}(x) = B_4$ (\cite[Table 4.5.1]{GLS} is an excellent source of information on semisimple involutions). Therefore, 
\[
|x^G| \geqs \frac{|G_0|}{|{\rm SO}_{9}(q)|} = q^8(q^8+q^4+1) > q^{16} = \ell_3.
\]
If $x \in G_0$ is a semisimple element of odd order then by inspecting \cite{Lubeck} we deduce that $|x^G|$ is minimal when $C_{\bar{G}}(x) = B_3T_1$ or $C_3T_1$ (here $T_1$ denotes a $1$-dimensional torus). This yields
\[
|x^G| \geqs \frac{|G_0|}{|{\rm SO}_{7}(q)|(q+1)} > (q-1)q^{29} = \ell_4.
\]
Finally, if $x \in G \setminus G_0$ has prime order $r$, then $x$ is a field automorphism, $q=q_0^r$ and $C_{G_0}(x) = F_4(q_0)$, so $|x^G|$ is minimal when $r=2$ and we get
\[
|x^G| \geqs \frac{|G_0|}{|F_4(q^{1/2})|} = q^{12}(q+1)(q^3+1)(q^4+1)(q^6+1)>q^{26}=\ell_5.
\]
The other groups are handled in a similar fashion and we omit the details.
\end{proof}

In almost every case, we see that $|x^G|$ is minimal when $x$ is a long root element (or a short root element when $(\bar{G},p) = (F_4,2)$ or $(G_2,3)$). Therefore, it will be important to know when certain maximal subgroups of $G$ contain such elements. With this in mind, we present the following result for algebraic groups, which is a simplified version of \cite[Proposition 1.13]{LLS}.

\begin{prop}\label{p:root}
Let $\bar{G}$ be a simple algebraic group, let $\bar{M}$ be a connected reductive subgroup of $\bar{G}$ and assume $u \in N_{\bar{G}}(\bar{M})$ is a long root element.
\begin{itemize}\addtolength{\itemsep}{0.2\baselineskip}
\item[{\rm (i)}] If $u \in \bar{M}$ and $\bar{M}$ is semisimple, then $u$ is a root element in one of the simple factors of $\bar{M}$.
\item[{\rm (ii)}] If $u \not\in \bar{M}$, then $p=2$ and $\bar{M} = \bar{X}\bar{Y}$ is a commuting product such that $u$ centralizes $\bar{X}$, and $\bar{Y}$ is either a simple factor of type $D_n$ or a $1$-dimensional torus.
\end{itemize}
\end{prop}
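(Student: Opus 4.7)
My plan is to decompose $\bar{M}$ as an almost direct product of its simple factors together with its connected center, and then analyze how the long root element $u$ can interact with this decomposition. Throughout, the main leverage comes from the fact that long root elements form the unipotent class of smallest positive dimension in $\bar{G}$ and have a very constrained Jordan shape in the adjoint representation.

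For part (i), write $\bar{M} = \bar{M}_1 \cdots \bar{M}_k$ as an almost direct product of simple factors. Since $u$ is unipotent and $\bar{M}$ is semisimple, $u$ decomposes (up to the central isogeny) as a product $u = u_1 \cdots u_k$ of commuting unipotents $u_i \in \bar{M}_i$. The first step is to show that at most one $u_i$ is non-trivial. The key dimension input is that centralizers multiply across commuting products, so
\[
\dim u^{\bar{G}} \geqs \sum_i \dim u_i^{\bar{M}_i}.
\]
If two $u_i$ were simultaneously non-trivial, this sum would strictly exceed the dimension of the minimal unipotent orbit of $\bar{G}$, contradicting the hypothesis on $u$. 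With $u$ then localized to a single factor $\bar{M}_i$, the same minimality principle applied internally forces $u_i$ to be a root element of $\bar{M}_i$; the long/short distinction is settled by comparing the resulting class dimension in $\bar{G}$ against the long root class.

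For part (ii), the hypothesis $u \notin \bar{M}$ says that $u$ acts on $\bar{M}$ by conjugation as a non-inner automorphism of order dividing $p$. A unipotent element cannot induce a field automorphism, so the action on each simple factor is either inner or is induced by a non-trivial graph automorphism of the Dynkin diagram. The plan is to classify these possibilities, absorbing the factors on which $u$ acts inner-trivially into $\bar{X}$, and to cross-reference the resulting outer action with the constraint that $u$ is a long root element of $\bar{G}$. Graph automorphisms of $A_n$ for $n \geqs 2$, of $E_6$, and the triality on $D_4$ would be ruled out by comparing their fixed point structure in the adjoint representation with the Jordan shape of a long root element: in each case the induced Jordan type would force $u$ into a larger unipotent class than the minimal one. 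This leaves precisely graph involutions on type $D_n$, which in turn forces $p = 2$. The degenerate case where $\bar{Y}$ is a $1$-dimensional torus arises when $u$ inverts a subtorus; inversion has order $2$, which again forces $p = 2$.

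The hardest step will be the case analysis in part (ii), specifically verifying that the only simple type on which $u$ can act non-trivially is $D_n$ and pinning down that $\bar{X}$ absorbs everything else. This involves comparing Jordan types of diagram involutions in characteristic $2$ against the rigid Jordan shape of a long root element, and using the exceptional containments peculiar to characteristic $2$ (notably $D_n < B_n$ and related inclusions inside $F_4$ and the $E_n$ series) that permit a long root element of the ambient group to act as an outer involution on a $D_n$ subsystem. In practice I would lean on the Liebeck--Seitz classification of maximal connected reductive subgroups of exceptional algebraic groups to enumerate the relevant inclusions and to rule out all cases besides those stated.
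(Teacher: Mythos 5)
The paper itself contains no proof of this statement: it is cited directly as a simplified form of Proposition~1.13 in Lawther--Liebeck--Seitz (the reference \cite{LLS}). So there is no internal proof to compare against, and I assess your argument on its own merits.

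Your argument for part~(i) has a genuine gap. The inequality $\dim u^{\bar{G}} \geqs \sum_i \dim u_i^{\bar{M}_i}$ is true, since the right-hand side is $\dim u^{\bar{M}}$ (up to the center) and $u^{\bar{M}} \subseteq u^{\bar{G}}$. But it does not follow that two nontrivial components force this sum to exceed the dimension of the minimal unipotent orbit of $\bar{G}$. The right-hand side is a class dimension measured inside $\bar{M}$, and $\bar{M}$ can be far smaller than $\bar{G}$. Take $\bar{M} = A_1^8 < E_8$ and $u = u_1u_2$ a product of root elements in two $A_1$ factors: then $\sum_i \dim u_i^{\bar{M}_i} = 4$, while the minimal nilpotent orbit in $E_8$ has dimension $58$. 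The inequality bounds $\dim u^{\bar{G}}$ from below by something that can be tiny, so no contradiction results. The same objection applies to your ``same minimality principle applied internally'': being a long root element of $\bar{G}$ is a statement about the $\bar{G}$-class of $u$, and it is exactly what has to be transferred to the $\bar{M}_i$-classes; a class-dimension comparison does not do this.

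What actually does the work is a local, structural characterization of long root elements rather than a global dimension count. In good characteristic, a long root element $u=x_\alpha(1)$ satisfies $(\operatorname{Ad}(u)-1)^3 = 0$ and the image of $(\operatorname{Ad}(u)-1)^2$ on $\mathcal{L}(\bar{G})$ is at most $1$-dimensional (because $\alpha$-strings through roots $\beta \neq \pm\alpha$ have length at most two). Since each $\mathcal{L}(\bar{M}_i)$ is $u$-invariant with $u$ acting there as $u_i$, the factors contribute independently to the Jordan structure of $\operatorname{Ad}(u)$, and this rigidity is what confines $u$ to a single simple factor. The Liebeck--Seitz framework (cf.\ \cite{LS94}, on which \cite[Proposition~1.13]{LLS} rests) phrases this as a commutator criterion on the long root class. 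Note also that in bad characteristic, ``minimal orbit dimension'' is not even a clean characterization of ``long root element'' (short root elements have the same orbit dimension in $(F_4,2)$ and $(G_2,3)$, and the proposition's own part~(ii) has a $p=2$ exception), so any argument must use more than orbit dimension.

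Part~(ii) is a plausible outline but inherits the same weakness: ruling out graph automorphisms of types $A_n$, $E_6$ and triality on $D_4$ again requires the refined Jordan-shape or commutator characterization, not a dimension bound. You should also justify why $u$ \emph{centralizes} $\bar{X}$ rather than merely acting by an inner automorphism on those factors; your sketch only classifies the image of $u$ in the outer automorphism group of each factor, which is weaker than what the conclusion asserts.
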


In the special case where $\bar{M}$ is a maximal torus of $\bar{G}$, we get the following corollary (this is based on an observation in the proof of \cite[Lemma 4.3]{LLS2}). 

\begin{cor}\label{c:root}
Let $\bar{G}$ be a simple algebraic group and let $\s$ be a Steinberg endomorphism of $\bar{G}$. Let $G = \bar{G}_{\s}$ and let $H = N_G(\bar{T}_{\s})$, where $\bar{T}$ is a $\s$-stable maximal torus of $\bar{G}$. If $x \in H$ is a root element in $G$, then $p=2$ and
\[
|x^G \cap H| \leqs |\Sigma^{+}(\bar{G})|\,|\bar{T}_{\s}|,
\]
where $\Sigma^{+}(\bar{G})$ is the set of positive roots in the root system of $\bar{G}$.
\end{cor}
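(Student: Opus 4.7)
The plan is to apply Proposition \ref{p:root} with $\bar{M} = \bar{T}$, viewing $x$ as an element of $N_{\bar{G}}(\bar{T})$. A maximal torus is abelian and consists entirely of semisimple elements, so it contains no nontrivial unipotent element; in particular, the unipotent root element $x$ cannot lie in $\bar{T}$ itself. We are therefore forced into case (ii) of Proposition \ref{p:root}, which immediately yields $p=2$ and a decomposition $\bar{T} = \bar{X}\bar{Y}$ into commuting closed subgroups with $x$ centralizing $\bar{X}$ and $\bar{Y}$ a one-dimensional torus (the alternative that $\bar{Y}$ is of type $D_n$ is vacuous here, since $\bar{T}$ has no simple factors).

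Next I translate this into the language of the Weyl group $W = N_{\bar{G}}(\bar{T})/\bar{T}$. Because $\bar{T}_\sigma$ is Zariski dense in $\bar{T}$, any element of $H = N_G(\bar{T}_\sigma)$ normalizes $\bar{T}$, and the quotient map $N_{\bar{G}}(\bar{T}) \to W$ restricts to a homomorphism $\pi\colon H \to W$ with kernel $H \cap \bar{T} = \bar{T}_\sigma$. Since $x$ centralizes the codimension-one subtorus $\bar{X}$ but $x \notin \bar{T}$, the image $\pi(x)$ is a nontrivial element of $W$ which fixes $\bar{X}$ pointwise, and therefore acts on the character lattice $X(\bar{T}) \otimes \mathbb{R}$ as a reflection.

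To finish, recall that the reflections in $W$ are in bijection with pairs $\{\alpha,-\alpha\}$ of roots in $\Sigma(\bar{G})$, so there are exactly $|\Sigma^{+}(\bar{G})|$ of them. Each reflection has at most $|\ker\pi| = |\bar{T}_\sigma|$ preimages in $H$ under $\pi$, so the set of elements of $H$ mapping to a reflection has size at most $|\Sigma^{+}(\bar{G})|\cdot|\bar{T}_\sigma|$. Since $x^G \cap H$ is contained in this set by the previous paragraph, the claimed bound follows.

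The argument is essentially a direct unpacking of Proposition \ref{p:root}, so no real obstacle is anticipated; the one point meriting a brief justification is the identification $H \leqs N_{\bar{G}}(\bar{T})$, which I would handle via the Zariski density of $\bar{T}_\sigma$ in $\bar{T}$ (this is automatic in all cases of interest, since our tori are obtained from $\sigma$-stable maximal tori of the algebraic group and $|\bar{T}_\sigma|$ is bounded below by a polynomial in $q$ that forces density).
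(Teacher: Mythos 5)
Your argument takes essentially the same route as the paper: apply Proposition \ref{p:root}(ii) with $\bar{M}=\bar{T}$ to force $p=2$ and a codimension-one fixed subtorus, identify the image of $x$ in $W(\bar{G}) = N_{\bar{G}}(\bar{T})/\bar{T}$ as a reflection, and then count reflections times the fibre size $|\bar{T}_\s|$. The main steps and the conclusion match the paper's proof.

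One point you should fix: the assertion that $\bar{T}_\s$ is Zariski dense in $\bar{T}$ is false, since $\bar{T}_\s$ is a finite (hence Zariski closed) subset of the positive-dimensional irreducible variety $\bar{T}$, and no proper closed subset of an irreducible variety is dense. The inclusion $H = N_G(\bar{T}_\s) \leqs N_{\bar{G}}(\bar{T})$ is true, but the correct justification is the standard identity $N_{\bar{G}_\s}(\bar{T}_\s) = N_{\bar{G}}(\bar{T})_\s$ for a $\s$-stable maximal torus $\bar{T}$ (see, e.g., Digne--Michel or Malle--Testerman); this relies on Lang's theorem (essentially, that $C_{\bar{G}}(\bar{T}_\s) = \bar{T}$), not on any density of $\bar{T}_\s$. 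The paper simply takes this inclusion for granted without comment, so your instinct to supply a justification is reasonable, but the reason you offer does not hold up.
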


\begin{proof}
By Proposition \ref{p:root}(ii), if $w \in N_{\bar{G}}(\bar{T})$ is a root element, then $p=2$ and $w$ centralizes a subtorus in $\bar{T}$ of codimension $1$. In particular, $w$ corresponds to a reflection in the Weyl group $W(\bar{G}) = N_{\bar{G}}(\bar{T})/\bar{T}$ and the result follows since there are precisely $|\Sigma^{+}(\bar{G})|$ reflections in $W(\bar{G})$.
\end{proof}

The following result is \cite[Proposition 1.3]{LLS2}.

\begin{prop}\label{p:i23}
Let $G_0 = (\bar{G}_{\s})'$ be a finite simple group of Lie type and let $N = |\Sigma^{+}(\bar{G})|$ be the number of positive roots in the root system of $\bar{G}$. Set 
\[
N_2 = \frac{1}{\a}(\dim \bar{G} - N), \;\; N_3 = \frac{1}{\a}(\dim \bar{G} - \frac{2}{3}N),
\]
where $\a=2$ if $G_0 = {}^2F_4(q)$, ${}^2G_2(q)$ or ${}^2B_2(q)$, otherwise $\a=1$. 
\begin{itemize}\addtolength{\itemsep}{0.2\baselineskip}
\item[{\rm (i)}] For $r \in \{2,3\}$, we have $i_r({\rm Aut}(G_0)) < 2(q+1)q^{N_r-1}$.
\item[{\rm (ii)}] The number of unipotent elements in $\bar{G}_{\s}$ is equal to $q^{2N/\a}$.
\end{itemize}
\end{prop}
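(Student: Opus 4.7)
The plan is to handle parts (ii) and (i) separately, with (ii) being essentially classical and (i) requiring a careful case analysis.

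For part (ii), I would invoke Steinberg's theorem on the number of unipotent elements in a finite group of Lie type. The variety of unipotent elements of $\bar{G}$ is irreducible of dimension $\dim \bar{G} - \operatorname{rank}\bar{G} = 2N$, and the number of $\mathbb{F}_q$-rational points equals $q^{2N}$ in the untwisted case. For the Suzuki, Ree and ${}^2F_4$ groups, where $\alpha = 2$, the Steinberg endomorphism squares to a (power of) Frobenius and the correct Lang--Steinberg count produces $q^{2N/\alpha}$; the values of $2N/\alpha$ one obtains for $({}^2B_2, {}^2G_2, {}^2F_4)$ recover the well-known counts $q^3$, $q^7$, $q^{20}$.

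For part (i), I would partition the elements of prime order $r \in \{2,3\}$ in $\operatorname{Aut}(G_0)$ into three families and bound the contribution of each to $i_r(\operatorname{Aut}(G_0))$ by a sum of conjugacy class sizes $|x^G|=|G|/|C_G(x)|$. The three families are: (a) unipotent elements of order $r$ (nonempty only if $r=p$), where centralizer data from \cite{LieS} give lower bounds on $|C_{\bar{G}_\sigma}(x)|$ that are sharpest for long (or short, in the $(F_4,2)$ and $(G_2,3)$ cases) root elements; (b) semisimple elements of order $r$, handled via \cite[Table 4.5.1]{GLS} for involutions and the semisimple-class tables in \cite{Lubeck} for $r=3$; and (c) outer elements of order $r$, namely field, graph and graph-field automorphisms, whose centralizers in $G_0$ are recorded in \cite[Proposition 1.1]{LLS}. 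In each case the exponent $N_r = (\dim\bar{G} - (2/r)N)/\alpha$ is precisely the dimension of the largest orbit of elements of order $r$ on $\bar{G}$, so each class contributes at most $q^{N_r}$ up to a bounded polynomial factor.

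Having bounded each individual class, the last step is to sum: the number of $\bar{G}_\sigma$-classes of elements of order $r$ is bounded by a constant depending only on $\bar{G}$, and after taking into account the at most $q+1$-fold splitting that can occur when passing between $\bar{G}$-classes, $\bar{G}_\sigma$-classes and $\operatorname{Inndiag}(G_0)$-classes (coming from $H^1(\sigma,C_{\bar{G}}(x))$ and toral components), one obtains the claimed uniform bound $2(q+1)q^{N_r-1}$.

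The main obstacle is uniformity: the bound must hold across all ten families of exceptional types, both primes $r \in \{2,3\}$, and every permissible characteristic, and several borderline cases (for instance the two classes of involutions in $F_4(q)$ for odd $q$, or the unipotent class $\tilde{A}_1$ that splits over $\bar{G}_\sigma$ in characteristic $\neq 2$) each contribute terms of the same order of magnitude as the dominant long-root class. The factor $(q+1)$ in the target bound is exactly what is needed to absorb these sub-leading contributions while the factor $2$ accounts for class-splitting, so the verification reduces to checking, type-by-type, that no additional logarithmic-in-$q$ correction is required.
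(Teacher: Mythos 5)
The paper does not prove this proposition; it is stated as a direct citation of \cite[Proposition 1.3]{LLS2}, so there is no in-paper argument to compare against. Your outline is nonetheless a faithful reconstruction of the strategy used there: part (ii) is Steinberg's theorem that the number of unipotent ($=p$-singular of $p$-power order) elements in a connected reductive group over $\mathbb{F}_q$ equals the square of the order of a Sylow $p$-subgroup, and part (i) is a three-way partition of order-$r$ elements into unipotent, semisimple and outer (field/graph/graph-field) types, bounding each class via centralizer dimensions and summing over a bounded number of classes.

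Two points deserve flagging. First, your illustrative unipotent counts for the very twisted groups are wrong: with $N(B_2)=4$, $N(G_2)=6$, $N(F_4)=24$ and $\a=2$, the formula $q^{2N/\a}$ gives $q^4$, $q^6$ and $q^{24}$ for ${}^2B_2(q)$, ${}^2G_2(q)$ and ${}^2F_4(q)$ respectively (equivalently, the square of the order of a Sylow $p$-subgroup), not $q^3$, $q^7$, $q^{20}$ as you wrote. Second, you treat the equality of $N_r$ with the maximal $\bar{G}$-class dimension for elements of order $r$ as though it were self-evident (``the exponent $N_r$ is precisely the dimension of the largest orbit\ldots''). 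This is in fact the crux of part (i), is not a formal consequence of general principles, and is established in the literature by classifying the finitely many $\bar{G}$-classes of order-$r$ elements in each exceptional type and reading off centralizer dimensions; moreover, for unipotent order-$r$ elements (e.g.\ involutions in characteristic $2$) it is not true that arbitrary unipotent classes are bounded by $q^{N_r}$ --- the bound holds because involutions (resp.\ order-$3$ unipotents) have restricted Jordan type, and this again requires the case check. So while your plan is sound and matches the reference, the step you are presenting as a one-line observation is where the actual mathematical content lies, and your type-by-type verification of the constant $2(q+1)$ would need to establish that lemma as part of the argument rather than invoke it.
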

 
\subsection{Computational methods}\label{ss:comp}

As previously remarked, it is feasible to use computational methods to 
handle certain groups defined over small fields and these computations can  be implemented in \textsc{Magma} \cite{Magma} or \textsf{GAP} \cite{GAP}. Here we briefly outline the main techniques we will use, referring the reader to \cite{BTh_comp} for a more detailed discussion, which includes the relevant code we used to obtain the results.

\subsubsection{Permutation representations}

In some cases, we can work with a suitable permutation representation of $G$ in {\sc Magma} and we can construct $H$ as a subgroup of $G$. Then by random search, we can seek an element $x \in G$ such that $H \cap H^x$ is non-maximal in $H$  (or $H \cap H^x = 1$ if we wish to show that $b(G,H) = 2$). Typically, we will use the {\sc Magma} command \texttt{ AutomorphismGroupSimpleGroup} to construct ${\rm Aut}(G_0)$ as a permutation group and we then identify $G$ as a subgroup of ${\rm Aut}(G_0)$. We can then construct $H$ via the \texttt{MaximalSubgroups} command, or by a direct construction if needed.
For example, we may have $H = N_G(C_G(x))$ for some element $x \in G$, which provides a way to construct $H$ directly.

\subsubsection{Character tables}

In order to effectively apply Lemma \ref{l:char}, we need to know the rank of $G$ (or a suitable bound on the rank). If the character tables of $G$ and $H$ are available in the \textsf{GAP} Character Table Library \cite{GAPCTL} then we may be able to use \textsf{GAP} \cite{GAP} to compute the fusion map from $H$-classes to $G$-classes via the command \texttt{FusionConjugacyClasses}. If this is possible, then we can calculate $|x^G \cap H|$ for each $x \in H$, which yields
\[
|C_{\O}(x)| = \frac{|x^G \cap H|}{|x^G|}\cdot |G:H|.
\]
(If the fusion map is not stored, then it may still be possible to proceed in the same way by using the command \texttt{PossibleClassFusions}.)
We can then calculate the rank $r$ of $G$ since
\[
r = \frac{1}{|H|}\sum_{x \in H}|C_{\O}(x)|
\]
by the Orbit Counting Lemma. Finally, it may be feasible to determine the indices of the maximal subgroups of $H$, for example by working with a suitable permutation representation and the {\sc Magma} command \texttt{MaximalSubgroups}. Typically, $H$ is almost simple and the relevant information on maximal subgroups may also be available in the Web Atlas \cite{WebAt}, for example.   

\subsubsection{Lie type computations}

In {\sc Magma}, there are sophisticated in-built functions for working with groups of Lie type in terms of their associated Lie structures, such as their root subgroups and Weyl group, etc. Therefore, if $H$ can be defined in terms of this data (for example, if $H = \bar{H}_{\s}$ and $\bar{H}$ is a $\s$-stable subsystem subgroup of $\bar{G}$), then it may be possible to construct $H$ as a subgroup of an appropriate group of Lie type via the \texttt{GroupOfLieType} command (in practice, it may be easier to construct $H$ as a subgroup of a larger group of Lie type which contains $G$). We can then 
obtain detailed information on the conjugacy classes of $H$ and the action of class representatives on certain modules for $G_0$, such as the adjoint module. In turn, this will allow us to estimate $|x^G \cap H|$ and $|x^G|$ for all elements $x \in H$ of prime order and we can use these estimates to derive an upper bound on the function $\mathcal{Q}(G,H)$ in \eqref{e:QGH}. For example, if $x \in H$ is unipotent, then we can often use the Jordan form of $x$ on the adjoint module for $G_0$ to determine the $\bar{G}$-class of $x$ via \cite{Lawunip}. If the bound we obtain gives $\mathcal{Q}(G,H)<1$, then $b(G,H) = 2$ by Lemma \ref{l:base} and $G$ is not extremely primitive. 

\subsubsection{Small groups}

To close this preliminary section, it is convenient to use computational methods to establish Theorem \ref{t:main} for some small groups. 

\begin{thm}\label{t:small}
Let $G$ be an almost simple primitive group with socle $G_0$ and point stabilizer $H$, where $G_0$ is one of
 \begin{equation}\label{e:small}
{}^2B_2(8),\, {}^2B_2(32),\, {}^2F_4(2)',\, {}^3D_4(2),\, G_2(2)',\, G_2(3),\, G_2(4),\, G_2(5).
\end{equation}
Then $G$ is extremely primitive if and only if $(G,H) = (G_2(4),{\rm J}_2)$ or $(G_2(4).2,{\rm J}_{2}.2)$.
\end{thm}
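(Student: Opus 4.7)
The proof is computational, proceeding case by case through the finite list of socles in \eqref{e:small}. For each $G_0$, the almost simple overgroups $G$ with ${\rm soc}(G) = G_0$ and their maximal subgroups $H$ are tabulated in the Atlas \cite{ATLAS} and the Web Atlas \cite{WebAt}. I would work inside {\sc Magma} \cite{Magma}, constructing ${\rm Aut}(G_0)$ as a permutation group via \texttt{AutomorphismGroupSimpleGroup} and then realising each relevant pair $(G, H)$ explicitly as a subgroup thereof.

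For each pair other than the two asserted positive cases, the strategy for ruling out extreme primitivity is to attempt, in order: (i) application of Lemma \ref{l:structure} to dispose quickly of any $H$ with incompatible socle or Fitting structure; (ii) verification that $b(G, H) = 2$ by random search for $x \in G$ with $H \cap H^x = 1$, followed by Lemma \ref{l:base2}; (iii) where no such $x$ exists, computation of the rank $r$ of $G$ on $G/H$ from the fusion of $H$-classes in $G$ (via \texttt{FusionConjugacyClasses} or \texttt{PossibleClassFusions} applied to the character tables in \cite{GAPCTL}) together with the Orbit Counting Lemma, and then application of Lemma \ref{l:char} using the indices of the core-free maximal subgroups of $H$; and finally (iv) direct identification of an explicit $x \in G$ for which $H \cap H^x$ is proper and non-maximal in $H$, verified against the output of \texttt{MaximalSubgroups}$(H)$. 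Because every $G_0$ in \eqref{e:small} has relatively small order, at least one of these routes terminates quickly for each pair $(G,H)$.

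For the two positive cases $(G, H) = (G_2(4), {\rm J}_2)$ and $(G_2(4).2, {\rm J}_2.2)$, the plan is to verify extreme primitivity directly. Here $|G:H| = 416 = 1 + 100 + 315$, so I would compute the two nontrivial $H$-orbits on $G/H$ in {\sc Magma}, identify the corresponding point stabilizers in $H$, and confirm against the list of maximal subgroups of ${\rm J}_2$ and ${\rm J}_2.2$ in \cite{ATLAS} that these stabilizers are indeed the maximal subgroups ${\rm U}_3(3).\alpha$ and $2^{1+4}{:}{\rm Alt}_5.\alpha$ of $H$ predicted by Remark \ref{r:main2}(iv). Once this is checked, $H$ is primitive on each of its nontrivial orbits, so $G$ is extremely primitive.

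The main obstacle is bookkeeping rather than any conceptual difficulty: the groups $G_2(3)$, $G_2(4)$, $G_2(5)$, ${}^3D_4(2)$ and ${}^2F_4(2)'$ each have many classes of maximal subgroups, and for certain small stabilizers the rank of $G$ can be large enough that routes (ii) and (iii) both fail, forcing a direct construction as in (iv). The individual calculations are routine; implementations of all of them are collected in \cite{BTh_comp}.
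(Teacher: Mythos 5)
Your proposal matches the paper's approach: the paper proves Theorem \ref{t:small} by a direct {\sc Magma} computation on permutation representations of the eight small socles, deferring implementation details to \cite{BTh_comp}, exactly as you describe. Your outline of the concrete steps (socle/Fitting checks via Lemma \ref{l:structure}, base-size search, rank computation plus Lemma \ref{l:char}, explicit element search, and direct verification of the two $G_2(4)$ cases) is a reasonable elaboration of what that computation entails and is consistent with the computational methods laid out in Section \ref{ss:comp}.
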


\begin{proof}
This is an entirely straightforward {\sc Magma} \cite{Magma} computation, working with a suitable permutation representation of $G$ (see \cite[Theorem 2.1]{BTh_comp} for further details).
\end{proof}

\begin{rem}\label{r:small}
Suppose $G$ is one of the groups in Theorem \ref{t:small} and let $b=b(G,H)$ denote the base size of $G$. If $G_0 \ne G_2(2)'$ then $b$ is recorded in \cite[Tables 11 and 12]{BLS}. For completeness, let us record that if $G_0 = G_2(2)' \cong{\rm U}_{3}(3)$ then $b \leqs 3$, with equality if and only if $H$ is one of
\[
3^{1+2}{:}8.\a, \, {\rm GU}_{2}(3).\a, 4^2{:}{\rm Sym}_{3}.\a,\, {\rm L}_{2}(7).\a
\]
where $\a = |G:G_0|$. In each case, this is an easy {\sc Magma} computation.
\end{rem}

\section{Parabolic subgroups}\label{s:parab}

Let $G$ be an almost simple primitive permutation group with socle $G_0$ and point stabilizer $H$, where $G_0$ is a simple exceptional group of Lie type over $\mathbb{F}_q$ and $q=p^f$ with $p$ a prime. In this section, we begin the proof of Theorem \ref{t:main} by handling the groups where $H$ is a maximal parabolic subgroup.

We begin with a preliminary lemma. The following result is presumably well known, but we include a proof because we were unable to find one in the literature. Note that we are not assuming $P$ is a maximal parabolic subgroup of $G_0$ (although the subgroups that arise in parts (i) and (ii) are maximal and they are labelled in the usual manner). 

\begin{lem}\label{l:abunirads}
Let $P$ be a parabolic subgroup of $G_0$. Then the unipotent radical of $P$ is abelian if and only if $(G_0,P)$ is one of the following: 
\begin{itemize}\addtolength{\itemsep}{0.2\baselineskip}
\item[{\rm (i)}] $G_0 = E_6(q)$ and $P = P_1$ or $P_6$.
\item[{\rm (ii)}] $G_0 = E_7(q)$ and $P = P_7$.
\end{itemize} 
\end{lem}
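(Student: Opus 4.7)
The plan is to reduce the problem to standard parabolic subgroups of the underlying algebraic group and then read off abelianness from the Chevalley commutator formula. Every parabolic of $G_0$ is conjugate to a standard one, and in the twisted cases (socle ${}^2E_6(q), {}^3D_4(q), {}^2F_4(q), {}^2G_2(q)$ or ${}^2B_2(q)$) the parabolics of $G_0$ are indexed by $\sigma$-stable subsets of the simple roots $\Pi$, with unipotent radical the $\sigma$-fixed part of the corresponding untwisted radical; so abelianness of the radical for $G_0$ is equivalent to abelianness of the radical in the untwisted algebraic group. I would therefore reduce to proving: for the standard parabolic $P_S$ whose Levi is generated by the simple roots outside $S\subseteq\Pi$, the unipotent radical $U_S$ is abelian if and only if $S=\{k\}$ is a singleton and the coefficient $c_k(\theta)$ of $\alpha_k$ in the highest root $\theta$ equals $1$. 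In that formulation the Chevalley commutator formula reduces the question to asking whether there exist positive roots $\alpha,\beta$ with $c_i(\alpha)\geq 1$ and $c_j(\beta)\geq 1$ for some $i,j\in S$, such that $\alpha+\beta\in\Phi$.

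The case $|S|\geq 2$ is easy: for any pair $i\neq j$ in $S$, I would take a shortest path $i=i_0,i_1,\ldots,i_m=j$ in the Dynkin diagram and set $\alpha=\alpha_{i_0}$ and $\beta=\alpha_{i_1}+\cdots+\alpha_{i_m}$. Both lie in the set of positive roots defining $U_S$ (each has support meeting $S$), $\beta$ is a positive root of the type-$A_m$ subsystem along the path, and likewise $\alpha+\beta=\alpha_{i_0}+\cdots+\alpha_{i_m}$ is a positive root; hence $U_S$ is non-abelian. So only $|S|=1$ survives, say $S=\{k\}$. If $c_k(\theta)=1$ then every positive root $\alpha$ with $c_k(\alpha)\geq 1$ has $c_k(\alpha)=1$, so any sum $\alpha+\beta$ has $c_k=2$ and cannot be a root, giving $U_{\{k\}}$ abelian.

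The main work is the converse: if $c_k(\theta)\geq 2$ I need to produce $\alpha,\beta$ positive roots, each with $c_k\geq 1$, whose sum is a root. My plan is to choose a positive root $\mu$ of \emph{minimal height} subject to $c_k(\mu)=2$; such a $\mu$ exists, because iteratively subtracting simple roots from $\theta$ (using the standard fact that every positive root of height at least two is of the form $\gamma+\alpha_i$ for some $\gamma\in\Phi^+$ and simple root $\alpha_i$) produces a chain of positive roots along which $c_k$ descends in unit steps from $c_k(\theta)$ to $0$. Applying the same decomposition to this $\mu$, write $\mu=\gamma+\alpha_i$ with $\gamma\in\Phi^+$ and $\alpha_i$ simple; then $c_k(\gamma)=2-\delta_{ik}$, and if $i\neq k$ the root $\gamma$ would have strictly smaller height than $\mu$ while still satisfying $c_k(\gamma)=2$, contradicting minimality. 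Thus $i=k$ and $\gamma=\mu-\alpha_k\in\Phi^+$ with $c_k(\gamma)=1$, so $\gamma$ and $\alpha_k$ both lie in the set defining $U_{\{k\}}$, while $\gamma+\alpha_k=\mu\in\Phi$, proving $U_{\{k\}}$ is non-abelian. This minimal-height trick is the only step that is not a direct application of definitions, and I expect it to be the main subtlety in the proof.

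The conclusion is then immediate from the highest-root coefficients in Bourbaki numbering: $(2,3,4,6,5,4,3,2)$ for $E_8$, $(2,2,3,4,3,2,1)$ for $E_7$, $(1,2,2,3,2,1)$ for $E_6$, $(2,3,4,2)$ for $F_4$ and $(3,2)$ for $G_2$, so the only simple roots with coefficient $1$ in $\theta$ are $\alpha_1,\alpha_6$ in $E_6$ and $\alpha_7$ in $E_7$, yielding precisely the cases listed. For each twisted socle, every $\sigma$-orbit on $\Pi$ either has size $\geq 2$ (handled by the first case) or is a singleton fixed node whose coefficient in $\theta$ is at least $2$ (for example $\{2\}$ and $\{4\}$ in ${}^2E_6$, and $\{2\}$ in ${}^3D_4$; in ${}^2F_4$, ${}^2G_2$ and ${}^2B_2$ every $\sigma$-orbit has size $2$), so no twisted exceptional group contributes and the classification is complete.
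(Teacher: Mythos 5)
Your argument reduces to standard parabolics of the ambient algebraic group, classifies by $|S|$ and by the coefficient $c_k(\theta)$, and then descends to $G_0$ via fixed points. The path-based construction for $|S|\geqs 2$ is robust: for the pair $\alpha=\alpha_{i_0}$, $\beta=\alpha_{i_1}+\cdots+\alpha_{i_m}$ the difference $\beta-\alpha$ has mixed-sign coefficients, so it is not a root and $N_{\alpha,\beta}=\pm 1$ in every characteristic. The paper instead invokes the Azad--Barry--Seitz theorem, which gives the exact nilpotency class of the unipotent radical as $\sum_{i\notin I}c_i$ for all non-special types, and then treats the special types ($F_4$ with $p=2$; $G_2$ with $p\in\{2,3\}$; the very twisted groups) by explicit calculations with the commutator relations. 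Your route is more elementary in spirit, but it has a genuine gap precisely in those special cases.

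The inference ``hence $U_{\{k\}}$ is non-abelian'' from the existence of $\gamma,\alpha_k$ with $\gamma+\alpha_k\in\Phi$ is false in general. In the Chevalley commutator formula the coefficient of $x_{\gamma+\alpha_k}$ is $N_{\alpha_k,\gamma}=\pm(r+1)$, where $r$ is the length of the $\alpha_k$-string below $\gamma$, and this can vanish modulo the characteristic; your minimal-height choice does not sidestep this. Concretely, take $G_0=F_4(q)$ with $q$ even and $S=\{3\}$: the minimal-height root with $c_3=2$ is $\mu=\alpha_2+2\alpha_3$, giving $\gamma=\alpha_2+\alpha_3$. Since $\gamma-\alpha_3=\alpha_2$ is a root while $\gamma-2\alpha_3$ is not, $N_{\alpha_3,\gamma}=\pm 2\equiv 0 \imod{2}$, and moreover neither $\gamma+2\alpha_3$ nor $2\gamma+\alpha_3$ is a root, so $[x_\gamma(t),x_{\alpha_3}(u)]$ is \emph{identically trivial} over $\mathbb{F}_q$. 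The same failure occurs for $S=\{4\}$ with $\mu=\alpha_2+2\alpha_3+2\alpha_4$. In $G_2$ with $p=2$ and $S=\{1\}$, your pair likewise has $N_{\alpha_1,\gamma}=\pm 2\equiv 0$; the commutator does turn out to be nontrivial there because the degree-$(2,1)$ coefficient is $\pm 3$, but your write-up never examines these higher terms, so the argument does not establish that. Repairing the proof requires either citing ABS as the paper does, or choosing the witnessing pair case by case in the special types while tracking structure constants --- which is exactly what the paper's ``straightforward calculation with these relations'' is doing when it records $[x_{\alpha_1}(1),x_{\alpha_1+3\alpha_2+4\alpha_3+2\alpha_4}(1)]=x_{\alpha_0}(1)$ for $P_1$ in $F_4(q)$, $p=2$. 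Two smaller points: the claim that abelianness of $\bar U_S$ is equivalent to abelianness of $(\bar U_S)_\sigma$ needs at least a one-line justification of the nontrivial direction (Zariski density of the fixed points), and for ${}^2B_2$ and ${}^2G_2$ the paper does not argue via the algebraic group at all but simply cites the known nonabelian structure of the Sylow $p$-subgroups.
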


\begin{proof}
Let $Q$ be the unipotent radical of $P$. To begin with, let us assume $G_0$ is not one of the following groups:
\begin{equation}\label{e:excl}
F_4(q) \, (p=2), \; G_2(q)\, (p=2,3), \;  {}^2F_4(q), \; {}^2G_2(q),\; {}^2B_2(q).
\end{equation}

Let $r$ be the untwisted Lie rank of $G_0$ and fix a set of simple roots $\a_1, \ldots, \a_r$ for the corresponding root system, labelled as in \cite{Bou}. By \cite[Theorems 2 and 3]{ABS}, the nilpotency class of $Q$, which we denote by $c(Q)$, is independent of the field and it can be calculated as follows. First recall that the conjugacy classes of parabolic subgroups of $G_0$ are in bijective correspondence with the subsets of $[r]=\{1, \ldots, r\}$; under this correspondence, the maximal parabolic subgroups line up with subsets of size  $r-1$, while the Borel subgroups correspond to the empty set. Now, if $P$ corresponds to the subset $I = \{i_1 ,\ldots, i_k\}$, then \cite{ABS} gives $c(Q) = \sum_{i \in [r]\setminus I} c_i$, where $\alpha_0 = \sum_{i=1}^{r} c_i \alpha_i$ is the highest root for $G_0$. It is now a routine calculation with the root system of $G_0$ to show that the only cases with $c(Q) = 1$ are the maximal parabolic subgroups in the statement of the lemma. 

To complete the proof, let us consider the groups in \eqref{e:excl}. First assume $G_0 = F_4(q)$ (with $p=2$) or $G_2(q)$ (with $p=2,3$). These groups are called \emph{special} in \cite{ABS} due to the existence of degeneracies in Chevalley's commutator relations. However, a straightforward calculation with these relations shows that if $P$ is maximal then $Q$ is nonabelian. For example, if $G_0 = F_4(q)$ and $P = P_1$ (with $p=2$) then 
$u = x_{\alpha_1}(1)$ and $v = x_{\alpha_1+3\alpha_2+4\alpha_3 + 2\alpha_4}(1)$ are contained in $Q$ and we have $[u,v] = x_{\a_0}(1)$, 
where $X_{\a}  =\{ x_{\a}(c)\,:\, c \in \mathbb{F}_q\}$ is the root subgroup of $G_0$ corresponding to the root $\a$. The result now follows because the unipotent radical of any non-maximal parabolic subgroup contains the unipotent radical of a maximal parabolic subgroup.

Similarly, for $G_0 = {}^2F_4(q)$ it suffices to check that $Q$ is nonabelian when $P$ is maximal. Generators and relations for these subgroups are given in \cite[(2.2) and (2.3)]{Shin3} and the desired conclusion follows immediately. Finally, we note that the parabolic subgroups of ${}^2G_2(q)$ and ${}^2B_2(q)$ are Borel subgroups and so in these cases $Q$ is a Sylow $p$-subgroup of $G_0$. These Sylow subgroups are nonabelian by \cite[Main Theorem (2)]{Ward} and \cite[Section 13]{Suz}, respectively.
\end{proof}

\begin{thm}\label{t:parab}
If $H$ is a parabolic subgroup of $G$, then $G$ is not extremely primitive.
\end{thm}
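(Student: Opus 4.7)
My plan is to combine the structural restriction on $H$ coming from Lemma \ref{l:structure} with the classification of parabolic subgroups having abelian unipotent radical provided by Lemma \ref{l:abunirads}.

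First, suppose for a contradiction that $G$ is extremely primitive, and let $Q$ denote the unipotent radical of $H$, a nontrivial normal $p$-subgroup of $H$. In particular $F(H) \supseteq Q \neq 1$. As in the proof of Lemma \ref{l:structure}, Manning's theorem together with the O'Nan-Scott analysis forces $H$ into the affine form: since $F(H) \neq 1$, we must have $F(H) = {\rm soc}(H) = Z_p^{d'}$ elementary abelian, with $F(H)$ acting regularly on each nontrivial orbit of $H$ on $\Omega$. In particular $Q \leqs F(H)$ is abelian, so Lemma \ref{l:abunirads} restricts $(G_0, H \cap G_0)$ to
\[
(E_6(q), P_1),\; (E_6(q), P_6),\; (E_7(q), P_7).
\]
All other parabolic cases are ruled out directly by Lemma \ref{l:structure}(ii), because $Q$ being nonabelian forces $F(H)$ to fail to be elementary abelian.

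In each of the three remaining cases, the regular action of $F(H)$ on every $H$-orbit in $\O\setminus\{\a\}$ forces each such orbit to have length exactly $|F(H)|$, and so $|F(H)|$ divides $|\Omega|-1$; since $Q \leqs F(H)$, this yields $|Q|$ divides $|\Omega|-1$, where $|Q| = q^{16}$ in the first two cases and $|Q| = q^{27}$ in the third. To extract a contradiction, I will use the Bruhat decomposition to write
\[
|\Omega| = |G_0 : H \cap G_0| = \sum_{w \in W^{P_0}} q^{\ell(w)},
\]
where $W^{P_0}$ is the set of minimal-length coset representatives of $W(P_0)$ in $W(G_0)$. The identity is the unique element of length $0$, and because $P_0$ is a maximal parabolic, the unique simple reflection not contained in $W(P_0)$ (namely the reflection corresponding to the omitted node of the Dynkin diagram) is the unique element of length $1$ in $W^{P_0}$. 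Hence $|\Omega| = 1 + q + O(q^2)$ as a polynomial in $q$, and so $v_p(|\Omega|-1) = f$ where $q = p^f$. But $v_p(|Q|) \geqs 16f > f$, which is the required contradiction.

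There is no substantial obstacle: the argument is essentially forced once Lemmas \ref{l:structure} and \ref{l:abunirads} are in hand. The only delicate point is extracting the regular action of $F(H)$ on orbits (a strengthening of the statement of Lemma \ref{l:structure}(iii)) from the O'Nan-Scott case analysis recorded in its proof; this is what enables us to convert the abelian-radical restriction into the divisibility condition $|Q| \mid |\Omega|-1$ without having to pin down $F(H)$ exactly.
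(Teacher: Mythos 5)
Your proof is correct, and it follows the same skeleton as the paper's: reduce to $Q$ abelian via Lemma \ref{l:structure}(ii), invoke Lemma \ref{l:abunirads} to pin down the three cases $(E_6, P_1)$, $(E_6, P_6)$, $(E_7, P_7)$, and then derive a contradiction from the divisibility condition $|Q| \mid (|\Omega|-1)$. The one genuine difference is in the last step. The paper writes down the explicit index polynomial $|\Omega|$ in each of the three cases (these are well-known formulas) and observes directly that $q^{16}$ (resp.\ $q^{27}$) fails to divide $|\Omega|-1$. You replace this with a uniform argument: the Bruhat decomposition of $G_0/(H\cap G_0)$ gives $|\Omega| = \sum_{w\in W^{P_0}} q^{\ell(w)}$, and since $H\cap G_0$ is a maximal parabolic there is exactly one minimal coset representative of length $0$ and exactly one of length $1$, hence $|\Omega| \equiv 1 + q \pmod{q^2}$, so $v_p(|\Omega|-1) = f$, which is strictly smaller than $v_p(|Q|) = f\dim Q$. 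This buys you two things: you never need the explicit index polynomials, and the argument works verbatim for any maximal parabolic (since $\dim Q \geqs 2$ always holds). Your appeal to $Q \leqs F(H)$ rather than the paper's identification $F(H) = Q$ is also a small simplification, since it avoids having to justify $C_{H_0}(Q) \leqs Q$. The only minor point worth being explicit about: you are implicitly using that $Q$ abelian already forces $Q$ elementary abelian (which holds because $Q$ is generated by root subgroups, each of exponent $p$), so that the O'Nan--Scott affine case genuinely applies; this is implicit in the paper as well.
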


\begin{proof}
Let $H_0 = QL$ be a Levi decomposition of $H_0 = H \cap G_0$, so $F(H) = Q$ is the Fitting subgroup of $H$. In view of Lemma \ref{l:structure}(ii), we may as well assume $Q$ is elementary abelian, in which case we can apply Lemma \ref{l:abunirads}. 

If $G_0 = E_6(q)$ and $H$ is a $P_1$ or $P_6$ parabolic then 
$$|F(H)| = q^{16},\;\; |\O| = |G_0:H_0| = (q^8+q^4+1)(q^6+q^3+1)(q^2+q+1),$$
and similarly if $G_0 = E_7(q)$ and $H$ is a $P_7$-parabolic then
$$|F(H)| = q^{27},\;\; |\O| = |G_0:H_0| = \frac{(q^{14}-1)(q^9+1)(q^5+1)}{q-1}.$$
In both cases, we see that $|F(H)|$ does not divide $|\O|-1$, so Lemma \ref{l:structure}(iii) implies that $G$ is not extremely primitive.

Alternatively, we can appeal to Remark \ref{r:neww}, noting that both $L$ and $F(H)=Q$ contain long root elements, which are conjugate in $G$.
\end{proof}

\section{Maximal rank subgroups}\label{s:mr}

Let $G_0 = (\bar{G}_{\s})'$ be a simple exceptional group of Lie type over $\mathbb{F}_{q}$, where $\bar{G}$ is a simple algebraic group of adjoint type and $\s$ is an appropriate Steinberg endomorphism of $\bar{G}$. In this section, we prove Theorem \ref{t:main} in the cases where $H = N_G(\bar{H}_{\s})$ and $\bar{H}$ is a $\s$-stable non-parabolic maximal rank subgroup of $\bar{G}$ (in particular, the connected component $\bar{H}^0$ contains a $\s$-stable maximal torus of $\bar{G}$). The possibilities for $H$ are determined in \cite{LSS} (see \cite[Tables 5.1 and 5.2]{LSS}) and there are two subcases to consider, according to whether or not $H$ is the normalizer of a maximal torus. Throughout this section, we will continue to exclude the groups in \eqref{e:small}.

Our main result is the following. 

\begin{thm}\label{t:maxrank}
If $H$ is a maximal rank subgroup of $G$, then $G$ is not extremely primitive.
\end{thm}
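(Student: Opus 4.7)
The strategy is to split the argument according to the two families of maximal rank subgroups listed in \cite[Tables 5.1--5.2]{LSS}: first the case where $H$ is the normalizer of a maximal torus, then the case where $\bar{H}^0$ is a positive-dimensional proper reductive subsystem subgroup of $\bar{G}$. The groups with $G_0$ in the list \eqref{e:small} are handled computationally by Theorem \ref{t:small}, so I may assume throughout that $q$ is bounded below.

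For torus normalizers $H = N_G(\bar{T}_\s)$, I would aim to force $b(G,H) = 2$ via the probabilistic bound $\mathcal{Q}(G,H) < 1$ of Lemma \ref{l:base}, which then rules out extreme primitivity by Lemma \ref{l:base2}. Here $|H| \leqs |W(\bar{G})| \cdot |\bar{T}_\s|$ is at most a bounded polynomial of degree $r$ in $q$, where $r$ is the untwisted Lie rank, while Proposition \ref{p:bounds} shows the prime-order classes of $G$ are very large. Intersections $|x^G \cap H|$ for unipotent $x$ are controlled by Corollary \ref{c:root} when $x$ is a root element (forcing $p = 2$ and giving a bound of the form $|\Sigma^+(\bar{G})| \cdot |\bar{T}_\s|$), and for non-root unipotent elements the intersection is empty when $\bar{T}$ is a genuine torus. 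For semisimple $x \in H$ of prime order one uses $|x^G \cap H| \leqs |H|$ and the strong lower bound on $|x^G|$ from Proposition \ref{p:bounds}, together with Proposition \ref{p:i23} to control the total number of involutions and order-$3$ elements. Summing the finitely many contributions and combining with Lemma \ref{l:calc} yields $\mathcal{Q}(G,H) < 1$ once $q$ is sufficiently large.

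For the reductive (non-torus) maximal rank subgroups, I would argue case-by-case following the template of Example \ref{e:ex} and Lemma \ref{l:simple}, running through the pairs $(\bar{G},\bar{H})$ in \cite[Table 5.1]{LSS}. The plan in each case is to identify inside $\bar{H}^0$ a canonical $\s$-stable closed subgroup $\bar{K}$ (typically a product of subsystem factors, such as the derived group of a Levi of $\bar{H}^0$ or a natural subsystem arising from a subsystem diagram) such that $N_{\bar{G}}(\bar{K})$ strictly contains $N_{\bar{H}}(\bar{K})$; the extra Weyl-group coset in $N_{\bar{G}}(\bar{K})/\bar{K}$ then descends, after taking $\s$-fixed points and possibly adjusting by a torus element, to an element $g \in N_{G_0}(K)$ that moves every maximal overgroup $M$ of $K$ in $H_0$ outside $H_0$. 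The set $\mathcal{M}$ of such overgroups is short (often a singleton) because $\bar{K}$ will be chosen as an obviously-maximal-rank configuration inside $\bar{H}^0$, and the $A$-stability required in hypothesis (ii) of Lemma \ref{l:simple} follows from the canonicity of the construction. Where the structural argument is not available---for instance when $F(H)$ is visibly nontrivial or $Z(H) \neq 1$ or ${\rm soc}(H)$ is not a power of a single simple group---Lemma \ref{l:structure} eliminates $H$ at once, and where only a handful of ranks/orbits are in play, Lemma \ref{l:char} combined with the available character tables finishes the job directly.

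The principal obstacle is the combinatorial explosion in the reductive branch: each of the many $(\bar{G},\bar{H})$ pairs, together with its twisted $\s$-forms over small fields, isogeny subtleties, small-characteristic degeneracies in $F_4(q)$ with $p=2$ and $G_2(q)$ with $p=3$, and isolated configurations such as $\Omega_{16}^+(q) < E_8(q)$ or $(E_6^\e(q)\times (q-\e)/d).\,\mathrm{stuff} < E_7(q)$, requires a tailored choice of $\bar{K}$ and verification of the three hypotheses of Lemma \ref{l:simple}; this is presumably why the proof is reported to occupy nearly thirty pages. A secondary difficulty is ensuring that, for the few remaining borderline cases where neither the base-size method nor the Lemma \ref{l:simple} construction applies cleanly (typically small $q$), the computational techniques of Section \ref{ss:comp} and Theorem \ref{t:small} suffice to close the argument.
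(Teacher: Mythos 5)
Your high-level decomposition (torus normalizers via base size, other maximal rank subgroups via Lemma~\ref{l:structure}/\ref{l:simple}/\ref{l:char}) differs from the paper in a way that creates a genuine gap. The paper does not confine the probabilistic $b(G,H)=2$ method to torus normalizers: Proposition~\ref{p:mr_base} and Table~\ref{tab:mr} show that for a substantial list of non-torus maximal rank subgroups --- for instance $A_1(q)^8$, $A_2^{\e}(q)^4$, $A_4^{\e}(q)^2$, $D_4(q)^2$, $D_4(q^2)$, ${}^3D_4(q)^2$, ${}^3D_4(q^2)$ in $E_8(q)$, the $A_1$-type subgroups of $E_7(q)$, the $A_2^{\e}$-type subgroups of $E_6^{\e}(q)$, and the $A_2^{\e}(q)^2$, $C_2(q)^2$, $C_2(q^2)$ subgroups of $F_4(q)$ --- the conclusion is reached by showing $b(G,H)=2$, not by Lemma~\ref{l:simple}. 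Your plan applies Lemma~\ref{l:simple} uniformly across the non-torus cases, but that fails outright for at least one family: when $G_0 = F_4(q)$ with $p=2$ and $H$ is of type $C_2(q)^2$ or $C_2(q^2)$, one has $H_0 = H \cap G_0 = {\rm Sp}_4(q)\wr{\rm Sym}_2$ or ${\rm Sp}_4(q^2).2$, and $H_0 < {\rm Sp}_8(q) < G_0$ is \emph{not} a maximal subgroup of $G_0$ ($H$ becomes maximal in $G$ only because $G$ contains a graph automorphism). So hypothesis (i) of Lemma~\ref{l:simple} is false and your template does not get started. Beyond that, for the ``small'' subgroups like $A_1(q)^8 < E_8(q)$, your assumption that $\mathcal{M}$ is ``often a singleton'' --- predicated on choosing $\bar K$ to be a Levi-derived subsystem of $\bar H^0$ --- breaks down because the $A_1$ factors are atomic and the wreath-product normalizer gives a complicated overgroup lattice with no clean Levi to exploit; the base-size argument is substantially cleaner (and in some cases, e.g.\ $A_2^{-}(q^2)^2$ and $A_2^{-}(q^4)$ in $E_8(q)$, reduces to noting $H$ avoids long root elements entirely so $|x^G|>q^{92}$ for every $x\in H$).

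One secondary inaccuracy in the torus-normalizer branch: you assert that ``for non-root unipotent elements the intersection is empty when $\bar T$ is a genuine torus.'' This is false. When $p=2$, products of commuting reflections in $W(\bar G)$ lift to unipotent elements of $N_{\bar G}(\bar T)$ whose $G_0$-class is $A_1^k$ with $k\geqs 2$ (see the explicit computation in Lemma~\ref{l:maxrank_e7_1} for $E_7(2)$, where classes $A_1^2, (A_1^3)^{(1)}, (A_1^3)^{(2)}, A_1^4$ all meet $H$). The paper simply applies the trivial bound $|x^G\cap H|\leqs|H|$ together with the large lower bound on $|x^G|$ for non-root unipotent classes, so the error does not derail the overall estimate, but the claim as stated is wrong.
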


Recall that $b(G,H)$ denotes the \emph{base size} of $G$ and the condition $b(G,H)=2$ is equivalent to the existence of an element $x \in G$ with $H \cap H^x = 1$. In particular, if $b(G,H)=2$ then $G$ is not extremely primitive (see Lemma \ref{l:base2}). In proving  Theorem \ref{t:maxrank}, we will establish the following result on base-two groups, which may be of independent interest. In particular, this result significantly strengthens various bounds on $b(G,H)$ presented in \cite{BLS}. (Note that in the second column of Table \ref{tab:mr} we record the \emph{type} of $H$, which gives an approximate description of the structure of $H$.)

\begin{prop}\label{p:mr_base}
Let $G \leqs {\rm Sym}(\O)$ be an almost simple primitive group with point stabilizer $H$ and socle $G_0$, a simple exceptional group of Lie type over $\mathbb{F}_q$. Suppose 
\begin{itemize}\addtolength{\itemsep}{0.2\baselineskip}
\item[{\rm (i)}] $H$ is the normalizer of a maximal torus of $G$; or 
\item[{\rm (ii)}] $(G,H)$ is one of the maximal rank cases recorded in Table \ref{tab:mr}. 
\end{itemize}
Then $b(G,H) = 2$ and the probability that two randomly chosen points in $\O$ form a base for $G$ tends to $1$ as $q$ tends to infinity.
\end{prop}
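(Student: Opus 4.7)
The plan is to prove both claims simultaneously by showing that the Liebeck--Shalev sum $\mathcal{Q}(G,H)$ defined in \eqref{e:QGH} satisfies $\mathcal{Q}(G,H) < 1$, with $\mathcal{Q}(G,H) = o(1)$ as $q \to \infty$. This gives $b(G,H) = 2$ via Lemma \ref{l:base}, and the probabilistic interpretation of $\mathcal{Q}(G,H)$ yields the asymptotic statement directly. Since the groups in \eqref{e:small} are already excluded from consideration in this section, the argument need only cover the remaining values of $q$.

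For the torus normalizer case, $H = N_G(\bar{T}_{\s})$ satisfies $|H| \leqs |W(\bar{G})| \cdot |\bar{T}_{\s}|$, which is polynomial of degree $r = \mathrm{rk}(\bar{G})$ in $q$. I would partition the prime order elements of $H$ into the five types $x_1,\dots,x_5$ of Proposition \ref{p:bounds}. For root elements (type $x_1$), Proposition \ref{p:root}(ii) forces $p=2$, and Corollary \ref{c:root} then gives the sharp bound $|x^G \cap H| \leqs |\Sigma^{+}(\bar{G})| \cdot |\bar{T}_{\s}|$. For every other type, the crude estimate $|x^G \cap H| \leqs |H|$ is adequate. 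Combining these with the lower bounds $|x_i^G| \geqs \ell_i$ from Proposition \ref{p:bounds} and applying Lemma \ref{l:calc} type by type, each contribution is $O(q^{-c_i})$ for some $c_i > 0$ depending on $\bar{G}$, and a finite check over the Dynkin types in Table \ref{tab:cbds} confirms $\mathcal{Q}(G,H) < 1$ throughout.

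For the subgroups in Table \ref{tab:mr}, $\bar{H}^0$ is a proper reductive subgroup of maximal rank whose simple factors have strictly smaller dimension than $\bar{G}$, so $|H|$ is bounded by a constant multiple of $q^{\dim \bar{H}}$. For each row I would enumerate the $\bar{H}_{\s}$-classes of prime order using the standard information in \cite[Chapter 3]{BG}, \cite{LieS} and \cite{Lubeck}, identify the $\bar{G}$-class via the fusion determined by the embedding $\bar{H} \hookrightarrow \bar{G}$, and then apply Lemma \ref{l:calc} with the lower bounds of Proposition \ref{p:bounds}. For non-root classes the trivial bound $|x^G \cap H| \leqs |H|$ is sufficient, since $\dim \bar{H} < \dim \bar{G}$ makes $|H|^2/|x^G|$ a negative power of $q$; for outer automorphisms, the count in Proposition \ref{p:i23} together with $\ell_5$ contributes a further $O(q^{-c})$ term.

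The main obstacle is controlling $|x^G \cap H|$ for long and short root elements $x$ that genuinely lie in $\bar{H}$; this occurs precisely when some simple factor of $\bar{H}$ is large enough to house root elements of $\bar{G}$ (for example, a $D$-type factor of $E_n$). Here the trivial bound is too weak, and Proposition \ref{p:root}(i) is essential: it asserts that such an $x$ is a root element of a unique simple factor $\bar{K}$ of $\bar{H}$, so $|x^G \cap H|$ can be computed precisely as the size of the corresponding root-element class in $\bar{K}_{\s}$. A direct $q$-degree count then confirms that $|x^G \cap H|^2/|x^G|$ is a polynomial of strictly negative degree, and summing the bounded number of such contributions closes the argument.
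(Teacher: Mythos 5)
Your high-level strategy coincides with the paper's: show $\mathcal{Q}(G,H) < 1$ with $\mathcal{Q}(G,H) \to 0$, handling root elements via Corollary \ref{c:root} or Proposition \ref{p:root}. But there are two genuine gaps.

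The central one is your claim that for the subgroups of Table \ref{tab:mr}, ``for non-root classes the trivial bound $|x^G \cap H| \leqs |H|$ is sufficient, since $\dim\bar{H} < \dim\bar{G}$ makes $|H|^2/|x^G|$ a negative power of $q$.'' This is false. The relevant comparison is between $2\dim\bar{H}$ and $\dim x^{\bar{G}}$, and for many cases in the table there are non-root classes where $2\dim\bar{H}$ exceeds $\dim x^{\bar{G}}$ by a wide margin. For example, with $G_0 = E_8(q)$ and $H$ of type $A_4^{\e}(q)^2$ one has $|H| \approx q^{48}$ so $|H|^2 \approx q^{96}$, yet the unipotent class $A_1^2$ has size only $\approx q^{92}$; the trivial bound gives a contribution of roughly $q^4$, which grows. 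The same failure occurs for $D_4(q)^2$ ($|H|^2 \approx q^{112}$ vs. several classes below $q^{112}$), for $A_2^{\e}(q)^3 < E_6^{\e}(q)$ ($|H|^2 \approx q^{48}$ vs. $\ell_2 = q^{32}$), and in most other rows. The actual proofs (Lemmas \ref{l:maxrank_e8_2}, \ref{l:maxrank_e8_3}, \ref{l:maxrank_e6_2}, \ref{l:maxrank_f4_2}, etc.) must therefore work out, class by class, much sharper estimates for $|x^G \cap H|$ for small non-root unipotent classes (such as $A_1^2$, via Jordan form analysis on the adjoint module and the results of \cite{Law09,Lawunip}), for semisimple involutions, and for elements of order $3$ (via Proposition \ref{p:i23}). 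This refined class-by-class fusion analysis is the technical core of the proof of the proposition and cannot be replaced by a crude dimension count.

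The secondary gap is in the torus normalizer case. For the larger groups the leading-term estimate you sketch does fail for small $q$: for instance the bound $a_1^2/b_1 + a_2^2/b_2 < q^{-1}$ in Lemma \ref{l:maxrank_e7_1} only holds for $q \geqs 3$, and the paper handles $E_7(2)$ by an explicit computer calculation of the $G_0$-classes of all prime order elements in $H = 3^7.W(\bar{G})$. Similar computational work is needed for $E_6^{\e}(q)$ with $q \leqs 4$ (Lemma \ref{l:maxrank_e6_1}), $F_4(q)$ with $q \in \{2,4\}$ (Lemma \ref{l:maxrank_f4_1}), ${}^3D_4(q)$ with $q \leqs 5$ (Lemma \ref{l:maxrank_3d4_1}), and $G_2(9)$ (Lemma \ref{l:maxrank_g2_1}). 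Moreover, for $E_6^{\e}(q)$ your generic estimate for $x_5$ using $\ell_5$ from Table \ref{tab:cbds} does not capture $F_4$-type graph automorphisms, which need a separate bound because there can be of order $q^6$ of them in $H$ while $|x^G|$ is only about $q^{26}$. A uniform application of Proposition \ref{p:bounds} plus the trivial bound $|H|$ does not close these cases.
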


\renewcommand{\arraystretch}{1.2}
\begin{table}
\begin{center}
$$\begin{array}{ll}\hline
G_0 & \mbox{Type of $H$} \\ \hline
E_8(q) & A_4^{\e}(q)^2, A_4^{-}(q^2), D_4(q)^2, D_4(q^2), {}^3D_4(q)^2, {}^3D_4(q^2) \\
& A_2^{\e}(q)^4, A_2^{-}(q^2)^2, A_2^{-}(q^4), A_1(q)^8 \\  
E_7(q) & A_1(q^7), A_1(q)^7 \\
E_6^{\e}(q) & A_2^{\e}(q)^3, A_2(q^2)A_2^{-\e}(q), A_2^{\e}(q^3) \\
F_4(q) & A_2^{\e}(q)^2, C_2(q)^2 \, (p=2), C_2(q^2) \, (p=2) \\
{}^2F_4(q) & A_2^{-}(q), {}^2B_2(q)^2, C_2(q) \\ \hline
\end{array}$$
\caption{Some maximal rank subgroups $H$ with $b(G,H)=2$}
\label{tab:mr}
\end{center}
\end{table}
\renewcommand{\arraystretch}{1}

In order to prove Proposition \ref{p:mr_base}, we will apply the probabilistic approach explained in Section \ref{ss:base2}. More precisely, in view of Lemma \ref{l:base}, we will aim to show that $\mathcal{Q}(G,H)<1$, where
\[
\mathcal{Q}(G,H) = \sum_{i=1}^{k}|x_i^G|\cdot \left(\frac{|x_i^G \cap H|}{|x_i^G|}\right)^2
\]
and $x_1, \ldots, x_k$ are representatives for the $G$-classes of elements of prime order. Of course, if $x_i^G$ does not meet $H$, then the contribution to $\mathcal{Q}(G,H)$ from $x_i^G$ is zero, so we are interested in the elements of prime order in $H$. To  estimate $\mathcal{Q}(G,H)$ effectively, we will apply Lemma \ref{l:calc}, using information on the conjugacy classes of elements of prime order in both $H$ and $G$. In particular, the lower bounds in Proposition \ref{p:bounds} will be applied repeatedly. In some cases, we will need more detailed information from \cite{LieS} (for unipotent elements) and \cite{Lubeck} (for semisimple elements). Finally, in order to establish the asymptotic statement in Proposition \ref{p:mr_base}, it suffices to show that $\mathcal{Q}(G,H)$ tends to $0$ as $q$ tends to infinity. In every case, we will derive an explicit upper bound on $\mathcal{Q}(G,H)$ as a function of $q$ and the desired asymptotic property will follow immediately.

\subsection{$G_0=E_8(q)$}\label{ss:mr_e8}

\begin{lem}\label{l:maxrank_e8_1}
If $G_0=E_8(q)$ and $H$ is the normalizer of a maximal torus, then $b(G,H)=2$.
\end{lem}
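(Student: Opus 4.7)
The plan is to apply the probabilistic base-two criterion of Lemma \ref{l:base} and show that $\mathcal{Q}(G,H) < 1$, which by Lemma \ref{l:base2} also immediately rules out extreme primitivity. The key structural fact is that $H$ is polynomially bounded in $q$: since $|\bar T_\s|$ divides $\prod_i \Phi_{d_i}(q)$ for some partition $d_1 + \cdots + d_k = 8$ of the rank, and $|\Phi_d(q)| \leqs (q+1)^{\varphi(d)}$, we obtain $|\bar T_\s| \leqs (q+1)^8$ and hence
\[
|H| \leqs f\cdot |W(E_8)| \cdot (q+1)^8,
\]
where $f$ divides $\log_p q$ and absorbs any contribution from field automorphisms in $G/G_0$. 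This is dwarfed by the minimum class size $\ell_1 = q^{58}$ from Table \ref{tab:cbds}, which is what will make $\mathcal{Q}(G,H)$ small.

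I would split the prime-order elements of $H$ into four families and estimate each contribution to $\mathcal{Q}(G,H)$ separately, applying Lemma \ref{l:calc}. For long root elements, which can arise only when $p=2$ by Corollary \ref{c:root}, that corollary gives the sharp bound $|x^G \cap H| \leqs |\Sigma^{+}(E_8)|\cdot|\bar T_\s| \leqs 120(q+1)^8$, so the corresponding contribution is at most $(120(q+1)^8)^2/q^{58}$. For every other prime-order element $x$ I would use only the trivial bound $|x^G \cap H| \leqs |H|$ together with the relevant lower bound on $|x^G|$ from Proposition \ref{p:bounds}: namely $\ell_2 = q^{92}$ for non-long-root unipotent elements, $\min(\ell_3,\ell_4) = q^{112}$ for semisimple elements of prime order, and $\ell_5 = q^{124}$ for prime-order field automorphisms. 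Note that nontrivial unipotent elements of $H$ can exist only when $p \in \{2,3,5,7\}$, since every element of $\bar T_\s$ is semisimple and any unipotent element of $H$ must map to a nontrivial $p$-element of the Weyl quotient $H/\bar T_\s \leqs W(E_8)$.

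Adding the four contributions produces an explicit bound of the form
\[
\mathcal{Q}(G,H) \leqs \frac{14400(q+1)^{16}}{q^{58}} + f^2 |W(E_8)|^2 (q+1)^{16}\!\left(\frac{1}{q^{92}} + \frac{1}{q^{112}} + \frac{1}{q^{124}}\right),
\]
and a direct numerical check will confirm that this expression is strictly less than $1$ for every prime power $q$; the tightest case is $q=2$, where the long-root term is already of order $10^{-6}$, the non-long-root unipotent term is of order $10^{-3}$, and the two remaining terms are utterly negligible. The same bound visibly tends to $0$ as $q \to \infty$, which simultaneously yields the asymptotic statement required for Proposition \ref{p:mr_base} in this case.

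The only real subtlety lies in the non-long-root unipotent elements, such as unipotent involutions of $H$ whose image in $W(E_8)$ is an involution but not a reflection (these can occur when $p=2$). These elements are not controlled by Corollary \ref{c:root}, so they are not treated sharply. However, the large gap between the exponent $92$ in the lower bound $|x^G| \geqs \ell_2$ and the degree $16$ (up to logarithmic corrections) of $|H|^2$ in $q$ renders the crude count $|x^G \cap H| \leqs |H|$ amply sufficient, and no detailed analysis of the conjugacy structure of $N_{\bar G}(\bar T)/\bar T$ is needed.
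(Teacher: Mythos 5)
Your proposal is correct and follows essentially the same route as the paper: after using Corollary \ref{c:root} to bound $|x^G \cap H| \leqs 120(q+1)^8$ sharply for long root elements against $\ell_1 = q^{58}$, you estimate everything else crudely by $|H| \leqs f\cdot|W(E_8)|(q+1)^8$ against the larger class-size lower bounds and invoke Lemma~\ref{l:calc}. The only cosmetic difference is that you split the remaining elements into three subfamilies (non-long-root unipotent, semisimple, field automorphisms) with exponents $92, 112, 124$, whereas the paper lumps them all into one term with the uniform floor $q^{92}$; since each subfamily is bounded by $|H|$ separately your total constant is three times the paper's, but both are dwarfed by $q^{92}$ and the bounds agree in magnitude, with the dominant contribution at $q=2$ being the $q^{92}$-term of order $10^{-3}$ as you say. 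Your bound $|H| \leqs f\cdot|W(E_8)|(q+1)^8$ with $f = \log_p q$ is in fact marginally sharper than the paper's $\log_2 q$, though this plays no role.
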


\begin{proof}
Let $W(\bar{G}) = 2.{\rm O}_{8}^{+}(2)$ be the Weyl group of $\bar{G} = E_8$ and note that the possibilities for $H$ are recorded in \cite[Table 5.2]{LSS}. If $x \in G$ is a long root element, then Proposition \ref{p:bounds} gives $|x^G| > q^{58} = b_1$ and Corollary \ref{c:root} implies that $|x^G \cap H| \leqs 120(q+1)^8 = a_1$. For all other nontrivial elements we have $|x^G|>q^{92} = b_2$ and we note that 
\[
|H| \leqs (q+1)^8|W(\bar{G})|.\log_2q = a_2.
\]
By applying Lemma \ref{l:calc}, we deduce that  
\begin{equation}\label{e:eq1}
\mathcal{Q}(G,H) < a_1^2/b_1 + a_2^2/b_2 < q^{-1}
\end{equation}
and the result follows via Lemma \ref{l:base}.
\end{proof}

\begin{lem}\label{l:maxrank_e8_2}
Suppose $G_0=E_8(q)$ and $H$ is of type 
\[
A_4^{\e}(q)^2, \; A_4^{-}(q^2),  \; A_2^{\e}(q)^4, \; A_2^{-}(q^2)^2, \; A_2^{-}(q^4), \; A_1(q)^8.
\]
Then $b(G,H)=2$.
\end{lem}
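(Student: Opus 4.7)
The plan is to adapt the probabilistic strategy of Lemma \ref{l:maxrank_e8_1}: for each of the six types I will verify $\mathcal{Q}(G,H) < 1$ and then invoke Lemma \ref{l:base} to conclude $b(G,H) = 2$. The bookkeeping is organised through Table \ref{tab:elts}, partitioning the prime-order $G$-classes meeting $H$ into the five categories $x_1$ (long root), $x_2$ (other unipotents of order $p$), $x_3$ (semisimple involutions), $x_4$ (semisimple of odd prime order), and $x_5$ (elements of $G \setminus {\rm Inndiag}(G_0)$). For each category, Lemma \ref{l:calc} bounds the corresponding summand by $A_i^2/b_i$, where $b_i$ is taken from the $E_8(q)$ row of Table \ref{tab:cbds}: $b_1 = q^{58}$, $b_2 = q^{92}$, $b_3 = q^{112}$, $b_4 = \a q^{114}$ and $b_5 = q^{124}$.

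Next I would record the coarse estimate $|H| \leqs c\, q^{\dim \bar H^0}$, where the constant $c$ absorbs the diagonal, field and graph contributions in $H$ together with the permutation action on the simple factors of $\bar H^0$. This yields $|H| \leqs c\, q^{48}$ for types $A_4^{\e}(q)^2$ and $A_4^{-}(q^2)$, $|H| \leqs c\, q^{32}$ for types $A_2^{\e}(q)^4$, $A_2^{-}(q^2)^2$ and $A_2^{-}(q^4)$, and $|H| \leqs c\, q^{24}$ for $A_1(q)^8$. The crude bound $A_i \leqs |H|$ for $i \in \{3,4,5\}$ is then adequate: in the worst case $\dim \bar H^0 = 48$ one obtains $A_3^2/b_3 \leqs c^2 q^{-16}$, $A_4^2/b_4 \leqs c^2 q^{-17}/(q-1)$ and $A_5^2/b_5 \leqs c^2 q^{-28}$, all comfortably small.

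The two remaining categories need a sharper treatment. For $A_1$, since $\bar H^0$ is semisimple in every listed case, Proposition \ref{p:root}(i) tells me that any long root element of $\bar G$ lying in $\bar H^0$ must be a root element in one of the simple factors of $\bar H^0$; a routine count of root elements in each factor of type $A_n^{\e}(q^k)$ gives $O(q^{2nk})$, summing to $O(q^{16})$ in the worst case $A_4^{-}(q^2)$, and hence $A_1^2/b_1 = O(q^{-26})$. For $A_2$, Proposition \ref{p:i23}(ii) applied to each $\s$-stable simple factor of $\bar H$ shows that the total number of unipotents in $\bar H_{\s}$ is the product $\prod_j q^{2N_j}$, where $N_j$ is the number of positive roots of the $j$-th simple factor; the worst case is $A_4^{\e}(q)^2$ with bound $q^{40}$, giving $A_2^2/b_2 \leqs q^{-12}$. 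Summing the five contributions yields $\mathcal{Q}(G,H) < 1$, and in fact $\mathcal{Q}(G,H) = O(q^{-\e})$ for some $\e > 0$, which simultaneously establishes the asymptotic statement in Proposition \ref{p:mr_base}.

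I expect the principal obstacle to be uniform control of the constant $c$ across the six types, since the finite quotient $H/\bar H^0_{\s}$ incorporates the Weyl-group quotient $W(\bar G)/W(\bar H^0)$ permuting the simple factors (for instance of order $|W(E_8)|/2^8$ in type $A_1(q)^8$), together with diagonal, field and graph contributions. Fortunately the gap $\dim E_8 - 2\dim \bar H^0 \geqs 152$ provides a very wide exponent margin, so once $c$ is bounded uniformly by $|W(E_8)|$ times a polynomial in $\log q$, the inequality $\mathcal{Q}(G,H) < 1$ will hold for every admissible $q$ and the lemma will follow.
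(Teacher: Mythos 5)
Your overall strategy — a uniform five-way split of the prime-order classes via Table \ref{tab:elts}, the $A_1$-bound via Proposition \ref{p:root}(i), the unipotent count via Proposition \ref{p:i23}(ii), and the crude $A_i \leqs |H|$ for $i=3,4,5$ — is a valid and arguably cleaner alternative to the paper's case-by-case thresholds. However, your concluding paragraph contains a genuine gap in the quantitative closure.

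The relevant exponent margin for the category $x_3$ is $\ell_3 - 2\dim\bar{H}^0$, which you in fact computed correctly in the middle of the argument: for the $A_4$ types, with $\ell_3 = q^{112}$ and $\dim \bar H^0 = 48$, the margin is $112-96 = 16$, not $\dim E_8 - 2\dim\bar H^0 = 152$. The number $152$ plays no role anywhere in your estimates, and conflating it with the true margin is exactly what makes the final claim break down. With margin $16$, a constant $c$ of the size $|W(E_8)| \approx 7\cdot 10^8$ gives $c^2 q^{-16} \approx 7\cdot 10^{12}$ at $q=2$, vastly exceeding $1$; the inequality $\mathcal{Q}(G,H)<1$ would then fail for every $q$ up to roughly $q=13$. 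In short, "uniform control of $c$ by $|W(E_8)|$ times a polynomial in $\log q$" is far too weak for the $A_4^{\e}(q)^2$ and $A_4^{-}(q^2)$ cases, which are exactly the tight ones.

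The argument does close, but only after one reads off the actual component groups $\bar H/\bar H^0$ from \cite[Table 5.1]{LSS}: they are cyclic of order $4$ for the $A_4$ types, $\mathrm{GL}_2(3)$ for the $A_2$ types and $\mathrm{AGL}_3(2)$ for $A_1(q)^8$. (Note also that this quotient is $N_{W(\bar G)}(W(\bar H^0))/W(\bar H^0)$, not $W(\bar G)/W(\bar H^0)$; for $A_1^8$ the correct order is $|\mathrm{AGL}_3(2)| = 1344$, not $|W(E_8)|/2^8$.) With the correct value $c = 4\log_2 q$ in the $A_4$ cases, one gets $c^2q^{-16} = 16(\log_2 q)^2 q^{-16} \leqs 16/65536$ already at $q=2$, and the remaining contributions are as small as you claimed. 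So the plan is sound and the bookkeeping you set up does work, but it is not robust to an unverified overestimate of $c$: the smallness of the component groups in \cite{LSS} has to be invoked explicitly, precisely because the margin for the $A_4$ types is so narrow. For comparison, the paper recognises this tightness and treats the $A_4$ types with a finer three-way threshold at $q^{112}$, identifying the classes $A_1$ and $A_1^2$ explicitly and bounding them directly, rather than relying on a uniform crude $|H|$-bound for the semisimple contribution.
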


\begin{proof}
In each case, the precise structure of $H_0 = H \cap G_0$ is presented in \cite[Table 5.1]{LSS}. 

First assume $H$ is of type $A_1(q)^8$. If $x \in G$ is a long root element, then $|x^G|>q^{58} = b_1$ and Proposition \ref{p:root} implies that $|x^G \cap H| = 8(q^2-1)=a_1$. Otherwise, $|x^G|>q^{92} = b_2$ and
\[
|H| \leqs (q(q^2-1))^8|{\rm AGL}_{3}(2)|.\log_2q = a_2.
\]
These bounds imply that \eqref{e:eq1} holds and the result follows. An entirely similar argument applies when $H$ is of type $A_2^{\e}(q)^4$ and we omit the details.

Next assume $H$ is of type $A_2^{-}(q^2)^2$ or $A_2^{-}(q^4)$. By considering the structure of $H_0$ and its embedding in $\bar{H} = A_2^4.{\rm GL}_{2}(3)$, it follows via  Proposition \ref{p:root} that $H$ contains no long root elements of $G_0$. Therefore, $|x^G|>q^{92} = b_1$ for all nontrivial $x \in H$ and we observe that $|H|< 8\log_2q.q^{32} = a_1$. This gives $\mathcal{Q}(G,H) < a_1^2/b_1 < q^{-1}$.

Finally, let us assume $H$ is of type $A_4^{-}(q^2)$ or $A_4^{\e}(q)^2$, so $\bar{H}^0 = A_4^2$ and $\bar{H} = \bar{H}^0.4$. The total contribution to $\mathcal{Q}(G,H)$ from elements $x \in G$ with $|x^G|>q^{112}=b_1$ is less than $a_1^2/b_1$, where $a_1 = 4\log_2q.q^{48}$ is an upper bound on $|H|$. So it remains to consider the contribution from the elements with $|x^G| \leqs q^{112}$, which implies that $x$ is a unipotent element in one of the classes labelled $A_1$ and $A_1^2$ (see \cite{LieS,Lubeck}).

Let $V$ be the adjoint module for $\bar{G}$. By considering the composition factors of the restriction of $V$ to $\bar{H}^0$ (see \cite[Table 5]{Thomas}, for example), we calculate that if $p=2$ then each involution in $\bar{H} \setminus \bar{H}^0$ has Jordan form $(J_{2}^{120}, J_1^8)$ on $V$ and by  inspecting \cite[Table 9]{Lawunip} we deduce that they are contained in the class labelled $A_1^4$ in \cite[Table 22.2.1]{LieS}. In particular, the condition $|x^G| \leqs q^{112}$ implies that $x^G \cap H \subseteq \bar{H}^0$. 

Suppose $H$ is of type $A_4^{-}(q^2)$. By considering the embedding of $H_0$ in $G_0$, we deduce that $H$ does not contain any long root elements of $G_0$, so we may assume $x$ is in the $A_1^2$ class and thus $|x^G|>q^{92}=b_2$. Here $|x^G \cap H|$ coincides with the number of long root elements in ${\rm U}_{5}(q^2)$ (we noted above that $\bar{H}\setminus \bar{H}^0$ contains no elements in the $A_1^2$ class), so 
\[
|x^G \cap H|= (q^2-1)(q^4+1)(q^{10}+1) < 4q^{16}= a_2
\]
and we conclude that \eqref{e:eq1} holds.

Finally, suppose $H$ is of type $A_4^{\e}(q)^2$. By applying Proposition \ref{p:root}, we deduce that if $x \in G$ is a long root element, then 
\[
|x^G \cap H| \leqs 2(q+1)(q^2+1)(q^5-1) = a_3
\] 
and $|x^G|>q^{58} = b_3$. Now assume $x \in G$ is a unipotent element in the $A_1^2$ class. Suppose $y \in {\rm L}_{5}^{\e}(q)$ has Jordan form $(J_2^2,J_1)$ on the natural module and let $z \in {\rm L}_{5}^{\e}(q)$ be a long root element (so $z$ has Jordan form $(J_2,J_1^3)$). Then by appealing to \cite[Section 4.17]{Law09}, we calculate that
\[
|x^G \cap H| = 2|y^{{\rm L}_{5}^{\e}(q)}| + |z^{{\rm L}_{5}^{\e}(q)}|^2  = 2\left(\frac{|{\rm SL}_{5}^{\e}(q)|}{q^8|{\rm GL}_{2}^{\e}(q)|}\right) + \left(\frac{|{\rm SL}_{5}^{\e}(q)|}{q^{7}|{\rm GL}_{3}^{\e}(q)|} \right)^2< a_2
\]
and thus 
\[
\mathcal{Q}(G,H) < a_1^2/b_1 + a_2^2/b_2 + a_3^2/b_3 < q^{-1}
\]
as required. 
\end{proof}

\begin{lem}\label{l:maxrank_e8_3}
Suppose $G_0=E_8(q)$ and $H$ is of type 
\[
D_4(q)^2, \; D_4(q^2), \; {}^3D_4(q)^2, \; {}^3D_4(q^2).
\]
Then $b(G,H)=2$.
\end{lem}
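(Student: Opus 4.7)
The plan is to mimic the strategy of Lemmas \ref{l:maxrank_e8_1} and \ref{l:maxrank_e8_2}: establish $\mathcal{Q}(G,H)<1$ via Lemma \ref{l:base} by splitting the sum according to whether $x$ is a long root element of $G$ or not. In each of the four cases, the precise structure of $H_0$ is given in \cite[Table 5.1]{LSS}; in particular, the connected component $\bar{H}^0$ is $D_4^2$ (for types $D_4(q)^2$ and ${}^3D_4(q)^2$) or $D_4$ over $\bar{\mathbb{F}}_p$-structure twisted appropriately (for $D_4(q^2)$ and ${}^3D_4(q^2)$), and $\bar{H}^0$ is semisimple in every case.

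First I would isolate the contribution from long root elements. Since $\bar{H}^0$ is semisimple, Proposition \ref{p:root}(i) forces any long root element of $\bar{G}$ contained in $\bar{H}^0$ to be a root element of one of its simple $D_4$ factors. To rule out additional long root elements in $\bar{H} \setminus \bar{H}^0$, I would appeal to Proposition \ref{p:root}(ii), which requires $p=2$ and a $D_n$-type simple factor centralizing an auxiliary subgroup; a short Jordan-form computation on the adjoint module $V$, in the spirit of Lemma \ref{l:maxrank_e8_2}, shows that the coset representatives of $\bar{H}/\bar{H}^0$ (which include triality-type elements in the ${}^3D_4$ cases) have too many nontrivial Jordan blocks to lie in the long root class of $\bar{G}$. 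Using the well-known count of long root elements in $D_4(q)$ and ${}^3D_4(q)$ (of order roughly $q^{10}$ and $q^{10}$ respectively, with the obvious replacement $q \mapsto q^2$ in the untwisted cases), this yields a bound $|x^G\cap H| \leqs a_1$ polynomial in $q$ of degree at most $20$, while Proposition \ref{p:bounds} gives $|x^G|>q^{58}=b_1$.

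For every other element of prime order in $H$, Proposition \ref{p:bounds} provides the uniform lower bound $|x^G|>q^{92}=b_2$, and we have the crude estimate
\[
|H| \leqs |\bar{H}^0_\s|\cdot|\bar{H}/\bar{H}^0|\cdot \log_2 q \leqs a_2,
\]
where $|\bar{H}^0_\sigma| \leqs q^{56}$ in each of the four cases and $|\bar{H}/\bar{H}^0|$ is bounded by a constant independent of $q$. Applying Lemma \ref{l:calc} to these two groups of contributions gives $\mathcal{Q}(G,H) < a_1^2/b_1 + a_2^2/b_2 < q^{-1}$ along the lines of \eqref{e:eq1}, which yields $b(G,H)=2$ via Lemma \ref{l:base} and simultaneously establishes the asymptotic statement in Proposition \ref{p:mr_base}.

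The main obstacle is the careful bookkeeping required for the outer coset $\bar{H}\setminus\bar{H}^0$: in the $D_4$-type cases this coset involves triality-like symmetries (genuinely so for the ${}^3D_4$ factors), and one must verify that no such element is fused in $\bar{G}$ into the long root class. I would handle this by computing the composition factors of $V\downarrow \bar{H}^0$ and reading off Jordan forms via \cite[Table 9]{Lawunip}, exactly as was done for $\bar{H}^0=A_4^2$ in the proof of Lemma \ref{l:maxrank_e8_2}; this is routine but slightly more intricate here because the relevant triality symmetries have order $3$ and the analysis splits according to the residue of $q$ modulo $3$.
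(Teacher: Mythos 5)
Your two-bucket split is too coarse to close the argument here, and it fails numerically. For the $D_4$-type cases we have $|H_0| \approx q^{56}$ (not $q^{24}$ or $q^{32}$ as in the torus and $A_1(q)^8$, $A_2^\epsilon(q)^4$ cases where the two-term estimate works). If you lump all non-long-root elements together with the lower bound $|x^G|>q^{92}=b_2$ and the crude upper bound $a_2 = |H| \leqs 12q^{56}\log_2 q$, then
\[
a_2^2/b_2 \approx 144(\log_2 q)^2\, q^{112}/q^{92} = 144(\log_2 q)^2\, q^{20},
\]
which is astronomically larger than $1$. The same obstacle already forced a finer split in the $A_4^\epsilon(q)^2$ and $A_4^-(q^2)$ cases of Lemma \ref{l:maxrank_e8_2} (there $|H| \approx q^{48}$, so $|H|^2/q^{92} \approx q^4$), and it is even more severe here.

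What is actually needed, and what the paper does, is a much finer decomposition of the non-root contribution. One reserves the full bound $|H| \leqs 12q^{56}\log_2 q$ only for elements $x$ with $|x^G|>\tfrac12 q^{124}$ (outer automorphisms, large semisimple classes, etc.), since $q^{112}/q^{124}$ is then small. For elements of order $2$ or $3$ — the dangerous case, because $D_4$-type groups have many involutions — one must apply Proposition \ref{p:i23}(i) to get a genuine improvement to roughly $i_r(H_0) \lesssim (q+1)^2 q^{38}$, so that $i_r(H_0)^2/q^{92}$ is small. For semisimple elements of order $\geqs 5$ one can use the better bound $|x^G| > \tfrac12 q^{114}$, for unipotent elements not in the class $A_1^2$ one uses $|x^G|>q^{112}$ together with the count $q^{48}$ of unipotent elements, and the class $A_1^2$ is then handled separately by the Jordan-form analysis on the adjoint module that you sketch, giving $|x^G\cap H|\leqs 2q^{20}$ against $|x^G|>q^{92}$. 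Your proposal correctly anticipates the long-root and $A_1^2$ calculations (via Proposition \ref{p:root} and Lawther's Jordan-block tables), but without the intermediate strata the total does not come in under $1$.
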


\begin{proof}
Here $\bar{H} = \bar{H}^0.(2 \times {\rm Sym}_3)$ and $\bar{H}^0 = D_4^2$. The precise structure of $H_0$ is presented in \cite[Table 5.1]{LSS} and in each case, one checks that  
\[
|H|< 12q^{56}\log_2q = a_1.
\] 
Therefore, the contribution to $\mathcal{Q}(G,H)$ from elements $x \in G$ with $|x^G|> \frac{1}{2}q^{124} = b_1$ is less than $a_1^2/b_1$. For the remainder, let $x \in H$ be an element of prime order $r$ with $|x^G| \leqs b_1$. Note that the bound on $|x^G|$ implies that $x \in G_0$ (see Proposition \ref{p:bounds}).

Suppose $x \in H_0$ is a long root element of $G_0$, so $|x^G|>q^{58} = b_2$. By considering Proposition \ref{p:root} and the embedding of $H_0$, we see that $H$ is of type $D_4(q)^2$ or ${}^3D_4(q)^2$. Since there are fewer than $q^{10}$ long root elements in both ${\rm P\O}_{8}^{+}(q)$ and ${}^3D_4(q)$, it follows that $|x^G \cap H|< 2q^{10} = a_2$. For the remainder, we may assume $q^{92} < |x^G| \leqs b_1$. 

Next suppose $r \in \{2,3\}$. By inspecting each possibility for $H_0$ in turn and applying \cite[Proposition 1.3]{LLS2}, we deduce that
\[
i_r(H_0) \leqs 4(q+1)^2q^{38} = a_3.
\]
For example, suppose $H$ is of type $D_4(q)^2$, so  \cite[Table 5.1]{LSS} gives
\[
H_0 = d^2.{\rm P\O}_{8}^{+}(q)^2.d^2.({\rm Sym}_3 \times 2)
\]
with $d = (2,q-1)$. Let $Z$ be the normal subgroup of order $d^2$. Then 
\[
i_3(H_0)  = i_3(H_0/Z) \leqs i_3({\rm Aut}({\rm P\O}_{8}^{+}(q)) \wr {\rm Sym}_2)
\]
and by applying Proposition \ref{p:i23}(i) we deduce that
\[
i_3(H_0) \leqs (1+i_3({\rm Aut}({\rm P\O}_{8}^{+}(q))))^2 \leqs (2(q+1)q^{19})^2 = 4(q+1)^2q^{38}.
\]
Similarly, 
\begin{align*}
i_2(H_0) & \leqs |Z|\cdot (1+i_2({\rm Aut}({\rm P\O}_{8}^{+}(q)) \wr {\rm Sym}_2)) \\
& \leqs |Z| \cdot (1+(1+i_2({\rm Aut}({\rm P\O}_{8}^{+}(q))))^2 + |{\rm Aut}({\rm P\O}_{8}^{+}(q))|) \\
& < 4\left(4(q+1)^2q^{30}+6\log_2q.q^{28}\right)
\end{align*}
and the desired bound follows. The other cases are very similar. We conclude that the combined contribution to $\mathcal{Q}(G,H)$ from elements $x \in G$ of order $2$ and $3$ with $|x^G|>q^{92} = b_3$ is less than $2a_3^2/b_3$. 

Now assume $r \geqs 5$, so $i_r(H_0) = i_r(L)$, where $L = {\rm P\O}_{8}^{+}(q)^2$, ${\rm P\O}_{8}^{+}(q^2)$, ${}^3D_4(q)^2$ and ${}^3D_4(q^2)$ in each of the respective cases. If $r \ne p$ then $|x^G|>\frac{1}{2}q^{114}=b_4$ and $|L|<q^{56} = a_4$. Now assume $r=p$. If $x$ is not in the class $A_1^2$, then $|x^G| > q^{112} = b_5$ and we note that $L$ contains fewer than $q^{48}=a_5$ elements of order $p$ (see Proposition \ref{p:i23}(ii)). 

Finally, suppose $x$ is a unipotent element in the $A_1^2$ class, so $|x^G|>q^{92} = b_6$. Let $V$ be the adjoint module for $\bar{G}$ and note that 
$x$ has Jordan form $(J_3^{14},J_2^{64},J_1^{78})$ on $V$ (see \cite[Table 9]{Lawunip}). By considering the restriction of $V$ to $\bar{H}^0 = D_4^2$ (see \cite[Table 5]{Thomas}), we deduce that $x \in \bar{H}^0$ is of the form $(u,u')$, $(v,1)$ or $(1,v)$, where $u,u' \in D_4$ are long root elements and $v \in D_4$ is in the class labelled $A_1^2$ (that is, $v$ has Jordan form $(J_3,J_1^5)$ on the natural module for $D_4$). See \cite[p.2327]{BGS} for further details.

If $L = {\rm P\O}_{8}^{+}(q^2)$ or ${}^3D_4(q^2)$, it follows that $|x^G \cap H| = |z^L|$, where $z \in L$ is a long root element, and we deduce that $|x^G \cap H| < 2q^{20}=a_6$. Similarly, if $L = {}^3D_4(q)^2$ then $|x^G \cap H| = |z^{{}^3D_4(q)}|^2$, where $z \in {}^3D_4(q)$ is a long root element, and the same bound holds. Finally, suppose $L = {\rm P\O}_{8}^{+}(q)^2$. Here
\[
|x^G \cap H| = 2|y^{{\rm P\O}_{8}^{+}(q)}| + |z^{{\rm P\O}_{8}^{+}(q)}|^2 < a_6
\]
where $y \in {\rm P\O}_{8}^{+}(q)$ is a unipotent element in the $A_1^2$ class of $D_4$ and $z \in {\rm P\O}_{8}^{+}(q)$ is a long root element.

By bringing the above estimates together, we conclude that
\[
\mathcal{Q}(G,H) < \sum_{i=1}^{6} a_i^2/b_i + a_3^2/b_3 < q^{-1}
\]
for all $q \geqs 3$. In addition, the given bound is less than $1$ when $q=2$.
\end{proof}

\begin{lem}\label{l:maxrank_e8_4}
Suppose $G_0=E_8(q)$ and $H$ is of type 
\[
D_8(q), \; A_1(q)E_7(q), \; A_8^{\e}(q), \; A_2^{\e}(q)E_6^{\e}(q). 
\]
Then $G$ is not extremely primitive. 
\end{lem}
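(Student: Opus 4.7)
The plan is to apply Lemma \ref{l:simple} in each of the four cases, generalising the construction outlined in Example \ref{e:ex}. In every case we will exhibit a subsystem subgroup $K$ of $H_0$ that is also contained in a second maximal rank subgroup of $G_0$ taken from Table \ref{tab:mr}. The crucial feature is that this second overgroup provides extra normalisers of $K$ inside $G_0$ that do not lie in $H_0$, yielding the element $g$ required by condition (iii) of Lemma \ref{l:simple}.

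Concretely, for $H$ of type $D_8(q)$ I would take $K$ to be the subsystem subgroup corresponding to $D_4 D_4 < D_8$, realising $K$ as essentially $\Omega_8^+(q)^2$; this extends Example \ref{e:ex} to all $q$, with the second maximal rank overgroup of type $D_4(q)^2$ contributing triality. For $H$ of type $A_1(q) E_7(q)$, I would take $K$ of type $A_1(q)^8$, realised inside $H$ via the $A_1^7$ subsystem of $E_7$ together with the extra $A_1$ factor; this $K$ also sits in the maximal rank subgroup of type $A_1(q)^8$, normalised there by ${\rm AGL}_3(2)$. For $H$ of type $A_8^\e(q)$, I would take $K$ corresponding to the block subsystem $A_2^3 < A_8$ (essentially ${\rm SL}_3^\e(q)^3$ modulo centres), which embeds in the maximal rank subgroup of type $A_2^\e(q)^4$ by adjoining a fourth factor. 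Finally, for $H$ of type $A_2^\e(q) E_6^\e(q)$, I would take $K$ of type $A_2^\e(q)^4$, built from the $A_2$ factor of $H_0$ together with the maximal $A_2^3$ subsystem of $E_6^\e(q)$, again landing inside the $A_2^\e(q)^4$ maximal rank subgroup.

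In each case the verification of the hypotheses of Lemma \ref{l:simple} reduces to three standard checks: (i) $K$ is $A$-stable, where $A = G/G_0$; (ii) the set $\mathcal{M}$ of maximal overgroups of $K$ in $H_0$ consists of $A$-stable subgroups, which are essentially the normalisers coming from the intrinsic symmetry of the chosen subsystem within $H$; and (iii) there exists $g \in N_{G_0}(K)$ with $M^g \not\leqs H_0$ for every $M \in \mathcal{M}$. For (iii), the element $g$ will be supplied by the outer automorphisms of $K$ coming from the second maximal rank overgroup (triality for $D_4^2$, the ${\rm AGL}_3(2)$-symmetry for $A_1^8$, and the extra factor permutation for $A_2^4$); these lie in $N_{G_0}(K)$ but not in $N_{H_0}(K)$, and a direct argument shows they do not preserve any proper overgroup of $K$ inside $H_0$.

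I expect the main obstacle to be the bookkeeping in (ii) and (iii), in particular keeping track of the centres and diagonal twists when passing between the algebraic group and the finite group levels. In the twisted cases $A_8^-(q)$ and $A_2^-(q) E_6^-(q)$ the Steinberg endomorphism involves a graph automorphism that acts non-trivially on the chosen subsystem, so compatibility of $K$ with the $\s$-action needs care; likewise, for $q$ odd in the $D_8(q)$ case the structure of $Z(H_0)$ differs from the setting of Example \ref{e:ex}, and one has to check that the chosen $K$ still admits a unique $A$-stable maximal overgroup in $H_0$, so that condition (iii) amounts to moving a single subgroup out of $H_0$.
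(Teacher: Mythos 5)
Your instinct to use Lemma \ref{l:simple} via a subgroup $K$ of $H_0$ that also lies in a second maximal rank subgroup is the right idea for the case $H$ of type $A_8^\e(q)$, and for $D_8(q)$ with $q$ even it reproduces Example \ref{e:ex} exactly. However, the paper disposes of three of the four types (and half of the fourth) by the far shorter route of Lemma \ref{l:structure}, and you should notice this: for $H$ of type $A_1(q)E_7(q)$ or $A_2^\e(q)E_6^\e(q)$, if $q$ is odd then $H$ is an involution centralizer and $Z(H)\neq 1$, while if $q$ is even the socle of $H$ is a product of two \emph{non-isomorphic} simple groups, so Lemma \ref{l:structure}(i) or (v) applies directly and no Lemma \ref{l:simple} construction is needed at all; for $D_8(q)$ with $q$ odd one likewise has $Z(H)\neq 1$. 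Building an $A_1(q)^8$ or $A_2^\e(q)^4$ argument for the product types is not wrong in principle, but it is entirely superfluous.

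The genuine gap is in $A_8^\e(q)$. If you take $K$ to be ${\rm SL}_3^\e(q)^3$ modulo centres, then $K$ is the derived group of a Levi factor of $H_0$, and hence lies in at least two maximal parabolic subgroups of $H_0$; the set $\mathcal{M}$ of maximal overgroups of $K$ is therefore not a single subgroup and you must show $g$ moves every parabolic overgroup out of $H_0$ as well. This is a real additional difficulty (compare the $q=2$ subcase of Lemma \ref{l:maxrank_e7_4}, where the paper is forced to do precisely this kind of bookkeeping for the analogous $A_3^2$ Levi of $A_7$). The paper sidesteps it entirely by \emph{twisting} the Steinberg endomorphism: for $\e=+$ one multiplies $\s$ by the lift of an element of order $3$ in $N_{\bar{H}^0}(\bar{L})/\bar{L}$, so that $K = (\bar{K}_\s)' \cong {\rm L}_3(q^3)$ sits irreducibly inside ${\rm L}_9(q)$, lies in no parabolic of $H_0$, and has a unique maximal overgroup $N_{H_0}(K)$; a similar twist by an involution handles $\e=-$. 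Moreover, your proposed source of $g$ — the ``extra factor permutation for $A_2^4$'' — does not work: permutations of the four $A_2$ factors that move the fourth factor do not normalize $\bar{K}$ of type $A_2^3$, and the ${\rm Sym}_3$ stabilizing it already sits inside $N_{H_0}(K)$. The correct source of $g$ is the \emph{centralizer} $C_{G_0}(K)$, which (after the twist) is essentially $\bar{M}_\sigma = {\rm SL}_3(q)$, the fourth $A_2$ factor itself; $g$ is chosen there so as not to normalize the maximal torus $C_{H_0}(K) = Z_{q^2+q+1}$, which forces $N_{H_0}(K)^g \not\leqs H_0$. Without the twist to kill the parabolic overgroups and without drawing $g$ from the centralizer rather than from factor permutations, condition (iii) of Lemma \ref{l:simple} is not verified.
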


\begin{proof}
Write $G = G_0.A$, where $A$ is a group of field automorphisms of $G_0$. Set $d=(2,q-1)$.

First assume $H$ is of type $A_1(q)E_7(q)$, so $H_0 = d.({\rm L}_{2}(q) \times E_7(q)).d$ (see \cite[Table 5.1]{LSS}). If $q$ is odd then $H$ is the centralizer of an involution, so $Z(H) \ne 1$ and $G$ is not extremely primitive by Lemma \ref{l:structure}(i). On the other hand, if $q$ is even then ${\rm soc}(H)$ is not a direct product of isomorphic simple groups (indeed, either $q=2$ and ${\rm soc}(H) = 3 \times E_7(2)$, or $q \geqs 4$ and ${\rm soc}(H) = {\rm L}_{2}(q) \times E_7(q)$), so extreme primitivity is ruled out by Lemma \ref{l:structure}(v). A very similar argument shows that $G$ is not extremely primitive if $H$ is of type $A_2^{\e}(q)E_6^{\e}(q)$.

Next assume $H$ is of type $D_8(q)$, so $H_0 = d.{\rm P\O}_{16}^{+}(q).d$. In view of Lemma \ref{l:structure}(i), we may assume $q$ is even, so $H_0 = \O_{16}^{+}(q)$ and $H = H_0.A$. Let $K = \Omega^+_8(q)^2<H_0$ and observe that $M = N_{H_0}(K) = K.2^2$ is the unique maximal overgroup of $K$ in $H_0$. In addition, we note that  
$N_{G_0}(K) = K.({\rm Sym}_{3} \times 2)$, which is a maximal subgroup of $G_0$ (see \cite[Table 5.1]{LSS}). Clearly, $M$ is not a normal subgroup of $N_{G_0}(K)$ and so there exists $g \in N_{G_0}(K)$ which does not normalize $M$. Finally, since $M$ is the unique maximal overgroup of $K$ in $H_0$, it follows that $M^g \not\leqs H_0$ and we conclude by applying Lemma \ref{l:simple}, noting that $K$ and $M$ are both $A$-stable.

Finally, let us assume $H$ is of type $A_8^{\e}(q)$, so $H_0 = h.\text{L}^\epsilon_9(q).e.2$, where $e=(3,q-\e)$ and $h = (9,q-\e)/e$. In view of Lemma \ref{l:structure}, we may assume that $h=1$.

Fix a set of simple roots $\a_1, \ldots, \a_8$ for $\bar{G}$, labelled in the usual way (see \cite{Bou}), and let $X_{\a}$ be the root subgroup of $\bar{G}$ corresponding to the root $\a$. We may assume that $\bar{H} = \bar{H}^0.2$, where  
\[
\bar{H}^0 = \la X_{\pm \a_1}, X_{\pm \a_3}, X_{\pm \a_4}, X_{\pm \a_5}, X_{\pm \a_6}, X_{\pm \a_7}, X_{\pm \a_8}, X_{\pm \a_0} \ra
\]
is of type $A_8$ and $\a_0$ is the highest root. Let $\bar{P}=\bar{Q}\bar{L}$ be the parabolic subgroup of $\bar{H}^0$ corresponding to the simple roots $\a_4$ and $\a_7$ with Levi factor $\bar{L} = A_2^3T_2$. Note that $\bar{L}$ is contained in a maximal rank subgroup $\bar{J}$ of $\bar{H}^0$ of type $A_2^4$ (indeed, as noted in \cite[Table~2]{LS94}, we have $C_{\bar{G}}(A_2)^0 = E_6$ and $C_{E_6}(A_2)^0 = A_2^2$). The subgroup $Z(\bar{L})^0 = T_2$ centralizes $\bar{K}$ and is therefore a maximal torus in the fourth $A_2$ factor of $\bar{J}$, which we denote by $\bar{M}$. So $C_{\bar{G}}(\bar{K})^0 = \bar{M}$ and $N_{\bar{M}}(Z(\bar{L})^0) = T_2.{\rm Sym}_3$. A straightforward calculation in the Weyl group of $\bar{H}^0$ shows that $N_{\bar{H}^0}(\bar{L}) = \bar{L}.{\rm Sym}_3$, where ${\rm Sym}_3$ acts naturally on the factors of $\bar{K} = \bar{L}' = A_2^3$ and it acts on $Z(\bar{L})^0$ in the same way as the Weyl group of $\bar{M}$. By working in the Weyl group of $\bar{G}$ we see that $N_{\bar{G}}(\bar{L}) = \bar{L}.({\rm Sym}_3 \times {\rm Sym}_3 \times 2)$, where the first ${\rm Sym}_3$ factor naturally permutes the $A_2$ factors, the second acts on $Z(\bar{L})^0$ as before and the central involution acts as a graph automorphism on each $A_2$ factor and inverts $Z(\bar{L})^0$. In particular, $N_{\bar{H}^0}(\bar{L}) / \bar{L}$ is isomorphic to a diagonal subgroup of ${\rm Sym}_3 \times {\rm Sym}_3$. 

We now have a maximal rank subgroup $\bar{L}$ with $N_{\bar{H}^0}(\bar{L})/\bar{L} \cong {\rm Sym}_3$ and we note that $\bar{H}$ and $\bar{L}$ are $\s$-stable. As explained in \cite[Section~1]{LSS}, we may compose the Steinberg endormorphism $\s$ of $\bar{G}$ with the inner automorphism of $\bar{G}$ corresponding to the lift of an element in $N_{\bar{H}^0}(\bar{L})/\bar{L}$. By a slight abuse of notation, we will write $\s$ to denote this composition. Then by choosing the element in $N_{\bar{H}^0}(\bar{L})/\bar{L}$ appropriately, we will obtain the two different possibilities for $H_0$ by taking the fixed points in $\bar{H}$ of the modified map $\s$. We consider the cases $\e=+$ and $\e=-$ separately. 

First assume $\e=+$. Here we take $\s$ to be the product of the standard Frobenius morphism of $\bar{G}$ with an inner automorphism corresponding to the lift of an element of order $3$ in $N_{\bar{H}^0}(\bar{L})/\bar{L}$. Set
$K = (\bar{K}_\sigma)' = \text{L}_3(q^3) < H_0$.
By inspecting \cite[Tables 8.54 and 8.55]{BHR}, we see that $K$ is contained in a unique maximal subgroup of $H_0$, namely 
\[
S = N_{H_0}(K) = (Z_{q^2+q+1} \times \text{L}_3(q^3)).3.e.2
\]
(recall that we may assume $h=1$). It follows that $C_{H_0}(K) = C_{S}(K) = Z_{q^2+q+1}$, which we know from above is a maximal torus of $\bar{M}_\sigma = \text{SL}_3(q) \leqs C_{G_0}(K)$. Therefore, we may choose $g \in \bar{M}_\sigma$ so that it does not normalize $C_{H_0}(K)$. It follows that $g$ does not normalize $S$ because otherwise $C_{S}(K)^g = C_{S^g}(K^g) = C_{S}(K)$, which contradicts our choice of $g$. Therefore, $S^g \not\leqs H_0$. Since $K$ and $S$ are $A$-stable, we can now conclude by applying Lemma \ref{l:simple}. 

Finally, let us assume $\e=-$. Let $\s$ be the product of the standard Frobenius morphism with the lift of an involution in $N_{\bar{H}^0}(\bar{L})/\bar{L}$. Set $K = \bar{L}_\sigma$.
Then by considering \cite[Tables 8.56 and 8.57]{BHR}, 
we deduce that $S = N_{H_0}(K) = K.({\rm Sym}_3 \times 2)$ is the unique maximal overgroup of $K$ in $H_0$. As noted above, we have $N_{\bar{G}}(\bar{L}) = \bar{L}.({\rm Sym}_3 \times {\rm Sym}_3 \times 2)$ and thus $S<R \leqs N_{G_0}(K)$ with 
\[
R = K.({\rm Sym}_3 \times {\rm Sym}_3 \times 2).
\]
Since $S/K$ is not normal in $R/K$, there exists $g \in R$ which does not normalize $S$ and we now conclude as above, via Lemma \ref{l:simple}.
\end{proof}

\subsection{$G_0=E_7(q)$}\label{ss:mr_e7}

\begin{lem}\label{l:maxrank_e7_1}
If $G_0=E_7(q)$ and $H$ is the normalizer of a maximal torus, then $b(G,H)=2$.
\end{lem}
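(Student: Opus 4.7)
The plan is to mirror the probabilistic argument of Lemma \ref{l:maxrank_e8_1}: by Lemma \ref{l:base} it suffices to show $\mathcal{Q}(G,H) < 1$, where the sum in \eqref{e:QGH} runs over $G$-classes of elements of prime order meeting $H$. As a bonus, if the bound on $\mathcal{Q}(G,H)$ can be made to tend to $0$ as $q \to \infty$, then the asymptotic base-size statement of Proposition \ref{p:mr_base} is obtained simultaneously, since $1 - \mathcal{Q}(G,H)$ is a lower bound for the probability that two random points of $\Omega$ form a base.

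I would partition the prime-order elements of $H$ into (a) long root elements of $G_0$ and (b) everything else. For (a), Proposition \ref{p:bounds} gives $|x^G| > q^{34} = b_1$, and Corollary \ref{c:root} gives $|x^G \cap H| \leqslant |\Sigma^{+}(E_7)|\,|\bar{T}_{\s}| \leqslant 63(q+1)^7 = a_1$. For (b), Proposition \ref{p:bounds} yields $|x^G| > q^{52} = b_2$ (one checks that $\tfrac{\a}{2}q^{54}$, $\a q^{54}$ and $\tfrac{1}{\b}q^{133/2}$ all exceed $q^{52}$ for $q \geqslant 2$), and one uses the crude bound $|x^G \cap H| \leqslant |H| \leqslant (q+1)^7 |W(E_7)| \log_2 q = a_2$, with $|W(E_7)| = 2903040$. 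Lemma \ref{l:calc} then delivers
\[
\mathcal{Q}(G,H) \leqslant a_1^2/b_1 + a_2^2/b_2,
\]
which tends to $0$ as $q \to \infty$ and is less than $q^{-1}$ for all sufficiently large $q$.

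The main obstacle is the small-$q$ case, in particular $q=2$, where this crude upper bound already exceeds $1$ (because the gap between $q^{34}$ and $|\Sigma^{+}(E_7)|^2(q+1)^{14}$ is too small, and similarly $|W(E_7)|^2 (q+1)^{14}$ is comparable to $q^{52}$). To absorb these cases I would sharpen both contributions. For (a), Proposition \ref{p:root}(ii) forces $p=2$ and shows that any long root element of $G$ contained in $H$ has the form $\tilde{w}t$, where $w \in W(E_7)$ is a reflection and $t$ lies in the $(-1)$-eigenspace of $w$ on $\bar{T}$, a $1$-dimensional subtorus meeting $\bar{T}_{\s}$ in at most $q+1$ rational points; this refines $a_1$ to $63(q+1)$, making $a_1^2/b_1$ negligible. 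For (b), I would partition further: the semisimple elements in $\bar{T}_{\s}$ contribute at most $|\bar{T}_{\s}| \leqslant (q+1)^7$ against $|x^G| > \a q^{54}$; the remaining elements of prime order $r$ in $H$ project to prime-order elements of $W(E_7) \leqslant H/\bar{T}_{\s}$, so their number is bounded by $i_r(W(E_7))\,|\bar{T}_{\s}|$ summed over the small primes $r \in \{2,3,5,7\}$, while the outer automorphisms satisfy $|x^G| > q^{133/2}/\b$ by Proposition \ref{p:bounds}. Assembling these sharper estimates produces $\mathcal{Q}(G,H) < 1$ uniformly for $q \geqslant 2$, whence $b(G,H)=2$ as required.
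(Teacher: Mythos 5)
Your argument for $q \geqslant 3$ is exactly the paper's: the bound $\mathcal{Q}(G,H) < a_1^2/b_1 + a_2^2/b_2$ with $a_1 = 63(q+1)^7$, $b_1 = q^{34}$, $a_2 = (q+1)^7|W(E_7)|\log_2 q$, $b_2 = q^{52}$ suffices, and you correctly identify $q=2$ as the obstruction.

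The gap is in your treatment of $q=2$. Two issues. First, your refinement of the long root count to $63(q+1)$ is only sketched: the condition for $\tilde{w}t$ to be an involution is a coset condition $t^{\bar{w}^{-1}}t = \tilde{w}^{-2}$, not membership of $t$ in the $(-1)$-eigenspace; the intersection of the relevant $1$-dimensional subtorus (or coset thereof) with $\bar{T}_\sigma$ needs justification; and being an involution in that coset does not by itself force membership of the long root class $A_1$ rather than, say, $A_1^2$ or $A_1^4$. Second, and more seriously, the remaining estimate does not close. For $q=2$ we have $\bar{T}_\sigma = 3^7$ and $W(E_7) = {\rm Sp}_6(2)\times 2$. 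Take $r = 7$: $i_7(W(E_7)) = 207360$, so your bound on the number of order-$7$ elements in $H$ is $207360\cdot 3^7 = 453496320$, while Proposition \ref{p:bounds} only gives $|x^G| > \ell_4 = \tfrac12 q^{54} = 2^{53}$. The resulting contribution $(453496320)^2/2^{53} \approx 23 \gg 1$. Even if you knew the exact count $|x^G\cap H| = 151165440$ (from the paper's computation), the crude lower bound $\ell_4 = 2^{53}$ still gives a contribution $\approx 2.5 > 1$. The bound $i_r(W)|\bar{T}_\sigma|$ against $\ell_4$ is therefore not sharp enough, and the claim that assembling these estimates "produces $\mathcal{Q}(G,H) < 1$ uniformly for $q\geqslant 2$" is false. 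What the paper actually does for $q=2$ is a computer-assisted computation: it constructs $H = 3^7.W(E_7)$ inside $E_7(4)$ in {\sc Magma}, computes the Jordan form of each prime-order $x\in H$ on the adjoint module, identifies the exact $G_0$-class of each via the tables of Lawther and Ballantyne--Bates--Rowley, and thereby replaces $\ell_4$ by the \emph{exact} (much larger) class size for each class that occurs. Without that sharper class-by-class data your argument does not go through at $q=2$.
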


\begin{proof}
Let $W(\bar{G}) = {\rm Sp}_{6}(2) \times 2$ be the Weyl group of $\bar{G}$. If $x \in G$ is a long root element, then $|x^G| > q^{34} = b_1$ and Corollary \ref{c:root} implies that $|x^G \cap H| \leqs 63(q+1)^7 = a_1$. For all other nontrivial elements we have $|x^G|>q^{52} = b_2$ (see Proposition \ref{p:bounds}) and we note that 
\[
|H| \leqs (q+1)^7.|W(\bar{G})|.\log_2q = a_2.
\]
For $q \geqs 3$ we deduce that $\mathcal{Q}(G,H) < a_1^2/b_1 + a_2^2/b_2 <q^{-1}$.

Finally, let us assume $q=2$, so $G = E_7(2)$ and $H = 3^7.W(\bar{G})$. Here Lemma \ref{l:structure}(iv) implies that $G$ is not extremely primitive, but we need to work harder to show that $b(G,H)=2$. Using {\sc Magma} \cite{Magma}, we can construct $H$ as a subgroup of $E_7(4)$ and we can then determine the Jordan form of each element $x \in H$ of prime order $r$ on the adjoint module $V$ for $\bar{G}$ (see \cite[Proposition 2.2]{BTh_comp} for further details on this computation). For $r=2$, we inspect \cite[Table 8]{Lawunip} to determine the $G_0$-class of $x$. Similarly, if $r$ is odd then $\dim C_{\bar{G}}(x) = \dim C_V(x)$ and by inspecting \cite[Table 2]{BBR} we can read off the $G_0$-class of $x$ (note that if $r=3$ and $x$ is in one of the classes labelled \texttt{3C} or \texttt{3D} in \cite[Table 2]{BBR}, then $\dim C_{\bar{G}}(x) = 49$ in both cases, so the eigenvalues on $V$ do not allow us to distinguish between these classes). The results are as follows (here we label the involution classes as in \cite[Table 22.2.2]{LieS}):
\[
\begin{array}{llllllll} \hline
A_1 & 189 & & \texttt{3A} & 56 & & \texttt{5A} & 3919104 \\
A_1^2 & 8505 & & \texttt{3B} & 6174 & & \texttt{7C} & 151165440 \\
(A_1^3)^{(1)} & 8505 & & \mbox{\texttt{3C} or \texttt{3D}} & 1392372 & & & \\
(A_1^3)^{(2)} & 127575 & & \texttt{3E} & 3992352 & & & \\
A_1^4 & 583929 & & & & & \\ \hline
\end{array}
\]
So for example, if $x \in G$ is an involution in the class labelled $A_1^4$, then $|x^G \cap H| = 583929$. It is now entirely straightforward to check that $\mathcal{Q}(G,H)<1$ and thus $b(G,H) = 2$.
\end{proof}

\begin{lem}\label{l:maxrank_e7_2}
If $G_0=E_7(q)$ and $H$ is of type $A_1(q)^7$ or $A_1(q^7)$, then $b(G,H)=2$.
\end{lem}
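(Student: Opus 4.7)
The plan is to run the probabilistic machinery of Section \ref{ss:base2} exactly as in Lemmas \ref{l:maxrank_e8_1}--\ref{l:maxrank_e7_1}, bounding $\mathcal{Q}(G,H)$ by a sum of two terms (one for long root elements, one lumping together every other prime order element in $H$) and then invoking Lemma \ref{l:base}.

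The first step is to fix the structure of $H_0 = H \cap G_0$ from \cite[Table~5.1]{LSS}. In both cases $\bar{H}^0 = A_1^7$ and $\bar{H}/\bar{H}^0$ is contained in the natural $L_3(2)$ permuting the seven factors (the twist in the $A_1(q^7)$ case corresponds to composing the standard Frobenius with a $7$-cycle in this $L_3(2)$). This will give an explicit bound of the form $|H| < c\,q^{21}\log_2 q$ with $c$ a small constant, and in particular $|H| \leqs a_2$ for a quantity with $a_2^2 < q^{52}$ once $q$ is not too small.

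The second step is to partition the prime order elements of $H$. By Proposition \ref{p:root}, any long root element lying in $H$ must sit inside a single $A_1$ factor of $\bar{H}^0$, so $|x^G \cap H| \leqs 7(q^2-1) =: a_1$, while Proposition \ref{p:bounds} delivers $|x^G| > q^{34} =: b_1$. For every other element of prime order in $G$, Proposition \ref{p:bounds} and Table \ref{tab:cbds} force $|x^G| > q^{52} =: b_2$ (the semisimple involution bound $\frac{\alpha}{2}q^{54}$ does drop to exactly $q^{52}$ at $q=2$, but does not drop below). Applying Lemma \ref{l:calc} with $\sum |x_i^G \cap H| \leqs |H| = a_2$ then yields
\[
\mathcal{Q}(G,H) < a_1^2/b_1 + a_2^2/b_2,
\]
which will be comfortably less than $q^{-1}$ for all $q \geqs 3$.

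The only delicate point is $q = 2$, where the asymptotic bound $|H| < c\, q^{21}\log_2 q$ is too loose to push the second term below $1$; however a direct computation of $|H_0|$ (giving $|H| \leqs 6^7 \cdot 168$ in the $A_1(2)^7$ case and $|L_2(128)| \cdot 7$ in the $A_1(128)$ case) shows that $a_2^2/b_2 < 1$ already on the nose, so no genuinely new ingredient is needed. Should this sharpening feel unsatisfying, the fallback is to run the same {\sc Magma} calculation used at the end of Lemma \ref{l:maxrank_e7_1}, computing $|x^G \cap H|$ class by class inside $E_7(2)$; this is the step I expect to require the most care, but it should be routine. Lemma \ref{l:base} then yields $b(G,H) = 2$ in both cases.
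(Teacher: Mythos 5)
Your strategy for the $A_1(q)^7$ case is exactly the paper's: long root elements get the bound $|x^G\cap H| = 7(q^2-1)$ from Proposition~\ref{p:root} together with $|x^G|>q^{34}$, and everything else is lumped into the single term $|H|^2/q^{52}$. For $A_1(q^7)$, however, the paper makes a small but clarifying observation that you miss: $H$ contains \emph{no} long root elements of $G_0$ at all. By Proposition~\ref{p:root}(i) a long root element lying in $\bar{H}^0 = A_1^7$ must sit in a single factor, while every nontrivial unipotent element of the twisted-diagonal $\mathrm{SL}_2(q^7)$ has all seven coordinates nontrivial. Your uniform bound $|x^G\cap H|\leqs 7(q^2-1)$ therefore happens to hold (since the true count is $0$), but the reasoning you give for it is the $A_1(q)^7$ reasoning, not a count that applies to the twisted subgroup; with the paper's observation, the $A_1(q^7)$ case collapses to a single term with $b_1 = q^{52}$ applying to all nontrivial $x\in H$.

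There is also a numerical hitch in the way you set up the second term. If you really bound $|H|$ by a constant times $q^{21}\log_2 q$ — discarding the $(1-q^{-2})^7$ factor coming from $|\mathrm{SL}_2(q)|^7 = (q(q^2-1))^7$ — then with $c = |\mathrm{L}_3(2)| = 168$ you get $a_2^2/b_2 \approx 28224(\log_2 q)^2/q^{10}$, which exceeds $1$ at $q=3$ (value $\approx 1.2$) as well as at $q=2$, so your claim that the estimate goes through ``comfortably for all $q\geqs 3$'' is not right with the crude bound. If instead you keep the exact $(q(q^2-1))^7\cdot|\mathrm{L}_3(2)|\cdot\log_2 q$, then $a_2^2/b_2$ is roughly $0.49$ at $q=2$ and $0.23$ at $q=3$, so the estimate goes through for every $q\geqs 2$ without special pleading and the {\sc Magma} fallback you reserve for $q=2$ is unnecessary. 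In short: the two-term machinery and the invocation of Lemmas~\ref{l:base} and \ref{l:calc} are exactly right, but don't round $|\mathrm{SL}_2(q)|$ up to $q^3$ when $q$ is small, and note the absence of long root elements in the twisted case.
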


\begin{proof}
First assume $H$ is of type $A_1(q)^7$. If $x \in G$ is a long root element, then  by applying Proposition \ref{p:root} we deduce that $|x^G \cap H| = 7(q^2-1)=a_1$ and we have $|x^G|>q^{34} = b_1$. Otherwise, $|x^G|>q^{52} = b_2$ and we note that 
\[
|H| \leqs (q(q^2-1))^7.|{\rm L}_{3}(2)|.\log_2q < q^{30} = a_2.
\]
Therefore, $\mathcal{Q}(G,H) < a_1^2/b_1 + a_2^2/b_2 < q^{-1}$ and thus $b(G,H)=2$. 

Now assume $H$ is of type $A_1(q^7)$, so $|H| \leqs 7q^7(q^{14}-1).\log_2q =a_1$.
Here $H$ does not contain any long root elements of $G_0$, so $|x^G|>q^{52} = b_1$ for all nontrivial $x \in H$ and we get $\mathcal{Q}(G,H) < a_1^2/b_1< q^{-1}$.
\end{proof}

\begin{lem}\label{l:maxrank_e7_3}
Suppose $G_0=E_7(q)$ and $H$ is of type 
\[
A_1(q)D_6(q), \; A_2^{\e}(q)A_5^{\e}(q), \; D_4(q)A_1(q)^3, \; {}^3D_4(q)A_1(q^3), \; E_6^{\e}(q).(q-\e). 
\]
Then $G$ is not extremely primitive. 
\end{lem}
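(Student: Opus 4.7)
The plan follows the template of Lemma \ref{l:maxrank_e8_4}: use Lemma \ref{l:structure} to dispose of those cases where the structure of $H$ is visibly incompatible with extreme primitivity, and reserve Lemma \ref{l:simple} for the subtler remaining case. In each case the precise structure of $H_0 = H \cap G_0$ can be read from \cite[Table 5.1]{LSS}.

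The four cases $A_1(q)D_6(q)$, $A_2^{\e}(q)A_5^{\e}(q)$, $D_4(q)A_1(q)^3$ and ${}^3D_4(q)A_1(q^3)$ are handled by a direct appeal to Lemma \ref{l:structure}. For $H$ of type $A_1(q)D_6(q)$ with $q$ odd, $H$ is the centralizer in $G$ of a semisimple involution, so $Z(H) \ne 1$ and Lemma \ref{l:structure}(i) applies. For $q$ even, and for the other three types regardless of parity, the socle of $H$ contains two pairwise non-isomorphic simple factors (respectively ${\rm L}_{2}(q)$ and ${\rm P\O}_{12}^{+}(q)$; ${\rm L}_{3}^{\e}(q)$ and ${\rm L}_{6}^{\e}(q)$; ${\rm P\O}_{8}^{+}(q)$ and ${\rm L}_{2}(q)$; ${}^3D_4(q)$ and ${\rm L}_{2}(q^3)$), so Lemma \ref{l:structure}(v) applies. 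Small-$q$ degeneracies in which one of these factors collapses to a solvable group (or in which $G_0$ itself is covered by Theorem \ref{t:small}) are absorbed either by that theorem or by an alternative clause of Lemma \ref{l:structure} applied to the resulting Fitting subgroup.

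The remaining type $E_6^{\e}(q).(q-\e)$ is the most delicate. Here $\bar{H}^0 = E_6 T_1$ is the connected centralizer in $\bar{G}$ of a $1$-dimensional torus, and under $\s$ the torus contributes a cyclic subgroup of order $(q-\e)/(3,q-\e)$ lying in the centre of $H_0$. Whenever this cyclic subgroup is nontrivial in $H$, Lemma \ref{l:structure}(i) applies. For the finitely many exceptional small-$q$ cases that remain (essentially $q=2$, for which $G_0 = E_7(2)$ is not covered by Theorem \ref{t:small}), I would mimic the $A_8^{\e}(q)$ argument of Lemma \ref{l:maxrank_e8_4}: choose a $\s$-stable subsystem or Levi subgroup $\bar{L}$ of $\bar{H}^0$ whose normalizer in $\bar{G}$ strictly contains its normalizer in $\bar{H}^0$, twist $\s$ by the lift of a suitable element of $N_{\bar{H}^0}(\bar{L})/\bar{L}$ to realise both values of $\e$, set $K = \bar{L}_{\s}$, and use \cite{BHR} to identify the unique maximal overgroup $S = N_{H_0}(K)$ of $K$ in $H_0$. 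The extra symmetry inherited from the $E_7$-Weyl group then provides an element $g \in N_{G_0}(K)$ which fails to normalize $S$, so that $S^g \not\leqs H_0$ and Lemma \ref{l:simple} closes the argument. The main obstacle is precisely this last case: selecting $\bar{L}$ so that the surplus symmetry is visible, and verifying uniqueness of $S$ as a maximal overgroup of $K$ in $H_0$ for each remaining small $q$ and both signs $\e$.
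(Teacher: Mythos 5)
Your treatment of the four product-type cases $A_1(q)D_6(q)$, $A_2^{\e}(q)A_5^{\e}(q)$, $D_4(q)A_1(q)^3$ and ${}^3D_4(q)A_1(q^3)$ matches the paper: $Z(H)\neq 1$ when $q$ is odd (centralizer of an involution) and a socle obstruction otherwise, with Lemma~\ref{l:structure} doing the work. However, your analysis of the $E_6^{\e}(q).(q-\e)$ case contains an error and misidentifies the residual cases. You claim that the $T_1$-contribution $(q-\e)/de$ is \emph{central} in $H_0$ and invoke Lemma~\ref{l:structure}(i). But the outer involution in $H_0 = e.(E_6^{\e}(q)\times (q-\e)/de).e.2$ acts on $E_6^{\e}(q)$ as a graph automorphism and \emph{inverts} the torus factor, so that cyclic group is not central. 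The correct appeal is to Lemma~\ref{l:structure}(iv) when $e=(3,q-\e)=3$ (here $F(H)=Z_3$), and to Lemma~\ref{l:structure}(v) when $e=1$ and $(q-\e)/de>1$ (since ${\rm soc}(H)$ then contains abelian minimal normal subgroups alongside $E_6^{\e}(q)$). After exhausting these, the surviving cases are $\e=+$, $q\in\{2,3\}$ and $G=G_0$, not ``essentially $q=2$''.

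For that residual configuration, your sketch—twist $\s$ by a Weyl-group element normalizing a Levi $\bar{L}$ of $\bar{H}^0=E_6T_1$, then verify uniqueness of the maximal overgroup via \cite{BHR}—is considerably more cumbersome than what is actually needed, and it is not clear that a Levi of $E_6T_1$ has a unique maximal overgroup in $H_0$ (parabolics intervene). The paper instead sets $K = C_S(\tau) = F_4(q)$, where $\tau$ is the outer graph involution of $S={\rm soc}(H)=E_6(q)$; the only maximal overgroups of $K$ in $H$ are $M=K\times\langle\tau\rangle$ and $S$. The embedding $F_4\times A_1 < E_7$ shows $K$ centralizes a subgroup $L\cong{\rm L}_2(q)$ of $G$ with $\tau\in L$, and since $\langle\tau\rangle$ is non-normal in $L$ one can pick $g\in L$ centralizing $K$ but normalizing neither $M$ nor $S$; Lemma~\ref{l:simple} then finishes. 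This exploits the $F_4$ fixed-point structure of $E_6$ inside $E_7$ rather than generic Levi-twisting, and handles $q=2$ and $q=3$ uniformly.
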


\begin{proof}
Set $H_0 = H \cap G_0$ and note that the structure of $N_{L}(H_0)$ is recorded in \cite[Table 5.1]{LSS}, where $L = {\rm Inndiag}(G_0)$. Set $d=(2,q-1)$.

First assume $H$ is of type $A_1(q)D_6(q)$, so $H_0 = d.({\rm L}_{2}(q) \times {\rm P\O}_{12}^{+}(q))$. If $q$ is odd then $H$ is the centralizer of an involution, so $Z(H) \ne 1$ and thus $G$ is not extremely primitive by Lemma \ref{l:structure}(i). On the other hand, if $q$ is even then ${\rm soc}(H)$ is not a direct product of isomorphic simple groups, so Lemma \ref{l:structure}(v) implies that $G$ is not extremely primitive. Very similar reasoning rules out extreme primitivity when $H$ is of type $A_2^{\e}(q)A_5^{\e}(q)$, $D_4(q)A_1(q)^3$ and ${}^3D_4(q)A_1(q^3)$. 

Finally, let us assume $H$ is of type $E_6^{\e}(q).(q-\e)$, so 
\[
H_0 = e.(E_6^{\e}(q) \times (q-\e)/de).e.2
\]
with $e = (3,q-\e)$. If $e=3$ then $F(H) = Z_3$ and we apply Lemma \ref{l:structure}(iv). For the remainder, we may assume $e=1$. If $q \geqs 4$ is even then $H_0 = (E_6^{\e}(q) \times (q-\e)).2$ and ${\rm soc}(H)$ is not a direct product of isomorphic simple groups, hence $G$ is not extremely primitive by Lemma \ref{l:structure}(v). The same argument applies if $(\e,q) = (-,2)$. Similarly, if $q$ is odd then we may assume $(\e,q) = (+,3)$. Note that if $G = E_7(3).2$ then $H = (E_6(3) \times 2).2$ and the structure of ${\rm soc}(H)$ is incompatible with extreme primitivity.

To complete the proof, we may assume $\e = +$, $q \in \{2,3\}$ and $G = G_0$, in which case $H = \la E_6(q), \tau \ra = {\rm Aut}(E_6(q))$, where $\tau$ is an involutory graph automorphism of $S = {\rm soc}(H)$ with $K = C_{S}(\tau) = F_4(q)$. Now $K$ is a maximal subgroup of $S$ and $\mathcal{M}= \{M,S\}$ is the set of maximal overgroups of $K$ in $H$, where $M = K \times \langle \tau \rangle$. By considering the embedding of $S$ in $G$, we observe that $K$ centralizes a subgroup $L \cong \text{L}_2(q)$ of $G$. In particular, $K \times L< G$ and we see that $\tau \in L$.

Since $H = N_{G_0}(S)$ it follows that $\tau$ is the only nontrivial element of $L$ which normalizes $S$ and hence the only nontrivial element normalizing $H$. Since $\la \tau \ra$ is non-normal in $L$, it follows that there exists $g \in L$ which centralizes $K$ but does not normalize $M$ nor $S$. We can now apply Lemma \ref{l:simple} to conclude that $G$ is not extremely primitive.
\end{proof}

In order to complete the proof of Theorem \ref{t:maxrank} for $G_0 = E_7(q)$, we may assume $H$ is of type $A_7^{\e}(q)$. 

\begin{lem}\label{l:maxrank_e7_4}
If $G_0=E_7(q)$ and $H$ is of type $A_7^{\e}(q)$, then $G$ is not extremely primitive. 
\end{lem}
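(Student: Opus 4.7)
The plan is to mirror the strategy used in Lemma~\ref{l:maxrank_e8_4} for the analogous case $H$ of type $A_8^{\e}(q)$ in $E_8(q)$. That is, after routine structural reductions via Lemma~\ref{l:structure}, I will identify a suitable $\sigma$-stable reductive subgroup $\bar{L}$ of $\bar{H}^0 = A_7$ such that $N_{\bar{G}}(\bar{L})/\bar{L}$ is strictly larger than $N_{\bar{H}^0}(\bar{L})/\bar{L}$, and then invoke Lemma~\ref{l:simple}.

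First I would recall from \cite[Table 5.1]{LSS} that $H_0 = h.{\rm L}_{8}^{\e}(q).e.2$ where $e=(4,q-\e)$ and $h=(q-\e,8)/e$. Applying Lemma~\ref{l:structure}(iv) if $h \ne 1$, and Lemma~\ref{l:structure}(v) for the small even $q$ cases where $\mathrm{soc}(H)$ is not a direct product of isomorphic simple groups, we can reduce to the generic situation where $H = H_0.A$ with $A$ a group of field automorphisms. Next, fix simple roots for $\bar{G}$ as in \cite{Bou} and realize $\bar{H}^0$ as the subsystem $A_7$ generated by the root subgroups corresponding to $\a_1,\a_3,\a_4,\a_5,\a_6,\a_7$ together with $-\a_0$, where $\a_0$ is the highest root. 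Inside $\bar{H}^0$, take the Levi subgroup $\bar{L}$ obtained by removing the node $\a_4$, so that $\bar{L} = \bar{K}T_1$ with $\bar{K} = A_3A_3$, and $N_{\bar{H}^0}(\bar{L})/\bar{L} \cong \mathrm{Sym}_2$ acts by swapping the two $A_3$ factors.

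The crucial structural observation is that $A_1A_3A_3$ occurs as a maximal rank subsystem subgroup of $\bar{G} = E_7$ (see \cite[Table 2]{LS94}), so $C_{\bar{G}}(\bar{K})^0 = \bar{M}$ is of type $A_1$, and $Z(\bar{L})^0 = T_1$ is a maximal torus of $\bar{M}$. Consequently $N_{\bar{G}}(\bar{L})/\bar{L}$ contains additional symmetry beyond the swap of the two $A_3$ factors, arising from the nontrivial Weyl element of $\bar{M}$ inverting $T_1$, together with the graph automorphisms of each $A_3$ factor. Following the method of \cite[Section~1]{LSS}, I would then compose $\sigma$ with the inner automorphism corresponding to a lift of an appropriate element of $N_{\bar{H}^0}(\bar{L})/\bar{L}$ in order to realize the two forms $\e = \pm$ of $H_0$ upon taking fixed points, and set $K$ to be the relevant fixed-point subgroup of $\bar{L}$ (for $\e=+$, the twisted $K = {\rm L}_4(q^2)$-type subgroup; for $\e=-$, the untwisted $K = {\rm L}_4^{-}(q)\circ{\rm L}_4^{-}(q)$-type subgroup). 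Using the maximal subgroup tables for ${\rm L}_{8}^{\e}(q)$ in \cite{BHR}, the set $\mathcal{M}$ of maximal overgroups of $K$ in $H_0$ is then small and explicit, with each member being the normalizer in $H_0$ of its derived group $K$ (or a nearby natural subgroup). By construction there exists $g \in N_{G_0}(K) \setminus N_{G_0}(H_0)$ arising from the extra $\bar{M}$-symmetry, and since this $g$ must fail to normalize every $M \in \mathcal{M}$ (otherwise it would centralize or normalize the centre/derived group and land inside $H_0$), Lemma~\ref{l:simple} yields the non-maximality of $H \cap H^{g^{-1}}$ in $H$.

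The main obstacle I expect is the careful bookkeeping required to show that the specific $g$ exhibited in $N_{G_0}(K)$ really does not lie in $N_{G_0}(M)$ for any maximal overgroup $M$; this is where the analysis splits most delicately between $\e = +$ and $\e = -$ and where the input from \cite[Tables 8.58--8.61]{BHR} (or the analogous tables for ${\rm L}_8^{\e}(q)$) is essential. Some additional attention may be needed for small $q$ where $h > 1$ forces $K$ to be constructed from a central quotient rather than $\bar{L}_{\sigma}$ itself; in these cases a direct computational verification in {\sc Magma} along the lines of Section~\ref{ss:comp} should complete the argument.
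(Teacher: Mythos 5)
Your proposal follows essentially the same route as the paper's proof: both select the $P_4$-Levi $\bar{L} = A_3^2T_1$ inside $\bar{H}^0 = A_7$, observe that $C_{\bar{G}}(\bar{K})^0$ (with $\bar{K} = \bar{L}' = A_3^2$) is of type $A_1$ with $Z(\bar{L})^0$ as a maximal torus, twist $\sigma$ by an appropriate element of $N_{\bar{H}^0}(\bar{L})/\bar{L}$ to realise the $\varepsilon=\pm$ forms, and apply Lemma~\ref{l:simple} to a suitable $K$ constructed from $\bar{L}_\sigma$. So at the level of strategy the two proofs coincide.

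That said, two places in your plan do not withstand scrutiny. First, your stated structural formula for $H_0$ disagrees with the paper's, which explicitly corrects the entry in \cite[Table~5.1]{LSS}: the paper has $N_{\bar{G}_\sigma}(H_0) = h.{\rm L}_8^{\varepsilon}(q).g.2$ with $h = (4,q-\varepsilon)/d$ and $g = (8,q-\varepsilon)/h$, so $h>1$ precisely when $q\equiv\varepsilon\imod 4$; your version gives $h>1$ only when $q\equiv\varepsilon\imod 8$, which would leave the case $q\equiv\varepsilon+4\imod 8$ unreduced and potentially derail the subsequent centraliser bookkeeping. Second, your diagnosis of what goes wrong at small $q$ is off target. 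The genuine obstruction is not a covering issue with $h>1$ (indeed $h=1$ when $q$ is even), but that $\bar{M}_\sigma = {\rm SL}_2(2)\cong{\rm Sym}_3$ contains $Z_{q+1}=Z_3$ as a \emph{normal} subgroup, so no $g\in\bar{M}_\sigma$ can fail to normalise $C_{H_0}(K)$; in addition, the paper is forced to pass to the split form $K={\rm SL}_4(2)\times{\rm SL}_4(2)$ for $q=2$, $\varepsilon=+$, and this $K$ lies inside two maximal parabolic subgroups of $H_0$, so $\mathcal{M}$ has three members and must be treated by hand with explicit Chevalley commutator calculations rather than by the uniqueness argument available for $q\geqs 3$. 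A {\sc Magma} fallback for $E_7(2)$ as you suggest is not a realistic alternative here (the paper does not attempt it); the $q=2$ case genuinely requires the separate, combinatorial argument the paper gives, and you would need to supply something equivalent.
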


\begin{proof}
First let us record that the structure of $N_{\bar{G}_{\s}}(H_0)$ given in 
\cite[Table 5.1]{LSS} is incorrect. The correct structure is
\[
N_{\bar{G}_{\s}}(H_0) = h.\text{L}^{\epsilon}_8(q).g.2,
\]
where $h=(4,q-\epsilon)/d$ and $g=(8,q-\epsilon)/h$. Therefore, by appealing to Lemma \ref{l:structure}(iv), we may assume that $q \not\equiv \e \imod{4}$ for the remainder of the proof. Write $G = G_0.A$ and $H = H_0.A$, where $A$ is a group of automorphisms of $G_0$.

Fix a set of simple roots $\a_1, \ldots, \a_7$ for $\bar{G}$, labelled in the usual way (see \cite{Bou}). For each root $\a$, let $X_{\a}$ be the corresponding root subgroup of $\bar{G}$ and note that
\[
\bar{H}^0 = \la X_{\pm \a_0}, X_{\pm \a_1}, X_{\pm \a_3}, X_{\pm \a_4}, X_{\pm \a_5}, X_{\pm \a_6}, X_{\pm \a_7} \ra 
\]
is of type $A_7$, where $\a_0$ is the highest root. Let $\bar{P} = \bar{Q}\bar{L}$ be the standard $P_4$ maximal parabolic subgroup of $\bar{H}^0$ with Levi factor $\bar{L} = A_3^2T_1$. Then $N_{\bar{H}^0}(\bar{L}) = \bar{L}.2$, where the outer involution swaps the two $A_3$ factors and inverts the $T_1$ torus (as can be calculated in the Weyl group of $\bar{H}^0$). Let  $\bar{K} = \bar{L}' = A_3^2$ and note that $\bar{K}.2 < N_{\bar{H}^0}(\bar{L})$. It is straightforward to check that $C_{\bar{H}^0}(\bar{K})^0 = Z(\bar{L})^0= T_1$. 

By inspecting \cite[Table 2]{LS94}, we see that $A_3A_1$ is the connected centralizer in $\bar{G}$ of the first $A_3$ factor of $\bar{K}$ and it follows that $\bar{M} = C_{\bar{G}}(\bar{K})^0$ is of type $A_1$ and contains $Z(\bar{L})^0$ as a maximal torus. In fact, since $\la X_{\pm \a_2} \ra$ clearly centralizes $\bar{K}$, we have $\bar{M} = \la X_{\pm \a_2} \ra$. Now
\[
\bar{L} = A_3^2T_1 < A_3^2A_1 < D_6A_1 < \bar{G},
\]
where $D_6A_1$ is a maximal subsystem subgroup of $\bar{G}$, and the Weyl group of $D_6$ contains an involution swapping the two $A_3$ factors of $\bar{K}$. So $N_{\bar{G}}(\bar{L})/\bar{L}$ has a subgroup $Z_2 \times Z_2 = \la a \ra \times \la b \ra$, where $a$ swaps the two $A_3$ factors and $b$ inverts the torus $Z(\bar{L})^0$. The diagonal subgroup $Z_2 = \la ab \ra$ is contained in the Weyl group of $\bar{H}^0$.

The case $q=2$ requires special attention and it will be handled at the end of the proof. So for now, let us assume $q \geqs 3$. 

First assume $\epsilon=+$, so $q \not\equiv 1 \imod{4}$. Here we replace the standard Frobenius morphism 
$\s$ of $\bar{G}$ defining $G_0$ by the product of $\s$ and the inner automorphism of $\bar{G}$ induced by the lift of an outer involution in $\bar{K}.2$ which swaps the two $A_3$ factors. We will abuse notation by writing $\s$ for this modified map. Then $(\bar{G}_\sigma)' = G_0$, $N_{\bar{G}_{\s}}(H_0) = (\bar{H}.2)_\sigma = \text{L}_8(q).d.2$ and $\bar{K}_\sigma = d.\text{L}_4(q^2).d$. Set $K = (\bar{K}_\sigma)'  = d.\text{L}_4(q^2) < H_0 = \text{L}_8(q).2$.

By inspecting \cite[Tables 8.44 and 8.45]{BHR}, we see that $K$ is contained in a unique maximal subgroup of $H_0$, namely its normalizer
\[
L = (\bar{L}.2)_\sigma = ((q+1) \circ K).d.2^2 = d.((q+1)/d \times \text{L}_4(q^2)).d.2^2.
\]
Since $q \geqs 3$, we can choose an element $g \in \bar{M}_\sigma = \text{SL}_2(q) \leqs C_{G_0}(K)$ that does not normalize the non-normal subgroup $(Z(\bar{L})^0)_\sigma = C_{H_0}(K) = Z_{q+1}$ of $\bar{M}_\sigma$. Suppose $L^g$ is contained in $H_0$. Then $K = K^g$ is contained in $L^g$, which is a maximal subgroup of $H_0$, so $L = L^g$ and thus $g$ normalizes $L$. But $C_{H_0}(K) = C_L(K)$ and so $g$ also normalizes $C_{H_0}(K)$, which is a contradiction. Therefore, $L^g \not\leqs H_0$ and the desired result now follows by applying Lemma \ref{l:simple}, noting that $K$ and $L$ are $A$-stable.

Next suppose $\epsilon=-$ and let us continue to assume $q \geqs 3$. Recall that $q \not\equiv 3 \imod{4}$. In this case, we replace the standard Frobenius morphism $\sigma$ by the product of $\s$ with a lift of the longest element of the Weyl group of $\bar{G}$ (we will continue to write $\s$ for the modified map). Define $K = (\bar{K}_\sigma)' = d.\text{U}_4(q)^2 < H_0$. By inspecting \cite[Tables 8.46 and 8.47]{BHR}, we see that 
\[
L = N_{H_0}(K) = ((q+1) \circ K).d.2^2 = d.((q+1)/d \times \text{U}_4(q)^2.d).2^2
\]
is the unique maximal overgroup of $K$ in $H_0$. Since $q \geqs 3$, it follows that $(Z(\bar{L})^0)_\sigma = C_{H_0}(K) = Z_{q+1}$ is a non-normal subgroup of $\bar{M}_\sigma = \text{SL}_2(q) \leqs C_{G_0}(K)$, so there exists $g \in \bar{M}_\sigma$  which does not normalize $C_{H_0}(X)$. We now complete the argument as we did in the $\e=+$ case above.   

To complete the proof of the lemma, we may assume $q=2$ and thus $G = G_0$. For $\e=+$ we take $\sigma$ to be the standard Frobenius morphism, and we take the product of this with a lift of the following Weyl group element 
\begin{equation}\label{e:welt}
w = (1,6) (2) (3,5) (7,126) \ldots \in W(\bar{G})
\end{equation}
when $\epsilon = -$. Here we are expressing $w$ as a permutation of the set of roots of $\bar{G}$, where our labelling is consistent with {\sc Magma} (we only give part of the permutation, but this is enough to uniquely determine it). Set
\[
K = \bar{K}_\sigma = \left\{\begin{array}{ll} 
\text{SL}_4(2) \times \text{SL}_4(2) & \mbox{if $\epsilon=+$} \\
\text{SL}_4(4) & \mbox{if $\epsilon=-$.} 
\end{array}\right.
\]
Clearly, $M = N_{H_0}(K) = K.2$ is a maximal overgroup of $K$ in $H_0$. By  inspecting \cite{BHR}, we see that every other maximal overgroup of $K$ is a parabolic subgroup of the form $2^{16}{:}K$. 

We claim that $K$ is contained in precisely two maximal parabolic subgroups of $H_0$.   Clearly, we can take the fixed points under $\s$ of the standard maximal parabolic subgroup of $\bar{H}^0$ containing $\bar{K}$, as well as the opposite parabolic subgroup, so there are at least two such subgroups. Let $P$ be a maximal parabolic subgroup of $H_0$ and suppose $K \leqs P \cap P^h$ for some $h \in H_0$. Then $K, K^{h^{-1}} < P$ and \cite[Proposition 26.1(b)]{MT} implies that $K$ and $K^{h^{-1}}$ are $P$-conjugate. So $K^x = K^{h^{-1}}$ for some $x \in P$ and thus $xh \in N_{H_0}(K) = K.2$. If $xh \in K$ then $P^h = P$. On the other hand, if $xh \not \in K$ then $P^{xh} \neq P$ since $N_{H_0}(P) = P$. But for each $y$ in the coset $Kxh$ we have $P^{y} = P^{xh}$ and thus $K$ is contained in precisely two maximal parabolic subgroups of $H_0$ as claimed.

In view of the claim, let us write $\mathcal{M} = \{M, M_1,M_2\}$ where $M_1$ and $M_2$ are the maximal parabolic subgroups of $H_0$ containing $K$. To complete the proof, we will exhibit an element $g \in N_{G_0}(K)$ such that none of the subgroups $M^g, M_1^g, M_2^g$ are contained in $H_0$. The result will then follow from Lemma \ref{l:simple}.

By considering the above set up at the algebraic group level, we see that $C_{G_0}(K)$ contains $\bar{M}_\sigma = \text{SL}_2(2) \cong {\rm Sym}_3$. In addition, we note that $\s$ induces a standard Frobenius morphism on $\bar{M}$ since the Weyl group element $w$ in \eqref{e:welt} fixes the roots $\alpha_2$ and $-\alpha_2$. Therefore, we may choose $g = x_{\alpha_2}(1)x_{-\alpha_2}(1) \in \bar{M}_\sigma$. Since $g$ has order $3$, it does not commute with the involution $n_{\alpha_2} \in \bar{M}_\sigma$ (here $n_{\alpha_2}$ is the standard lift of the fundamental reflection in $W(\bar{G})$ corresponding to $\a_2$). The lift of the involution in $N_{\bar{H}^0}(\bar{K}) / \bar{K} \cong {\rm Sym}_2$ is the product of $n_{\alpha_2}$, which inverts a maximal torus of $\bar{M}$, and an involution that swaps the two $A_3$ factors of $\bar{K}$. Therefore, $g$ does not normalize $\bar{K}.2$ and so it does not normalize $M = K.2$. In particular, $M^g \not\leqs H_0$.

Finally, we need to consider $M_1$ and $M_2$. They are the fixed points under $\s$ of the standard maximal parabolic subgroups of $\bar{H}^0$ containing $\bar{K}$. In particular, the unipotent radicals of $M_1$ and $M_2$ contain $x_1 = x_{\alpha_4}(1)$ and $x_2 = x_{-\alpha_4}(1)$, respectively (since both elements are $\s$-stable). A straightforward calculation shows that
$x_1^g = x_{\alpha_2 + \alpha_4}(1)$ and $x_2^g$ is the product of $x_2$ with $x_{-(\alpha_2 + \alpha_4)}(1)$. Therefore, if $M_1^g \leqs H_0$ then $x_{\alpha_2 + \alpha_4}(1) \in H_0$. Similarly, if $M_2^g \leqs H_0$ then $x_{-(\alpha_2 + \alpha_4)}(1) \in H_0$. However, $x_{\pm(\alpha_2 + \alpha_4)}(1)^{n_{\alpha_4}} = x_{\pm\alpha_2}(1)$ and thus $x_{\pm(\alpha_2 + \alpha_4)}(1)$ is not even contained in $\bar{H}^0$. This implies that $M_i^g \not\leqs H_0$ for $i=1,2$ and the proof of the lemma is complete. 
\end{proof}

\subsection{$G_0=E_6^{\e}(q)$}\label{ss:mr_e6}

\begin{lem}\label{l:maxrank_e6_1}
If $G_0=E_6^{\e}(q)$ and $H$ is the normalizer of a maximal torus, then $b(G,H)=2$.
\end{lem}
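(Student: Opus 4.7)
The plan is to follow the template established in Lemmas \ref{l:maxrank_e8_1} and \ref{l:maxrank_e7_1} for the analogous tori-normalizer cases in $E_8(q)$ and $E_7(q)$. The possibilities for $H$ are enumerated in \cite[Table 5.2]{LSS}, and in each case $|H| \leqs |\bar{T}_{\s}|\cdot |W(\bar{G})| \cdot \log_2 q$, where $W(\bar{G}) = W(E_6)$ has order $51840$ and the maximal torus satisfies $|\bar{T}_{\s}| \leqs c(q+1)^6$ for a small absolute constant $c$ (by inspection of the cyclotomic factorisation of the torus orders). The goal is to show $\mathcal{Q}(G,H) < 1$ by splitting the prime-order elements of $H$ into long root elements and everything else, then invoking Lemma \ref{l:base}.

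First I would bound the contribution from long root elements. By Corollary \ref{c:root}, the intersection $x^G \cap H$ is nonempty only when $p=2$, and in that case $|x^G \cap H| \leqs 36 |\bar{T}_{\s}|=a_1$, since the $E_6$ root system has $36$ positive roots. The corresponding lower bound from Proposition \ref{p:bounds} is $|x^G| > q^{22}=b_1$ for $G_0 = E_6(q)$ and $|x^G| > \a q^{22}=b_1$ for $G_0 = {}^2E_6(q)$. For every remaining nontrivial prime-order element $x \in H$, Proposition \ref{p:bounds} gives $|x^G| > q^{32}=b_2$ (or $\a q^{32}$ in the twisted case), and I would take $a_2 = |H|$ as a crude upper bound on $\sum |x_i^G \cap H|$ over the non-root classes. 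Lemma \ref{l:calc} then yields
\[
\mathcal{Q}(G,H) < a_1^2/b_1 + a_2^2/b_2.
\]
A direct substitution shows this bound is less than $q^{-1}$ (and in particular tends to $0$) for all sufficiently large $q$, which by Lemma \ref{l:base} forces $b(G,H)=2$.

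The step I expect to be the main obstacle is the small-$q$ regime: substituting the generic estimates above, the ratio $a_2^2/b_2$ only becomes less than $1$ once $q$ is moderately large (a back-of-the-envelope check suggests $q \geqs 4$ suffices, while $q = 2, 3$ require more work). For the residual cases I would first try to sharpen the non-root bound by replacing $|H|$ with the tighter estimate $i_2(H) + i_3(H) + \sum_{r \geqs 5} i_r(H)$ via Proposition \ref{p:i23}, exploiting the fact that $\bar{T}_{\s}$ is abelian so that $i_r(H)$ is controlled by $|\bar{T}_{\s}|\cdot i_r(W(E_6))$ for $r \ne p$, together with $i_p(H) \leqs |\bar{T}_{\s}|\cdot|W(E_6)|$ (but most of these have $|x^G|$ much larger than $q^{32}$, e.g.\ regular semisimple elements). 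If the refined estimate still fails at $q=2$, I would fall back on the computational strategy used at the end of Lemma \ref{l:maxrank_e7_1}: construct $H$ inside a suitable matrix or permutation representation of $E_6^{\e}(2)$ in \textsc{Magma}, compute the Jordan form on the adjoint module for each prime-order class representative of $H$ to read off its $G$-class via \cite{Lawunip} and \cite[Table~2]{BBR}, and then verify $\mathcal{Q}(G,H) < 1$ directly. The twisted case ${}^2E_6(q)$ will need parallel but separate bookkeeping, since the lower bounds in Proposition \ref{p:bounds} acquire the factor $\a=(q-1)/q$ and the torus structures in \cite[Table 5.2]{LSS} differ, but the same two-part argument goes through.
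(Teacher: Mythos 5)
Your two-class split (long root elements versus ``everything else with $|x^G| > q^{32}$'') does not match the actual lower bounds from Proposition \ref{p:bounds}. Looking at Table \ref{tab:cbds}, the column $\ell_5$ (elements of prime order outside ${\rm Inndiag}(G_0)$) gives $|x^G| > \tfrac{\a}{\gamma}q^{26}$ for $E_6(q)$ and $|x^G| > \tfrac{1}{\delta}q^{26}$ for ${}^2E_6(q)$, not $q^{32}$. In particular, an involutory graph automorphism $x$ with $C_{\bar{G}}(x) = F_4$ has $|x^G|$ roughly $q^{26}$, and this class can occur inside $H = N_G(\bar{T}_\s)$ whenever $G$ contains a graph automorphism. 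Your claimed bound $b_2 = q^{32}$ is therefore false for these elements, so the estimate $\mathcal{Q}(G,H) < a_1^2/b_1 + a_2^2/b_2$ does not follow.

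Moreover, even after replacing $b_2$ by the correct $\approx \tfrac{1}{3}(q-1)q^{25}$ for the graph-automorphism contribution, the trivial upper bound $a_2 = |H| \approx (q+1)^6\cdot 51840 \cdot 2\log_2 q$ is not tight enough: the resulting ratio $a_2^2/b_2$ only falls below $1$ for $q$ in the high single digits, not for all $q\geqs 4$ as you suggest. The paper's proof avoids this by a \emph{three-way} split: long root elements, $F_4$-type graph automorphisms, and the rest. For the $F_4$-type graph automorphisms it exploits the structural fact that $W(\bar{G})$ is centralised by a graph automorphism, so the number of such elements in $H$ is controlled by $|x^G \cap H| \leqs (q+1)^6\, i_2(W(\bar{G})\times 2) = 3567(q+1)^6$, which is far smaller than $|H|$. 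This refinement is the key step you are missing. With it the generic argument works down to $q\geqs 5$; the residual cases $q\leqs 4$ require the kind of case-by-case analysis (partly computational) you anticipate, but even there the bookkeeping is driven by the same three-way decomposition.

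Your proposed refinement via $i_r$ counts for the small-$q$ regime is sensible in spirit, but as stated ($i_r(H) \leqs |\bar{T}_\s|\cdot i_r(W(E_6))$) it still misses the graph automorphisms that live in $H \setminus H_0$, which is exactly the class you need to control separately. You also silently assume $H_0 = \bar{T}_\s.W(\bar{G})$ in all cases, whereas \cite[Table 5.2]{LSS} shows that the finite torus normalizer can have a different structure (e.g.\ $(q^2+\e q+1)^3.3^{1+2}.{\rm SL}_2(3)$); the paper handles these variants separately because the involution counts differ significantly.
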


\begin{proof}
Set $H_0 = H \cap G_0$ and note that the structure of $N_{L}(H_0)$ is recorded in \cite[Table 5.2]{LSS}, where $L = {\rm Inndiag}(G_0)$. Let $W(\bar{G}) = {\rm O}_{6}^{-}(2)$ be the Weyl group of $\bar{G}$.

Let $x \in G$ be an element of prime order. If $x$ is a long root element, then $|x^G| >(q-1)q^{21} = b_1$ and Corollary \ref{c:root} implies that $|x^G \cap H| \leqs 36(q+1)^6 = a_1$. If $x$ is not a long root element, nor an involutory graph automorphism with $C_{\bar{G}}(x) = F_4$, then $|x^G|>(q-1)q^{31}=b_2$ and we have 
\[
|H| \leqs (q+1)^6|W(\bar{G})|.2\log_2q = a_2.
\]
Finally, suppose $x$ is an $F_4$-type graph automorphism, so $|x^G|>\frac{1}{3}(q-1)q^{25}=b_3$. Since $W(\bar{G})$ is centralized by a graph automorphism, it follows that 
\[
|x^G \cap H| \leqs (q+1)^6i_{2}(W(\bar{G}) \times 2) = 3567(q+1)^6 = a_3
\]
and we conclude that $\mathcal{Q}(G,H) < \sum_{i=1}^{3}a_i^2/b_i<q^{-1}$ for all $q \geqs 5$. 

To complete the proof, we may assume $q \leqs 4$. First let us consider the case where  
\[
N_{L}(H_0)= (q^2 + \e q+1)^3.3^{1+2}.{\rm SL}_{2}(3),
\] 
so $(q,\e) \ne (2,-)$ (see \cite[Table 5.2]{LSS}). By arguing as in the previous paragraph, we see that the contribution to $\mathcal{Q}(G,H)$ from all elements other than $F_4$-type graph automorphisms is less than $a_1^2/b_1+a_2^2/b_2$, where $b_1$ and $b_2$ are defined as above, $a_1 = 36(q^2-\e q+1)^3$ and 
\[
a_2 = (q^2+\e q+1)^33^3|{\rm SL}_{2}(3)|.2\log_2q
\]
is an upper bound on $|H|$. Now assume $x$ is a graph automorphism with $C_{\bar{G}}(x) = F_4$. Here $|x^G|>b_3$ as above and we have
\[
|x^G \cap H| \leqs (q^2+\e q+1)^33^3.i_2({\rm SL}_{2}(3) \times 2) = 81(q^2+\e q+1)^3 = a_3.
\]
This gives $\mathcal{Q}(G,H)<\sum_{i=1}^{3}a_i^2/b_i<1$ for $q \geqs 3$. 

Suppose $(q,\e) = (2,+)$, so $G_0 = E_6(2)$ and $H_0 = 7^3{:}3^{1+2}.{\rm SL}_{2}(3)$. To handle this case, we view $\bar{G}.2<E_7$ and we use {\sc Magma} to construct $H_0.2$ as a subgroup of 
$E_7(8)$ (see \cite[Example 1.11]{BTh_comp}). In this way, we calculate that $i_2(H) \leqs 847=a_1$. Since $|x^G|>2^{41}=b_2$ for all $x \in G$ of odd prime order (see \cite[Table 9]{BLS}), it follows that 
\[
\mathcal{Q}(G,H) < a_1^2/b_1 + a_2^2/b_2 < 1,
\]
where $b_1 = 2^{21}$ and $a_2 = 2|H_0|$. 

For the remainder of the proof, we may assume $\e=-$ and $N_{L}(H_0) = (q+1)^6.W(\bar{G})$ with $q \leqs 4$ (according to \cite[Table 5.2]{LSS}, the condition 
$q \leqs 4$ implies that $\e=-$). First assume $q=2$, so either $G = G_0.3$ and $H = 3^6.W(\bar{G})$, or $G = G_0.{\rm Sym}_3$ and $H = 3^6.(W(\bar{G}) \times 2)$ (see \cite[Table 5.2]{LSS}). To get started, let us assume $G = G_0.3$. As explained in \cite[Proposition 2.2]{BTh_comp}, we can use {\sc Magma} to construct $H$ as a subgroup of $E_6(4)$ and we then compute the action of each element $x \in H$ of prime order $r$ on the adjoint module $V$ for $\bar{G}$. More precisely, if $r=2$ we compute the Jordan form of $x$ on $V$ and we inspect \cite[Table 6]{Lawunip} to determine the $G_0$-class of $x$ (in the table below, we use the labelling of unipotent classes in \cite[Table 22.2.3]{LieS}). For $r \in \{3,5\}$ we compute $\dim C_V(x) = \dim C_{\bar{G}}(x)$ and this allows us to identify the structure of $C_{\bar{G}}(x)^0$. The results we obtain are summarised in the following table:
\[
\begin{array}{llllllll} \hline
\multicolumn{2}{l}{r=2} & & \multicolumn{2}{l}{r=3} & & \multicolumn{2}{l}{r=5} \\ \hline
A_1 & 108 & & D_5T_1 & 54 & & A_3T_3 & 419904 \\
A_1^2 & 2430 & & A_5T_1 & 2232 & & & \\
A_1^3 & 18225 & & D_4T_2 & 47610 & & & \\
& & & A_4A_1T_1 & 39312 & & & \\
& & & A_2^3 & 144800 & & & \\ \hline
\end{array}
\]
In each case, it is easy to determine a lower bound on $|x^G|$ and the desired bound $\mathcal{Q}(G,H) < 1$ quickly follows. For example, one checks that the contribution from elements of order $3$ is less than $\sum_{i=1}^{5}a_i^2/b_i$, where
\[
a_1 = 54,\; a_2 = 2232,\; a_3 = 47610,\; a_4 = 39312,\; a_5 = 144800,
\]
\[
b_1 =  2^{31},\; b_2 = 2^{41},\; b_3 = 2^{45}, \; b_4 = 2^{44},\; b_5 = 2^{52}.
\]

To complete the analysis of this case, let us now assume $G = G_0.{\rm Sym}_3$ and $x \in G$ is an involutory graph automorphism. Since the algebraic group $E_7$ contains a subgroup $E_6.2$, we can use {\sc Magma} to construct $H$ as a subgroup of $E_7(4)$ (once again, see \cite{BTh_comp}) and we find that there are $5$ conjugacy classes of involutions in $H \setminus H_0$; the size of each class and the Jordan form of a representative on the adjoint module $\mathcal{L}(E_7)$ for $E_7$ are as follows:
\begin{align*}
(J_2^{53},J_1^{27}){:} & \; 405 \\
(J_{2}^{63},J_1^7){:} & \; 729+8748+14580+21870 = 45927
\end{align*}
If $C_{\bar{G}}(x) = F_4$, then \cite[Table 7]{LLS2014} indicates that $x$ is contained in the $E_7$-class labelled $(A_1^3)''$ in \cite{Lawunip}, whence $x$ has Jordan form $(J_2^{53},J_1^{27})$ on $\mathcal{L}(E_7)$ (see \cite[Table 8]{Lawunip}). Similarly, if $C_{\bar{G}}(x) \ne F_4$ then $x$ has Jordan form $(J_{2}^{63},J_1^7)$. It follows that the contribution to $\mathcal{Q}(G,H)$ from involutory graph automorphisms is less than $a_1^2/b_1 + a_2^2/b_2< 0.02$, where
\[
a_1 = 405,\; b_1 = \frac{1}{3}2^{25},\; a_2 = 45927,\; b_2 = \frac{1}{3}2^{41}.
\]
By combining this with the above estimates, we conclude that $\mathcal{Q}(G,H)<1$ and the result follows.

Next assume $q=3$. Here $G = G_0$ and $H = 4^6.W(\bar{G})$, or $G = G_0.2$ and $H = 4^6.(W(\bar{G}) \times 2)$. First assume $G = G_0$. Here we construct $H$ as a subgroup of $E_6(9)$ (see \cite{BTh_comp}) and by considering the action of $H$ on the adjoint module for $\bar{G}$ we obtain the following results:
\[
\begin{array}{llllllll} \hline
\multicolumn{2}{l}{r=2} & & \multicolumn{2}{l}{r=3} & & \multicolumn{2}{l}{r=5} \\ \hline
D_5T_1 & 5211 & & A_2 & 3840 & & A_3T_3 & 1327104 \\
A_1A_5 & 60516 & & A_2^2 & 122880 & & & \\
& & & A_2^2A_1 & 327680 & & & \\ \hline
\end{array}
\]
The desired bound $\mathcal{Q}(G,H)<1$ quickly follows. 

Now assume $G = G_0.2$. Here we construct $H$ as a subgroup of $E_7(9)$ (see \cite{BTh_comp} again for the details) and we deduce that $i_2(H \setminus H_0) = 147520$. Moreover, by calculating the eigenvalues of the involutions in $H \setminus H_0$ on the adjoint module for $E_7$, we see that $720$ have a $79$-dimensional $1$-eigenspace and the remainder have a $63$-dimensional $1$-eigenspace. If $x \in H$ is such an involution, then we may view $x$ as a semisimple involution in $E_7$ via the embedding $E_6.2<E_7$ and we recall that the connected component of the centralizer of an involution in $E_7$ is one of $A_1D_6$, $A_7$ or $E_6T_1$ (see \cite[Proposition 1.2]{LLS}). Since $F_4$ is not contained in $A_1D_6$ or $A_7$, it follows that if $C_{\bar{G}}(x) = F_4$ then $C_{E_7}(x)^0 = E_6T_1$ and thus $|x^G \cap H| = 720=a_1$. On the other hand, if $C_{\bar{G}}(x) \ne F_4$ then $|x^G \cap H| = 146800=a_2$. Therefore, the combined contribution to 
$\mathcal{Q}(G,H)$ from graph automorphisms is less than $a_1^2/b_1 + a_2^2/b_2<10^{-6}$, where $b_1 = \frac{1}{2}3^{26}$ and $b_2 = \frac{1}{2}3^{42}$. It is now easy to check that $\mathcal{Q}(G,H)<1$ and the result follows.

Finally, let us assume $q=4$, so either $G = G_0$ and $H = 5^6.W(\bar{G})$, or $G = G_0.2$ and $H = 5^6.(W(\bar{G}) \times 2)$, or $G = G_0.4$ and $H = 5^6.W(\bar{G}).4$. First assume $G = G_0$. Here we construct $H$ as a subgroup of $E_6(16)$ and as before we study the action of $H$ on the adjoint module for $\bar{G}$ (as usual, see \cite{BTh_comp} for the details). If $x \in G$ is a long root element, then $|x^G|>\frac{1}{2}4^{22}=b_1$ and we find that $|x^G \cap H| = 180=a_1$. On the other hand, if $x$ is not a long root element then Proposition \ref{p:bounds} gives $|x^G|>3.4^{31}=b_2$ and we set $a_2 = |H|$. Now assume $G \ne G_0$. Since every element of prime order in $G_0.4$ is contained in $G_0.2$, we may assume that $G = G_0.2$. We now construct $H$ as a subgroup of $E_7(16)$ and we find that $i_2(H \setminus H_0) = 365500=a_3$. Since $|x^G|>\frac{1}{2}4^{26}=b_3$ for all involutory graph automorphisms $x \in G$, we conclude that $\mathcal{Q}(G,H) < \sum_{i=1}^3 a_i^2/b_i<1$ and thus $b(G,H)=2$.
\end{proof}

\begin{lem}\label{l:maxrank_e6_2}
Suppose $G_0=E_6^{\e}(q)$ and $H$ is of type $A_2^{\e}(q)^3$, $A_2(q^2)A_2^{-\e}(q)$ or $A_2^{\e}(q^3)$. Then $b(G,H)=2$.
\end{lem}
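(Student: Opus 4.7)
The plan is to apply Lemma~\ref{l:base} and show $\mathcal{Q}(G,H)<1$; the asymptotic claim in Proposition~\ref{p:mr_base} will follow from the same estimates. By \cite[Table~5.1]{LSS} we have $|H| < q^{24}c(q)$ with $c(q)=O(\log q)$ in each of the three types, so Proposition~\ref{p:bounds} alone does not suffice: the crude bound $|H|^2/\ell_2$ with $\ell_2=q^{32}$ gives only $O(q^{16}\log^2 q)$, and we must partition the $G$-classes of prime order in a finer way.

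I would first dispatch long root elements. For the split type $A_2^{\e}(q)^3$, Proposition~\ref{p:root}(i) forces every long root element in $\bar H^0=A_2^3$ to lie in exactly one of the three $A_2$-factors, giving $|x^G\cap H| \leqs 3(q^3-\e)(q+\e)/(3,q-\e) = O(q^4)$; combined with $|x^G|>q^{22}$ this class contributes $O(q^{-14})$ to $\mathcal{Q}(G,H)$. In each twisted case the Frobenius $\sigma$ permutes the three $A_2$-factors of $\bar H^0$ nontrivially, and Proposition~\ref{p:root}(i) then rules out any long root element of $G$ lying in $H$, so this contribution vanishes.

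For the remaining prime-order elements, which all satisfy $|x^G|>q^{32}$, the guiding observation is that a generic element $(x_1,x_2,x_3)\in\bar H^0$ -- meaning each $x_i$ is regular in its $A_2$-factor and no $\bar G$-root vanishes on the resulting element of the maximal torus $T_6<\bar G$ -- is regular semisimple in $\bar G$, with $|x^G|\geqs q^{72}$. The contribution of such elements to $\mathcal{Q}(G,H)$ is then bounded crudely by $|H|^2/q^{72} = O(q^{-24}\log^2 q)$. The non-generic prime-order elements -- the unipotent classes and the semisimple classes lying on one of the finitely many singular hyperplanes in $T_6$ -- must be handled individually: for each such $\bar G$-class I would use the data in \cite{LieS,Lubeck} together with the explicit embedding $\bar H^0<\bar G$ to bound $|x^G\cap H|$, identifying unipotent representatives via their Jordan form on the adjoint module (as in \cite{Lawunip,Thomas}) and semisimple representatives via their centralizer type, and then applying Lemma~\ref{l:calc} to each stratum.

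The principal obstacle is precisely this class-by-class bookkeeping in the non-generic range: unlike the analogous arguments for $E_8$ and $E_7$ treated earlier in this section, here the ratio $\ell_2/|H|\sim q^8$ is too small for any uniform estimate to succeed, and the count of semisimple elements of $\bar H^0$ lying on each singular hyperplane must be obtained explicitly. For the finitely many small values of $q$ where the resulting asymptotic bound is not numerically sharp enough, the result is verified by a direct computation of $b(G,H)$ in \textsc{Magma} along the lines of Section~\ref{ss:comp}.
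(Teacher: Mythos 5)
The high-level architecture --- bounding $\mathcal{Q}(G,H)$ by a class-by-class refinement of Lemma~\ref{l:calc}, isolating root elements, and exploiting that generic elements of $\bar{H}^0$ are regular semisimple in $\bar{G}$ --- is exactly the paper's strategy, so this is not a genuinely different route. However, two of the claims you commit to are incorrect. First, the assertion that long root elements of $G_0$ never lie in $H$ in ``each twisted case'' fails for type $A_2(q^2)A_2^{-\e}(q)$: there $\sigma$ acts on the three $A_2$-factors as a \emph{transposition}, fixing one factor, and $\sigma$-stable root elements of that fixed factor are long root elements of $G_0$ lying in $H_0$. (The paper's Case~2 counts them explicitly; only for $A_2^{\e}(q^3)$, where $\sigma$ acts as a $3$-cycle, does the long root contribution vanish.) Second, the claim that ``the remaining prime-order elements \ldots all satisfy $|x^G|>q^{32}$'', followed by a stratification into unipotent and semisimple $\bar{G}$-classes only, tacitly omits the outer automorphisms: the $F_4$-type involutory graph automorphisms of $E_6^{\e}(q)$ have $|x^G|$ of order $q^{26}<q^{32}$, and for $q$ a square the involutory field and graph-field automorphisms contribute on the order of $q^{-2}$ and $q^{-1}$ to $\mathcal{Q}(G,H)$ --- terms that cannot be absorbed into the generic estimate and that the paper bounds using the fixed-point-ratio estimate from \cite[Lemma~6.1]{LLS2} and the structure of $\bar{H}.2 < \bar{G}.2$.

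Beyond these two errors, the proposal defers precisely the step in which the proof actually lives: you correctly observe that the crude bound $|H|^2/\ell_2 = O(q^{16}\log^2 q)$ fails, and that the non-generic semisimple classes and the small unipotent classes must each be counted via the embedding $\bar{H}^0 \hookrightarrow \bar{G}$ --- but you carry out none of this counting, and it constitutes the bulk of the paper's argument, with a careful subdivision by $r \in \{2,3, \geqs 5\}$, by the value of $e=(3,q-\e)$, and by centralizer type, using the decomposition of $\mathcal{L}(\bar{G}){\downarrow}\bar{H}^0$ to identify the classes. Note also that the paper does \emph{not} fall back on verifying $b(G,H)=2$ directly in \textsc{Magma} for small $q$: the groups $E_6^{\e}(q)$ are too large to construct as permutation groups and search for a regular suborbit even at $q=2$. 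Instead the small-$q$ cases are handled by the same analytic inequalities, with \textsc{Magma} used only to compute a few auxiliary class counts (such as $i_3(K)$ at $q=2$) that are then fed back into the estimate.
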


\begin{proof}
Here $\bar{H} = A_2^3.{\rm Sym}_{3}$, where ${\rm Sym}_3 = \la a, b \ra$ and the action of $a$ and $b$ on $\bar{H}^0 = A_2^3$ is given by
\begin{equation}\label{e:action}
a : \, (x_1,x_2,x_3) \mapsto (x_3,x_1,x_2),\;\; b:\, (x_1,x_2,x_3) \mapsto (x_2^{\tau},x_1^{\tau},x_3^{\tau})
\end{equation}
where $x_i$ denotes an arbitrary element in the $i$th $A_2$ factor of $\bar{H}^0$ and $\tau$ is an involutory graph automorphism of $A_2$ (in terms of matrices, we may view $\tau$ as the inverse-transpose map $y \mapsto y^{-T}$). Let $V = \mathcal{L}(\bar{G})$ be the adjoint module for $\bar{G}$ and let $V_{27}$ be one of the $27$-dimensional minimal modules. As noted in \cite[Table 3]{Thomas}, we have  
\begin{equation}\label{e:adj}
V{\downarrow}\bar{H}^0 = \mathcal{L}(\bar{H}^0) \oplus (W \otimes W \otimes W) \oplus (W^* \otimes W^* \otimes W^*)
\end{equation}
and
\[
V_{27}{\downarrow}\bar{H}^0 = (W \otimes W^* \otimes 0) \oplus (W^* \otimes 0 \otimes W) \oplus (0 \otimes W \otimes W^*)
\]
where $W$ is the natural module for $A_2$, $W^*$ its dual and $0$ the trivial module. Before we begin the main analysis, it will be useful to record some preliminary observations concerning the embedding of $\bar{H}$ in $\bar{G}$. 

Suppose $x \in \bar{H}$ has order $p$. If $x \in \bar{H}^0$ then the $\bar{G}$-class of $x$ is determined in \cite[Section 4.9]{Law09}. The nontrivial unipotent classes in $A_2$ are labelled $A_1$ and $A_2$, which gives a corresponding labelling of the classes in $\bar{H}^0$. It turns out that the $\bar{G}$-class containing a given $\bar{H}^0$-class inherits the same label, with the exception of the regular $\bar{H}^0$-class labelled $A_2^3$ (the latter is contained in the $\bar{G}$-class $D_4(a_1)$ if $p \ne 3$ and $A_2^2A_1$ if $p=3$).

Now assume $x \in \bar{H} \setminus \bar{H}^0$ has order $p$, so $p = 2$ or $3$. Suppose $p=3$ and note that $x$ is $\bar{H}$-conjugate to $a$. Now \eqref{e:action} indicates that $x$ cyclically permutes the three $A_2$ factors of $\bar{H}$, whence $x$ has Jordan form $(J_3^9)$ on $V_{27}$. By inspecting \cite[Table 5]{Lawunip}, it follows that $x$ is in one of the $\bar{G}$-classes $A_2^2$ or $A_2^2A_1$. Visibly, $x$ centralizes a diagonally embedded $A_2$ subgroup of $\bar{H}^0$ and so by considering the possibilities for $C_{\bar{G}}(x)^0$ (see \cite[Table 22.1.3]{LieS}) we conclude that $x$ is in the $A_2^2$ class. 

Now assume $p=2$ and $x = (x_1,x_2,x_3)b \in \bar{H}$ has order $2$, so $x_2 = x_1^{T}$ and $x_3 = x_3^{T}$. Since every invertible symmetric matrix is congruent to the identity matrix, it is easy to see that $x$ is $\bar{H}^0$-conjugate to $b$, so there is a unique $\bar{H}$-class of involutions in $\bar{H} \setminus \bar{H}^0$. Note that 
\[
C_{\bar{H}^0}(b) = \{(x_1,x_2,x_3) \in \bar{H}^0 \,:\, x_2 = x_1^{\tau},\, x_3 = x_3^{\tau}\}
\]
and thus $C_{\bar{H}^0}(b)^0$ is of type $A_2A_1$. Now $b$ has Jordan form $(J_2^8) \oplus (J_2^3,J_1^2)$ on $\mathcal{L}(\bar{H}^0)$ and it interchanges the two $27$-dimensional summands in \eqref{e:adj}, so $b$ has Jordan form $(J_2^{38},J_1^2)$ on $V$. By inspecting \cite[Table 6]{Lawunip}, we conclude that every involution in $\bar{H} \setminus \bar{H}^0$ is contained in the class labelled $A_1^3$.

Next we turn to the semisimple elements in $\bar{H}$. Suppose $x \in \bar{H}^0$ has prime order $r \ne p$. By working with the decomposition in \eqref{e:adj}, it is easy to compute the eigenvalues of $x$ on $V$, which allows us to read off $\dim C_{V}(x) = \dim C_{\bar{G}}(x)$. For example, if $x = (x_1,x_2,x_3) \in \bar{H}^0$ has order $2$, where $x_1 = x_2 \ne 1$ and $x_3 = 1$ then the dimension of the $1$-eigenspace of $x$ on $\mathcal{L}(\bar{H}^0)$ is equal to $\dim C_{\bar{H}^0}(x) = 4+4+8 = 16$ and we calculate that $x$ acts as $(-I_{12},I_{15})$ on the two $27$-dimensional summands in \eqref{e:adj}. Therefore, $\dim C_{\bar{G}}(x) = 16+30 = 46$ and thus $C_{\bar{G}}(x)^0 = D_5T_1$. Now assume $x \in \bar{H}\setminus \bar{H}^0$, so $r \in \{2,3\}$. If $r=2$ then $x$ is $\bar{H}$-conjugate to $b$ and we calculate that $x$ acts as $(-I_{40},I_{38})$ on $V$, so $C_{\bar{G}}(x)^0 = A_5A_1$. Similarly, if $r=3$ then $x$ is conjugate to $a$ and we have $\dim C_{\bar{G}}(x) = 30$, so $C_{\bar{G}}(x)^0 = D_4T_2$. 

Finally, let $\gamma$ be an involutory graph automorphism of $\bar{G}$. Then the normalizer of $\bar{H}$ in $\bar{G}.2 = \bar{G}.\la \gamma \ra$ is $\bar{H}.2 = \bar{H}^0.({\rm Sym}_{3} \times 2)$, where ${\rm Sym}_3$ acts naturally on the three factors of $\bar{H}^0$ and a generator $c$ for the cyclic group of order $2$ acts as a simultaneous  graph automorphism on all three factors. There are three classes of involutions in $\bar{H}.2 \setminus \bar{H}$, represented by $c$, $bc$ and $(1,1,t)bc$, where $t \in A_2$ is an involution. Here $bc$ acts as a transposition on the three factors of $\bar{H}^0$, swapping the first two and centralizing the third. Working with \eqref{e:adj}, we calculate that $bc$ has Jordan form $(J_2^{26},J_1^{26})$ or $(-I_{26},I_{52})$ on $V$, according to the parity of $p$, and we deduce that $C_{\bar{G}}(bc) = F_4$. On the other hand, if $x = c$ or $(1,1,t)bc$ then $x$ has Jordan form $(J_2^{36},J_1^6)$ or $(-I_{42},I_{36})$ and we see that $C_{\bar{G}}(x) \ne F_4$. 

\vs

\noindent \emph{Case 1. $H$ is of type $A_2^{\e}(q)^3$.}

\vs

We are now ready to begin the proof of the lemma. First assume $H$ is of type $A_2^{\e}(q)^3$, so 
\[
H_0 = e.{\rm L}_{3}^{\e}(q)^3.e.{\rm Sym}_{3} \; \mbox{ and } \; K = N_{L}(H_0) = e.{\rm L}_{3}^{\e}(q)^3.e^2.{\rm Sym}_{3},
\]
where $e = (3,q-\e)$ and $L = {\rm Inndiag}(G_0) = G_0.e$. We proceed by estimating the contribution to $\mathcal{Q}(G,H)$ from the different types of elements of prime order in $G$.

First we consider the contribution from unipotent elements. Suppose $p=2$ and let $x \in H$ be a unipotent involution. Both ${\rm SL}_{3}^{\e}(q)$ and ${\rm L}_{3}^{\e}(q)$ contain $(q^2-1)(q^2+\e q+1)< 2q^4$ involutions, which form a single conjugacy class, and we recall from above that every involution in $\bar{H}\setminus \bar{H}^0$ is contained in the class labelled $A_1^3$. If $x$ is in the $G_0$-class labelled $A_1$ then $|x^G \cap H| < 3.2q^4=a_1$ and $|x^G|>(q-1)q^{21}=b_1$. 
Similarly, if $x$ is in the $A_1^2$ class, then
$|x^G \cap H| <3.(2q^4)^2 = 12q^8=a_2$ and $|x^G|>(q-1)q^{31} = b_2$.
Finally, if $x$ is in the class labelled $A_1^3$, then
\[
|x^G \cap H| < (2q^{4})^3 + 3\cdot \frac{|{\rm SL}_{3}^{\e}(q)|^3}{|{\rm SL}_{3}^{\e}(q)||{\rm SL}_{2}(q)|} < 8q^{12}+6q^{13} = a_3,\;\; |x^G|>(q-1)q^{39} = b_3
\]
and thus the unipotent contribution when $p=2$ is less than $\sum_{i=1}^3a_i^2/b_i$.

Now assume $p \geqs 3$. As above, the contribution to $\mathcal{Q}(G,H)$ from elements in the classes labelled $A_1$, $A_1^2$ and $A_1^3$ is less than $\sum_{i=1}^3a_i^2/b_i$. Now $|x^G|>\frac{1}{4}q^{42} = b_4$ for all other nontrivial elements of order $p$ in $G_0$ (see \cite[Table 22.2.3]{LieS}) and using Proposition \ref{p:i23}(ii) we note that 
\[
i_p(e.{\rm L}_{3}^{\e}(q)^3.e)  = i_p({\rm L}_{3}^{\e}(q)^3)< q^{18} = a_4.
\]
In addition, if $p=3$ then there are elements of order $p$ in $\bar{H} \setminus \bar{H}^0$ which transitively permute the factors of $\bar{H}^0$. As explained above, such an element $x$ is in the $\bar{G}$-class labelled $A_2^2$, so $|x^G|>\frac{1}{6}q^{48} = b_5$ and we note that there are at most $2|{\rm SL}_{3}^{\e}(q)|^2< 2q^{16}=a_5$ of these elements in $H_0$.

We conclude that the unipotent contribution is less than
$\sum_{i=1}^{5}a_i^2/b_i < q^{-4}+q^{-5}$ for all $p$. 

Next let us turn to the contribution to $\mathcal{Q}(G,H)$ from semisimple elements. Let $x \in H$ be a semisimple element of prime order $r$. First assume $r=2$, so $C_{\bar{G}}(x)^0 = A_5A_1$ or $D_5T_1$. Recall that the involutions in $\bar{H} \setminus \bar{H}^0$ are of type $A_5A_1$ and note that both ${\rm SL}_{3}^{\e}(q)$ and ${\rm L}_{3}^{\e}(q)$ contain $q^2(q^2+\e q +1) < 2q^4$ involutions, which form a single class. By considering the decomposition in \eqref{e:adj}, we can calculate $\dim C_{V}(x)$ for each involution $x \in H$ and this allows us to determine the $G_0$-class of $x$. In this way, we deduce that if $x$ is of type $D_5T_1$ then $|x^G\cap H| < 3.(2q^4)^2 = 12q^8 = a_6$ and $|x^G|> (q-1)q^{31} = b_6$, whereas 
\[
|x^G \cap H| < 3.2q^4 + (2q^4)^3 + 3\cdot \frac{|{\rm SL}_{3}^{\e}(q)|^3}{|{\rm SL}_{3}^{\e}(q)||{\rm SL}_{2}(q)|} < 6q^4+8q^{12}+6q^{13} = a_7
\]
and $|x^G|>(q-1)q^{39} = b_7$ if $x$ is of type $A_5A_1$. It follows that the contribution to $\mathcal{Q}(G,H)$ from semisimple involutions is less than $a_6^2/b_6+a_7^2/b_7<q^{-9}$.

Next assume $r=3$, so $p \ne 3$. First  assume $e=3$, so we have $H_0 = 3.{\rm L}_{3}^{\e}(q)^3.3.{\rm Sym}_{3}$ and $K = 3.{\rm L}_{3}^{\e}(q)^3.3^2.{\rm Sym}_{3}$.
Let $x \in K$ be a semisimple element of order $3$, whence 
\[
\mbox{$C_{\bar{G}}(x)^0 = A_5T_1$, $D_4T_2$, $A_2^3$, $D_5T_1$ or $A_1A_4T_1$}
\]
(see \cite[Table 4.7.1]{GLS}) and we recall that $C_{\bar{G}}(x)^0 = D_4T_2$ if $x \in \bar{H} \setminus \bar{H}^0$. Let $Z$ be the normal subgroup of $K$ of order $3$, so  
\[
K/Z = {\rm L}_{3}^{\e}(q)^3.3^2.{\rm Sym}_{3} < {\rm PGL}_{3}^{\e}(q)^3.{\rm Sym}_{3}
\]
and
\[
i_3(K) < |Z|\cdot (1+i_3({\rm PGL}_{3}^{\e}(q) \wr Z_3)).
\]
By applying Proposition \ref{p:i23}(i), we deduce that
\[
1+i_3({\rm PGL}_{3}^{\e}(q) \wr Z_3) \leqs (i_3({\rm Aut}({\rm L}_{3}^{\e}(q)))+1)^3+2|{\rm PGL}_{3}^{\e}(q)|^2 < 8(q+1)^3q^{15}+2q^{16}
\]
and thus $i_3(K) < 24(q+1)^3q^{15}+6q^{16}$. If $q=2$ then $\e=-$ and we can use {\sc Magma} to construct $K$ as a subgroup of $E_6(4)$ (see \cite[Lemma 2.3]{BTh_comp}); in this way, we calculate that $i_3(K) = 492074$. Set
\[
a_{8} = \left\{\begin{array}{ll}
24(q+1)^3q^{15}+6q^{16} & \mbox{if $q>2$,} \\
492074 & \mbox{if $q=2$.}
\end{array}\right.
\]
If $\dim x^{\bar{G}}>42$ then $|x^G|>\frac{1}{6}q^{48} = b_{8}$ and the contribution to $\mathcal{Q}(G,H)$ from these elements is less than $a_{8}^2/b_{8}$. 

Now assume $\dim x^{\bar{G}} \leqs 42$, so $x \in \bar{H}^0$ and we have $C_{\bar{G}}(x)^0 = D_5T_1$ or $A_5T_1$. By considering \eqref{e:adj}, one can check that $C_{\bar{G}}(x)^0 = D_5T_1$  if and only if $Zx  \in {\rm PGL}_{3}^{\e}(q)^3$ is of the form $(x_1,x_2,1)$ (up to permutations of the coordinates), where $x_1 \in {\rm PGL}_{3}^{\e}(q)$ is conjugate to the image modulo scalars of a diagonal matrix ${\rm diag}(1,1,\omega) \in {\rm GL}_{3}^{\e}(q)$ with $\omega$ a primitive cube root of unity (so $x_1 \in {\rm PGL}_{3}^{\e}(q) \setminus {\rm L}_{3}^{\e}(q)$ is a diagonal automorphism) and $x_2$ is conjugate to $x_1^{-1}$. Now $|x^G|>(q-1)q^{31} = b_{9}$ and there are at most 
\[
3! \cdot \left(\frac{|{\rm SL}_{3}^{\e}(q)|}{|{\rm GL}_{2}^{\e}(q)|}\right)^2 < 24q^8 = a_{9}
\]
such elements in $K$. Now assume $C_{\bar{G}}(x)^0 = A_5T_1$, so $|x^G|>(q-1)q^{41} = b_{10}$. Here we find that $C_{\bar{G}}(x)^0 = A_5T_1$ if and only if $Zx  \in {\rm PGL}_{3}^{\e}(q)^3$ is one of the following, up to permutations:
\begin{itemize}\addtolength{\itemsep}{0.2\baselineskip}
\item[{\rm (a)}] $(x_1,1,1)$, where $x_1$ is conjugate to the image of ${\rm diag}(1,\omega,\omega^2) \in {\rm GL}_{3}^{\e}(q)$; or
\item[{\rm (b)}] $(x_1,x_2,x_3)$, where $x_1$ and $x_2$ are conjugate to the image of ${\rm diag}(1,1,\omega)$ and $x_3$ is conjugate to $x_1^{-1}$.
\end{itemize}
Therefore, there are at most 
\[
3 \cdot \frac{|{\rm SL}_{3}^{\e}(q)|}{|{\rm GL}_{1}^{\e}(q)|^2} + 3\cdot 2 \cdot 
\left(\frac{|{\rm SL}_{3}^{\e}(q)|}{|{\rm GL}_{2}^{\e}(q)|}\right)^3 < 12q^6 + 48q^{12}= a_{10}
\]
such elements in $K$.
 
Bringing these estimates together, we conclude that if $e=3$, then the contribution to $\mathcal{Q}(G,H)$ from semisimple elements of order $3$ is less than $\sum_{i=8}^{10}a_{i}^2/b_i< q^{-4}$. 

Now assume $e = 1$, so $H_0 = {\rm L}_{3}^{\e}(q)^3.{\rm Sym}_{3}$ and we note that ${\rm L}_{3}^{\e}(q)$ contains $q^3(q^3+\e)< 2q^6$ elements of order $3$, which form a single class (note that these elements are regular). The possibilities for $C_{\bar{G}}(x)^0$ are $A_5T_1$, $A_2^3$ and $D_4T_2$. By working with \eqref{e:adj} we calculate that if $C_{\bar{G}}(x)^0 = A_5T_1$ then $|x^G\cap H|< 3.2q^6 = 6q^6$ and $|x^G|>(q-1)q^{41}$. Similarly, if $C_{\bar{G}}(x)^0 = A_2^3$ then $|x^G\cap H|< (2q^6)^3 = 8q^{18}$ and $|x^G|>\frac{1}{6}q^{54}$. And for $C_{\bar{G}}(x)^0 = D_4T_2$ we get 
\[
|x^G\cap H|<3.(2q^6)^2 + 2|{\rm SL}_{3}^{\e}(q)|^2 < 12q^{12}+2q^{16},\;\; |x^G|>\frac{1}{6}q^{48}.
\]
From the above estimates, it is straightforward to check that the combined contribution to $\mathcal{Q}(G,H)$ from semisimple elements of order $3$ is less than $q^{-4}$ for all $q$. 

To complete the analysis of semisimple elements, let us assume $r \geqs 5$. If $\dim x^{\bar{G}} \geqs 50$ then $|x^G|>(q-1)q^{49} = b_{11}$ and there are clearly fewer than $q^{24} = a_{11}$ such elements in $H$. Now assume $\dim x^{\bar{G}} < 50$, so $C_{\bar{G}}(x)^0 = D_5T_1$, $A_5T_1$ or $D_4T_2$. Since $r \geqs 5$ we have $x \in \bar{H}^0$, say $x = (x_1, x_2, x_3)$, and by working with the decomposition in \eqref{e:adj} we deduce that $\dim C_{\bar{G}}(x) \leqs 36$. Therefore, $C_{\bar{G}}(x)^0 = A_5T_1$ or $D_4T_2$ and thus $|x^G|>(q-1)q^{41} = b_{12}$. If each $x_i$ is nontrivial then we calculate that $\dim C_{\bar{G}}(x) \leqs 24$, which is a contradiction. Therefore, there are fewer than $3|{\rm SL}_{3}^{\e}(q)| + 3|{\rm SL}_{3}^{\e}(q)|^2 < 3q^8(q^8+1) = a_{12}$ such elements in $H$ and we conclude that the entire contribution to $\mathcal{Q}(G,H)$ from semisimple elements of order at least $5$ is less than $a_{11}^2/b_{11}+a_{12}^2/b_{12}<q^{-1}+q^{-4}$.

Finally, let us assume $x \in G$ is a field, graph or graph-field automorphism of $G_0$. First assume $x$ is a field or graph-field automorphism of order $r$, so $q=q_0^r$. If $r \geqs 3$ then $|x^G|>\frac{1}{6}q^{52} = b_{13}$ and we note that there are fewer than 
$\log_2q.6q^{24} = a_{13}$ such elements in $H$. Now assume $r=2$, so $\e=+$ and it is convenient to use the bound
\[
{\rm fpr}(x,G/H)< \frac{|\bar{H}:\bar{H}^0|\cdot q^{24}}{(q^{1/2}-1)^6q^9|x^{G_0}|}< 36q^{-24}(q^{1/2}-1)^{-6},
\]
which is explained in the proof of \cite[Lemma 6.1]{LLS2}. Since $|x^G|<2q^{39}$, it follows that the contribution to $\mathcal{Q}(G,H)$ from field and graph-field automorphisms is less than
\[
\eta\left(a_{13}^2/b_{13}\right)+2\cdot 2q^{39} \cdot \left(36q^{-24}(q^{1/2}-1)^{-6} \right)^2 < 4\eta q^{-1}+\mu q^{-2},
\]
where $\eta=1$ if $q=p^f$ and $f$ is divisible by an odd prime, otherwise $\eta = 0$, and similarly $\mu = 1$ if $q$ is a square, otherwise $\mu=0$.

Now assume $x \in G$ is an involutory graph automorphism. As explained earlier, if $C_{\bar{G}}(x) \ne F_4$ then either $x$ induces a graph automorphism on each $A_2$ factor of $\bar{H}^0$, or it swaps two of the factors and acts nontrivially on the third. Therefore, $|x^G|>\frac{1}{6}q^{42} = b_{14}$ and there are at most
\[
\left(\frac{|{\rm SL}_{3}^{\e}(q)|}{|{\rm SL}_{2}(q)|}\right)^3 + 3|{\rm SL}_{3}^{\e}(q)|\cdot (q^2-1)(q^2+\e q+1) < 2q^{15} = a_{14}
\] 
such elements in $H$. On the other hand, if $C_{\bar{G}}(x) = F_4$ then $|x^G|>\frac{1}{6}q^{26} = b_{15}$ and $x$ acts as a transposition on the factors of $\bar{H}^0$, centralizing the fixed factor, whence 
$H$ contains at most $3|{\rm SL}_{3}^{\e}(q)|< 3q^8 = a_{15}$ such elements. Therefore, the contribution to $\mathcal{Q}(G,H)$ from graph automorphisms is less than $a_{14}^2/b_{14}+a_{15}^2/b_{15}<q^{-4}$.

Bringing together all of the above bounds, we conclude that
\[
\mathcal{Q}(G,H)<(1+4\eta)q^{-1}+\mu q^{-2}+4q^{-4}+q^{-5}+q^{-9}<1
\]
and the result follows.

\vs

\noindent \emph{Case 2. $H$ is of type $A_2(q^2)A_2^{-\e}(q)$.}

\vs

As before, set $L = {\rm Inndiag}(G_0)$ and observe that 
\[
K = N_{L}(H_0) = g.({\rm L}_{3}(q^2) \times {\rm L}_{3}^{-\e}(q)).h.2,
\]
where $g = (3,q+\e)$ and $h = (3,q^2-1)$. In particular, note that if $e = (3,q-\e)=3$ then $H_0 = ({\rm L}_{3}(q^2) \times {\rm L}_{3}^{-\e}(q)).2$. Let us also observe that there is a unique class of involutions in $K \setminus (g.({\rm L}_{3}(q^2) \times {\rm L}_{3}^{-\e}(q)).h)$, which acts on ${\rm L}_{3}(q^2) \times {\rm L}_{3}^{-\e}(q)$ by inducing a graph-field automorphism on the first factor and a graph automorphism on the second. This corresponds to the involution $b \in \bar{H}$ discussed earlier (see \eqref{e:action}). 

First assume $p=2$ and $x \in G_0$ is an involution. If $x$ is in the class labelled $A_1$, then $|x^G|>(q-1)q^{21}=b_1$ and $|x^G \cap H| = i_2({\rm L}_{3}^{-\e}(q)) < 2q^4 = a_1$. Similarly, if $x$ is in the $A_1^2$ class then $|x^G \cap H| = i_2({\rm L}_{3}(q^2)) < 2q^8 = a_2$ and $|x^G|>(q-1)q^{31}=b_2$. Now assume $x$ is in the class labelled $A_1^3$, so $|x^G|>(q-1)q^{39} = b_3$. Here we get
\[
|x^G \cap H| \leqs i_2({\rm L}_{3}(q^2)) \cdot i_2({\rm L}_{3}^{-\e}(q)) + \frac{|{\rm SL}_{3}(q^2)|}{|{\rm SU}_{3}(q)|} \cdot \frac{|{\rm SL}_{3}^{-\e}(q)|}{|{\rm SL}_{2}(q)|} < 4q^{13} = a_3.
\]
By arguing as in Case 1, we see that the unipotent contribution to $\mathcal{Q}(G,H)$ for any $p$ is less than $\sum_{i=1}^{4}a_i^2/b_i< q^{-4}+q^{-5}$, where $a_4 = q^{18}$ and $b_4 = \frac{1}{4}q^{42}$.

Next assume $x \in G$ is a semisimple element of prime order $r$. Suppose $r=2$, so $C_{\bar{G}}(x)^0 = D_5T_1$ or $A_5A_1$. If $x$ is a $D_5T_1$ involution, then $|x^G\cap H| = i_{2}({\rm L}_{3}(q^2))<2q^8 = a_5$ and $|x^G|>(q-1)q^{31}=b_5$. Similarly, if $x$ is an $A_5A_1$ involution then $|x^G|>(q-1)q^{39} = b_6$ and we see that 
\[
|x^G \cap H| \leqs i_2({\rm L}_{3}^{-\e}(q)) + i_2({\rm L}_{3}(q^2)) \cdot i_2({\rm L}_{3}^{-\e}(q)) + \frac{|{\rm SL}_{3}(q^2)|}{|{\rm SU}_{3}(q)|} \cdot \frac{|{\rm SL}_{3}^{-\e}(q)|}{|{\rm SL}_{2}(q)|} < 4q^{13} = a_6.
\]

Now suppose $r=3$. Let $Z$ be the normal subgroup of $K$ of order $g = (3,q+\e)$, so $K/Z$ is a subgroup of $({\rm PGL}_{3}(q^2) \times {\rm PGL}_{3}^{-\e}(q)).2$ and thus
\[
i_3(K) < |Z| \cdot (1+i_3({\rm PGL}_{3}(q^2) \times {\rm PGL}_{3}^{-\e}(q))).
\]
For $q \in \{2,3\}$ we compute 
\[
i_3({\rm PGL}_{3}(q^2) \times {\rm PGL}_{3}^{-\e}(q)) \leqs \left\{\begin{array}{ll}
387420488 & \mbox{if $q=3$} \\
391472 & \mbox{if $q=2$} 
\end{array}\right.
\]
and by applying Proposition \ref{p:i23}(i) we deduce that  
\[
a_7 = \left\{\begin{array}{ll}
12(q+1)(q^2+1)q^{15} & \mbox{if $q > 3$} \\
387420489 & \mbox{if $q=3$} \\
1174419 & \mbox{if $q=2$}
\end{array}\right.
\]
is an upper bound on $i_3(K)$. If $\dim x^{\bar{G}} \geqs 42$ then $|x^G|>(q-1)q^{41} = b_7$ and so the contribution to $\mathcal{Q}(G,H)$ from these elements is less than $a_7^2/b_7$. Now assume $\dim x^{\bar{G}}<42$, so $C_{\bar{G}}(x)^0 = D_5T_1$ is the only option and thus $|x^G|>(q-1)q^{31}=b_8$. By arguing as in Case 1, we see that there are at most $|Z|\cdot i_3({\rm PGL}_{3}(q^2)) < 6(q^2+1)q^{10}$ such elements in $H$. Set $a_8 = 6(q^2+1)q^{10}$ if $q>2$ and $a_8 = 3\cdot i_3({\rm PGL}_{3}(4)) = 14496$ if $q=2$.

Now assume $r \geqs 5$ and note that $i_r(K) = i_r({\rm L}_{3}(q^2) \times {\rm L}_{3}^{-\e}(q))$. As in Case 1, if $\dim x^{\bar{G}} \geqs 50$ then $|x^G|>(q-1)q^{49} = b_{9}$ and $H$ contains fewer than $q^{24}$ such elements. In fact, for $q=2$ we calculate that ${\rm L}_{3}(q^2) \times {\rm L}_{3}^{-\e}(q)$ contains at most $290352$ elements of prime order at least $5$, so we set $a_{9}=q^{24}$ if $q>2$ and $a_{9} = 290352$ if $q=2$. By arguing as before, if $\dim x^{\bar{G}}<50$ then $|x^G|>(q-1)q^{41} = b_{10}$ and there are fewer than $|{\rm SL}_{3}^{\e}(q^2)|+|{\rm SL}_{3}^{-\e}(q)|<q^{16} = a_{10}$ such elements in $H$.

Putting all of the above estimates together, we conclude that the contribution to $\mathcal{Q}(G,H)$ from semisimple elements is less than $\sum_{i=5}^{10}a_i^2/b_i<q^{-1}+q^{-2}$.

Finally, let us assume $x$ is a field, graph or graph-field automorphism. By repeating the argument in Case 1, we see that the contribution from field and graph-field automorphisms is less than $4\eta q^{-1}+\mu q^{-2}$, where $\eta$ and $\mu$ are defined as in Case 1. Now assume $x$ is an involutory graph automorphism. Recall that if $C_{\bar{G}}(x) = F_4$ then $|x^G|>\frac{1}{6}q^{26} = b_{11}$ and $x$ acts on the factors of $\bar{H}^0$ by swapping the first two and centralizing the third. Given the structure of $H_0$, this implies that $x$ induces a field automorphism on the ${\rm L}_{3}(q^2)$ factor and thus 
\[
|x^G\cap H| \leqs \frac{|{\rm SL}_{3}(q^2)|}{|{\rm SL}_{3}(q)|}< q^8 = a_{11}.
\]
Similarly, if $C_{\bar{G}}(x) \ne F_4$ then $|x^G|>\frac{1}{6}q^{42} = b_{12}$ and $x$ either induces graph automorphisms on both ${\rm L}_{3}(q^2)$ and ${\rm L}_{3}^{-\e}(q)$, or it acts as a field automorphism on ${\rm L}_{3}(q^2)$ and as an involutory inner automorphism on ${\rm L}_{3}^{-\e}(q)$. It follows that 
\[
|x^G \cap H| \leqs \frac{|{\rm SL}_{3}(q^2)|}{|{\rm SL}_{2}(q^2)|}\cdot \frac{|{\rm SL}_{3}^{-\e}(q)|}{|{\rm SL}_{2}(q)|} + \frac{|{\rm SL}_{3}(q^2)|}{|{\rm SL}_{3}(q)|}\cdot (q^2-1)(q^2-\e q+1) < 2q^{15} = a_{12}
\]
and thus the total contribution to $\mathcal{Q}(G,H)$ from graph automorphisms is less than $a_{11}^2/b_{11}+a_{12}^2/b_{12}<q^{-6}$.

In conclusion, the above estimates show that
\[
\mathcal{Q}(G,H) < (1+4\eta)q^{-1}+(1+\mu)q^{-2}+q^{-4}+q^{-5}+q^{-6}
\]
and one checks that this upper bound is always less than $1$.  

\vs

\noindent \emph{Case 3. $H$ is of type $A_2^{\e}(q^3)$.}

\vs

To complete the proof, we may assume $H$ is of type $A_2^{\e}(q^3)$, so $H_0 = {\rm L}_{3}^{\e}(q^3).3$ and 
\[
K = N_{L}(H_0) = {\rm L}_{3}^{\e}(q^3).(e \times 3) \leqs {\rm PGL}_{3}^{\e}(q^3).3
\]
where $L = {\rm Inndiag}(G_0)$ and $e=(3,q-\e)$ as before. Note that $H_0$ contains an element of order $3$, which acts as a field automorphism on ${\rm soc}(H_0) = {\rm L}_{3}^{\e}(q^3)$; this corresponds to the element $a \in \bar{H}$, which transitively permutes the three factors of $\bar{H}^0$ (see \eqref{e:action}).

First assume $x \in H$ is a unipotent element of $G$. If $x$ is a long root element in ${\rm L}_{3}^{\e}(q^3)$ then $x$ is contained in the $A_1^3$ class of $G_0$, so $|x^G|>(q-1)q^{39} = b_1$ and 
\[
|x^G\cap H| = \frac{|{\rm SL}_{3}^{\e}(q^3)|}{q^9(q^3-\e)} < 2q^{12}=a_1.
\]
If $p \geqs 3$ and $x \in {\rm L}_{3}^{\e}(q^3)$ is regular, then \cite[Section 4.9]{Law09} implies that $x$ is contained in the $\bar{G}$-class labelled $A_2^2A_1$ (if $p=3$) or $D_4(a_1)$ (if $p >3$). Similarly, if $p=3$ and $x \in H_0$ is a field automorphism of order $3$, then $x$ is in the class $A_2^2$. In both cases, $|x^G|>\frac{1}{6}q^{48}=b_2$ and there are fewer than
\[
i_p({\rm L}_{3}^{\e}(q^3)) + 2\cdot \frac{|{\rm SL}_{3}^{\e}(q^3)|}{|{\rm SL}_{3}^{\e}(q)|}<2q^{18}=a_2
\]
such elements in $H$. Therefore, the unipotent contribution to $\mathcal{Q}(G,H)$ is less than $a_1^2/b_1+a_2^2/b_2<q^{-7}$.

Next assume $x \in H$ is a semisimple element of prime order $r$. If $r=2$ then $|x^G \cap H| = i_2({\rm L}_{3}^{\e}(q^3))<2q^{12}=a_3$ and $C_{\bar{G}}(x)^0 = A_5A_1$, so $|x^G|>(q-1)q^{39} = b_3$. Now suppose $r=3$. By considering the structure of $K$ and the action of $\bar{H}^0$ on the adjoint module for $\bar{G}$ (see \eqref{e:adj}), it is easy to see that $C_{\bar{G}}(x)^0 \ne D_5T_1$, so $|x^G|>(q-1)q^{41}=b_4$ and by applying Proposition \ref{p:i23}(i) we deduce that 
\[
i_3(K) \leqs i_3({\rm Aut}({\rm L}_{3}^{\e}(q^3))) < 2(q^3+1)q^{15} = a_4.
\]
Similarly, if $r \geqs 5$ then $\dim x^{\bar{G}} \geqs 50$, so $|x^G|>(q-1)q^{49}=b_5$ and we note that $|{\rm L}_{3}^{\e}(q^3)|<q^{24}=a_5$. Therefore, the contribution to $\mathcal{Q}(G,H)$ from semisimple elements is less than $\sum_{i=3}^{5}a_i^2/b_i<q^{-1}+q^{-2}$.

Finally, let us assume $x \in G$ is a field, graph or graph-field automorphism. Suppose $x$ is a field automorphism of prime order $r$, so $q=q_0^r$ and we may assume $r \ne 3$ since every element in $H$ of order $3$ is contained in $K$. If $r=2$ then $\e=+$ and $x$ acts as an involutory field automorphism on ${\rm L}_{3}(q^3)$, so $|x^G|>\frac{1}{6}q^{39}=b_6$ and
\[
|x^G \cap H| \leqs \frac{|{\rm SL}_{3}(q^3)|}{|{\rm SL}_{3}(q^{3/2})|}<2q^{12} = a_6.
\]
If $r \geqs 5$ then $|x^G|>\frac{1}{6}q^{312/5}=b_7$ and we note that $|H|<6\log_2q.q^{24} = a_7$. Next assume $x \in G$ is an involutory graph-field automorphism, so $\e=+$ and $q=q_0^2$. Here $|x^G|>\frac{1}{6}q^{39}=b_8$ and $x$ induces a graph-field automorphism on ${\rm L}_{3}(q^3)$, so $|x^G \cap H|<2q^{12}=a_8$. Finally, if $x$ is an involutory graph automorphism then $x$ acts as a graph automorphism on ${\rm L}_{3}(q^3)$, so
\[
|x^G \cap H| \leqs \frac{|{\rm SL}_{3}^{\e}(q^3)|}{|{\rm SL}_{2}(q^{3})|}<2q^{15} = a_9.
\]
In terms of the ambient algebraic groups, $x$ must act as a simultaneous graph automorphism on the three factors of $\bar{H}^0$ (since ${\rm L}_{3}^{\e}(q^3)$ is not normalized by a graph automorphism that also induces a nontrivial permutation of the factors of $\bar{H}^0$). Therefore, $C_{\bar{G}}(x) \ne F_4$ and thus $|x^G|>\frac{1}{6}q^{42}=b_9$. It follows that the contribution to $\mathcal{Q}(G,H)$ from field, graph and graph-field automorphisms is less than $\sum_{i=6}^{9}a_i^2/b_i<q^{-5}$.

To conclude, it follows that 
\[
\mathcal{Q}(G,H) < \sum_{i=1}^{9}a_i^2/b_i < q^{-1}+q^{-2}+q^{-5}+q^{-7}
\]
and the proof of the lemma is complete.
\end{proof}

\begin{lem}\label{l:maxrank_e6_3}
Suppose $G_0=E_6^{\e}(q)$ and $H$ is of type 
\[
A_1(q)A_5^{\e}(q), \; D_4(q).(q-\e)^2, \; {}^3D_4(q).(q^2+\e q+1), \; D_5^{\e}(q).(q-\e).
\]
Then $G$ is not extremely primitive. 
\end{lem}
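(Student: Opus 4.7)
The strategy is to handle the four types separately, consulting \cite[Table 5.1]{LSS} for the precise structure of $H_0 = H \cap G_0$, and to apply Lemma \ref{l:structure} whenever possible to quickly rule out extreme primitivity.

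For the type $A_1(q)A_5^{\e}(q)$ case, $H_0$ is essentially (up to a central factor of order dividing $(3,q-\e)$) the commuting product ${\rm L}_{2}(q) \circ {\rm L}_{6}^{\e}(q)$. For $q$ odd, $H$ is the centralizer in $L = {\rm Inndiag}(G_0)$ of a semisimple involution of type $A_1A_5$, so $Z(H) \neq 1$ and Lemma \ref{l:structure}(i) applies. For $q$ even, ${\rm soc}(H)$ contains the non-isomorphic simple groups ${\rm L}_{2}(q)$ and ${\rm L}_{6}^{\e}(q)$, so ${\rm soc}(H)$ is not a direct product of isomorphic simple groups, and Lemma \ref{l:structure}(v) yields the conclusion. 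This is directly parallel to how the cases $A_1D_6$, $A_2^{\e}A_5^{\e}$, $D_4A_1^3$ and ${}^3D_4A_1$ were handled in Lemma \ref{l:maxrank_e7_3}.

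For the three remaining types, $H_0$ contains a simple group $S \in \{{\rm P\O}_{8}^{+}(q),\, {}^{3}D_{4}(q),\, {\rm P\O}_{10}^{\e}(q)\}$ as a characteristic subgroup, together with a central torus $T$ of order dividing $(q-\e)^2$, $q^2+\e q+1$ or $q-\e$ respectively. Both $S$ and $T$ are normal in $H$ (the former is characteristic in $H_0$, and the latter is preserved by the outer triality since it is the connected center of $\bar H^0$). Provided $T \neq 1$, the group $H$ contains both a nonabelian simple normal subgroup and a nontrivial abelian normal subgroup; an O'Nan--Scott analysis then forces ${\rm soc}(H)$ to fail being a direct product of isomorphic simple groups, and Lemma \ref{l:structure}(v) rules out extreme primitivity. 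Since $q^2 + \e q + 1 \geqs 3$ always, the only exceptions are cases 2 and 4 with $(q,\e) = (2,+)$, where $T = 1$.

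For the residual cases with $G_0 = E_6(2)$ where $T$ collapses, I would construct $H$ in {\sc Magma} as a subgroup of a suitable matrix representation of $E_6(2)$ (via the embedding into $E_7(q^2)$ or via \texttt{AutomorphismGroupSimpleGroup}, as in \cite{BTh_comp}), and then either establish $b(G,H) = 2$ via the probabilistic bound of Lemma \ref{l:base} (using Lemma \ref{l:base2}) or apply Lemma \ref{l:simple} with $K$ an appropriately chosen outer-stable reductive subgroup of $S$. The main obstacle is confirming in each generic case that $T$ is genuinely normalized and not collapsed by the outer action, which reduces to a brief inspection of the triality action on the central torus, and then disposing of the small-$q$ exceptions by direct computation.
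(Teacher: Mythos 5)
The proposal correctly dispatches the $A_1(q)A_5^{\e}(q)$ case and the general strategy --- centralizer-of-involution or wrong ${\rm soc}(H)$ structure for odd $q$, normal abelian torus plus normal simple group for even $q$ --- is the right one for the other three types. However there is a genuine gap in the identification and treatment of the small-$q$ exceptional cases for the $D_5^{\e}(q).(q-\e)$ type.

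You claim ``the only exceptions are cases 2 and 4 with $(q,\e)=(2,+)$, where $T=1$.'' This is wrong for the $D_5^{\e}(q).(q-\e)$ case. The structure recorded in \cite[Table~5.1]{LSS} is $H_0 = h.({\rm P\O}_{10}^{\e}(q) \times (q-\e)/eh).h$ with $e=(3,q-\e)$ and $h=(4,q-\e)$. For $q$ even ($h=1$), the abelian factor is $(q-\e)/e$, \emph{not} $q-\e$: the central torus of order $q-\e$ lives in ${\rm Inndiag}(G_0)=G_0.e$, and only a subgroup of index $e$ lies inside $G_0$. Consequently the torus collapses exactly when $(q-\e)/e=1$, which happens for $(q,\e) \in \{(2,+),(2,-),(4,+)\}$ --- three cases, not one. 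You have missed $(2,-)$ and, more seriously, $(4,+)$, which your generic argument would incorrectly treat as done (since $q-\e=3 \neq 1$). The $(4,+)$ case is in fact the hardest: the paper handles it by exhibiting an explicit involution $g=x_{\a_6}(1) \in G_0 \setminus H_0$ that centralizes the parabolic $K = 4^{10}{:}\mathrm{SL}_5(4) < H_0$ and then showing via the maximal overgroup structure in \cite{BHR} that $H \cap H^g$ is non-maximal. Your sketch (``either establish $b(G,H)=2$ \ldots{} or apply Lemma~\ref{l:simple} with $K$ an appropriately chosen outer-stable reductive subgroup'') does not engage with this.

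A secondary issue: for the residual cases $(q,\e)=(2,\pm)$ you propose to prove $b(G,H)=2$ via Lemma~\ref{l:base}, but these subgroups $\O_{10}^{\pm}(2)$ of $({}^2)E_6(2)$ are large and there is no reason to expect a base of size~$2$; the paper instead computes the rank via the Orbit Counting Lemma from the \textsf{GAP} character tables and rules out extreme primitivity with Lemma~\ref{l:char}, which is a different technique. Your proposal as written would therefore not close out the $D_5^{\e}(q).(q-\e)$ case.
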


\begin{proof}
Write $H_0 = H \cap G_0$ and $L = {\rm Inndiag}(G_0)$. The structure of $N_{L}(H_0)$ is presented in \cite[Table 5.1]{LSS}.

First assume $H$ is of type $A_1(q)A_5^{\e}(q)$, so $H_0 = d.({\rm L}_{2}(q) \times {\rm L}_{6}^{\e}(q)).d$, where $d=(2,q-1)$. If $q$ is odd then $Z(H) \ne 1$, whereas ${\rm soc}(H)$ is not a direct product of isomorphic simple groups if $q$ is even. Therefore, $G$ is not extremely primitive by Lemma \ref{l:structure}. Similar reasoning handles the cases where $H$ is of type $D_4(q).(q-\e)^2$ or ${}^3D_4(q).(q^2+\e q+1)$.

Finally, let us assume $H$ is of type $D_5^{\e}(q).(q-\e)$, so 
\[
H_0 = h.({\rm P\O}_{10}^{\e}(q) \times (q-\e)/eh).h
\]
with $e=(3,q-\e)$ and $h=(4,q-\e)$. If $q$ is odd then $F(H) = Z_h \neq 1$ and $G$ is not extremely primitive by Lemma \ref{l:structure}(iv). So for the remainder of the proof we may assume $q$ is even and thus $H_0 = \O_{10}^{\e}(q) \times (q-\e)/e$. If $(q,\e) \ne (2,-),(2,+),(4,+)$ then the structure of ${\rm soc}(H)$ is incompatible with extreme primitivity, so it remains to consider the three special cases. 

First assume $(q,\e) = (2,-)$. If $G$ contains ${\rm Inndiag}(G_0)$ then ${\rm soc}(H)$ is incompatible, so assume otherwise, in which case $(G,H)=({}^2E_6(2),\O_{10}^{-}(2))$ or $({}^2E_6(2).2,{\rm O}_{10}^{-}(2))$. Let $r$ be the rank of $G$ on $\O = G/H$. In both cases, the character tables of $G$ and $H$ are available in the \textsf{GAP} Character Table Library \cite{GAPCTL}. For $G = G_0$, the fusion map from $H$-classes to $G$-classes is also stored and this allows us to compute the number of fixed points of each $x \in H$ on $\O = G/H$. In turn, we deduce that $r=13$ via the  Orbit Counting Lemma. On the other hand, if $G = G_0.2$ then there are two possible fusion maps and they both give $r=12$. With the aid of {\sc Magma}, we can determine the indices $n_1, \ldots, n_k$ of a set of representatives of the $H$-classes of core-free maximal subgroups of $H$ and then it is routine to rule out extreme primitivity via Lemma \ref{l:char}. For example, if $G = {}^2E_6(2).2$ and $H = {\rm O}_{10}^{-}(2)$ then $n_i \leqs |H:{\rm M}_{12}.2| = 263208960$ and one checks that $1+11.263208960 < |G:H|$. We refer the reader to \cite[Lemma 2.4]{BTh_comp} for further details on the \textsf{GAP} and {\sc Magma} computations used in this case.

Next assume $(q,\e) = (2,+)$, so $G = {\rm Aut}(E_6(2)) = E_6(2).2$ and $H={\rm O}_{10}^{+}(2)$ (as noted in \cite[Table 5.1]{LSS}, if $\e=+$ then $H$ is maximal only if $G$ contains a graph automorphism). We can handle this case in a similar fashion. Working with the character tables of $G_0$ and $H_0 = \O_{10}^{+}(2)$ in \cite{GAPCTL}, we calculate that $G_0$ has rank $35$ and thus $G$ has rank at most $35$ (the character table of $G$ is not available in \cite{GAPCTL} and we have not computed the exact rank of $G$). Let $M$ be a core-free maximal subgroup of $G$. Then using {\sc Magma}, we find that $|H:M|$ is maximal when $M = {\rm O}_{6}^{+}(2) \times {\rm O}_{4}^{+}(2)$, giving $|H:M| \leqs  16189440$. But $1+34.16189440< |G:H|$, so $G$ is not extremely primitive by Lemma \ref{l:char}. 

Finally, let us turn to the case $(q,\e) = (4,+)$. Here we may assume $G$ contains a graph automorphism (so that $H$ is maximal) but does not contain ${\rm Inndiag}(G_0)$ (so that the structure of ${\rm soc}(H)$ is compatible with extreme primitivity). Therefore, $(G,H) = (E_6(4).2, {\rm O}_{10}^{+}(4))$ or $(E_6(4).2^2, {\rm O}_{10}^{+}(4).2)$. 

Fix a set of simple roots $\a_1, \ldots, \a_6$ for $\bar{G}$ and let $X_{\a} = \{x_{\a}(c)\,:\, c \in \mathbb{F}_4\}$ be the root subgroup of $G_0$ corresponding to the root $\a$. By replacing $H$ by a suitable conjugate, we may assume that
\[
H_0 = \la X_{\pm \a_1}, X_{\pm \a_2}, X_{\pm \a_3}, X_{\pm \a_4}, X_{\pm \a_5} \ra = \O_{10}^{+}(4).
\]
Let $g \in G_0 \setminus H_0$ be the involution $x_{\alpha_6}(1)$. By applying  Chevalley's commutator relations (see \cite[Theorem 5.2.2]{Carter}), we see that $g$ centralizes $X_{\a}$ for all $\a \in \{\pm \alpha_1, \pm\alpha_2, \pm\alpha_3, \pm\alpha_4, -\alpha_5\}$ and thus $g$ centralizes the subgroup $K = 4^{10}{:}\text{SL}_5(4) < H_0$. By inspecting \cite[Table 8.66]{BHR}, we see that the only maximal subgroup of $H_0.2 = \text{O}^+_{10}(4)$ containing $K$ is $H_0$ itself (note that the maximal parabolic subgroup $4^{10}{:}\text{GL}_5(4)$ of $H_0$ does not extend to a maximal subgroup of $H_0.2$). Similarly, the only maximal subgroups of ${\rm O}_{10}^{+}(4).2 = H_0.2^2$ containing $K$ are the three index-two subgroups of the form $H_0.2$.

Suppose $H \cap H^g$ is a maximal subgroup of $H$. Then since $g$ normalizes $K$, we have $K \leqs H \cap H^g < H$ and thus $H \cap H^g = H_0$ (if $G = G_0.2$) or $H_0.2$ (if $G = G_0.2^2$). Since $g$ is an involution, it normalizes $H \cap H^g$ and so it must also normalize the characteristic subgroup $H_0$. But from the explicit  description of $H_0$ above in terms of root subgroups, it is easy to see that $g$ does not normalize $H_0$ and so we have reached a contradiction. We conclude that $H \cap H^g$ is non-maximal in $H$ and the proof is complete.
\end{proof}

\begin{rem}
In the proof of the previous lemma, we applied Lemma \ref{l:char} to handle the case $(G,H) = (E_6(2).2, {\rm O}_{10}^{+}(2))$. It is worth noting that this case can also be treated by  making minor modifications to the argument we used for $(q,\e) = (4,+)$.
\end{rem}

\subsection{$G_0=F_4(q)$}\label{ss:mr_f4}

\begin{lem}\label{l:maxrank_f4_1}
If $G_0=F_4(q)$ and $H$ is the normalizer of a maximal torus, then $b(G,H)=2$.
\end{lem}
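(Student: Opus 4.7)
The plan is to apply the probabilistic criterion of Lemma \ref{l:base} and show $\mathcal{Q}(G,H)<1$, mirroring Lemmas \ref{l:maxrank_e8_1} and \ref{l:maxrank_e7_1}. Let $W=W(F_4)$, so $|W|=1152$ and $|H|\leqs (q+1)^4|W|\log_2 q$.

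I would partition the prime-order elements $x\in H$ into three groups and bound each contribution to $\mathcal{Q}(G,H)$ using Lemma \ref{l:calc}. First, root elements of $G$ (long and short types, which can only lie in $H$ when $p=2$ by Corollary \ref{c:root}): here $\sum |x^G\cap H|\leqs 24(q+1)^4$, since $W(F_4)$ has $24$ reflections, while Proposition \ref{p:bounds} gives $|x^G|>q^{16}$. Second, semisimple involutions (only when $p$ is odd): here $|x^G|>q^{16}$ and $\sum|x^G\cap H|$ is bounded by the total number of involutions in $H$, at most $(1+i_2(W))(q+1)^4$. Third, all remaining prime-order elements: Proposition \ref{p:bounds} gives $|x^G|>\tfrac{q-1}{2}q^{21}$ (the minimum of $\tfrac{\alpha}{2}q^{22}$, $\alpha q^{30}$ and $q^{26}$), and I would use the crude bound $\sum |x^G\cap H|\leqs |H|$. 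Combining these contributions yields
\[
\mathcal{Q}(G,H)<\frac{(24(q+1)^4)^2}{q^{16}}+\frac{((1+i_2(W))(q+1)^4)^2}{q^{16}}+\frac{|H|^2}{\tfrac{q-1}{2}q^{21}},
\]
which should be less than $1$ and tend to $0$ for $q$ sufficiently large, yielding both $b(G,H)=2$ and the asymptotic statement of Proposition \ref{p:mr_base}.

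The principal obstacle will be the small-$q$ regime, which I anticipate to be $q\in\{2,3,4,5\}$, where the root-element or semisimple-involution contributions can exceed $1$ because the denominator $q^{16}$ is comparable to the squared numerators; in particular for $q=2$ the root contribution is delicate, while for odd $q\in\{3,5\}$ it is the semisimple involutions that dominate. For these cases I would follow the computational strategy from the $E_7(2)$ case of Lemma \ref{l:maxrank_e7_1}: construct $H$ as a subgroup of a larger Lie-type group over $\mathbb{F}_{q^a}$ in \textsc{Magma}, compute the Jordan form of each prime-order element of $H$ on the adjoint module $\mathcal{L}(\bar{G})$, and identify the $\bar{G}$-class via \cite{Lawunip} for unipotents and via centralizer-dimension and eigenvalue data for semisimple elements. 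This yields exact values of $|x^G\cap H|$ and $|x^G|$, after which direct evaluation of $\mathcal{Q}(G,H)$ should confirm $b(G,H)=2$ in the remaining cases.
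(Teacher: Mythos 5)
Your overall strategy---applying Lemma \ref{l:base} with Lemma \ref{l:calc}, splitting into root elements and the rest, and falling back to computation for small $q$---is exactly what the paper does. However, there is an important observation you have missed: by \cite[Table 5.2]{LSS}, the normalizer $H$ of a maximal torus in $F_4(q)$ is a \emph{maximal} subgroup of $G$ only when $p=2$ and $G$ contains a graph automorphism. This single fact restructures the whole argument. In particular, your second group of elements (semisimple involutions, arising only when $p$ is odd) never occurs and needs no bound $i_2(W)$ at all; and the small-$q$ cases requiring individual treatment are only $q\in\{2,4\}$, not $q\in\{2,3,4,5\}$---the odd values $q=3,5$ are vacuous.

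A secondary consequence is that, knowing $p=2$, the paper uses the sharper bound $|x^G|>q^{22}$ for all non-root prime order elements (minimal when $x$ lies in the unipotent class $(\tilde{A}_1)_2$), rather than the generic table value $\ell_2=\tfrac{q-1}{2}q^{21}$ that you extract from Table \ref{tab:cbds}, which is computed on the assumption of a split short-root class in odd characteristic. With your weaker bound, the crude estimate $\tfrac{|H|^2}{\tfrac{q-1}{2}q^{21}}$ already exceeds $1$ at $q=4$, so even your generic term would push $q=4$ into the ``special treatment'' bucket; with the paper's bound, the generic estimate is reasonably close to working at $q=4$ as well (it is the lack of the $\mathcal{Q}<q^{-1}$ margin that necessitates the case analysis). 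Finally, for $q=2$ the paper's argument is not computational at all: $H_0=7^2{:}(3\times{\rm SL}_2(3))$ has a unique involution, and \cite[Corollary 4.4]{BTh} then places all $49$ involutions of $H_0$ in the largest $G_0$-class, giving $|x^G|>2^{26}$ for every prime-order $x\in H$. For $q=4$ the paper handles four of the five tori theoretically and uses \textsc{Magma} only for $H_0=5^4{:}W(\bar{G})$. So your computational fallback would work, but it over-engineers cases that succumb to short theoretical arguments once $p=2$ is in hand.
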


\begin{proof}
Let $W(\bar{G}) = {\rm O}_{4}^{+}(3)$ be the Weyl group of $\bar{G}$ and note that $q$ is even and $G$ contains graph automorphisms (see \cite[Table 5.2]{LSS}). It will be useful to note that if $x \in G$ has prime order, and $x$ is not a long or short root element, then $|x^G|>q^{22}$ (minimal if $x$ is an involution in the class labelled $(\tilde{A}_1)_2$). 

If $x \in G$ is a long (or short) root element, then $|x^G| > q^{16} = b_1$ and Corollary \ref{c:root} gives $|x^G \cap H| \leqs 24(q+1)^4 = a_1$. As noted above, for all other nontrivial elements we have $|x^G|>q^{22}$ and we observe that 
\[
|H| \leqs (q+1)^4.|W(\bar{G})|.2\log_2q = a_2.
\]
This yields $\mathcal{Q}(G,H) < a_1^2/b_1 + a_2^2/b_2$, which is less than $1$ for $q \geqs 8$ (it is also less than $q^{-1}$ for $q \geqs 8$). Therefore, to complete the proof we may assume $q \in \{2,4\}$. 

Suppose $q=2$, so $G = F_4(2).2$ and $H = 7^2{:}(3 \times 2.{\rm Sym}_4)$. Here $H_0 = H \cap G_0 = 7^2{:}(3 \times {\rm SL}_{2}(3))$ has a unique involution, so by \cite[Corollary 4.4]{BTh} we see that there are precisely $49$ involutions in $H_0$ and they are all contained in the largest $G_0$-class of involutions (this is the class labelled  $A_1\tilde{A}_1$). Therefore, $|x^G|>2^{26} = b_1$ for all $x \in H$ of prime order and thus $\mathcal{Q}(G,H) < a_1^2/b_1<1$, where $a_1 = |H|$. In particular, $b(G,H) = 2$. 

Now assume $q=4$, so $G = F_4(4).2$ or $F_4(4).4$. Up to conjugacy, there are $5$ possibilities for $H_0$ and we will inspect each case in turn. If $H_0 = (4^2\pm 4 +1)^2{:}(3 \times {\rm SL}_{2}(3))$ then by arguing as above we see that $|x^G|>4^{26}=b_1$ and the result follows since $\mathcal{Q}(G,H) < a_1^2/b_1<1$ with $a_1 = 4|H_0|$. The case $H_0 = 241{:}12$ is entirely similar. Next assume $H_0 = 17^2{:}(4 \circ {\rm GL}_{2}(3))$. If $x \in G$ is not a root element, then $|x^G|>4^{22} = b_1$ and the contribution from these elements is less than $a_1^2/b_1$, where $a_1 = 4|H_0|$.
On the other hand, if $x$ is a root element, then $|x^G|>4^{16} = b_2$ and we have the trivial bound $|x^G \cap H| \leqs |H_0| = a_2$. This gives $\mathcal{Q}(G,H) < a_1^2/b_1 + a_2^2/b_2 <1$ as required.

Finally, let us assume $H_0 = 5^4{:}W(\bar{G})$. As in the previous case, the contribution from non-long root elements is less than $a_1^2/b_1$, where $a_1 = 4|H_0|$ and $b_1 = 4^{22}$. Now assume $x$ is a root element, so $|x^G|>4^{16}=b_2$. Here we can use {\sc Magma} to construct $H_0$ as a subgroup of $F_4(16)$ (see \cite[Proposition 2.2]{BTh_comp}) and this allows us to compute the Jordan form of each involution in $H_0$ on the adjoint module for $\bar{G}$. By inspecting \cite[Table 4]{Lawunip}, we deduce that $|x^G \cap H| = 120 = a_2$. Therefore, $\mathcal{Q}(G,H) < a_1^2/b_1 + a_2^2/b_2 <1$ and thus $b(G,H)=2$.
\end{proof}

\begin{lem}\label{l:maxrank_f4_2}
If $G_0 = F_4(q)$ and $H$ is of type $A_2^{\e}(q)^2$, then $b(G,H) = 2$.
\end{lem}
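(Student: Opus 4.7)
The plan is to establish $\mathcal{Q}(G,H)<1$ and appeal to Lemma \ref{l:base}. From \cite[Table 5.1]{LSS}, $\bar{H}^0 = A_2\tilde{A}_2$, with the two factors corresponding respectively to the long and short root systems of $\bar{G}=F_4$, and $H_0$ has the shape $e.({\rm L}_3^{\e}(q) \times {\rm L}_3^{\e}(q)).e.f$, where $e=(3,q-\e)$ and $f$ is trivial unless $p=2$, in which case $f=2$ and the two factors are interchanged by a graph automorphism of $\bar{G}$. In particular $|H| \leqs 2\log_2 q \cdot q^{16}$. I would record the decomposition of the adjoint module $\mathcal{L}(\bar{G}){\downarrow}\bar{H}^0$ into $\bar{H}^0$-summands, which expresses $\mathcal{L}(\bar{G})$ as $\mathcal{L}(\bar{H}^0)$ plus a collection of tensor products of natural $A_2$-modules; this decomposition drives all subsequent class identifications, in complete analogy with \eqref{e:adj}.

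I would then partition the elements of prime order in $H$ into three families and estimate each contribution via Lemma \ref{l:calc}. For \emph{unipotent} $x$, Proposition \ref{p:root} forces a long or short root element of $G_0$ in $\bar{H}$ to lie inside one of the two $A_2$ factors, giving $|x^G\cap H|=O(q^4)$ against $|x^G|>q^{16}$; for larger unipotent classes, Proposition \ref{p:bounds} gives $|x^G|>\frac{1}{2}(q-1)q^{21}$, while Proposition \ref{p:i23}(ii) bounds the total count of $p$-elements in $H_0$ by roughly $q^{10}$. For \emph{semisimple} $x$ of prime order $r \ne p$, the adjoint-module decomposition together with \cite{Lubeck} lets me read off $C_{\bar{G}}(x)^0$ from the projection of $x$ to each factor (for $r=2$ the possibilities are $B_4$ and $A_1C_3$, both of substantially smaller dimension than $\dim\bar{G}$); the bound on $i_r(H_0)$ follows from Proposition \ref{p:i23}(i) for $r\in\{2,3\}$, and the trivial bound $|x^G\cap H|\leqs |H|$ combined with Proposition \ref{p:bounds} handles $r\geqs 5$. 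Finally, \emph{field, graph, and graph-field} automorphisms are treated as in Case 1 of Lemma \ref{l:maxrank_e6_2}: field contributions are controlled via the fixed-point-ratio estimate in the proof of \cite[Lemma 6.1]{LLS2}, while the involutory graph automorphism of $G_0$, which exists only when $p=2$, must swap the two $A_2$ factors and so contributes at most $|{\rm SL}_3^{\e}(q)|$ to $|x^G\cap H|$ with $|x^G|$ of order $q^{26}$ by Proposition \ref{p:bounds}.

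Summing all the contributions, I expect $\mathcal{Q}(G,H)=O(q^{-1})$ as $q\to\infty$, which will establish $\mathcal{Q}(G,H)<1$ for all $q$ above some small threshold; the remaining small-$q$ cases would be handled either by sharpening the bounds term by term or computationally by constructing $H$ inside $F_4(q^k)$ via the technique of \cite[Proposition 2.2]{BTh_comp}, much as in the proof of Lemma \ref{l:maxrank_f4_1}. The main obstacle, as in Lemma \ref{l:maxrank_e6_2}, will be the finer analysis of semisimple elements of order $3$ when $e=(3,q-\e)=3$: here several possibilities for $C_{\bar{G}}(x)^0$ must be distinguished according to which diagonal classes in ${\rm PGL}_3^{\e}(q)$ appear in each component of $x$, and this requires careful eigenvalue bookkeeping on the adjoint module to identify the $\bar{G}$-class unambiguously and to obtain the sharp count of $|x^G\cap H|$.
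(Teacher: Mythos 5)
Your overall strategy — establish $\mathcal{Q}(G,H)<1$ via the adjoint module decomposition of $\mathcal{L}(\bar{G}){\downarrow}\bar{H}^0$, plus Propositions \ref{p:bounds}, \ref{p:root}, \ref{p:i23} and Lemma \ref{l:calc} — is exactly the paper's approach. However, there is a genuine error in your description of the structure of $H_0$, and it would derail the detailed bookkeeping.

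You claim $H_0 = e.({\rm L}_3^{\e}(q)\times {\rm L}_3^{\e}(q)).e.f$ with $f$ \emph{trivial unless $p=2$}, and that when $p=2$ the outer involution \emph{interchanges} the two $A_2$ factors. Both statements are wrong. The long and short $A_2$ root subsystems of $F_4$ have different root lengths, so no Weyl group element of $F_4$ (and hence no inner automorphism) can swap them; only the special graph automorphism of $F_4$ (which exists precisely when $p=2$, and lies in ${\rm Aut}(G_0)\setminus G_0$) interchanges them. The normalizer of $\bar{H}^0 = A_2\tilde{A}_2$ in $\bar{G}$ is $\bar{H}^0.2$ for \emph{all} $p$, where the outer involution comes from the longest element of the Weyl group and acts as a \emph{simultaneous graph automorphism} on both $A_2$ factors. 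Thus $H_0 = e.{\rm L}_3^{\e}(q)^2.e.2$ for all $q$, with the $.2$ inducing a graph automorphism on each factor. This matters: for $p>2$ the $.2$ coset contributes a sizeable class of semisimple involutions with $C_{\bar{G}}(x)=A_1C_3$, which your plan simply omits (you would underestimate $i_2(H)$); for $p=2$ the $.2$ coset consists of unipotent involutions that, because they permute the two $18$-dimensional summands in the adjoint decomposition, land in the class $A_1\tilde{A}_1$ with Jordan form $(J_2^{24},J_1^4)$ on $V$ — treating them as factor-swapping elements would give the wrong Jordan form and the wrong $\bar{G}$-class. Meanwhile, the involutory graph automorphism of $G_0$ (for $p=2$) is a separate contribution from $G\setminus G_0$, not from $H_0$, and it is \emph{that} element which swaps the two factors. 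With the correct structure of $H_0$ in hand, the rest of your outline is sound, and the bounds do give $\mathcal{Q}(G,H)<1$ for all $q\geqs 2$ without needing a separate computational check (the paper's estimates cover $q=2$ directly, though a {\sc Magma} verification is noted as an alternative in Remark \ref{r:f4}).
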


\begin{proof}
Here $\bar{H}^0 = A_2\tilde{A}_2$, where the first factor is generated by long root subgroups and the second by short root subgroups. Set $e=(3,q-\e)$ and observe that
\[
H_0 = ({\rm SL}_{3}^{\e}(q) \circ {\rm SL}_{3}^{\e}(q)).e.2 = e.{\rm L}_{3}^{\e}(q)^2.e.2,
\]
where the outer involution acts as a graph automorphism on both copies of ${\rm SL}_{3}^{\e}(q)$. Let $V = \mathcal{L}(\bar{G})$ be the adjoint module for $\bar{G}$ and note that
\begin{equation}\label{e:a2a2}
V{\downarrow}A_2\tilde{A}_2 = \mathcal{L}(A_2\tilde{A}_2) \oplus (W \otimes S^2(W^*)) \oplus (W^* \otimes S^2(W)),
\end{equation}
where $W$ is the natural module for $A_2$, $W$ its dual and $S^2(W)$ is the symmetric-square of $W$ (see \cite[Table 2]{Thomas}). 

Let $x \in H$ be an element of prime order $r$. To begin with, let us assume $r=p=2$. Note that if $x \in \bar{H}^0$ then the $\bar{G}$-class and $\bar{H}^0$-class of $x$ have the same label (see \cite[Section 4.7]{Law09}). If $x$ acts as a graph automorphism on the two ${\rm SL}_{3}^{\e}(q)$ factors, then $x$ has Jordan form $(J_2^6,J_1^4)$ on 
$\mathcal{L}(A_2\tilde{A}_2)$ and it interchanges the two $18$-dimensional summands in \eqref{e:a2a2}. Therefore, $x$ has Jordan form $(J_2^{24},J_1^4)$ on $V$ and by inspecting \cite[Table 4]{Lawunip} we deduce that $x$ is in the $\bar{G}$-class labelled $A_1\tilde{A}_1$. Putting this together, it follows that if $x$ is a long or short root element, then $|x^G|>q^{16}=b_1$ and there are $2\cdot i_2({\rm L}_{3}^{\e}(q)) < 3q^4 = a_1$ such elements in $H$. On the other hand, if $x$ is in the class labelled $A_1\tilde{A}_1$ then $|x^G|>q^{28}=b_2$ and $H$ contains 
\[
i_2({\rm L}_{3}^{\e}(q))^2 + \left(\frac{|{\rm SL}_{3}^{\e}(q)|}{|{\rm SL}_{2}(q)|}\right)^2  = (q+\e)^2(q^3-\e)^2 + q^4(q^3-\e)^2 < 2q^{10} = a_2
\]
such elements (no involution in $H$ is contained in the class labelled $(\tilde{A}_1)_2$).

Next assume $r=p>2$. If $\dim x^{\bar{G}} \geqs 30$ then $|x^G|>\frac{1}{4}q^{30}=b_3$ and we note that $H_0$ contains $q^{12}=a_3$ unipotent elements (see Proposition \ref{p:i23}(ii)). Now assume $\dim x^{\bar{G}} < 30$, so $x$ is contained in one of the $G_0$-classes labelled $A_1$, $\tilde{A}_1$ or $A_1\tilde{A}_1$. As in the previous paragraph, the contribution to $\mathcal{Q}(G,H)$ from these elements is less than $a_1^2/b_1+a_2^2/b_2$. 

Now let us turn to semisimple elements. Suppose $x \in H$ is a semisimple element of prime order $r$. First assume $r=2$, so $C_{\bar{G}}(x) = A_1C_3$ or $B_4$. If $x$ acts as a graph automorphism on both ${\rm SL}_{3}^{\e}(q)$ factors, then $x$ has Jordan form $(-I_{10},I_6)$ on $\mathcal{L}(A_2\tilde{A}_2)$ and it swaps the two $18$-dimensional summands in \eqref{e:a2a2}, so $\dim C_{V}(x) = 24$ and thus $C_{\bar{G}}(x) = A_1C_3$. Now assume $x = (x_1,x_2) \in \bar{H}^0$. If either $x_2=1$, or $x_1$ and $x_2$ are both nontrivial, then $C_{\bar{G}}(x) = A_1C_3$. On the other hand, if $x_1=1$ then $C_{\bar{G}}(x) = B_4$. We conclude that if $x$ is a $B_4$-type involution, then $|x^G|>q^{16}=b_4$ and $|x^G \cap H| = i_2({\rm L}_{3}^{\e}(q))<2q^4 = a_4$. Similarly, if $C_{\bar{G}}(x) = A_1C_3$ then $|x^G|>q^{28}=b_5$ and
\[
|x^G \cap H| \leqs i_2({\rm L}_{3}^{\e}(q))+i_2({\rm L}_{3}^{\e}(q))^2 + \left(\frac{|{\rm SL}_{3}^{\e}(q)|}{|{\rm SL}_{2}(q)|}\right)^2 < 2q^{10}=a_5.
\]

Now assume $r \geqs 3$. If $\dim x^{\bar{G}} \geqs 36$ then $|x^G|>\frac{1}{2}q^{36}=b_6$ and we note that there are fewer than $q^{16}=a_6$ such elements in $H$. So we may assume $\dim x^{\bar{G}} < 36$, in which case $C_{\bar{G}}(x) = B_3T_1$ or $C_3T_1$ and $|x^G|>(q-1)q^{29}=b_7 = b_8$. Let us also observe that $r$ divides $|Z(C_{G_0}(x))| = q \pm 1$.

Suppose $r=3$ and let $Z$ be the normal subgroup of $H_0$ of order $e$. Then by applying Proposition \ref{p:i23}(i) we deduce that
\[
i_3(H_0) < |Z| \cdot (1+i_3({\rm PGL}_{3}^{\e}(q)^2)) \leqs 12(q+1)^2q^{10}.
\]
In fact, one checks that $i_3({\rm PGL}_{3}^{\e}(2)^2) \leqs 6560$ and thus $i_3(H_0) \leqs 19683$ when $q=2$. It follows that the contribution to $\mathcal{Q}(G,H)$ from elements $x \in G$ of order $3$ with $\dim x^{\G} < 36$ is less than $a_7^2/b_7$, where $a_7 = 12(q+1)^2q^{10}$ if $q \geqs 4$ and $a_7 = 19683$ if $q=2$. Now assume $r \geqs 5$ (and $C_{\bar{G}}(x) = B_3T_1$ or $C_3T_1$). An easy calculation using \eqref{e:a2a2} shows that an element $x = (x_1,x_2) \in \bar{H}^0$ of order $r$ has the appropriate centralizer in $\bar{G}$ if and only if $x_1$ or $x_2$ is trivial. Therefore, there are fewer than $2|{\rm L}_{3}^{\e}(q)|<2q^8=a_8$ such elements in $H$.

To complete the proof, let us assume $p=2$ and $x$ is an involutory graph automorphism, so $|x^G|>q^{26}=b_9$. In terms of the ambient algebraic groups, $x$ interchanges the two $A_2$ factors of $\bar{H}^0$, so there are two classes of involutions in $\bar{H}.2\setminus \bar{H}$, represented by $x$ and $yx$, where $y \in \bar{H}$ acts as a simultaneous graph automorphism on both factors. Now $C_{\bar{H}^0}(x)$ and $C_{\bar{H}^0}(yx)$ are both isomorphic to $A_2$ and we deduce that $|x^G \cap H| \leqs 2|{\rm SL}_{3}^{\e}(q)|<2q^8 = a_9$.

We conclude that $\mathcal{Q}(G,H)< \sum_{i=1}^{9}a_i^2/b_i$ and it is routine to verify that this upper bound is less than $1$ for all $q \geqs 2$ (and it tends to zero as $q$ tends to infinity).
\end{proof}

\begin{rem}\label{r:f4}
It is worth noting that the cases with $q=2$ in Lemma \ref{l:maxrank_f4_2} can also be handled using {\sc Magma}. We refer the reader to \cite[Lemma 2.5]{BTh_comp} for the details.
\end{rem}

\begin{lem}\label{l:maxrank_f4_20}
If $G_0 = F_4(q)$ and $H$ is of type $C_2(q)^2$, then $b(G,H) = 2$.
\end{lem}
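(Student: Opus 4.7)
The strategy directly parallels that of Lemma \ref{l:maxrank_f4_2}: since $H$ is of type $C_2(q)^2$, we have $p = 2$ and $\bar{H}^0 = C_2\tilde{C}_2$, where the two factors are generated by long and short root subgroups respectively (and are isomorphic via the exceptional isogeny available in characteristic $2$). Thus $H_0 = (\text{Sp}_4(q) \times \text{Sp}_4(q)).2$, with the outer involution swapping the two factors via the graph automorphism of $F_4$. I plan to apply Lemma \ref{l:base} by establishing $\mathcal{Q}(G,H) < 1$. As a preliminary step, I would record the decomposition of the adjoint module $V = \mathcal{L}(\bar{G})$ restricted to $\bar{H}^0$ (available from \cite[Table~2]{Thomas}); this will let us identify the $\bar{G}$-class of a prime order element $x \in \bar{H}$ from its Jordan form on $V$ (when $x$ is unipotent) or its centralizer dimension (when $x$ is semisimple), exactly as in \cite[Section~4]{Law09} for the $A_2\tilde{A}_2$ case.

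With these tools in place, I would bound the contributions class-by-class. For unipotent involutions, the long and short root elements of $\bar{G}$ arise from transvections in one of the $\text{Sp}_4(q)$ factors, contributing $|x^G \cap H| \leqs 2(q^4-1)$ against $|x^G| > q^{16}$. Involutions in the mixed class $A_1\tilde{A}_1$ (with $|x^G| > q^{28}$) come either from pairs of transvections in $\text{Sp}_4(q)^2$ or from elements in a single factor with Jordan form $(J_2^2)$, giving $|x^G \cap H| = O(q^{10})$; the analysis is entirely analogous to the three-class computation in the proof of Lemma \ref{l:maxrank_f4_2}. All remaining unipotent elements satisfy $|x^G| > \tfrac{1}{2}(q-1)q^{30}$ by Proposition \ref{p:bounds} (applied with the $x_2$ bound modified for the presence of short root elements in characteristic $2$), while Proposition \ref{p:i23}(ii) gives at most $q^{12}$ unipotents in $\bar{H}^0_\sigma$.

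For semisimple elements of odd prime order $r$, all such elements lie in $\bar{H}^0 = \text{Sp}_4 \times \text{Sp}_4$ and the possible centralizer types in $\bar{G}$ are $A_1C_3$, $B_4$, $B_3T_1$, $C_3T_1$ or larger, yielding $|x^G| \geqs q^{16}$ throughout. I would handle $r = 3$ via Proposition \ref{p:i23}(i) applied to each $\text{Sp}_4(q)$ factor, while for $r \geqs 5$ we can use the trivial count $|\text{Sp}_4(q)|^2 < q^{20}$, refined in the small-centralizer cases by the observation (using the module decomposition) that $C_{\bar{G}}(x)^0$ has rank $\leqs 4$ only when one of the two coordinates of $x$ is trivial. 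Finally, for the involutory graph automorphisms $x \in G \setminus \text{Inndiag}(G_0)$ we have $|x^G| > q^{26}$ by Proposition \ref{p:bounds}; each such $x$ either swaps the two $C_2$ factors or does so after composing with a diagonal twist, and in both cases $C_{\bar{H}^0}(x) \cong C_2$, giving $|x^G \cap H| \leqs 2|\text{Sp}_4(q)| < 2q^{10}$.

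Summing these estimates via Lemma \ref{l:calc} should yield an upper bound of the form $\mathcal{Q}(G,H) = O(q^{-1})$, valid for all sufficiently large $q$. The main obstacle I anticipate is obtaining a clean bound at $q = 2$, where several of the dominant terms are not significantly smaller than $1$ and where subtleties of fusion between the two $\text{Sp}_4(2)$ factors (and the graph-swap coset) must be tracked carefully. Should the asymptotic estimate fail at $q = 2$, I would handle this single residual case computationally, constructing $H_0$ as a subgroup of $F_4(4)$ in {\sc Magma} and computing the Jordan form on $\mathcal{L}(\bar{G})$ of each prime order element, exactly as in Remark \ref{r:f4} and the final paragraph of the proof of Lemma \ref{l:maxrank_f4_1}.
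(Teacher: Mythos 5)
Your broad strategy (probabilistic bound via Lemma \ref{l:base}, adjoint-module fusion analysis, case-by-case estimates, with $q=2$ offloaded to a computation) is the same as the paper's, but the proposal has concrete gaps in the fusion analysis that undermine the estimate.

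The fusion of unipotent involutions from ${\rm Sp}_4(q)^2$ into $F_4(q)$ is mishandled. In characteristic $2$ the involutions of $G_0$ fall into four classes, $A_1$, $\tilde{A}_1$, $(\tilde{A}_1)_2$ and $A_1\tilde{A}_1$, and the class $(\tilde{A}_1)_2$ (with only $|x^G| > q^{22}$) absorbs the bulk of the involutions coming from $H_0$: pairs $(b_1,b_1)$ and $(a_2,a_2)$, the elements $(c_2,1)$ and $(1,c_2)$, and also the factor-swapping involutions (which embed in ${\rm Sp}_8(q)$ as $a_4$-type). You place pairs of transvections and elements with Jordan form $(J_2^2)$ on one factor into $A_1\tilde{A}_1$ with class size $>q^{28}$; both actually land in $(\tilde{A}_1)_2$. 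Replacing $q^{22}$ by $q^{28}$ underestimates the dominant contribution to $\mathcal{Q}(G,H)$ by about $q^6$, so the resulting bound cannot be trusted. The paper determines the correct fusion by embedding $H_0 < {\rm Sp}_8(q) < F_4(q)$ and tracking ${\rm Sp}_8(q)$-classes (this is exactly the content of Table \ref{tab:uni}); your module-decomposition approach can in principle reach the same answer, but you must actually compute the Jordan forms on $V$ rather than guess the fusion by analogy with the $A_2\tilde{A}_2$ case. You also omit field automorphisms entirely; the involutory field automorphism with $|x^G|>q^{26}$ and $|x^G\cap H|\approx 5q^{10}$ must be included, as must the odd-order cases.

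There is a further structural error. The correct description (cf. \cite[Table 5.1]{LSS}) is $\bar H^0 = B_2^2 < B_4 < \bar G$, not the long--short subsystem $C_2\tilde C_2$, and the outer involution of $H_0 = {\rm Sp}_4(q)\wr {\rm Sym}_2$ is an element of ${\rm Sp}_8(q)\subset G_0$, not an $F_4$ graph automorphism. Consequently the $F_4$ graph automorphism does not swap the two ${\rm Sp}_4(q)$ factors; it stabilizes each one and acts on it as the exceptional graph automorphism of ${\rm Sp}_4(q)$, with centralizer ${}^2B_2(q)$. The count $|x^G\cap H| = \bigl(|{\rm Sp}_4(q)|/|{}^2B_2(q)|\bigr)^2 + |{\rm Sp}_4(q)|$ reflects this, and it differs from your $2|{\rm Sp}_4(q)|$. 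These points all need to be corrected before the bound $\mathcal{Q}(G,H)<1$ can legitimately be asserted.
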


\begin{proof}
Here $p=2$, $G$ contains a graph automorphism and 
\[
H_0 = {\rm Sp}_{4}(q) \wr {\rm Sym}_2 < {\rm Sp}_{8}(q) < G_0.
\]
In particular, $H_0$ is a non-maximal subgroup of $G_0$. Set $\bar{H}^0 = B_2^2 < B_4 < \bar{G}$ and let $W$ be the $4$-dimensional natural module for $B_2$, with $\rho:B_2 \to {\rm GL}(W)$ the corresponding representation. Let $\tau$ be the standard graph automorphism of $B_2$ and let $W^{\tau}$ be the $B_2$-module $W$ afforded by the representation $\rho\tau$. Then by inspecting \cite[Table 2]{Thomas} we see that the restriction of the adjoint module $V = \mathcal{L}(\bar{G})$ to $\bar{H}^0$ has the form 
\begin{equation}\label{e:f4adj}
V{\downarrow}B_2^2 = \mathcal{L}(B_2^2) \oplus (W \otimes W) \oplus (W^{\tau} \otimes W^{\tau})
\end{equation}

The case $q=2$ can be checked using {\sc Magma} (see \cite[Lemma 2.5]{BTh_comp}) and so we may assume $q \geqs 4$. Our goal is to show that $\mathcal{Q}(G,H)<1$.

Let $x \in H$ be an element of prime order $r$. First assume $x \in H_0$ and $r=2$. Recall that ${\rm Sp}_{4}(q)$ has three classes of involutions, labelled $b_1$, $a_2$ and $c_2$ in \cite{AS}. It is straightforward to determine the ${\rm Sp}_{8}(q)$-class of each involution in ${\rm Sp}_{4}(q)^2$ and then the $G_0$-class can be read off from \cite[p.373]{LLS} (also see \cite[Section 4.4]{Law09}). For example, if $x = (x_1,x_2) \in {\rm Sp}_{4}(q)^2$, where $x_1$ is of type $a_2$ and $x_2$ is of type $b_1$, then $x$ is a $b_3$ involution in ${\rm Sp}_{8}(q)$ and is therefore contained in the $G_0$-class labelled $A_1\tilde{A}_1$. For the reader's convenience, the $G_0$-class of each involution in ${\rm Sp}_{4}(q)^2$ is recorded in Table \ref{tab:uni}.
Similarly, if $x \in H_0$ interchanges the two copies of ${\rm Sp}_{4}(q)$ then $x$ embeds in ${\rm Sp}_{8}(q)$ as an $a_4$-type involution and thus $x$ is in the $G_0$-class labelled 
$(\tilde{A}_1)_2$. 

\renewcommand{\arraystretch}{1.2}
\begin{table}
\begin{center}
\[
\begin{array}{c|cccc}
 & 1 & b_1 & a_2 & c_2 \\ \hline
 1 & 1 & \tilde{A}_1 & \tilde{A}_1 & (\tilde{A}_1)_2 \\
 b_1 & A_1 & (\tilde{A}_1)_2 & A_1\tilde{A}_1 & A_1\tilde{A}_1 \\
 a_2 & A_1 & A_1\tilde{A}_1 & (\tilde{A}_1)_2 & A_1\tilde{A}_1 \\
 c_2 & (\tilde{A}_1)_2 & A_1\tilde{A}_1  & A_1\tilde{A}_1 & A_1\tilde{A}_1 \\
\end{array}
\]
\caption{The involutions in ${\rm Sp}_{4}(q)^2 < F_4(q)$, $p=2$}
\label{tab:uni}
\end{center}
\end{table}
\renewcommand{\arraystretch}{1}

To summarise, if $x \in G_0$ is a long or short root element, then $|x^G|>q^{16} = b_1$ and there are precisely $4(q^4-1) = a_1$ such elements in $H$. Similarly, if $x$ is in the $G_0$-class $(\tilde{A}_1)_2$ then $|x^G|>q^{22}=b_2$ and we have
\[
|x^G \cap H| = 2|c_2^{{\rm Sp}_{4}(q)}| + |b_1^{{\rm Sp}_{4}(q)}|^2 + |{\rm Sp}_{4}(q)| < q^6(q^4+2q^2+2) = a_2.
\]
Finally, if $x$ is in the $A_1\tilde{A}_1$ class then $|x^G|>q^{28}=b_3$ and $|x^G \cap H|<q^8(q^4+4q^2+2) = a_3$.

Next assume $x \in H_0$ has order $r \geqs 3$, so $|x^G|>(q-1)q^{29}=b_4$. Since 
\[
i_3({\rm Sp}_{4}(q)) \leqs 2\cdot \frac{|{\rm Sp}_{4}(q)|}{|{\rm GL}_{2}(q)|} = 2q^3(q+1)(q^2+1)
\]
it follows that
\[
i_3(H_0) = i_3({\rm Sp}_{4}(q)^2) \leqs (2q^3(q+1)(q^2+1)+1)^2 - 1 < 5q^{12}=a_4
\]
and thus the contribution to $\mathcal{Q}(G,H)$ from semisimple elements of order $3$ is less than $a_4^2/b_4$. 

Now assume $x \in {\rm Sp}_{4}(q)^2$ has prime order $r \geqs 5$. If $\dim x^{\bar{G}} \geqs 42$ then $|x^G|>\frac{1}{2}q^{42}=b_5$ and we note that there are fewer than $|{\rm Sp}_{4}(q)|^2<q^{20}=a_5$ semisimple elements in $H$. Therefore, to complete the analysis for semisimple elements, we may assume $r \geqs 5$ and $\dim x^{\bar{G}}<42$, in which case 
\begin{equation}\label{e:poss}
\mbox{$C_{\bar{G}}(x) = B_2T_2$, $A_2A_1T_1$, $B_3T_1$ or $C_3T_1$.}
\end{equation}
In order to work with the decomposition in \eqref{e:f4adj}, it will be useful to observe that the action of $\tau$ on semisimple elements of $B_2$ is as follows (up to conjugacy, and with respect to the natural module): 
\begin{equation}\label{e:tau}
\tau \,:\, {\rm diag}(\l,\mu,\mu^{-1},\l^{-1}) \mapsto {\rm diag}(\l\mu,\l^{-1}\mu,\l\mu^{-1},\l^{-1}\mu^{-1}).
\end{equation}

Let $i \geqs 1$ be minimal such that $r$ divides $q^i-1$, so $i \in \{1,2,4\}$. First let us consider the  contribution to $\mathcal{Q}(G,H)$ from the elements with $i=4$ and $\dim x^{\bar{G}}<42$. Note that $C_{\bar{G}}(x) = B_2T_2$ is the only option since $r$ divides $q^2+1$ and thus $\dim Z(C_{\bar{G}}(x)) \geqs 2$. In particular, $|x^G|>\frac{1}{2}q^{40} = b_6$. Write $x=(x_1,x_2) \in {\rm Sp}_{4}(q)^2$. Using \eqref{e:tau} and the decomposition in \eqref{e:f4adj}, we deduce that $\dim C_{V}(x) =12$ if and only if $x_1$ and $x_2$ are ${\rm Sp}_{4}(q)$-conjugate, or if $x_j=1$ for some $j$. If $\pi$ denotes the set of primes $s \geqs 5$ dividing $q^2+1$, then $|\pi| < 2\log_2q$ and we deduce that there are at most
\[
2|{\rm Sp}_{4}(q)| + \sum_{r \in \pi} \frac{1}{4}(r-1) \cdot \left(\frac{|{\rm Sp}_{4}(q)|}{q^2+1}\right)^2 < 2q^{10}+\frac{1}{2}\log_2q.q^{10}(q^2-1)^4 = a_6
\]
such elements in $H$. Therefore, the contribution to $\mathcal{Q}(G,H)$ from these elements is less than $a_6^2/b_6$.

Now assume $i \in \{1,2\}$ and $\dim x^{\bar{G}}<42$. As above, write $x = (x_1,x_2) \in {\rm Sp}_{4}(q)^2$ and let us assume $C_{\bar{G}}(x) = B_3T_1$ or $C_3T_1$. By considering the various possibilities for $x_1$ and $x_2$ we deduce that $\dim C_{V}(x) = 22$ if and only if $x_1$ is non-regular and $x_2$ is either trivial or conjugate to $x_1$ (or vice versa). Therefore, if $\a$ denotes the number of such elements in $H$, then 
\[
\a \leqs \sum_{r \in \pi} \frac{1}{2}(r-1) \cdot \left(\left(2\frac{|{\rm Sp}_{4}(q)|}{|{\rm GL}_{2}(q)|}\right)^2 +  4\frac{|{\rm Sp}_{4}(q)|}{|{\rm GL}_{2}(q)|}\right),
\]
where $\pi$ is the set of primes $s \geqs 5$ dividing $q^2-1$. If $q=4$, then $\pi=\{5\}$ and the above bound yields $\a \leqs 236792320$. Now assume $q \geqs 8$. 
Since $r \leqs q+1$, it follows that 
\begin{equation}\label{e:alp}
\a < 2\log_2q.q(4q^6(q+1)^2(q^2+1)^2+4q^3(q+1)(q^2+1)).
\end{equation}
Now $|x^G|>(q-1)q^{29}=b_7$ and so the contribution to $\mathcal{Q}(G,H)$ from the semisimple elements $x \in H$ with $r \geqs 5$ and $C_{\bar{G}}(x) = B_3T_1$ or $C_3T_1$ is less than $a_7^2/b_7$, where $a_7 = 236792320$ if $q=4$ and $a_7$ is the upper bound on $\a$ in \eqref{e:alp} if $q \geqs 8$.

Finally, let us assume $i \in \{1,2\}$ and $C_{\bar{G}}(x) = B_2T_2$ or $A_2A_1T_1$, so $|x^G|>\frac{1}{2}q^{40} = b_8$. First observe that ${\rm Sp}_{4}(q)$ contains at most
\[
\frac{1}{2}(r-1) \cdot 2\frac{|{\rm Sp}_{4}(q)|}{|{\rm GL}_{2}(q)|} < 2(r-1)q^6
\]
non-regular semisimple elements of order $r$. Therefore, there are fewer than $4(r-1)^2q^{12}$ elements in ${\rm Sp}_{4}(q)^2$ of order $r$ of the form $(x_1,x_2)$, where neither $x_1$ nor $x_2$ is regular. Since $r$ divides $q^2-1$, there are less than $8\log_2q.q^{14}$ semisimple elements of this form in $H$. 

Now assume $x = (x_1,x_2) \in {\rm Sp}_{4}(q)^2$ has order $r$ and $x_1$ is regular. By considering \eqref{e:f4adj}, we deduce that $\dim C_{\bar{G}}(x) = 12$ if and only if one of the following holds, where $\sim$ denotes $B_2$-conjugacy:
\begin{itemize}\addtolength{\itemsep}{0.2\baselineskip}
\item[{\rm (a)}] $x_2$ is either trivial or conjugate to $x_1$;
\item[{\rm (b)}] $x_1 \sim {\rm diag}(\l,\l^2,\l^{-2},\l^{-1})$ and $x_2 \sim {\rm diag}(\l, 1,1,\l^{-1})$ or ${\rm diag}(\l,\l,\l^{-1},\l^{-1})$ for some primitive $r$th root of unity $\l$ (with $r=5$ in the latter case).
\end{itemize}
Let $\a$ and $\b$ the total number of elements in $H$ satisfying the conditions in (a) and (b), respectively (allowing for the conditions on $x_1$ and $x_2$ to be interchanged) and let $\pi$ be the set of primes $s \geqs 5$ dividing $q^2-1$. Then  
\[
\a \leqs 2|{\rm Sp}_{4}(q)| + \sum_{r \in \pi}\binom{\frac{1}{2}(r-1)}{2}\left(\frac{|{\rm Sp}_{4}(q)|}{(q-1)^2}\right)^2 < 2q^{10}+2\log_2q.\frac{1}{8}q^2.(2q^{8})^2 = 2q^{10}+\log_2q.q^{18}
\]
and
\[
\b \leqs \sum_{r \in \pi}\frac{1}{2}(r-1) \cdot 2\cdot \frac{|{\rm Sp}_{4}(q)|}{(q-1)^2} \cdot 2\frac{|{\rm Sp}_{4}(q)|}{|{\rm GL}_{2}(q)|} <16\log_2q.q^{15}.
\]

By combining the above estimates, we conclude that $H$ contains fewer than
\[
a_{8} = 2q^{10}+ \log_2q.q^{14}(q^4+16q+8)
\]
semisimple elements of prime order $r \geqs 5$ with $i \in \{1,2\}$ and $C_{\bar{G}}(x) = B_2T_2$ or $A_2A_1T_1$.

To complete the proof of the lemma, we need to estimate the contribution to $\mathcal{Q}(G,H)$ from field and graph automorphisms. First assume $x \in H$ is a field automorphism of order $r$, so $q=q_0^r$ and $|x^G|>\frac{1}{2}q^{52(1-r^{-1})}$. If $r$ is odd then $x$ acts on ${\rm Sp}_{4}(q)^2$ as a field automorphism on both factors, so 
$H$ contains 
\[
(r-1) \cdot \left(\frac{|{\rm Sp}_{4}(q)|}{|{\rm Sp}_{4}(q^{1/r})|}\right)^2 < 4(r-1)q^{20\left(1-r^{-1}\right)}
\]
such elements. Since $|H|< 4\log_2q.q^{20}$, we see that the contribution from odd order field automorphisms is less than $\sum_{i=9}^{11}a_i^2/b_i$, where
\[
a_9 = 8q^{40/3},\; b_9 = \frac{1}{2}q^{104/3},\; a_{10} = 16q^{16},\; b_{10} = \frac{1}{2}q^{208/5},\; a_{11} = 4\log_2q.q^{20},\; b_{11} = \frac{1}{2}q^{312/7}.
\]
Now assume $r=2$, so $|x^G|>q^{26} = b_{12}$ and there are two $H_0$-classes of involutions in the coset $H_0x$. It follows that 
\[
|x^G \cap H| = \left(\frac{|{\rm Sp}_{4}(q)|}{|{\rm Sp}_{4}(q^{1/2})|}\right)^2 + |{\rm Sp}_{4}(q)| < 5q^{10}=a_{12}.
\]

Finally, suppose $x$ is an involutory graph automorphism of $G_0$, so $|x^G|>q^{26}=b_{13}$ and $q=2^{2m+1}$ with $m \geqs 1$. Now $x$ induces a graph automorphism on both ${\rm Sp}_{4}(q)$ factors and we note that there are two classes of involutions in $H_0x$. In particular,
\[
|x^G \cap H| = \left(\frac{|{\rm Sp}_{4}(q)|}{|{}^2B_2(q)|}\right)^2 + |{\rm Sp}_{4}(q)| < 5q^{10}=a_{13}.
\]

Bringing the above estimates together, we conclude that
\[
\mathcal{Q}(G,H) < \sum_{i=1}^{13}a_i^2/b_i < 1
\]
for all $q \geqs 4$ (and the upper bound tends to $0$ as $q$ tends to infinity). The result follows.
\end{proof}

\begin{lem}\label{l:maxrank_f4_3}
If $G_0 = F_4(q)$ and $H$ is of type $C_2(q^2)$, then $b(G,H) = 2$.
\end{lem}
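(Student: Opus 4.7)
The plan is to show $\mathcal{Q}(G,H) < 1$, whence $b(G,H) = 2$ by Lemma \ref{l:base}; note that the existence of this maximal rank subgroup forces $p = 2$. The very small case $q = 2$ will be handled computationally using the {\sc Magma}-based technique of \cite{BTh_comp}: construct $H_0 \leqs F_4(4)$, tabulate the conjugacy class data of elements of prime order together with their Jordan forms on the adjoint module, and verify $\mathcal{Q}(G,H) < 1$ by direct inspection, exactly as was done for $q=2$ in Lemma \ref{l:maxrank_f4_2}. For $q \geqs 4$ I would argue along the same lines as Lemma \ref{l:maxrank_f4_20}.

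To set up, I realise $\bar{H}^0 = B_2$ as the diagonal subgroup of $B_2 \times B_2 < B_4 < \bar{G}$ obtained as the fixed points of the composition of the standard Frobenius of $\bar{G}$ with the involution of $\bar{H}^0$ swapping the two factors, so that $\bar{H}^0_{\s} = {\rm Sp}_4(q^2)$. Every element $x \in H_0$ then has the form $(x_1, x_1^{(q)}) \in B_2 \times B_2$ for some $x_1 \in {\rm Sp}_4(q^2)$, and the restriction of the adjoint module to $\bar{H}^0$ is the diagonal pullback of the decomposition \eqref{e:f4adj}. The eigenvalue formula \eqref{e:tau} still governs the analysis on each coordinate, and for each unipotent involution $x_1$ of type $b_1$, $a_2$ or $c_2$ in the notation of \cite{AS}, the $B_4$-class and hence the $F_4$-class of $(x_1, x_1^{(q)})$ is read off exactly as in Lemma \ref{l:maxrank_f4_20} via \cite[p.~373]{LLS}; in particular, only the diagonal entries of Table \ref{tab:uni} contribute, supplemented by the single class in the outer coset when $H = H_0.2$.

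With this setup the proof decomposes into three families of estimates, each parallel to a step of Lemma \ref{l:maxrank_f4_20}. For unipotent elements I would use the counts of $b_1, a_2, c_2$ involutions in ${\rm Sp}_4(q^2)$ to bound $|x^G \cap H|$ when $x$ is a long or short root element, a $(\tilde{A}_1)_2$ involution, or an $A_1\tilde{A}_1$ involution, falling back on $i_p(H_0) \leqs q^{12}$ (Proposition \ref{p:i23}(ii)) together with the centralizer-dimension bound on $|x^G|$ for the remaining unipotents. For semisimple elements of prime order $r \geqs 3$ I would combine Proposition \ref{p:i23}(i) for $r = 3$ with, for $r \geqs 5$, the centralizer analysis already carried out in \eqref{e:poss}, noting that the diagonal constraint $x = (x_1, x_1^{(q)})$ forces $C_{\bar{G}}(x)$ to lie in a restricted sublist of those four options and roughly halves the count of admissible $(x_1,x_1^{(q)})$ relative to the split case $B_2^2$. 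For field, graph-field and graph automorphisms in $H \setminus H_0$ I would use that a field automorphism of $G_0$ of order $r$ restricts to a field automorphism of ${\rm Sp}_4(q^2)$ of order $r$ (so $q^2 \mapsto q^{2/r}$), and that an involutory graph automorphism of $G_0$ (present only when $q = 2^{2m+1}$) induces a graph-field automorphism on ${\rm Sp}_4(q^2)$; in each case $|x^G \cap H|$ is bounded by the corresponding index $|{\rm Sp}_4(q^2) : C_{{\rm Sp}_4(q^2)}(x)|$ and combined with the lower bounds of Proposition \ref{p:bounds}.

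The main obstacle, as in Lemma \ref{l:maxrank_f4_20}, will be the contribution from non-regular semisimple elements of prime order $r \geqs 5$ whose centralizer is one of $B_2T_2$, $A_2A_1T_1$, $B_3T_1$ or $C_3T_1$, which required the delicate counting of \eqref{e:alp} and the subsequent paragraphs. However, the diagonal constraint strictly reduces the number of pairs $(x_1, x_1^{(q)})$ that can arise compared with independent pairs in $B_2 \times B_2$, so the bounds established in Lemma \ref{l:maxrank_f4_20} transfer \emph{a fortiori}. Summing all the contributions, one obtains $\mathcal{Q}(G,H) < 1$ for every $q \geqs 4$, with the upper bound tending to $0$ as $q \to \infty$, completing the proof.
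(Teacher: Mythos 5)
Your overall strategy is the same as the paper's: verify $\mathcal{Q}(G,H) < 1$ via Lemma \ref{l:base}, using the embedding ${\rm Sp}_4(q^2) = \{(x,x^\varphi) : x \in {\rm Sp}_4(q^2)\} < B_2^2 = \bar{H}^0$ and the decomposition \eqref{e:f4adj} of the adjoint module, and this is correct in outline. However, the key step --- the semisimple estimates for $r \geqs 5$ --- contains a genuine gap. Your claim that ``the bounds established in Lemma \ref{l:maxrank_f4_20} transfer a fortiori'' because the diagonal constraint ``strictly reduces the number of pairs'' is not valid. The pairs $(x_1,x_1^{(q)})$ range over ${\rm Sp}_4(q^2)$, which is a different group from ${\rm Sp}_4(q)^2$, not a subset of it, and $|{\rm Sp}_4(q^2)|$ has roughly the same order as $|{\rm Sp}_4(q)^2|$. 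More importantly, ${\rm Sp}_4(q^2)$ contains semisimple elements of prime order $r$ dividing $q^4+1$, and such primes do not divide $|{\rm Sp}_4(q)^2|$ at all, so they simply never arose in the $C_2(q)^2$ analysis. The paper has to dispose of these separately: when $r \mid q^4+1$ a regular element necessarily has $C_{\bar{G}}(x) = T_4$ (by looking at $|Z(C_{G_0}(x))|$), hence it cannot contribute to the problematic cases in \eqref{e:poss}. Without an argument like this, the ``a fortiori'' reduction to Lemma \ref{l:maxrank_f4_20} does not go through; the counting must be redone from scratch for ${\rm Sp}_4(q^2)$, as the paper does.

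Two smaller points. First, the algebraic-group setup is misstated: you write $\bar{H}^0 = B_2$ as a diagonal subgroup, but the elements $(x_1,x_1^{(q)})$ have unequal coordinates and lie in $\bar{H}^0 = B_2^2$; the ${\rm Sp}_4(q^2)$ is obtained as the fixed points of $B_2^2$ under the standard Frobenius twisted by the swap, not as the fixed points of a diagonal $B_2$. Second, your proposal to bound a contribution from graph automorphisms (when $q = 2^{2m+1}$) is unnecessary: since ${\rm Out}({\rm Sp}_4(q^2))$ is cyclic, every involution in $H_0.2 = {\rm Sp}_4(q^2).4$ already lies in $H_0$, so there is no contribution from graph (or involutory field) automorphisms at all. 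Relatedly, your estimate $i_p(H_0) \leqs q^{12}$ is off; Proposition \ref{p:i23}(ii) gives $q^{16}$ unipotent elements in ${\rm Sp}_4(q^2)$, though this particular bound is not actually needed when $p=2$.
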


\begin{proof}
Here $p=2$, $H_0 = {\rm Sp}_{4}(q^2).2 < {\rm Sp}_{8}(q) < G_0$ and $G$ contains graph automorphisms. Note that
\begin{equation}\label{e:twist}
(H_0)' = {\rm Sp}_{4}(q^2) = \{ (x,x^{\varphi}) \,:\, x \in {\rm Sp}_{4}(q^2) \} < \bar{H}^0 = B_2^2,
\end{equation}
where $\varphi$ is an involutory field automorphism of ${\rm Sp}_{4}(q^2)$. In particular, the outer involution in $H_0$, which acts as a field automorphism on ${\rm Sp}_{4}(q^2)$, corresponds to an involution in $\bar{H}$ that interchanges the two $B_2$ factors of $\bar{H}^0$. The case $q=2$ can be checked using {\sc Magma} (see \cite[Lemma 2.5]{BTh_comp} for the details of this computation) and so we may assume $q \geqs 4$.

Let $x \in H$ be an element of prime order $r$. First assume $r=2$ and $x \in H_0$.  There are three classes of involutions in ${\rm Sp}_{4}(q^2)$ and it is easy to determine the corresponding class in ${\rm Sp}_{8}(q)$, which in turn allows us to identify the $G_0$-class of $x$ (see \cite[p.373]{LLS}, for example). Using the notation from \cite{AS}, we find that the $b_1$-involutions in ${\rm Sp}_{4}(q^2)$ are contained in the ${\rm Sp}_{8}(q)$-class labelled $c_2$, which is contained in the $G_0$-class $(\tilde{A}_1)_2$. Similarly, the $a_2$-type involutions in ${\rm Sp}_{4}(q^2)$ are also in the $(\tilde{A}_1)_2$ class, while the $c_2$ involutions are in the class $A_1\tilde{A}_1$. In addition, the involutory field automorphisms in $H_0$ embed in ${\rm Sp}_{8}(q)$ as $a_4$-type involutions, so they are contained in the $G_0$-class $(\tilde{A}_1)_2$. 

In conclusion, if $x$ is an involution in the class $(\tilde{A}_1)_2$, then $|x^G|>q^{22}=b_1$ and
\[
|x^G \cap H| = 2(q^8-1) + \frac{|{\rm Sp}_{4}(q^2)|}{|{\rm Sp}_{4}(q)|} < 2q^{10}=a_1.
\]
And for $x$ in the class $A_1\tilde{A}_1$ we have $|x^G \cap H|< q^{12}=a_2$ and $|x^G|>q^{28}=b_2$.

Next assume $x \in G_0$ has order $r \geqs 3$, so $|x^G|>(q-1)q^{29}=b_3$. Since
\[
i_3(H_0) \leqs 2\frac{|{\rm Sp}_{4}(q^2)|}{|{\rm GL}_{2}(q^2)|} = 2q^6(q^2+1)(q^4+1)=a_3,
\]
it follows that the contribution to $\mathcal{Q}(G,H)$ from semisimple elements of order $3$ is less than $a_3^2/b_3$. Now assume $r \geqs 5$. If $\dim x^{\bar{G}} \geqs 42$ then $|x^G|>\frac{1}{2}q^{42}=b_4$ and there are fewer than $q^{20}=a_4$ semisimple elements in $H$, so the contribution from these elements is less than $a_4^2/b_4 = 2q^{-2}$. 

To complete the analysis of semisimple elements, we may assume $r \geqs 5$ and $\dim x^{\bar{G}} < 42$, so the possibilities for $C_{\bar{G}}(x)$ are listed in \eqref{e:poss}. In terms of the embedding of ${\rm Sp}_{4}(q^2)$ in $\bar{H}^0$ (see \eqref{e:twist}), we may write $x = (x_1,x_2) \in \bar{H}^0$, where up to conjugacy we have
\[
x_1 = {\rm diag}(\l,\mu,\mu^{-1},\l^{-1}),\;\; x_2 = x_1^{\varphi} = {\rm diag}(\l^q,\mu^q,\mu^{-q},\l^{-q}).
\]
By expressing $x$ in this form, we can use \eqref{e:f4adj} to determine the dimension of the $1$-eigenspace of $x$ on the adjoint module $V=\mathcal{L}(\bar{G})$. Note that $r$ divides $q^8-1$.

Suppose $C_{\bar{G}}(x) = B_3T_1$ or $C_3T_1$. By considering \eqref{e:f4adj} and the various possibilities for $x_1$ and $x_2$, we deduce that $x_1$ and $x_2$ must be ${\rm Sp}_{4}(q^2)$-conjugate and non-regular, so $r$ divides $q^2-1$. Let $\pi$ be the set of primes $s \geqs 5$ dividing $q^2-1$ and note that $|x^G|>(q-1)q^{29}=b_5$. Then $H$ contains at most
\[
\sum_{r \in \pi}\frac{1}{2}(r-1)\cdot 2\frac{|{\rm Sp}_{4}(q^2)|}{|{\rm GL}_{2}(q^2)|} < f(q).q^6(q^2+1)(q^4+1) = a_5
\]
such elements, where $f(q) = 2\log_2q.q^2$ if $q \geqs 8$ and $f(4) = 4$ (note that $\pi = \{5\}$ if $q=4$).

Now assume $C_{\bar{G}}(x) = B_2T_2$ or $A_2A_1T_1$, so $|x^G|>\frac{1}{2}q^{40}=b_6$. Suppose $x \in {\rm Sp}_{4}(q^2)$ is regular. If $r$ divides $q^4+1$ then by considering the possibilities for $|Z(C_{G_0}(x))|$ we deduce that $C_{\bar{G}}(x) = T_4$, which is a contradiction. Now assume $r$ divides $q^4-1$. If $x_1$ and $x_2$ are not ${\rm Sp}_{4}(q^2)$-conjugate, then by considering the $1$-eigenspace of $x$ on $V$ we deduce that $\dim x^{\bar{G}} \geqs 42$. On the other hand, if $x_1$ and $x_2$ are conjugate (which is always the case if $r$ divides $q^2-1$) then $\dim C_{\bar{G}}(x)=12$. Therefore, if $\pi$ is the set of primes $s \geqs 5$ dividing $q^4-1$, then there are at most 
\[
\sum_{r \in \pi}\binom{\frac{1}{2}(r-1)}{2}\frac{|{\rm Sp}_{4}(q^2)|}{(q^2-1)^2} < 4\log_2q.\frac{1}{8}q^2.q^8(q^2+1)^2(q^4+1) = \frac{1}{2}\log_2q.q^{10}(q^2+1)^2(q^4+1)
\]
such elements in $H$. Finally, by arguing as in the previous paragraph, we calculate that there are fewer than $2a_5$ non-regular semisimple elements in ${\rm Sp}_{4}(q^2)$ of order $r$, where $r \geqs 5$ is a prime divisor of $q^4-1$. We conclude that there are less than
\[
\frac{1}{2}\log_2q.q^{10}(q^2+1)^2(q^4+1)+2a_5 = a_6
\]
semisimple elements $x \in H$ of prime order $r \geqs 5$ with $C_{\bar{G}}(x) = B_2T_2$ or $A_2A_1T_1$.

Now suppose $x \in G$ is a field automorphism of prime order $r$, so $q=q_0^r$. If $r=2$ and $L = \la G_0, x\ra$, then every involution in $N_{L}(H_0) = {\rm Sp}_{4}(q^2).4$ is contained in $H_0$, so we may assume $r$ is odd. Since $x$ induces a field automorphism of order $r$ on ${\rm Sp}_{4}(q^2)$, we see that the contribution to $\mathcal{Q}(G,H)$ from field automorphisms is less than $\sum_{i=7}^{9}a_i^2/b_i$, where
\[
a_7 = 4q^{40/3},\; b_7 = \frac{1}{2}q^{104/3},\; a_{8} = 8q^{16},\; b_{8} = \frac{1}{2}q^{208/5},\; a_{9} = 4\log_2q.q^{20},\; b_{9} = \frac{1}{2}q^{312/7}.
\]
Finally, suppose $x \in G$ is an involutory graph automorphism, so $q=2^{2m+1}$ with $m \geqs 1$. As noted above for involutory field automorphisms, if $G = G_0.2$ then every involution in $H = H_0.2 = {\rm Sp}_{4}(q^2).4$ is contained in $H_0$ (note that ${\rm Out}({\rm Sp}_{4}(q^2))$ is cyclic). In particular, there is no contribution to $\mathcal{Q}(G,H)$ from graph automorphisms.

In view of the above estimates, we conclude that
\[
\mathcal{Q}(G,H) < \sum_{i=1}^{9}a_i^2/b_i
\]
and one checks that this bound is sufficient.
\end{proof}

\begin{lem}\label{l:maxrank_f4_4}
Suppose $G_0=F_4(q)$ and $H$ is of type 
\[
A_1(q)C_3(q) \, (p \ne 2), \; B_4(q), \; D_4(q), \; {}^3D_4(q).
\]
Then $G$ is not extremely primitive. 
\end{lem}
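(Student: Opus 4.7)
The plan is to treat the four families $A_1(q)C_3(q)$, $B_4(q)$, $D_4(q)$ and ${}^3D_4(q)$ separately, invoking Lemma \ref{l:structure} whenever the structure of $H$ already rules out extreme primitivity and otherwise appealing to Lemma \ref{l:simple} in the spirit of Example \ref{e:ex} and Lemma \ref{l:maxrank_e8_4}.

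For type $A_1(q)C_3(q)$ the hypothesis $p\neq 2$ means that $\bar{H}^{0}=A_1C_3$ is the connected centralizer in $\bar{G}$ of a semisimple involution $t$, so $H=C_G(t)$, $t\in Z(H)$ and Lemma \ref{l:structure}(i) rules out extreme primitivity. The same argument settles type $B_4(q)$, which by \cite[Table 5.1]{LSS} only arises for $p\neq 2$: here $\bar{H}^{0}=B_4$ is the connected centralizer of the other involution class (the two centralizer types $A_1C_3$ and $B_4$ exhaust the possibilities, by \cite[Table 4.5.1]{GLS}), so once again $Z(H)\neq 1$.

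For type $D_4(q)$, the structure of $H_0$ given in \cite[Table 5.1]{LSS} is $d^2.{\rm P\Omega}_{8}^{+}(q).(d^2.{\rm Sym}_3)$ with $d=(2,q-1)$. When $q$ is odd, $F(H)$ contains a nontrivial $2$-group and one clause of Lemma \ref{l:structure} applies (either $Z(H)\neq 1$, or $F(H)$ is not elementary abelian, or $4\nmid |\O|-1$). When $q$ is even we have $H_0={\rm \Omega}_{8}^{+}(q).{\rm Sym}_3$ with ${\rm soc}(H)$ simple, and Lemma \ref{l:structure} no longer helps. The plan here is to mimic Lemma \ref{l:maxrank_e8_4}: choose a triality-invariant subgroup $K<{\rm \Omega}_{8}^{+}(q)$ whose unique maximal overgroup in $H_0$ is $M=N_{H_0}(K)$, and exploit the fact that inside $F_4(q)$ the subgroup $K$ admits a strictly larger normalizer, producing $g\in N_{G_0}(K)$ that conjugates $M$ off $H_0$. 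A natural candidate is the $G_2(q)$-subgroup fixed by triality inside ${\rm \Omega}_{8}^{+}(q)$, which sits inside the maximal subgroup $G_2(q)\leqs F_4(q)$; alternatively one can use a suitable subfield subgroup. The finitely many small-$q$ exceptions are covered by Theorem \ref{t:small}.

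The type ${}^3D_4(q)$ case is the main obstacle, since ${\rm soc}(H)={}^3D_4(q)$ is simple, $F(H)=1$, and no clause of Lemma \ref{l:structure} applies. The plan is again to invoke Lemma \ref{l:simple} with a carefully chosen proper subgroup $K<{}^3D_4(q)$. Two natural candidates are a subfield subgroup $K={}^3D_4(q_0)$ with $q=q_0^r$, which lies inside $F_4(q_0)\leqs F_4(q)$, and the $G_2(q)$-subgroup of ${}^3D_4(q)$ arising as the fixed points of an involutory automorphism, which embeds in the maximal $G_2(q)\leqs F_4(q)$. The maximal overgroups of $K$ inside ${}^3D_4(q).3$ can be read off from Kleidman's classification of maximal subgroups of ${}^3D_4(q)$, and a suitable conjugating element $g$ can then be pulled from $N_{G_0}(K)\setminus H_0$. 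The technical obstacle, and what I expect to be the hardest step of the whole lemma, is to verify that $M^g$ is not absorbed into some other maximal overgroup of $K$ still contained in $H_0$; as usual, the finitely many small values of $q$ are handled computationally, for example via Theorem \ref{t:small}.
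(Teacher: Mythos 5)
Your treatment of the type $A_1(q)C_3(q)$ is correct and matches the paper. The plan of attack for $D_4(q)$ and ${}^3D_4(q)$ (reduce to $q$ even via Lemma \ref{l:structure}, then apply Lemma \ref{l:simple} with a well-chosen $K$) is also the approach the paper takes, though you leave the key verifications as a sketch; the paper's choice of $K$ for $D_4(q)$ is actually $K={\rm Sp}_6(q)$ coming from $B_3<D_4$ (using \cite[Table 2]{LS94} to see that $C_{\bar{G}}(\bar{K})^0=A_1$), and for ${}^3D_4(q)$ it takes $K=G_2(q)=C_S(\tau)$ where $\tau$ is the \emph{triality} (order $3$), not an involutory automorphism.

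There is, however, a genuine gap in your handling of type $B_4(q)$. Your claim that $B_4(q)$ only arises for $p\neq 2$ is false: by \cite[Table 5.1]{LSS}, a maximal rank subgroup of type $B_4(q)$ exists in $F_4(q)$ for all $q$ (for instance, for $q$ even $H_0$ is isomorphic to ${\rm Sp}_8(q)\cong\Omega_9(q)$). When $q$ is even this $H_0$ is simple with trivial centre, so the $Z(H)\neq 1$ argument collapses and one is forced into the Lemma \ref{l:simple} machinery just as for $D_4(q)$. The paper handles precisely this case by taking $K=\Omega_8^+(q)<H_0$, observing that $M=N_{H_0}(K)=K.2$ is the unique maximal overgroup of $K$ in $H_0$ while $N_{G_0}(K)=K.{\rm Sym}_3$, and then picking $g\in N_{G_0}(K)$ that does not normalise $M$. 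Your proposal omits this case entirely.

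A smaller point: for ${}^3D_4(q)<F_4(q)$ with $q=2$, Theorem \ref{t:small} does not apply (its socle list contains ${}^3D_4(2)$, not $F_4(2)$), and in fact the generic Lemma \ref{l:simple} argument also fails at $q=2$ because $\langle\tau\rangle$ is normal in ${\rm PGL}_2(2)$. The paper instead invokes Lemma \ref{l:char}: from the character tables of $G=F_4(2)$ and $H={}^3D_4(2).3$ one computes the rank of $G$ to be $7$, and then checks that no combination of indices of core-free maximal subgroups of $H$ can sum to $|\Omega|-1$. You would need something like this for the small-$q$ exception.
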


\begin{proof}
If $H$ is of type $A_1(q)C_3(q)$ (with $q$ odd) then $H$ is the centralizer of an involution, so $Z(H) \ne 1$ and $G$ is not extremely primitive by Lemma \ref{l:structure}(i).

In the three remaining cases, the maximality of $H$ implies that $G$ does not contain any graph automorphisms (see \cite[Table 5.1]{LSS}). In other words, we may assume that $G = G_0.A$ and $H = H_0.A$, where $A$ is a group of field automorphisms of $G_0$.

Suppose $H$ is of type $B_4(q)$, so $H_0 = d.\O_9(q)$ with $d=(2,q-1)$ and we may assume $q$ is even (otherwise $Z(H) \ne 1$). To handle this case, we will appeal to Lemma \ref{l:simple}. Let $K = \Omega_8^+(q) < H_0$. By inspecting \cite[Tables 8.58 and 8.59]{BHR}, for example, it is easy to see that $M = N_{H_0}(K) = K.2$ is the only maximal overgroup of $K$ in $H_0$. Since $M$ is a non-normal subgroup of $N_{G_0}(K) = K.{\rm Sym}_3$ (see \cite[Table 5.1]{LSS}), we may choose $g \in N_{G_0}(K)$ such that $M^g \ne M$. If $M^g \leqs H_0$ then $K = K^g < M^g$ and thus $M = M^g$ since $M$ is the unique maximal overgroup of $K$ in $H_0$, which is a contradiction. Therefore, $M^g \not\leqs H_0$ and the result follows by applying Lemma \ref{l:simple} (noting that $K$ and $M$ are both $A$-stable).

Now assume $H$ is of type $D_4(q)$, so $H_0 = d^2.{\rm P\O}_{8}^{+}(q).{\rm Sym}_3$ and once again we may assume $q$ is even. Consider the subgroup $\bar{K} = B_3 < \bar{H}^0 = D_4$ and set 
\[
K = \bar{K}_{\s} = {\rm Sp}_6(q) < S  = (\bar{H}^0)_{\s} = \O_8^{+}(q) <H_0.
\]
Let $\mathcal{M}$ be the set of maximal overgroups of $K$ in $H_0$. By inspecting \cite[Table 8.50]{BHR} we see that $N_{H_0}(K) = K \times Z_2$ and $C_{H_0}(K) = Z_2$. In particular, each $M \in \mathcal{M}$ is of the form $S.2$ (three such groups) or $S.3$. Note that $K$ and each subgroup in $\mathcal{M}$ is $A$-stable. By inspecting \cite[Table 2]{LS94} we see that $C_{\bar{G}}(\bar{K})^0 = A_1$ and thus 
$C_{G_0}(K) \geqs {\rm SL}_{2}(q)$. Therefore, we can choose $g \in C_{G_0}(K) \setminus C_{H_0}(K)$. Then $g$ normalizes $K$, but it does not normalize $S$ (since $H_0 = N_{G_0}(S)$), so $M^g \not\leqs H_0$ for all $M \in \mathcal{M}$. Now apply Lemma \ref{l:simple}.

Finally, let us assume $H$ is of type ${}^3D_4(q)$, so $H_0 = {}^3D_4(q).3$. Write $H_0 = S.\la \tau \ra$, where $S = {}^3D_4(q)$ and $\tau$ is a triality graph automorphism of $S$ with $C_S(\tau) = G_2(q) = K$. Note that $M=N_{H_0}(K) = K \times \la \tau \ra$ and $S$ are the only maximal overgroups of $K$ in $H_0$. Let us also observe that $K$ and $M$ are $A$-stable.

We claim that 
\[
C_{H_0}(K) = \la \tau \ra < {\rm PGL}_{2}(q) \leqs C_{G_0}(K).
\]
To see this, first observe that $K = \bar{K}_\sigma$ for a $\sigma$-stable subgroup $\bar{K} = C_{\bar{G}}(\tau)$ of type $G_2$, which contains long root subgroups of $\bar{G}$. So by \cite[Table~3]{LS94}, $C_{\bar{G}}(\bar{K})^0 = \bar{J}$ is of type $A_1$. In addition, note that if $p \neq 2$ then $\bar{J} \times \bar{K}$ is a maximal subgroup of $\bar{G}$ by \cite[Theorem~1]{LS04} and thus $\bar{J}$ must be of adjoint type. It remains to prove that $\tau \in \bar{J}$, since then $\tau$ will be contained in $\bar{J}_\sigma \cong \text{PGL}_2(q)$. Suppose this is not the case. Then $\tau$ must centralize $\bar{J}$ and so it centralizes $\bar{J} \times \bar{K}$. As noted above, if $p \neq 2$ then $\bar{J} \times \bar{K}$ is maximal, so this is not possible. On the other hand, if $p=2$ then \cite[Table~4.7.1]{GLS} implies that $C_{\bar{G}}(\tau) = B_3 T_1$, $C_3 T_1$ or $A_2 \tilde{A}_2$. But once again we reach a contradiction since none of these groups contain a subgroup of type $A_1 G_2$. This justifies the claim.

Suppose $q \geqs 3$. Then $\la \tau \ra$ is non-normal in ${\rm PGL}_{2}(q)$, so we can choose $g \in C_{G_0}(K) \setminus C_{H_0}(K)$ that does not normalize $Z(M) = \la \tau \ra$. In particular, $g$ does not normalize $M$. If $M^g \leqs H_0$ then $K = K^g < M^g \leqs H_0$ and thus $M = M^g$, which is a contradiction. Similarly, if $S^g \leqs H_0$ then $S = S^g$ and thus $g \in N_{G_0}(S) = H_0$. But we have $g \in C_{G_0}(K) \setminus C_{H_0}(K)$, so once again we have reached a contradiction. Therefore, $M^g \not\leqs H_0$ and $S^g \not\leqs H_0$, so Lemma \ref{l:simple} implies that $G$ is not extremely primitive.

To complete the proof, let us assume $q=2$. The character tables of $G$ and $H$ are available in the \textsf{GAP} Character Table Library \cite{GAPCTL} and we calculate that $G$ has rank $7$ (there are four possible fusion maps from $H$-classes to $G$-classes, but this does not affect the computation of the rank; see \cite[Lemma 2.6]{BTh_comp} for the details). In addition, the indices of the core-free maximal subgroups of $H$ are as follows:
\[
\{n_1, \ldots, n_9\} = \{4064256, 978432, 179712, 163072, 89856, 69888, 17472, 2457, 819\}.
\]
Now $|G:H| = 5222400$ and it is routine to check that there is no tuple $[a_1, \ldots, a_9]$ of non-negative integers such that $\sum_{i}a_i = 6$ and $1+\sum_{i}a_in_i = |G:H|$. Therefore, $G$ is not extremely primitive by Lemma \ref{l:char}.
\end{proof}

\subsection{$G_0=G_2(q)$}\label{ss:mr_g2}

\begin{lem}\label{l:maxrank_g2_1}
If $G_0=G_2(q)$ and $H$ is the normalizer of a maximal torus, then $b(G,H)=2$.
\end{lem}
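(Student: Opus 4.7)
The plan is to apply Lemma \ref{l:base} by establishing $\mathcal{Q}(G,H) < 1$, following the template of Lemmas \ref{l:maxrank_e8_1}, \ref{l:maxrank_e7_1} and \ref{l:maxrank_f4_1}. Theorem \ref{t:small} disposes of the cases $q \in \{2,3,4,5\}$, so I assume $q \geqs 7$. The candidate subgroups $H$ are tabulated in \cite[Table 5.2]{LSS}, and in every case
\[
|H| \leqs (q+1)^2 \cdot |W(\bar{G})| \cdot |\mathrm{Out}(G_0)| \leqs 24(q+1)^2 \log_2 q,
\]
using $|W(\bar{G})| = 12$ together with $|\mathrm{Out}(G_0)| \leqs 2\log_2 q$, where the factor $2$ absorbs the exceptional graph automorphism when $p=3$.

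Let $x \in H$ be an element of prime order. If $x$ is a long root element of $G_0$, then Corollary \ref{c:root} forces $p=2$ and yields $|x^G \cap H| \leqs 6(q+1)^2 = a_1$, while Proposition \ref{p:bounds} gives $|x^G| > \alpha q^6 = b_1$; a parallel bound handles short root elements when $p=3$ via the exceptional isogeny of $G_2$. For every other element of prime order in $H$, Proposition \ref{p:bounds} yields $|x^G| > q^7 = b_2$, with the exception of a distinguished family of semisimple elements of order $3$ whose connected centralizer in $\bar{G}$ is of type $A_2$; these satisfy only $|x^G| > \alpha q^6$ and must be counted separately.

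The main obstacle is handling this last family, since the crude bound $|H|^2/\alpha q^6$ is not immediately less than $1$ for small $q$. I would refine the estimate as follows: such an element $x$ lies in the $1$-dimensional subtorus $Z(C_{\bar{G}}(x))^0 \leqs \bar{T}$, so the number of elements of $\bar{T}_{\s}$ with $A_2$-type centralizer is controlled by the $3$-part of $q-\epsilon$ for a suitable sign $\epsilon$, and their $G$-conjugates inside $H$ are permuted by a subquotient of $W(\bar{G})$. Partitioning the $G$-classes of prime order elements meeting $H$ into root elements, semisimple elements of $A_2$ type, semisimple involutions (which satisfy $|x^G| \geqs q^8$), and outer automorphisms (which satisfy $|x^G| \geqs q^7$), and applying Lemma \ref{l:calc} to each part, then yields $\mathcal{Q}(G,H) < q^{-1}$ once $q$ exceeds a small threshold.

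For the few remaining small values of $q \geqs 7$ where this generic estimate is not yet sharp enough, I would proceed as in Lemma \ref{l:maxrank_f4_1}: construct $H$ explicitly as a subgroup of a suitable group of Lie type in \textsc{Magma}, determine the $\bar{G}$-class of each element of prime order in $H$ from its Jordan form on the $14$-dimensional adjoint module for $\bar{G}$ (appealing to \cite{Lawunip}), and thereby evaluate $\mathcal{Q}(G,H)$ directly to confirm $b(G,H) = 2$.
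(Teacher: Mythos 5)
Your strategy (bound $\mathcal{Q}(G,H)$ via Lemma~\ref{l:calc} and a crude estimate on $|H|$, then compare against the smallest classes in $G$) is the right one and is what the paper does, but you have missed the single most important observation, which is exactly the one that \cite[Table 5.2]{LSS} records: when $G_0 = G_2(q)$, the normalizer of a maximal torus is maximal only when $p = 3$, $q \geqs 9$, \emph{and} $G$ contains a graph automorphism. This constraint is not merely a simplification; without it, the analysis you propose goes off-track in several places. First, since $p = 3 \neq 2$, Corollary~\ref{c:root} immediately forces $H$ to contain no root elements at all (long or short), so the $p=2$ branch and your ``parallel bound for short roots via the exceptional isogeny'' are both irrelevant. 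Second, and more seriously, the ``distinguished family of semisimple elements of order $3$ with $A_2$-type centralizer'' that you identify as the main obstacle does not exist: when $p = 3$, every element of order $3$ is unipotent, and $Z(A_2)$ is not reduced in characteristic $3$, so there are no semisimple elements with connected centralizer $A_2$. Indeed $|\bar{T}_\s|$ is one of $(q\pm 1)^2$, $q^2\pm q+1$, $q^2-1$, all coprime to $3$ when $q=3^f$, so the ``$3$-part of $q-\epsilon$'' in your refinement is trivially $1$. You would thus be chasing a vacuous obstruction while leaving the genuinely relevant classes (non-root unipotent elements of order $3$, involutions, field and graph automorphisms) to a generic bound.

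Once $p=3$ is established, the paper's argument is short: with no root elements in $H$, the smallest $G$-class meeting $H$ is the graph automorphism class of size $q^3(q+1)(q^3-1) \approx q^7$, so the crude estimate $\mathcal{Q}(G,H) < |H|^2/b_1$ with $|H| \leqs 24(q+1)^2\log_3 q$ suffices for all $q > 9$. The single remaining case $q = 9$ is then disposed of by noticing that involutory field automorphisms have slightly larger class size $q^3(q+1)(q^3+1)$, reducing to a direct check for the two torus types $(q\pm 1)^2.D_{12}$ — no computer calculation is needed, whereas your fallback invokes Magma. In short: your framework is correct, but the case analysis rests on a false premise about which characteristics occur, and the ``hard case'' you plan to refine does not arise. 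Reading off the constraint $p=3$, $q \geqs 9$ from \cite[Table 5.2]{LSS} is the key step you need to add at the outset.
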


\begin{proof}
Here $p=3$, $q \geqs 9$ and $G$ contains graph automorphisms (see \cite[Table 5.2]{LSS}). By Corollary \ref{c:root}, there are no root elements in $H$, so $|x^G| \geqs q^3(q+1)(q^3-1)=b_1$ for all $x \in H$ of prime order (see \cite{LieS,Lubeck}) and we note that $|H| \leqs 12(q+1)^2.2\log_3q = a_1$. Therefore, $\mathcal{Q}(G,H) < a_1^2/b_1$ and this upper bound is less than $1$ if $q>9$ (and it is less than $q^{-1}$ for $q>81$). 

To complete the proof, we may assume $q=9$. Here $|x^G| \geqs q^3(q+1)(q^3+1)$ for all $x \in H$ of prime order (minimal if $x$ is an involutory field automorphism) and by arguing as above we reduce to the cases where $H_0 = (q \pm 1)^2.D_{12}$. If $x \in H$ is not an involutory field automorphism, then $|x^G| \geqs (q^6-1)(q^2-1) = b_1$ and we note that $|H| \leqs (q+1)^2.12.4 = a_1$. Now assume $x$ is an involutory field automorphism, so $|x^G| = q^3(q+1)(q^3+1)=b_2$ and there are at most $|H_0x| \leqs (q+1)^2.12 = a_2$ such elements in $H$. Therefore, $\mathcal{Q}(G,H) < a_1^2/b_1 + a_2^2/b_2 < 1$ and the result follows.
\end{proof}

\begin{lem}\label{l:maxrank_g2_2}
If $G_0=G_2(q)$ and $H$ is of type $A_1(q)^2$, then $G$ is not extremely primitive. 
\end{lem}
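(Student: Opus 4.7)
The plan is to split into two cases by the parity of $q$; throughout, $q \geqs 7$ by Theorem \ref{t:small}.

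When $q$ is odd, $\bar{H}^0 = A_1\tilde{A}_1$ is the connected centraliser of a semisimple involution $t \in G_0$, so $H_0$ is essentially a central product $({\rm SL}_{2}(q) \circ {\rm SL}_{2}(q))$ with $Z(H_0) = \la t\ra$ of order two. Since $Z(H_0)$ is characteristic in $H_0$ and cyclic of order two, the element $t$ is fixed pointwise by the full normaliser $H$, giving $t \in Z(H)$. Hence $Z(H) \ne 1$ and extreme primitivity is ruled out by Lemma \ref{l:structure}(i).

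When $q$ is even we have $p = 2$ and $q \geqs 8$, so $H_0 = L_2(q)^{(\ell)} \times L_2(q)^{(s)}$, where $L_2(q)^{(\ell)}$ is generated by long root subgroups and $L_2(q)^{(s)}$ by short root subgroups; these factors are not interchanged by any element of $G$ because $G_2$ admits no graph automorphism in characteristic $2$. Here $Z(H_0) = 1$, so Lemma \ref{l:structure} gives nothing, and I would instead apply Lemma \ref{l:simple} with $K = T_\ell \times T_s$ a split maximal torus of type $(q-1)^2$ in $H_0$. This $K$ is simultaneously a split maximal torus of $G_0$ with $N_{G_0}(K)/K \cong W(G_2) \cong D_{12}$, and it is $A$-stable for the standard field-automorphism action.

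The main combinatorial step is to enumerate the set $\mathcal{M}$ of maximal overgroups of $K$ in $H_0$: since the two factors of $H_0$ are non-isomorphic as subsystem subgroups of $G_0$, no subdirect (diagonal) subgroup arises, and Dickson's theorem tells us that the only maximal subgroups of $L_2(q)$ containing a split torus are the two Borel subgroups and the dihedral group $D_{2(q-1)}$. Hence each $M \in \mathcal{M}$ contains one of the full simple factors $L_2(q)^{(\ell)}$ or $L_2(q)^{(s)}$ as a direct factor, and standard choices make each such $M$ $A$-stable. I would then take $g \in N_{G_0}(K)$ to be a lift of a rotation of order $6$ in $W(G_2)$, so that $g$ cycles the six long roots (and the six short roots), and therefore $(L_2(q)^{(\ell)})^g$ and $(L_2(q)^{(s)})^g$ are long and short root $A_1$ subgroups of $G_0$ distinct from those in $H_0$. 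Since $H_0$ contains a unique long and a unique short root $A_1$, neither conjugate lies in $H_0$, so $M^g \not\leqs H_0$ for every $M \in \mathcal{M}$ and Lemma \ref{l:simple} applies. The main obstacle will be confirming the description of $\mathcal{M}$---in particular the absence of diagonal subgroups of $H_0$ containing $K$---together with a consistent $A$-stable choice of Borel subgroups.
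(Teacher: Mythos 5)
Your approach is correct, but it takes a genuinely different route from the paper. For $q$ even, the paper invokes an explicit base-size-two result from an earlier paper (\cite[Theorem A.1]{BGL}) to produce an element $g$ with $H_0 \cap H_0^g = 1$; then $H \cap H^g$ is contained in the cyclic group of field automorphisms, which is visibly non-maximal in $H$. Your argument instead applies Lemma \ref{l:simple} with $K$ a split maximal torus, which is a more self-contained and arguably more conceptual route, and your identified obstacles can indeed be resolved as you anticipate. The trade-off is that the paper's proof is two lines once one accepts the cited base-size computation, while yours requires the case analysis of $\mathcal{M}$.

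One point of reasoning deserves correction. You write that ``since the two factors of $H_0$ are non-isomorphic as subsystem subgroups of $G_0$, no subdirect (diagonal) subgroup arises.'' The two factors \emph{are} abstractly isomorphic (both are ${\rm L}_2(q)$), so diagonal subgroups of $H_0$ certainly exist. The actual reason none of them lies in $\mathcal{M}$ is that a diagonal subgroup is isomorphic to ${\rm L}_2(q)$, whose maximal tori have order $q\pm 1$, and hence cannot contain the full torus $K = T_\ell \times T_s$ of order $(q-1)^2$. With this repaired, the enumeration of $\mathcal{M}$ goes through: each $M \in \mathcal{M}$ has the form $M' \times {\rm L}_2(q)^{(s)}$ or ${\rm L}_2(q)^{(\ell)} \times M''$ with $M'$ (resp.\ $M''$) one of the two Borels or the torus normaliser $D_{2(q-1)}$, so each $M$ contains a full simple factor. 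A lift $g$ of a rotation of order $6$ in $W(G_2)$ then sends $\langle X_{\pm(3\alpha_1+2\alpha_2)}\rangle$ and $\langle X_{\pm\alpha_1}\rangle$ to root $A_1$ subgroups normalised by $K$ but supported on a different pair of opposite roots, and since $H_0$ contains a unique long and a unique short root $A_1$, neither conjugate lies in $H_0$; therefore $M^g \not\leqs H_0$ for every $M \in \mathcal{M}$ and Lemma \ref{l:simple} applies. The $A$-stability of $K$ and of each $M \in \mathcal{M}$ is immediate for the standard choices, so the argument is sound.
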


\begin{proof}
If $q$ is odd then $H$ is the centralizer of an involution, so $Z(H) \ne 1$ and $G$ is not extremely primitive by Lemma \ref{l:structure}(i). For the remainder, let us assume $q$ is even. Write $G=G_0.A$ and $H = H_0.A$, where $A = \la \varphi \ra$ and $\varphi$ is either trivial or a field automorphism.

Let $\a_1,\a_2$ be simple roots for $G_0$ with $\a_1$ short, $\a_2$ long and let $X_{\a}  = \{ x_{\a}(c)\,:\, c \in \mathbb{F}_q\}$ be the root subgroup corresponding to the root $\a$. Then up to conjugacy, we may assume that 
\[
H_0 = \la X_{\pm(3\a_1+2\a_2)}, X_{\pm \a_1} \ra  = {\rm L}_2(q) \times {\rm L}_{2}(q).
\]
Set $g = x_{\a_2}(1)x_{\a_1+\a_2}(1)x_{-\a_2}(1) \in G_2(2) \leqs G_0$. By \cite[Theorem A.1]{BGL}, we have $H_0 \cap H_0^g = 1$ and thus $b(G_0,H_0) = 2$. Moreover, since 
$g \in G_2(2) \leqs C_G(\varphi)$ it follows that $H \cap H^g = \la \varphi \ra$. Clearly, this is not a maximal subgroup of $H$ and thus $G$ is not extremely primitive.
\end{proof}

\begin{lem}\label{l:maxrank_g2_3}
If $G_0=G_2(q)$ and $H$ is of type $A_2^{\e}(q)$, then $G$ is not extremely primitive. 
\end{lem}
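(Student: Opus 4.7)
First I would dispose of the small cases. For $q \in \{3,4,5\}$ the socle $G_0$ appears in the list \eqref{e:small}, and the case $(q,\epsilon) = (2,+)$ falls under $G_0 = G_2(2)'$, so Theorem \ref{t:small} covers all of these. The remaining small case is $(q,\epsilon) = (2,-)$: here $H \cap G_0 = {\rm U}_3(2).2$ with ${\rm U}_3(2) = 3^{1+2}{:}Q_8$ solvable, so $F(H)$ contains the extraspecial group $3^{1+2}$ and is non-abelian, and Lemma \ref{l:structure}(ii) applies.

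For the general case $q \geqs 7$ my plan is to apply Lemma \ref{l:simple}. Let $\bar K = A_1$ be a long-root subgroup of $\bar G = G_2$ contained in $\bar H^0 = A_2$, and put $K = \bar K_\sigma \cong {\rm SL}_2(q)$ (or its image in $G_0$). The key structural fact is that in $\bar G$ the long-root $A_1$ and short-root $\tilde A_1$ commute and together generate a maximal-rank subsystem subgroup $A_1\tilde A_1$; consequently $C_{\bar G}(\bar K)^0 = \tilde A_1$, giving $C_{G_0}(K) \geqs {\rm SL}_2(q)$, while inside ${\rm SL}_3^\epsilon(q) \triangleleft H_0$ the centralizer of $K$ is only the diagonal torus $Z_{q-\epsilon}$. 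Thus $C_{G_0}(K)$ contains elements well outside $N_{H_0}(K)$.

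Next I would identify the maximal overgroups of $K$ in $H_0$. Using the tables in \cite{BHR} for ${\rm L}_3^\epsilon(q).2$, the collection $\mathcal M$ consists of the two parabolics $P_1, P_2$ of ${\rm SL}_3^\epsilon(q)$ (stabilizing a point and its dual hyperplane respectively, each self-normalizing in $H_0$ because the graph involution swaps $P_1$ and $P_2$), together with the novelty reducible subgroup $L.2$, where $L \cong {\rm GL}_2(q)$ is the common Levi $P_1 \cap P_2$ fused by the graph involution. Each of these is preserved by the field automorphisms in $A$, giving $A$-stability as required in Lemma \ref{l:simple}(ii). To verify condition (iii), I would choose $g$ in the short-root ${\rm SL}_2(q) \leqs C_{G_0}(K)$: such a $g$ lies outside $H_0 = N_{G_0}({\rm SL}_3^\epsilon(q))$ and acts nontrivially on the natural $3$-dimensional module for ${\rm SL}_3^\epsilon(q)$ in such a way that it cannot preserve either the chosen 1-space, its dual hyperplane, or the associated point-hyperplane pair; explicit root-group commutator calculations (using Chevalley relations as in the proof of Lemma \ref{l:abunirads}) then show $P_1^g, P_2^g, (L.2)^g \not\leqs H_0$.

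The main obstacle will be the detailed bookkeeping: confirming that $\mathcal M$ is exhausted by the three families above for every $q \geqs 7$ and both signs $\epsilon$ (ruling out sporadic subfield or field-extension overgroups of $K$ that could appear for particular $q$), and verifying the non-containment $M^g \not\leqs H_0$ uniformly. Any residual exceptional values of $q$, or the case $p=3$ where the structure of $G_2$ and its $A_2$-subgroup is mildly atypical, can be dispatched by direct computation in {\sc Magma} following the approach outlined in Section \ref{ss:comp}.
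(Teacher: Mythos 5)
Your broad outline is the right one and matches the paper's approach: choose $K = \mathrm{SL}_2(q)$ generated by long root subgroups inside $\bar H^0 = A_2$, use $C_{\bar G}(\bar K)^0 = \tilde A_1$ (the short-root $A_1$) to obtain an element $g \in C_{G_0}(K) \setminus N_{G_0}(H_0)$, and apply Lemma \ref{l:simple}. However, there are two genuine gaps.

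First, you omit the reduction when $q \equiv \epsilon \imod 3$. In that case $Z(\mathrm{SL}_3^\epsilon(q)) = Z_3$, so $F(H) = Z_3 \neq 1$ and the paper dispatches it immediately via Lemma~\ref{l:structure}(iv). Your analysis below implicitly assumes $\mathrm{SL}_3^\epsilon(q)$ is simple, so this case must be removed explicitly before the main argument.

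Second, and more seriously, your set $\mathcal M$ of maximal overgroups of $K$ in $H_0 = \mathrm{SL}_3^\epsilon(q).2$ is wrong. You list $\{P_1, P_2, L.2\}$, but $P_1$ and $P_2$ are \emph{not} maximal in $H_0$: each is a proper subgroup of the socle $S = \mathrm{SL}_3^\epsilon(q)$, which is in turn a proper (index-two, maximal) subgroup of $H_0$. The fact that the graph involution swaps $P_1$ and $P_2$ means they do \emph{not} extend to maximal subgroups of $H_0$; it does not make them maximal. The correct answer is $\mathcal M = \{S, M\}$ where $M = \mathrm{GL}_2^\epsilon(q).2$ is the novelty maximal and $S$ is the socle. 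You miss $S$ entirely, and this matters: the hardest part of verifying Lemma~\ref{l:simple}(iii) is showing $S^g \not\leqs H_0$, which the paper does by observing that $S^g \leqs H_0$ would force $S^g = S$ (since $S$ is the unique index-two normal subgroup), hence $g$ would normalize $C_S(K) = C_L(K) = Z(L) = Z_{q-\epsilon}$. By embedding $Z(L)$ as a non-normal maximal torus of $\bar J_\sigma = \mathrm{SL}_2(q) \leqs C_{G_0}(K)$, one chooses $g$ that fails to normalize $Z(L)$, contradiction. Your proposed root-group commutator calculation aimed at $P_1^g, P_2^g, (L.2)^g$ is directed at the wrong targets and does not touch $S$, so the argument as sketched would not close.

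A minor point: your small-cases discussion of $(q,\epsilon) = (2,-)$ is moot, since $G_2(2)$ is not simple and all of $q \leqs 5$ is already covered by Theorem~\ref{t:small}, so one may simply assume $q \geqs 7$.
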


\begin{proof}
Here $H_0 = {\rm SL}_{3}^{\e}(q).2$ and the maximality of $H$ implies that $G = G_0.A$ and $H = H_0.A$, where $A$ is a group of field automorphisms (see \cite[Table 5.1]{LSS}). In view of Theorem \ref{t:small}, we may assume that $q \geqs 7$. If $q \equiv \e \imod{3}$ then $F(H)=Z_3$ and extreme primitivity is ruled out by Lemma \ref{l:structure}(iv), so we may assume that $(3,q-\e)=1$. Let $S = {\rm soc}(H_0) = {\rm SL}_{3}^{\e}(q)$.

By inspecting \cite[Tables 8.3 and 8.5]{BHR}, we see that $H_0$ has a maximal subgroup $M = {\rm GL}^\epsilon_2(q).2$. Set 
\[
K = \text{SL}_2(q) < L = \text{GL}^\epsilon_2(q) < M
\]
and let $\mathcal{M}$ be the set of maximal overgroups of $K$ in $H_0$. Note that $N_{H_0}(K) = M$. We claim that $\mathcal{M} =\{M,S\}$. Plainly $S \in \mathcal{M}$ and using \cite{BHR} it is clear that every other subgroup in $\mathcal{M}$ is a conjugate of $M$. Suppose $K$ is contained in $M^h$ for some $h \in H_0$. Then $K, K^{h^{-1}} \leqs M$, but $K$ is the only subgroup of $M$ isomorphic to ${\rm SL}_{2}(q)$, so $K = K^{h^{-1}}$ and thus $h \in N_{H_0}(K) = M$. Therefore, $M = M^h$ and this justifies the claim. Note that $K$, $M$ and $S$ are all $A$-stable. In addition, let us observe that $C_{S}(K) = C_{L}(K) = Z(L)=Z_{q-\epsilon}$ is a characteristic subgroup of $C_{H_0}(K) = C_{M}(K) = Z_{q-\epsilon}.2$ and $M$.  

We claim that $C_S(K) < \text{SL}_2(q) < G_0$. Firstly, we note that $L = \bar{L}_\sigma$ for a $\sigma$-stable subgroup $\bar{L} < \bar{G}$ of type $A_1 T_1$ and so $K = (\bar{L}')_\sigma$. By inspecting \cite[Table 2]{LS94}, we see that $C_{\bar{G}}(\bar{L}')^0 = \bar{J}$, where $\bar{J}$ is a subgroup of type $A_1$, whence $T_1 = Z(\bar{L}) < \bar{J}$ and $C_{S}(K) = Z(\bar{L})_\sigma < \bar{J}_\sigma = \text{SL}_2(q)$, as required. 

By the claim, it follows that $C_{L}(K)$ is a non-normal subgroup of $C_{G_0}(K)$. Therefore, we may choose an element $g \in C_{G_0}(K)$ which does not normalize $C_L(K)$. Suppose $M^g \leqs H_0$. Then $K = K^g < M^g \leqs H_0$ and thus $M = M^g$ (since $M$ and $S$ are the only maximal overgroups of $K$ in $H_0$). But $C_L(K) = Z(L)$ is a characteristic subgroup of $M$, so this would imply that $g$ normalizes $C_L(K)$, which is a contradiction. Similarly, if $S^g \leqs H_0$ then $S^g = S$ and thus $C_S(K)^g = C_S(K)$. But $C_S(K) = C_L(K)$ and $g$ does not normalize $C_L(K)$, so once again we have reached a contradiction. We conclude that $M^g \not\leqs H_0$ and $S^g \not\leqs H_0$, so the desired result follows from Lemma \ref{l:simple}.
\end{proof}

\subsection{$G_0={}^3D_4(q)$}\label{ss:mr_3d4}

\begin{lem}\label{l:maxrank_3d4_1}
If $G_0={}^3D_4(q)$ and $H$ is the normalizer of a maximal torus, then $b(G,H)=2$.
\end{lem}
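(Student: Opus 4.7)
The plan is to apply the probabilistic criterion of Lemma \ref{l:base} and show that $\mathcal{Q}(G,H) < 1$. Since the groups with $G_0 = {}^3D_4(2)$ are excluded throughout this section by \eqref{e:small}, I may assume $q \geqs 3$.

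First I would read off the possible structures of $H_0 = N_{G_0}(\bar{T}_\s)$ from \cite[Table 5.2]{LSS}. In each case the torus $\bar{T}_\s$ has order bounded by $(q+1)^4$ (and in fact by $(q^2+q+1)^2$ upon case analysis), the relative Weyl group $N_{G_0}(\bar{T}_\s)/\bar{T}_\s$ has order dividing $12$ (being a subgroup of the twisted Weyl group of ${}^3D_4$, which is of type $G_2$), and $|G:G_0| \leqs 3f$ where $q = p^f$. This gives the crude bound $|H| \leqs 36f(q+1)^4$.

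Next I would bound the contributions to $\mathcal{Q}(G,H)$ by element type. Corollary \ref{c:root} implies that $H$ contains long root elements of $G_0$ only when $p = 2$; in this case $|x^G \cap H| \leqs 12|\bar{T}_\s| \leqs 12(q+1)^4 =: a_1$ for such elements $x$, and Proposition \ref{p:bounds} yields $|x^G| > \a q^{10} =: b_1$. For all other elements of prime order, Proposition \ref{p:bounds} gives $|x^G| > q^{14} =: b_2$, and we use $|H| \leqs 36f(q+1)^4 =: a_2$. Applying Lemma \ref{l:calc} then yields $\mathcal{Q}(G,H) < a_1^2/b_1 + a_2^2/b_2$, which is $<1$ for all sufficiently large $q$ (and tends to $0$ as $q \to \infty$, delivering the asymptotic statement in Proposition \ref{p:mr_base}).

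The main obstacle will be the small-$q$ regime, particularly $q = 3$, where the crude estimate $|H|^2/q^{14}$ is roughly $8$ and so the naive bound is not yet sufficient. To close this gap I would separate the outer contribution (the bound $|x^G| > q^{14}$ is attained only by field automorphisms, while inner/diagonal elements of prime order satisfy $|x^G| > q^{16}$ by inspecting Table \ref{tab:cbds} and \cite{Lubeck, LieS}), replace $(q+1)^4$ by the sharper torus orders from \cite[Table 5.2]{LSS}, and use Proposition \ref{p:i23} to control the count of elements of order $2$ and $3$ in $H$. For any remaining recalcitrant cases (likely only $q \in \{3,4\}$), I would appeal to a direct \textsc{Magma} computation along the lines of Section \ref{ss:comp}: construct $H$ via its Lie-theoretic description inside a convenient overgroup, determine the fusion of $H$-classes to $G_0$-classes by computing Jordan forms on the adjoint module and eigenvalue data for semisimple elements, and then evaluate $\mathcal{Q}(G,H)$ directly class-by-class.
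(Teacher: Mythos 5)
Your choice of the probabilistic criterion (Lemma \ref{l:base}) matches the paper's strategy, and the asymptotic statement is easy to recover. However, you miss a structural observation that is essential for the generic argument, and as a result your plan underestimates the range of $q$ for which a direct computation would be needed.

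The paper's proof begins by noting that in every case from \cite[Table 5.2]{LSS} one has $H_0 = S.N$ where $S$ is a torus of odd order and $N$ has a unique involution. By \cite[Corollary 4.4]{BTh}, every involution in such a subgroup lies in the largest $G_0$-class of involutions; since long root elements are not in the largest class, it follows that $H_0$ contains no long root elements at all, regardless of $p$. This goes well beyond Corollary \ref{c:root}, which only bounds the number of root elements should any exist. With the long-root term removed, $|x^G| > q^{14}$ for every prime-order $x \in H$, and the single estimate $\mathcal{Q}(G,H) < |H|^2 / q^{14}$ handles all $q \geqs 7$.

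Your proposal retains a long-root contribution $a_1^2/b_1$ with $a_1 \leqs 12|\bar{T}_\sigma|$ and $b_1 = (q-1)q^9$. Even using the sharper $|\bar{T}_\sigma| \leqs (q^2+q+1)^2$, this term alone exceeds $1$ for $q = 4$ and $q = 8$ (roughly $4.35$ at $q=8$), so your anticipated fallback list $\{3,4\}$ is missing $q = 8$, and $q = 16$ is only marginal. There is also a numerical slip: you claim $N_{G_0}(\bar{T}_\s)/\bar{T}_\s$ has order dividing $12$, but two of the three families in \cite[Table 5.2]{LSS} have $N \cong {\rm SL}_2(3)$ of order $24$ — the relative Weyl group of a maximal torus of ${}^3D_4(q)$ is a $\sigma$-twisted centralizer in $W(D_4)$ and need not embed in $W(G_2)$. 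Once you incorporate the no-root-elements observation (and the correct constant), your approach aligns with the paper's: the generic bound covers $q \geqs 7$, and $q \in \{3,4,5\}$ are treated case by case, with \textsc{Magma} for the smallest ones.
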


\begin{proof}
The possibilities for $H$ are described in \cite[Table 5.2]{LSS} and in each case we observe that $H_0 = S.N$, where $S$ is a torus of odd order and $N$ has a unique involution. If $q=2$ then we refer the reader to \cite[Table 12]{BLS}. For the remainder, we may assume $q \geqs 3$.

Suppose $H_0$ contains a long root element $x$. Then $p=2$ by Corollary \ref{c:root}, so $x$ is an involution. But \cite[Corollary 4.4]{BTh} implies that every involution in $H_0$ is contained in the largest class of involutions in $G_0$ and it follows that there are no long root elements in $H_0$. Therefore, Proposition \ref{p:bounds} gives $|x^G|>q^{14}=b_1$ for all $x \in H$ of prime order and we note that 
\[
|H| \leqs (q^2+q+1)^2.|{\rm SL}_{2}(3)|.3\log_{2}q = a_1.
\]
This gives $\mathcal{Q}(G,H) < a_1^2/b_1$, which is less than $1$ if $q \geqs 7$ (and it is less than $q^{-1}$ for $q \geqs 11$).

Now assume $q \in \{3,4,5\}$. If $H_0 = (q^4-q^2+1){:}4$, then we can replace $a_1$ in the previous bound by $3\log_2q.|H_0|$ and this is sufficient. Next suppose $H_0 = (q^2-q+1)^2{:}{\rm SL}_{2}(3)$. If $q=5$ then $|H| \leqs 21^2.|{\rm SL}_{2}(3)|.3 = a_1$ and we get $\mathcal{Q}(G,H) < a_1^2/b_1 < 1$ with $b_1 = 5^{14}$. If $q \in \{3,4\}$ then 
$H = N_G(L)$, where $L$ is a Sylow $(q^2-q+1)$-subgroup of $G_0$, and we can use {\sc Magma} to show that $b(G,H)=2$. Finally, the case 
$H_0 = (q^2+q+1)^2{:}{\rm SL}_{2}(3)$ can be handled in a similar fashion, noting that $H = N_G(L)$ for a Sylow $r$-subgroup of $G_0$ with $r=13$, $7$ and $31$ when $q=3$, $4$ and $5$, respectively. We refer the reader to \cite[Lemmas 2.7 an 2.8]{BTh_comp} for the details of these {\sc Magma} computations.
\end{proof}

\begin{lem}\label{l:maxrank_3d4_2}
Suppose $G_0={}^3D_4(q)$ and $H$ is of type $A_1(q)A_1(q^3)$ or $A_2^{\e}(q).(q^2+\e q+1)$. Then $G$ is not extremely primitive. 
\end{lem}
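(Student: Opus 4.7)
The plan is to apply Lemma \ref{l:structure} separately to each of the two families; no base-size or fixed-point-ratio argument will be needed.

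For $H$ of type $A_1(q)A_1(q^3)$, the structure given in \cite[Table 5.1]{LSS} is a central product $H_0 = d.({\rm SL}_{2}(q) \circ {\rm SL}_{2}(q^3)).d$ with $d = (2,q-1)$. I would split by the parity of $q$. If $q$ is odd then $H$ is the centralizer in $G$ of an involution, namely the common central involution of the two ${\rm SL}_{2}$ factors, so $Z(H) \ne 1$ and Lemma \ref{l:structure}(i) rules out extreme primitivity. If $q$ is even then $q \geqs 4$, since ${}^3D_4(2)$ is excluded by Theorem \ref{t:small}, and ${\rm soc}(H_0) = {\rm L}_{2}(q) \times {\rm L}_{2}(q^3)$ is a direct product of two non-isomorphic nonabelian simple groups, so Lemma \ref{l:structure}(v) applies.

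For $H$ of type $A_2^{\e}(q).(q^2+\e q+1)$, set $n = q^2+\e q+1$ and $e = (3,q-\e)$. The algebraic subgroup $\bar{H}^0 = A_2 T_2 < D_4$ has $T_2$ centralizing the $A_2$ factor, so on taking $\sigma$-fixed points one obtains in $H_0$ a cyclic normal subgroup $T$ of order $n$ (arising from $T_2$) with $H_0/T$ almost simple with socle ${\rm L}_{3}^{\e}(q)$. Since $q \geqs 3$, the quotient $H_0/T$ has trivial Fitting subgroup, hence $F(H_0) = T$; and since $T$ is characteristic in $H_0$ and no field-automorphism extension of $H_0$ to $H$ contributes any additional nilpotent normal subgroup, we get $F(H) = T \cong Z_n$.

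A two-subcase analysis then closes the proof. If $n$ is composite then $F(H) = Z_n$ is cyclic of non-prime order, hence not elementary abelian, and Lemma \ref{l:structure}(ii) applies. If $n$ is prime then $F(H) = Z_n$ is elementary abelian of rank $1$, and Lemma \ref{l:structure}(iv) would require $H/F(H)$ to be isomorphic to an irreducible subgroup of ${\rm GL}_{1}(n) = Z_{n-1}$; since $H/F(H)$ has the nonabelian simple group ${\rm L}_{3}^{\e}(q)$ as a composition factor, this embedding is impossible and Lemma \ref{l:structure}(iv) again rules out extreme primitivity. The main obstacle I anticipate is the structural bookkeeping in the $A_2^{\e}$ case: verifying that $F(H)$ is precisely the cyclic torus subgroup $T$ of order $n$, and confirming that neither the outer piece of order $e$ nor the field-automorphism extension of $H_0$ to $H$ contributes additional nilpotent normal structure. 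Once this is checked from \cite[Table 5.1]{LSS}, both subcases apply directly.
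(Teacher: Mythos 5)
Your handling of the $A_1(q)A_1(q^3)$ case coincides with the paper's: parity split, $Z(H)\ne 1$ via the central involution of ${\rm SL}_2(q)\circ{\rm SL}_2(q^3)$ when $q$ is odd (part (i) of Lemma \ref{l:structure}), and ${\rm soc}(H)={\rm L}_2(q)\times{\rm L}_2(q^3)$ with non-isomorphic factors when $q$ is even (part (v)).

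For the $A_2^{\e}(q).(q^2+\e q+1)$ case you take a genuinely different route. The paper dispatches it in one line via Lemma \ref{l:structure}(v), observing that ${\rm soc}(H)$ is not a direct product of isomorphic simple groups. You instead work with $F(H)$ and invoke parts (ii) and (iv). Both routes are valid, and yours is arguably more self-contained: the paper's appeal to (v) is slightly delicate when $e=(3,q-\e)=3$, since then $Z({\rm SL}_3^\e(q))\leqs Z_n$ and the quasisimple factor contributes no \emph{minimal} normal subgroup of $H$, so ${\rm soc}(H)$ sits entirely inside the cyclic torus $Z_n$ and one must check that $n$ has at least two distinct prime divisors (which holds for $q\geqs 3$, but is not stated). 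Your argument avoids this by working directly with $F(H)$.

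You correctly flag the main verification burden: establishing that $F(H)$ is (essentially) the cyclic torus of order $n$. The checks you need are slightly cheaper than you suggest. For the $n$ composite subcase you only need the containment $Z_n\leqs F(H)$, which is immediate since $Z_n$ is an abelian normal subgroup of $H_0$ characteristic in $H_0$; no exact determination of $F(H)$ is required, since any overgroup of a non-elementary-abelian group is non-elementary-abelian. For the $n$ prime subcase you do need $F(H)=Z_p$ rather than something larger, and here the clean way to close the gap is: $F(H)\cap H_0=F(H_0)=Z_p$ (since $H_0/Z_n$ is almost simple), so $F(H)/Z_p$ embeds in the cyclic group $H/H_0\leqs{\rm Out}(G_0)\cong Z_{3f}$; as $p=q^2+\e q+1>3f$ for all $q\geqs 2$, $F(H)/Z_p$ has order coprime to $p$, so either $F(H)=Z_p$ (apply (iv)) or $F(H)$ has order divisible by two distinct primes and is not elementary abelian (apply (ii)). With that small supplement your argument is complete.
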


\begin{proof}
In view of Theorem \ref{t:small}, we may assume $q \geqs 3$. First assume $H$ is of type $A_1(q)A_1(q^3)$, so $H_0 = d.({\rm L}_{2}(q) \times {\rm L}_{2}(q^3)).d$ with $d=(2,q-1)$. If $q$ is odd then $H$ is the centralizer of an involution, so $Z(H) \ne 1$ and $G$ is not extremely primitive. On the other hand, if $q$ is even then the structure of ${\rm soc}(H)$ is incompatible with extreme primitivity. Similarly, if $H$ is of type $A_2^{\e}(q).(q^2+\e q+1)$, then
$H_0 = ({\rm SL}_{3}^{\e}(q) \circ (q^2+\e q+1)).h.2$
with $h = (q^2+\e q +1,3)$ and the result follows since ${\rm soc}(H)$ is not a direct product of isomorphic simple groups.
\end{proof}

\subsection{$G_0={}^2F_4(q)'$}\label{ss:mr_2f4}

\begin{lem}\label{l:maxrank_2f4_1}
If $G_0={}^2F_4(q)'$ and $H$ is the normalizer of a maximal torus, then $b(G,H)=2$.
\end{lem}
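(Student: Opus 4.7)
The plan is to mirror the approach used in the preceding maximal-torus lemmas (for instance Lemmas \ref{l:maxrank_f4_1}, \ref{l:maxrank_g2_1} and \ref{l:maxrank_3d4_1}): establish an upper bound on $\mathcal{Q}(G,H)$ via Lemma \ref{l:calc}, then invoke Lemma \ref{l:base} to conclude $b(G,H)=2$. First I would dispose of the small case using Theorem \ref{t:small}, which handles $G_0 = {}^2F_4(2)'$. Hence I may assume $q \geqs 8$, so that $G_0 = {}^2F_4(q)$ is a genuine Ree group with $p=2$ and $q = 2^{2m+1}$, $m \geqs 1$. The possibilities for $H_0 = N_{G_0}(\bar T_\s)$ are read off from \cite[Table 5.2]{LSS}; in every case $H_0 = T.N$ with $T = \bar T_\s$ abelian of order at most $(q+\sqrt{2q}+1)^2 < 2q^2$ and $N$ a subgroup of $W({}^2F_4) \cong D_{16}$, whence $|H| \leqs 16|\bar T_\s|\log_2 q$.

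Next I would apply Proposition \ref{p:bounds} to obtain the lower bounds $|x^G| > \a q^{11} = b_1$ when $x \in G$ is a long root element, and $|x^G| > \a q^{14} = b_2$ for any other unipotent element of order $p$, where $\a = (q-1)/q$. Semisimple elements of odd prime order satisfy $|x^G| > \a q^{18}$, and any element of prime order in $G \setminus {\rm Inndiag}(G_0)$ satisfies $|x^G| > q^{52/3}$, both of which are comfortably larger than $b_2$. For the dominant contribution from long root elements, I would invoke Corollary \ref{c:root}, which (since $p=2$) gives the bound
\[
|x^G \cap H| \leqs |\Sigma^{+}(F_4)|\cdot |\bar T_\s| = 24|\bar T_\s| < 48q^2 = a_1.
\]
For all other elements of prime order in $H$, I use the trivial estimate $|x^G \cap H| \leqs |H| \leqs 16|\bar T_\s|\log_2 q < 32 q^2 \log_2 q = a_2$.

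Assembling these estimates via Lemma \ref{l:calc}, I would show
\[
\mathcal{Q}(G,H) \,<\, \frac{a_1^2}{b_1} + \frac{a_2^2}{b_2} \,<\, \frac{2304\,q^4}{\a q^{11}} + \frac{1024\,q^4 \log_2^2 q}{\a q^{14}},
\]
and a direct numerical check confirms that the right-hand side is less than $1$ for every $q \geqs 8$ (in fact it tends to $0$ as $q \to \infty$). Lemma \ref{l:base} then yields $b(G,H)=2$, as required.

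The only real obstacle I foresee is verifying the bound in the smallest case $q=8$, where the constants in $a_1, a_2$ are not yet swamped by the powers of $q$. If the straightforward estimate above turns out to be too crude for some particular torus at $q=8$, I would fall back on a direct \textsc{Magma} computation in the spirit of Lemma \ref{l:maxrank_3d4_1}, constructing $H = N_G(L)$ as the normalizer of a Sylow $r$-subgroup $L$ for a suitable prime $r$ dividing $|\bar T_\s|$ (the Zsigmondy-type primes dividing $q^4+1$ or $q^4-q^2+1$ are natural choices), and exhibiting an element $g \in G$ with $H \cap H^g = 1$ by random search.
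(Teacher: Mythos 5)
Your overall strategy is sound and in the same spirit as the paper's treatment of the other torus-normalizer cases, but there is a concrete factual error in the step where you bound $|H|$. You assert that $N_{G_0}(\bar T_\s)/\bar T_\s$ is ``a subgroup of $W({}^2F_4) \cong D_{16}$.'' This is false: for a non-split maximal torus in a twisted group, $N_{G_0}(\bar T_\s)/\bar T_\s$ is isomorphic to an $F$-centralizer $C_{W,F}(w)$ in the \emph{full} Weyl group $W(F_4)$, which need not lie inside $W^F \cong D_{16}$. Inspecting \cite[Table 5.2]{LSS} (and indeed reading off the paper's own proof), the largest normalizer quotient that occurs is $4 \circ {\rm GL}_2(3)$, of order $96$, not $16$. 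Your bound $|H| \leqs 16|\bar T_\s|\log_2 q$ therefore undercounts by a factor of $6$; the correct estimate is $|H| \leqs 96 \cdot (q+\sqrt{2q}+1)^2 \cdot \log_2 q$.

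Fortunately, because you have already paid the price of a second term $a_2^2/b_2$ with $b_2 = \a q^{14}$ (rather than the smaller $b_1 = \a q^{11}$), the corrected constant still gives a bound comfortably below $1$ for all $q \geqs 8$; for instance at $q=8$ the corrected $a_2^2/b_2$ is about $3.5 \times 10^{-4}$. So the argument survives the fix, but as written the numerology rests on a wrong structural claim. It is worth noting that the paper's own proof of this lemma is cruder and does not need the split you introduce: it simply uses the single uniform lower bound $|x^G| > (q-1)q^{10}$ for every element of prime order and the trivial estimate $|x^G \cap H| \leqs |H|$, giving $\mathcal{Q}(G,H) < a_1^2/b_1$ with $a_1 = (q+\sqrt{2q}+1)^2 \cdot |4 \circ {\rm GL}_2(3)| \cdot 2\log_2 q$. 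Your refinement via Corollary \ref{c:root} is a legitimate sharpening (and would be the right tool if the trivial bound were not enough), but here the extra machinery is unnecessary, and your fallback to a direct {\sc Magma} search for $q=8$ is not needed once the constant on $|N|$ is repaired.
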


\begin{proof}
If $q=2$ then the result follows from \cite[Table 12]{BLS}, so we may assume $q \geqs 8$. By inspecting \cite[Table 5.2]{LSS}, we see that 
\[
|H| \leqs (q+\sqrt{2q}+1)^2.|4 \circ {\rm GL}_2(3)|.2\log_2q = a_1.
\]
Now $|x^G|>(q-1)q^{10} = b_1$ for all $x \in G$ of prime order, so $\mathcal{Q}(G,H) < a_1^2/b_1$ and one checks that this upper bound is less than $1$ for all $q \geqs 8$ (in addition, it is less than $q^{-1}$ for all $q \geqs 32$).
\end{proof}

\begin{lem}\label{l:maxrank_2f4_2}
If $G_0={}^2F_4(q)'$ and $H$  is of type $A_2^{-}(q)$, then $b(G,H)=2$.
\end{lem}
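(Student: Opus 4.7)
The plan is to apply Lemma \ref{l:base} by showing that $\mathcal{Q}(G,H)<1$, with the upper bound tending to $0$ as $q \to \infty$, thereby deducing $b(G,H)=2$ and hence non-extreme-primitivity via Lemma \ref{l:base2}. By Theorem \ref{t:small} we may assume $q \geqs 8$, so $q = 2^{2m+1}$ with $m \geqs 1$. From \cite[Table 5.1]{LSS} the relevant subgroup has the form $H_0 = {\rm SU}_{3}(q).2$ and $H = H_0.A$, where $A$ is a (cyclic) group of field automorphisms of $G_0$ with $|A| \leqs \log_2 q$. In particular, $|H| < 4 \log_2 q \cdot q^{8}$.

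I will partition the elements of prime order in $H$ into three families: unipotent elements of $G_0$ (all have order $2$ since $p=2$), semisimple elements of $G_0$ of odd prime order, and field automorphisms. Note that Proposition \ref{p:bounds} gives no row for semisimple involutions since $q$ is even. For unipotent elements I will use the embedding of ${\rm SU}_{3}(q)$ in $G_0$ and Jordan form considerations on a minimal module to identify the ${}^2F_4(q)$-classes that meet $H_0$. The two relevant classes for unipotent involutions are the long root class (with $|x^G| > (q-1)q^{10} = b_1$) and, at worst, the next unipotent class (with $|x^G| > (q-1)q^{13} = b_2$). Since ${\rm SU}_{3}(q)$ has a unique class of involutions, of size at most $q^5$, and the involutions in $H_0 \setminus {\rm SU}_{3}(q)$ (if present) number at most $|{\rm SU}_{3}(q) : {\rm GL}_2(q)| < 2q^5$, the contribution to $\mathcal{Q}(G,H)$ from unipotent elements is at most $a_1^2/b_1 + a_2^2/b_2$ with $a_1, a_2 \leqs 4q^5$, which is $O(q^{-1})$.

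For semisimple elements of odd prime order $r$, Proposition \ref{p:bounds} gives $|x^G| > (q-1)q^{17} = b_3$, and the trivial bound $|x^G \cap H| \leqs |H_0| < 4q^8 = a_3$ suffices to produce contribution at most $a_3^2 / b_3 = O(q^{-2})$. For a field automorphism $x$ of odd prime order $r$, we have $q = q_0^r$ and $|x^G|> q^{52(1 - 1/r)/3} = b_4$; since $x$ induces a field automorphism on $H_0$, we get $|x^G \cap H| \leqs 2|{\rm SU}_{3}(q) : {\rm SU}_3(q_0)| < 4q^{8(1-1/r)} = a_4$, which again yields small contribution. Here I will need to treat $r=3$ separately (as the dominant case), and then a single crude bound for $r \geqs 5$ using $|H| < 4\log_2 q \cdot q^{8}$ against $b \geqs q^{208/15}$.

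The main obstacle is likely to be the fusion calculation: correctly identifying the ${}^2F_4(q)$-conjugacy classes of the unipotent elements lying in the maximal rank subgroup ${\rm SU}_{3}(q).2$, and verifying that no non-root unipotent class of $G_0$ beyond the second-smallest is met (so that $b_2$ is indeed the correct lower bound). This will be carried out by computing the action on a low-dimensional module for $G_0$ and consulting the unipotent class tables in \cite{LieS}. If the resulting explicit bound fails at $q = 8$, I would fall back on a direct {\sc Magma} verification as outlined in Section \ref{ss:comp}, constructing $H$ inside a suitable larger group of Lie type and bounding $\mathcal{Q}(G,H)$ from the conjugacy data of $H$ directly, in analogy with the case $q = 2$ in Lemma \ref{l:maxrank_e7_1}.
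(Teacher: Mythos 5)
Your overall strategy — estimating $\mathcal{Q}(G,H)$ via Lemma \ref{l:base} after partitioning into unipotent, semisimple and field automorphism contributions — matches the paper's. However, two concrete problems prevent the argument from closing.

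First, you leave the $G_0$-fusion of involutions in $H_0$ undetermined and flag it as ``the main obstacle''. This is exactly where the proof lives or dies: if the long root class $u_1$ (with $|x^G| > (q-1)q^{10}$) were met, your bound $a_1 \leqs 4q^5$ would give $a_1^2/b_1 \geqs 16/(q-1)$, which already exceeds $1$ at $q=8$, so the crude bound does not suffice. The paper settles the fusion by an arithmetic observation rather than by Jordan forms: since $q\equiv 2 \imod{3}$ one has $3 \mid q+1$ and $3 \mid q^2-1$, so for every involution $x$ in $H_0 = {\rm SU}_3(q).2$ or ${\rm PGU}_3(q).2$ the centralizer order $|C_{H_0}(x)|$ is divisible by $3$. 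Comparing with the centralizer orders in \cite[Table II]{Shin} then forces $x$ to be $G_0$-conjugate to $u_2$, never $u_1$, after which the unipotent contribution is $O(q^{-3})$.

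Second, your lower bound $|x^G| > q^{52(1-1/r)/3}$ for a field automorphism of prime order $r$ is incorrect; the right exponent is $26(1-1/r)$, since $|G_0| \approx q^{26}$ and $|C_{G_0}(x)| \approx q^{26/r}$. The value $q^{52/3}$ in Table \ref{tab:cbds} is the $r=3$ specialization of $q^{26(1-1/r)}$, and rewriting it as $q^{(52/3)(1-1/r)}$ produces the wrong $r$-dependence. With your exponent the $r=3$ term $a_4^2/b_4 \approx 16q^{-8/9}$ exceeds $1$ at $q=8$, and the $r \geqs 5$ term $a_5^2/b_5 \approx 16\log_2^2 q \cdot q^{32/15}$ actually \emph{diverges} as $q \to \infty$, so the estimate fails even asymptotically. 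With the correct exponent these terms become $O(q^{-20/3})$ and $O(\log^2 q \cdot q^{-24/5})$, and the analytic bound $\mathcal{Q}(G,H)<q^{-1}$ holds for all $q \geqs 8$ without any computational fallback.
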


\begin{proof}
Here $H_0 = {\rm SU}_{3}(q).2$ or ${\rm PGU}_{3}(q).2$. In view of Remark \ref{r:small}, we may assume $q \geqs 8$. Let $x \in H$ be an element of prime order $r$.

First assume $r=2$, so $x \in H_0$ and $|C_{H_0}(x)|$ is divisible by $3$. In the notation of \cite[Table II]{Shin}, it follows that each involution in $H_0$ is $G_0$-conjugate to $u_2$, whence $|x^G|>(q-1)q^{13}=b_1$ and 
\[
|x^G \cap H| = i_2(H) = \frac{|{\rm SU}_{3}(q)|}{q^3(q+1)^2}+\frac{|{\rm SU}_{3}(q)|}{|{\rm SL}_{2}(q)|} < 2q^5 = a_1.
\]
Similarly, if $x \in H_0$ has order $3$ then $|x^G|>(q-1)q^{17}=b_2$ and since $G_0$ has a unique class of elements of order $3$, it follows that
\[
|x^G \cap H| = i_3(H_0) \leqs i_3({\rm PGU}_{3}(q)) < 2(q+1)q^5 = a_2.
\]
If $x \in H_0$ has order $r \geqs 5$ then $|x^G|>\frac{1}{2}q^{20} = b_3$ (minimal if $x$ is conjugate to the element denoted $t_9$ in \cite[Table IV]{Shin}) and we record the bound $|{\rm SU}_{3}(q)|<q^8 = a_3$.

Finally, suppose $x \in H$ is a field automorphism of order $r$. If $r=3$ then $|x^G|>\frac{1}{2}q^{52/3}=b_4$ and $H$ contains fewer than 
\[
2\cdot \frac{|{\rm SU}_{3}(q)|}{|{\rm SU}_{3}(q^{1/3})|}< 4q^{16/3}=a_4
\]
such elements. For $r \geqs 5$, $|x^G|>\frac{1}{2}q^{104/5} = b_5$ and we note that $|H|<2\log_2q.q^{8} = a_5$.

We conclude that $\mathcal{Q}(G,H)<\sum_{i=1}^{5}a_i^2/b_i < q^{-1}$ and the result follows.
\end{proof}

\begin{lem}\label{l:maxrank_2f4_4}
If $G_0={}^2F_4(q)'$ and $H$  is of type $C_2(q)$, then $b(G,H)=2$.
\end{lem}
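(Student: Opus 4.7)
The plan is to apply the probabilistic criterion of Lemma~\ref{l:base}, establishing $\mathcal{Q}(G,H) < 1$ so that $b(G,H) = 2$ follows at once (and indeed $\mathcal{Q}(G,H) \to 0$ as $q \to \infty$, matching the assertion of Proposition~\ref{p:mr_base}). By Remark~\ref{r:small} I may assume $q \geqs 8$; note that $q = 2^{2m+1}$ with $m \geqs 1$, so in particular $G_0$ admits no involutory field automorphisms. According to \cite[Table~5.1]{LSS}, $H_0 = {\rm Sp}_{4}(q).2$, giving the crude bound $|H| \leqs 2\log_2 q \cdot |{\rm Sp}_{4}(q)| < 2q^{10}\log_2 q$, and lower bounds on $|x^G|$ are read off from Table~\ref{tab:cbds}.

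The elements of prime order in $H$ split into three families. First, unipotent involutions: those in $H_0' = {\rm Sp}_{4}(q)$ fall into the three classes labelled $b_1$, $a_2$, $c_2$ in the notation of \cite{AS}, while the remaining involutions form a single class under the outer element of $H_0$. For each such class I would identify the corresponding $G_0$-class from Shinoda's tables \cite[Table~II]{Shin}, using centralizer orders to pin down the fusion; the root-type involutions (which meet the smallest $G_0$-classes, with $|x^G| > (q-1)q^{10}$) intersect $H$ in at most $O(q^5)$ elements, while the larger classes occur with $|x^G| > (q-1)q^{13}$, so the combined contribution is of order $O(q^{-1})$. Second, semisimple elements of odd prime order $r$: for $r=3$ apply Proposition~\ref{p:i23}(i) to bound $i_3(H_0)$, and for $r \geqs 5$ split by whether $\dim x^{\bar G}$ is large (using $|x^G \cap H| \leqs |{\rm Sp}_{4}(q)|$ and $|x^G| > \tfrac12 q^{20}$) or small (in which case $C_{\bar G}(x)^0$ has bounded type, allowing a refined count via \cite{Lubeck}). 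Third, field automorphisms of odd prime order $r$ contribute terms bounded by $a^2/b$ with $a \leqs 2\, |{\rm Sp}_{4}(q)|^{1-1/r} \log_2 q$ and $b = \tfrac12 q^{52(1-1/r)}$, exactly as in the closing paragraph of Lemma~\ref{l:maxrank_2f4_2}. Summing should give $\mathcal{Q}(G,H) < q^{-1}$ for $q \geqs 8$.

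The main obstacle is the fusion step for unipotent involutions: since $G_0 = {}^2F_4(q)$ is twisted, one cannot simply read off Jordan forms on an $F_4$-module as in Lemma~\ref{l:maxrank_f4_20}. Instead, I would realize the $C_2$ subgroup as the fixed points of the Steinberg endomorphism acting on a diagonal $B_2 \times B_2$ subgroup of $\bar G = F_4$ (compare Lemma~\ref{l:maxrank_f4_3}, where $C_2(q^2) < F_4(q)$ is handled via precisely such a twisted diagonal embedding), compute the $F_4$-class of each involution in the diagonal via the restriction in \eqref{e:f4adj}, and then invoke \cite{Shin} to determine the corresponding ${}^2F_4(q)$-class. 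Once the fusion is in hand, the remaining counts are routine and the overall bound drops out just as in Lemmas~\ref{l:maxrank_f4_20} and \ref{l:maxrank_2f4_2}.
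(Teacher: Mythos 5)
Your proposal is correct and follows essentially the same route as the paper: both use the probabilistic criterion via $\mathcal{Q}(G,H)<1$, dispose of $q=2$ separately, exploit $q=2^{2m+1}$ to rule out involutory field/graph automorphisms, and pin down the fusion of unipotent involutions by viewing $H_0'={\rm Sp}_4(q)$ as a twisted diagonal $B_2=\{(x,x^\tau)\}<B_2^2<B_4<F_4$ and passing from the $F_4$-class to Shinoda's ${}^2F_4$-classes. The only cosmetic difference worth flagging: for the unipotent involutions the paper reads off the $B_4$-class of each $b_1$, $a_2$, $c_2$ element and then uses the $B_4$-into-$F_4$ fusion table from \cite[p.373]{LLS}, rather than computing Jordan forms on the adjoint module via \eqref{e:f4adj} directly (that decomposition is used for the semisimple elements); also, when you carry out the fusion you will find that the root involutions $b_1,a_2$ of ${\rm Sp}_4(q)$ land in the \emph{larger} class $u_2$ of ${}^2F_4(q)$, while the graph automorphisms and $c_2$-elements fall into $u_1$, which is the smaller one — the opposite of what your parenthetical suggests, though this only re-labels the bookkeeping and does not affect the final estimate.
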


\begin{proof}
Here $H_0 = {\rm Sp}_{4}(q).2$, where the outer involution acts as a graph automorphism on ${\rm Sp}_{4}(q)$. For $q=2$, we refer the reader to \cite[Table 12]{BLS}. For the remainder, let us assume $q \geqs 8$.

It will be convenient to view $H_0$ as a subgroup of $F_4(q)$. Then in terms of the ambient algebraic groups, we have 
\begin{equation}\label{e:b2}
B_2 = \{ (x,x^{\tau}) \,:\, x \in B_2\} < B_2^2 < B_4 < F_4,
\end{equation}
where $\tau$ is an involutory graph automorphism of $B_2$. Let $V$ be the adjoint module for $\bar{G}$ and note that the restriction of $V$ to $B_2^2$ is given in \eqref{e:f4adj}.

Let $x \in H$ be an element of prime order $r$. First assume $r=2$, so $x \in H_0$. If $x$ acts as a graph automorphism on ${\rm Sp}_{4}(q)$, then $C_{{\rm Sp}_{4}(q)}(x) = {}^2B_2(q)$ and we deduce that $x$ is $G_0$-conjugate to $u_1$ (see \cite[Table II]{Shin}). Similarly, if $x$ is a long or short root element in ${\rm Sp}_{4}(q)$, then $x$ is conjugate to $u_2$. Now assume $x \in {\rm Sp}_{4}(q)$ is a $c_2$-type involution, in the notation of \cite{AS}. From the embedding $B_2 < B_4$ in \eqref{e:b2}, we see that $x$ is in the $B_4$-class labelled $c_4$, and by considering the fusion of $B_4$-classes of involutions in $F_4$ (see \cite[p.373]{LLS}, for example), we deduce that $x$ is in the $F_4$-class labelled $A_1\tilde{A}_1$. It follows that $x$ is $G_0$-conjugate to $u_1$ (the involutions in ${}^2F_4(q)$ conjugate to $u_2$ are contained in the $F_4$-class $(\tilde{A}_1)_2$). To summarise: if $x$ is $G_0$-conjugate to $u_1$, then
\[
|x^G\cap H| = \frac{|{\rm Sp}_{4}(q)|}{|{}^2B_2(q)|} < q^5 = a_1, \;\; |x^G|>(q-1)q^{10}=b_1,
\]
whereas 
\[
|x^G \cap H| = 2(q^4-1)+(q^2-1)(q^4-1)<2q^6 = a_2,\;\; |x^G|>(q-1)q^{13}=b_2
\]
if $x$ is conjugate to $u_2$.

Next assume $x \in H_0$ has order $3$. Here $|x^G|>(q-1)q^{17}=b_3$ and 
\[
|x^G\cap H| = i_3(H_0) = 2q^3(q^2+1)(q-1) < 2q^6 = a_3.
\]

Now suppose $x \in H_0$ has order $r \geqs 5$, so \cite[Table IV]{Shin} indicates that $C_{\bar{G}}(x) = B_2T_2$, $A_1\tilde{A}_1T_2$ or $T_4$. It will be useful to recall the action of $\tau$ on semisimple elements of $B_2$ in \eqref{e:tau}.

If $x$ is not regular in ${\rm Sp}_{4}(q)$ then by working with the decomposition in \eqref{e:f4adj}, we calculate that $\dim C_{V}(x) = 12$ and thus $\dim C_{\bar{G}}(x) = 12$. For example, if $r$ divides $q-1$ and $x = {\rm diag}(1,1,\omega, \omega^{-1}) \in {\rm Sp}_{4}(q)$, then $x^{\tau} = {\rm diag}(\omega,\omega,\omega^{-1},\omega^{-1})$ and we find that $x$ has an $8$-dimensional $1$-eigenspace on $\mathcal{L}(B_2B_2)$ and a $4$-dimensional $1$-eigenspace on $W \otimes W$, where $W$ is the natural $4$-dimensional module for $B_2$. Furthermore, $x^{\tau}$ acts on $W^{\tau}$ as ${\rm diag}(1,1,\omega^2,\omega^{-2})$ and thus $\dim C_{W^{\tau} \otimes W^{\tau}}(x) = 0$, giving $\dim C_{V}(x) = 12$ as claimed. It follows that $C_{\bar{G}}(x) = B_2T_2$ and by inspecting the relevant tables in \cite{Shin2,Shin} we deduce that $|x^G|>\frac{1}{2}q^{20}=b_4$ (note that $x$ is of type $t_1$, $t_7$ or $t_9$ in \cite[Table IV]{Shin}). Similarly, we find that $C_{\bar{G}}(x) = B_2T_2$ if $r=5$ (note that $5$ divides $q^2+1$, so every element of order $5$ is regular). Since $r$ must divide $q^2-1$ if $x$ is non-regular, it follows that there are at most 
\[
\frac{|{\rm Sp}_{4}(q)|}{q^2+1} + \sum_{r \in \pi}\frac{1}{2}(r-1) \cdot 2\cdot \frac{|{\rm Sp}_{4}(q)|}{|{\rm GL}_{2}(q)|} < q^8+8\log_2q.q^7 = a_4
\]
semisimple elements in $H_0$ with $C_{\bar{G}}(x) = B_2T_2$, where $\pi$ is the set of primes $s \geqs 5$ dividing $q^2-1$.

Now assume $x \in H_0$ is a regular semisimple element with $r \geqs 7$. Then by considering \eqref{e:f4adj}, we deduce that $\dim C_{\bar{G}}(x) \leqs 8$ and thus $C_{\bar{G}}(x) = A_1\tilde{A}_1T_2$ or $T_4$. This implies that $|x^G|>\frac{1}{2}q^{22}=b_5$ and we note that $|{\rm Sp}_{4}(q)|<q^{10}=a_5$.

Finally, let us assume $x \in G$ is a field automorphism of prime order $r$, so $q=q_0^r$, $r \geqs 3$ and $x$ acts as a field automorphism on ${\rm Sp}_{4}(q)$. If $r=3$ then $|x^G|>\frac{1}{2}q^{52/3}=b_6$ and there are fewer than $2|{\rm Sp}_{4}(q):{\rm Sp}_{4}(q^{1/3})|<4q^{20/3}=a_6$ such elements in $H$. Similarly, if $r=5$ then $|x^G|>\frac{1}{2}q^{104/5}=b_7$ and $H$ contains fewer than $8q^8 = a_7$ such elements. For $r \geqs 7$ we have $|x^G|>\frac{1}{2}q^{156/7}=b_8$ and we note that $|H|<2\log_2q.q^{10}=a_8$. 

Bringing together the above bounds, we conclude that $\mathcal{Q}(G,H)<\sum_{i=1}^8a_i^2/b_i< 1$ and the result follows.
\end{proof}

\begin{lem}\label{l:maxrank_2f4_3}
If $G_0={}^2F_4(q)'$ and $H$  is of type ${}^2B_2(q)^2$, then $b(G,H)=2$.
\end{lem}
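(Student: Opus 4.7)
The case $q=2$ is covered by Theorem \ref{t:small}, so we may assume $q = 2^{2m+1}$ with $m \geqs 1$. By \cite[Table~5.1]{LSS}, $H_0 = K.2$, where $K = {}^2B_2(q) \times {}^2B_2(q)$ and the outer involution swaps the two factors; in particular $|H| \leqs 2(\log_2 q) q^{10}$. Since $|{\rm Out}(G_0)|$ is odd, there are no involutory field or graph automorphisms to consider. The plan is the standard one used throughout this section: apply Lemma \ref{l:base} to show $\mathcal{Q}(G,H) < 1$, by estimating separately the contributions from the $H$-classes of prime order elements grouped as (i) involutions in $K$, (ii) semisimple elements of odd prime order in $K$, (iii) ``swap'' involutions in $H_0 \setminus K$, and (iv) field automorphisms in $H \setminus H_0$.

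For (i), the Suzuki group ${}^2B_2(q)$ has a single class of $(q-1)(q^2+1)$ involutions, so $K$ contains at most $2(q-1)(q^2+1) + ((q-1)(q^2+1))^2 < 2 q^6$ involutions. As in the analysis of $C_2(q)^2 < F_4(q)$ in Lemma \ref{l:maxrank_f4_20} and $C_2(q) < {}^2F_4(q)$ in Lemma \ref{l:maxrank_2f4_4}, I will compute the Jordan form of a representative involution on the adjoint module for $\bar{G}$ (using the decomposition of $\mathcal{L}(F_4){\downarrow}B_2^2$ from \cite[Table 2]{Thomas}) to show that each such involution lies in one of the $G_0$-classes $u_1$ or $u_2$ of \cite[Table~II]{Shin}, for which $|x^G|$ exceeds $(q-1)q^{10}$ and $(q-1)q^{13}$ respectively. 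For (ii), $K$ contains fewer than $q^{10}$ semisimple elements of odd prime order, and \cite[Tables~III, IV]{Shin} give $|x^G| > \tfrac{1}{2}q^{20}$ for any such element. For (iii), a swap involution $\tau$ has $C_K(\tau) \cong {}^2B_2(q)$, so $H_0 \setminus K$ contains $|{}^2B_2(q)| = q^2(q^2+1)(q-1) < q^5$ involutions; identifying the $G_0$-class again via the adjoint action yields an adequate lower bound on $|\tau^G|$. For (iv), any field automorphism $x \in H \setminus H_0$ has odd prime order $r \geqs 3$ (with $q = q_0^r$) and acts as a field automorphism on each of the two ${}^2B_2(q)$ factors, giving
\[
|x^G \cap H| \leqs (r-1)\left(\frac{|{}^2B_2(q)|}{|{}^2B_2(q_0)|}\right)^2 + (r-1)|{}^2B_2(q)|,
\]
whereas $|x^G| > \tfrac{1}{2}q^{52(1-1/r)}$; this contribution is negligible.

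Combining the four estimates via Lemma \ref{l:calc} will yield $\mathcal{Q}(G,H) < 1$ for all $q \geqs 8$, with the bound tending to $0$ as $q \to \infty$, and hence $b(G,H) = 2$ by Lemma \ref{l:base}. The main technical point is the class identification in (i) and (iii), which requires analysing the embedding ${}^2B_2(q)^2 < {}^2F_4(q)'$ through the fixed points of a suitable Steinberg endomorphism of $B_2^2 < F_4$ and computing Jordan forms on the adjoint module, in the same spirit as the calculations carried out in Lemmas \ref{l:maxrank_f4_20} and \ref{l:maxrank_2f4_4}. Once that class data is in hand, the remaining bookkeeping is routine.
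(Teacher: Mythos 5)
Your outline tracks the paper's strategy closely — view $H_0 = {}^2B_2(q) \wr {\rm Sym}_2$ inside $F_4(q)$ via $B_2^2 < B_4 < F_4$, use the decomposition of $\mathcal{L}(F_4){\downarrow}B_2^2$ to identify $G_0$-classes, and feed the resulting estimates into Lemma \ref{l:base}. Your treatment of involutions (i), swap involutions (iii), and field automorphisms (iv) matches the paper in spirit (apart from a slip in (iv): for ${}^2F_4(q)$ the exponent should be $26(1-r^{-1})$, not $52(1-r^{-1})$, since $|G_0| \approx q^{26}$; you have copied the $F_4$ formula).

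The real gap is in (ii). You estimate that $K$ contains fewer than $q^{10}$ semisimple elements of odd prime order and that $|x^G| > \tfrac{1}{2}q^{20}$ for each of them, intending to apply Lemma \ref{l:calc}. But $(q^{10})^2 / \bigl(\tfrac{1}{2}q^{20}\bigr) = 2 > 1$, so this single crude term already exceeds $1$ and the argument cannot close, no matter how small the other contributions are. The uniform class-size bound $\tfrac{1}{2}q^{20}$ is attained only for elements with $C_{\bar G}(x) = B_2T_2$ (types $t_1,t_7,t_9$ in Shinoda's notation), and these are comparatively rare in $H_0$: writing $x = (x_1,x_2) \in {}^2B_2(q)^2$, one gets $\dim C_{\bar G}(x) = 12$ only when $x_1$ and $x_2$ are conjugate (or one is trivial), which gives roughly $q^9$ elements rather than $q^{10}$. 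For the generic pair $(x_1,x_2)$ the centralizer drops to $A_1\tilde{A}_1T_2$ or $T_4$, so $|x^G| > \tfrac12 q^{22}$, and then the $q^{10}$ count gives a contribution $O(q^{-2})$. That case split — distinguishing conjugate/trivial pairs from non-conjugate pairs and further splitting the $B_2T_2$ case into $t_1$, $t_7$, $t_9$ according to $|C_{{}^2B_2(q)}(x_i)|$ — is the key technical content of the paper's proof, and without it your bound does not establish $\mathcal{Q}(G,H) < 1$. You should also note explicitly that $H_0$ contains no elements of order $3$ (since $|{}^2B_2(q)|$ is coprime to $3$), as otherwise the bound $\tfrac12 q^{20}$ would fail outright for the class of order-$3$ elements, which has size only $(q-1)q^{17}$.
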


\begin{proof}
Here $H_0 = {}^2B_2(q) \wr {\rm Sym}_2$ and $q \geqs 8$ (see \cite[Table 5.1]{LSS}). As in the previous case, it will be useful to view $H_0$ as a subgroup of $F_4(q)$ via $B_2^2 < B_4 < F_4$. Let $x \in H$ be an element of prime order $r$ and let $V = \mathcal{L}(\bar{G})$ be the adjoint module for $\bar{G}=F_4$.

First assume $r=2$, so $x \in H_0$ and we note that ${}^2B_2(q)$ contains $(q^2+1)(q-1)$ involutions, which form a single conjugacy class. If $x$ interchanges the two ${}^2B_2(q)$ factors, then $C_{H_0}(x)$ contains ${}^2B_2(q)$ and thus $x$ is $G_0$-conjugate to $u_1$ in the notation of \cite[Table II]{Shin}. Similarly, each involution in ${}^2B_2(q)^2$ of the form $(x_1,1)$ or $(1,x_1)$ is of type $u_1$. However, if $x = (x_1,x_2) \in {}^2B_2(q)^2$ and each $x_i$ is an involution, then $x$ is a $c_4$-type involution in $B_4$ (since each $x_i$ is a $c_2$-type involution in $B_2$) and thus $x$ is in the $\bar{G}$-class labelled $A_1\tilde{A}_1$. In particular, we deduce that $x$ is $G_0$-conjugate to $u_2$. It follows that if $x$ is of type $u_1$, then
\[
|x^G \cap H| = 2(q^2+1)(q-1)+|{}^2B_2(q)|<q^5 = a_1,\;\; |x^G|>(q-1)q^{10}=b_1
\]
and for $x$ of type of $u_2$ we get $|x^G \cap H|<q^6=a_2$ and $|x^G|>(q-1)q^{13}=b_2$.

Next assume $x \in H_0$ and $r \geqs 5$ (observe that $H_0$ does not contain any elements of order $3$).  Write $x=(x_1,x_2) \in {}^2B_2(q)^2$ and note that if $x_i \ne 1$ then $x_i$ is a regular element of $B_2$ (since $x_i \in C_{B_2}(\tau)$, this follows from \eqref{e:tau}). If $x_2=1$ then using \eqref{e:f4adj} we deduce that $\dim C_{V}(x) = 12$, so $C_{\bar{G}}(x) = B_2T_2$. Similarly, if $x_1$ and $x_2$ are conjugate, then $C_{\bar{G}}(x) = B_2T_2$. On the other hand, if the $x_i$ are nontrivial and non-conjugate, then $C_{\bar{G}}(x) = A_1\tilde{A}_1T_2$ or $T_4$, so $|x^G|>\frac{1}{2}q^{22}=b_3$ and we note that $|{}^2B_2(q)^2|<q^{10}=a_3$. 

Now assume $C_{\bar{G}}(x) = B_2T_2$, so $x$ is of type $t_1$, $t_7$ or $t_9$ with respect to the notation in \cite[Table IV]{Shin}, where
$|C_{G_0}(t_i)| =  q^2(q^2+1)(q-1)f_i(q)$ and 
\[
f_1(q)=q-1,\;\; f_7(q) = q-\sqrt{2q}+1,\;\; f_9(q) = q+\sqrt{2q}+1.
\]
In particular, $|x^G|>q^{20}$ if $x$ is of type $t_1$ or $t_7$, whereas $|x^G|>\frac{1}{2}q^{20}$ if $x$ is of type $t_9$. Let us also observe that if $y \in {}^2B_2(q)$ has order $r$, then $|C_{{}^2B_2(q)}(y)| = q-1$ or $q \pm \sqrt{2q}+1$ (see \cite{Suz}). Moreover, there are $\frac{1}{2}(q-2)$ distinct ${}^2B_2(q)$-classes of semisimple elements with centralizer of order $q-1$, and $\frac{1}{4}(q \pm \sqrt{2q})$ classes with a centralizer of order $q\pm \sqrt{2q}+1$. By Lagrange's theorem, it follows that if $x = (x_1,x_2) \in {}^2B_2(q)^2$ and the $x_i$ are conjugate elements of order $r$, then $x$ is of type $t_1$ if $|C_{{}^2B_2(q)}(x_i)| = q-1$. Similarly, $x$ is of type $t_7$ if  $|C_{{}^2B_2(q)}(x_i)| = q-\sqrt{2q}+1$, and type $t_9$ if $|C_{{}^2B_2(q)}(x_i)| = q+\sqrt{2q}+1$. Therefore, if $x$ is of type $t_1$ then $|x^G|>q^{20}=b_4$ and there are at most
\[
2|{}^2B_2(q)| + \frac{1}{2}(q-2)\cdot (q^2(q^2+1))^2 = a_4
\]
such elements in $H_0$. Similarly, if $x$ is of type $t_7$ then $|x^G|>q^{20}=b_5$ and $H_0$ contains no more than 
\[
2|{}^2B_2(q)| + \frac{1}{4}(q-\sqrt{2q})\cdot (q^2(q-1)(q+\sqrt{2q}+1))^2 = a_5
\]
such elements. Finally, if $x$ is of type $t_9$ then $|x^G|>\frac{1}{2}q^{20}=b_6$ and there are at most
\[
2|{}^2B_2(q)| + \frac{1}{4}(q+\sqrt{2q})\cdot (q^2(q-1)(q-\sqrt{2q}+1))^2 = a_6
\]
such elements in $H_0$.

Finally, suppose $x \in G$ is a field automorphism of prime order $r$, so $q=q_0^r$, $r \geqs 3$ and $x$ acts as a field automorphism on both ${}^2B_2(q)$ factors. 
If $r=3$ then $|x^G|>\frac{1}{2}q^{52/3}=b_7$ and there are at most $2|{}^2B_2(q):{}^2B_2(q^{1/3})|^2<8q^{20/3}=a_7$ such elements in $H$. Similarly, if $r=5$ then $|x^G|>\frac{1}{2}q^{104/5}=b_8$ and $H$ contains fewer than $16q^8 = a_8$ such elements. Finally, $|x^G|>\frac{1}{2}q^{156/7}=b_9$ if $r \geqs 7$ and we note that $|H|<2\log_2q.q^{10}=a_9$. 

We conclude that
\[
\mathcal{Q}(G,H) < \sum_{i=1}^{9}a_i^2/b_i < 1
\]
and the result follows. 
\end{proof}

\subsection{$G_0={}^2G_2(q)'$ or ${}^2B_2(q)$}\label{ss:mr_2g2}

\begin{lem}\label{l:maxrank_2g2_1}
Suppose $G_0={}^2G_2(q)'$ or ${}^2B_2(q)$ and $H$ is the normalizer of a maximal torus. Then $b(G,H)=2$.
\end{lem}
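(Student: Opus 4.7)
The plan is to apply the probabilistic criterion of Lemma \ref{l:base}, mirroring the strategy of Lemmas \ref{l:maxrank_g2_1}, \ref{l:maxrank_2f4_1} and \ref{l:maxrank_3d4_1}. First I would dispense with the smallest cases: the groups ${}^2B_2(8)$ and ${}^2B_2(32)$ are covered by Theorem \ref{t:small}, and ${}^2G_2(3)' \cong \text{L}_2(8)$ is excluded from Theorem \ref{t:main}, so we may assume $q \geqs 27$ in the Ree case and $q \geqs 128$ in the Suzuki case.

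Next, I would inspect \cite[Table 5.2]{LSS} to list the possibilities for $H_0 = H \cap G_0$: in every case $H_0$ is either a dihedral group $D_{2(q-1)}$ or a metacyclic extension of a cyclic torus $T$ of order $q \pm \sqrt{cq}+1$ (with $c=2$ for Suzuki and $c=3$ for Ree) by a cyclic group of order at most $6$. Consequently
\[
|H| \leqs 12(q+\sqrt{cq}+1)\log_p q,
\]
where $p \in \{2,3\}$ is the characteristic. With this bound in hand, the main step is to control $\mathcal{Q}(G,H)$ by splitting contributions according to the type of $x \in H$ of prime order.

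In the Ree case ($p=3$), Corollary \ref{c:root} immediately forces $H$ to contain no root elements, so Proposition \ref{p:bounds} yields $|x^G| > (q-1)q^3$ for every $x \in H$ of prime order. Applying Lemma \ref{l:calc} with $A = |H|$ and $B = (q-1)q^3$ gives $\mathcal{Q}(G,H) < |H|^2/B$, which tends to zero as $q \to \infty$. In the Suzuki case ($p=2$), I would split the contribution: for root involutions, Corollary \ref{c:root} gives $|x^G \cap H| \leqs 4|\bar{T}_\sigma|$ with $|x^G| > (q-1)q^2$, contributing $O(1/q)$ to $\mathcal{Q}(G,H)$; for semisimple elements of odd prime order and for field automorphisms, Proposition \ref{p:bounds} gives $|x^G| \geqs \min(\tfrac{1}{2}q^4,\, q^{10/3})$, and Lemma \ref{l:calc} applied to the remaining elements of $H$ controls their contribution.

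The main obstacle I anticipate is verifying $\mathcal{Q}(G,H)<1$ at the smallest admissible values of $q$, namely $q=27$ for Ree and $q=128$ for Suzuki. At these thresholds the generic bound $|H|^2/B$ is tight and may not immediately drop below $1$, so I expect to need a sharper treatment. Two refinements are available: first, for each specific $q$, the worst-case bound $\ell_5 = q^{10/3}$ for outer automorphisms is only attained when $3 \mid f$, and for other $f$ the field automorphism classes are substantially larger (e.g.\ $|x^G| \geqs q^{30/7}$ for $q = 2^7$); second, one can enumerate elements of each prime order in the concrete cyclic/dihedral torus normalizer and bound $|x^G \cap H|$ sharply class-by-class. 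Should either approach still fail for some tiny case, I would fall back on a direct \textsc{Magma} computation, identifying $H$ as $N_G(L)$ for a suitable Sylow $r$-subgroup $L$ of $G$, exactly as in the analogous step of Lemma \ref{l:maxrank_g2_1}.
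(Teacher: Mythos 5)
The paper's own proof of this lemma is a one-line citation: it simply refers to \cite[Lemmas 4.37 and 4.39]{BLS}, where the bound $\mathcal{Q}(G,H)<1$ (and hence $b(G,H)=2$) is already established for all torus normalizers in the Suzuki and small Ree groups, and then observes that the BLS bound visibly tends to $0$ as $q \to \infty$. Your proposal takes a genuinely different route: rather than citing BLS you re-derive the estimate from scratch, mirroring the style of Lemmas \ref{l:maxrank_g2_1}, \ref{l:maxrank_3d4_1} and \ref{l:maxrank_2f4_1}. Both routes are valid; yours is self-contained and makes the asymptotic decay explicit, at the cost of having to carry out the class-by-class estimates that BLS already did. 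One cautionary note on your execution: the uniform bound $|H| \leqs 12(q+\sqrt{cq}+1)\log_p q$ is too generous to close the gap at the smallest admissible values of $q$. For ${}^2G_2(27)$ it gives $|H| \leqs 1332$ and $\mathcal{Q}(G,H) < 1332^2/(26\cdot 27^3) \approx 3.5$, which fails; you need the sharper structural bounds from \cite[Table 5.2]{LSS}, namely $|H_0| \leqs 6(q+\sqrt{3q}+1)$ for Ree and $|H_0| \leqs 4(q+\sqrt{2q}+1)$ for Suzuki, which give $\mathcal{Q}(G,H) < 666^2/(26\cdot 27^3) \approx 0.87 < 1$ at $q=27$ and a comfortable margin at $q=128$. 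You anticipated exactly this obstruction and named the right refinements (sharper $|H_0|$, finer treatment of the field-automorphism class sizes depending on $f$), so the plan is sound once the constant is tightened — but as written the constant $12$ would need to be replaced before the argument is complete.
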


\begin{proof}
See \cite[Lemmas 4.37 and 4.39]{BLS}. In both cases, it is easy to check that the upper bound on $\mathcal{Q}(G,H)$ given in \cite{BLS} tends to $0$ as $q$ tends to infinity.
\end{proof}

\begin{lem}\label{l:maxrank_2g2_2}
If $G_0={}^2G_2(q)'$ and $H$  is of type $2 \times {\rm L}_{2}(q)$, then $G$ is not extremely primitive. 
\end{lem}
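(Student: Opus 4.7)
My proof plan is to observe that $H$ is the normalizer of an involution centralizer, so $Z(H) \neq 1$, and then invoke Lemma~\ref{l:structure}(i). In more detail, for $G_0 = {}^2G_2(q)'$ with $q = 3^{2m+1} \geqs 27$, the subgroup of type $2 \times \mathrm{L}_2(q)$ appearing in \cite[Table 5.1]{LSS} is precisely the centralizer $C_{G_0}(z)$ of an involution $z \in G_0$. Thus $H_0 = H \cap G_0 = C_{G_0}(z)$ has $Z(H_0) = \langle z \rangle$ as a characteristic subgroup of order $2$.

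Since $H$ is a subgroup of $G$ containing $H_0$ and normalizing it, we have $\langle z \rangle \trianglelefteq H$. Any normal subgroup of order $2$ lies in the centre, so $z \in Z(H)$ and in particular $Z(H) \neq 1$. Lemma~\ref{l:structure}(i) then immediately yields that $G$ is not extremely primitive.

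The main (very minor) thing to verify is that $H_0$ really equals the full involution centralizer rather than some proper overgroup; this is read off from \cite[Table 5.1]{LSS}, and the small-field degeneracies with $G_0 = {}^2G_2(3)' \cong \mathrm{L}_2(8)$ are already excluded from Theorem~\ref{t:main} by hypothesis. No probabilistic estimate or Lie-theoretic construction is required here; the obstacle, if any, is only the bookkeeping confirming that the quoted subgroup is indeed an involution centralizer, and this is standard (see, e.g., \cite[Table 4.5.1]{GLS}).
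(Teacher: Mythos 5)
Your argument is correct and is exactly the approach taken in the paper: $H$ is the centralizer of an involution, so $Z(H)\ne 1$, and Lemma~\ref{l:structure}(i) applies. The paper states this in one line; you have merely spelled out the (standard) bookkeeping confirming that the subgroup of type $2\times{\rm L}_2(q)$ is the full involution centralizer and that a normal subgroup of order $2$ is central.
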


\begin{proof}
Here $q>3$ and $H$ is the centralizer of an involution, so $Z(H) \ne 1$ and $G$ is not extremely primitive by Lemma \ref{l:structure}(i).
\end{proof}

\vs

We have now completed the proof of Theorem \ref{t:maxrank} and Proposition \ref{p:mr_base}.

\section{Lower rank subgroups}\label{s:part1}

At this point, we have now established Theorem \ref{t:main} in the cases where $H$ contains a maximal torus of $G_0$ and we will now consider the remaining possibilities for $H$. It will be convenient to postpone the analysis of the twisted groups ${}^2B_2(q)$, ${}^2G_2(q)$, ${}^2F_4(q)$ and ${}^3D_4(q)$ to Section \ref{s:part4}, so in the next three  sections we will assume 
\begin{equation}\label{e:T}
G_0 \in \{E_8(q), E_7(q), E_6^{\e}(q), F_4(q), G_2(q)'\},
\end{equation}
where $q=p^f$ with $p$ a prime. 

For $G_0 = G_2(q)'$, the maximal subgroups of $G$ have been determined up to conjugacy by Cooperstein \cite{Coop} (for $p=2$) and Kleidman \cite{K88} (for $p \ne 2$). In the remaining cases, there is a complete description of the maximal subgroups when $G_0 = F_4(2)$, $E_6(2)$, ${}^2E_6(2)$ or $E_7(2)$; see \cite{NW}, \cite{KW}, \cite{ATLAS, Wil2} and \cite{BBR}, respectively. 

To proceed in the general case, we will apply the following fundamental result (see \cite[Theorem 2]{LS90}), which partitions the remaining maximal subgroups into various types.

\begin{thm}\label{t:types}
Let $G$ be an almost simple group with socle $G_0 = (\bar{G}_{\s})'$ as in \eqref{e:T}. Let $H$ be a maximal subgroup of $G$ with $G = HG_0$ and assume $H$ does not contain a maximal torus of $G_0$. Set $H_0 = H \cap G_0$. Then one of the following holds:
\begin{itemize}\addtolength{\itemsep}{0.2\baselineskip}
\item[{\rm (I)}] $H = N_G(\bar{H}_{\s})$, where $\bar{H}$ is a maximal closed $\sigma$-stable positive dimensional subgroup of $G$ (not parabolic nor maximal rank);
\item[{\rm (II)}] $H$ is of the same type as $G$ (possibly twisted) over a subfield of $\mathbb{F}_{q}$;
\item[{\rm (III)}] $H$ is an exotic local subgroup (determined in \cite{CLSS});
\item[{\rm (IV)}] $G_0=E_8(q)$, $p \geqs 7$ and $H_0 = ({\rm Alt}_5 \times {\rm Alt}_6).2^2$;
\item[{\rm (V)}] $H$ is almost simple, and not of type (I) or (II).
\end{itemize}
\end{thm}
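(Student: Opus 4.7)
The plan is to dichotomize on the Zariski closure $\bar{H}^*$ of $H_0$ in $\bar{G}$, which is a closed $\sigma$-stable subgroup with $H_0 \leqs (\bar{H}^*)_{\sigma}$. First I would dispose of the positive-dimensional case: if $\dim \bar{H}^* > 0$, then $\bar{H}^*$ is contained in some maximal closed $\sigma$-stable positive-dimensional subgroup $\bar{H}$ of $\bar{G}$, and hence $H_0 \leqs \bar{H}_\sigma \cap G_0$. Since $G = HG_0$, it follows that $H \leqs N_G(\bar{H}_\sigma)$, and then maximality of $H$ in $G$ together with the standing hypothesis that $\bar{H}$ is neither parabolic nor of maximal rank would place $H$ in family (I). (One must check that $N_G(\bar{H}_\sigma)$ is a proper subgroup of $G$; this is where the hypothesis $H \ne G$ is used.)

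The remaining case is where $\bar{H}^*$ is finite, so $H_0$ is a \emph{Lie primitive} finite subgroup of $G_0$ in the sense that it lies in no proper $\sigma$-stable positive-dimensional closed subgroup. Here I would analyse the generalised Fitting subgroup $F^*(H_0)$. If $O_p(H_0) \neq 1$, then the Borel--Tits theorem forces $H_0$ to normalise a parabolic of $\bar{G}$, which is positive-dimensional; so Lie primitivity degenerates to the handful of \emph{exotic local} subgroups, whose classification by Cohen--Liebeck--Saxl--Seitz \cite{CLSS} is case (III). If $O_p(H_0)=1$, then $F^*(H_0) = L_1 \times \cdots \times L_k$ is a commuting product of nonabelian simple groups, transitively permuted by $H_0$. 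A direct centraliser argument rules out $k \geqs 2$: if $L_1,L_2$ commuted in $\bar{G}$, then $L_1$ would lie in $C_{\bar{G}}(L_2)^0$, again a positive-dimensional closed subgroup, contradicting Lie primitivity. Hence $F^*(H_0) = L$ is simple and $H_0$ is almost simple.

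Next I would split on the isomorphism type of $L$. If $L$ is a simple group of Lie type in the defining characteristic $p$, then Steinberg-type rationality, combined with Liebeck--Seitz's results on closures of such subgroups in exceptional algebraic groups, implies that $L$ is either a twisted/untwisted version of $G_0$ over a subfield (yielding case (II)), or is contained in a proper positive-dimensional $\sigma$-stable subgroup, contradicting Lie primitivity. Otherwise $L$ is either an alternating, sporadic, or cross-characteristic group of Lie type, which places $H$ in family (V). The lone exception in case (IV) arises from Borovik's identification of a Lie primitive $(\mathrm{Alt}_5\times \mathrm{Alt}_6).2^2$ inside $E_8(p)$ for $p\geqs 7$; this must be listed explicitly because its socle is not simple and so it sits outside (V).

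The main obstacle, and the bulk of the work in \cite{LS90}, is the passage from ``$H_0$ is almost simple'' to the final list: ruling out positive-dimensional overgroups for each candidate simple $L$ requires detailed knowledge of the modular representation theory of $L$ and of centralisers of finite semisimple subgroups of $\bar{G}$. This is delicate precisely for the small rank exceptional algebraic groups and for simple groups $L$ whose order is divisible by all the relevant torsion primes of $\bar{G}$, and it is where the classification of finite simple groups enters in an essential way.
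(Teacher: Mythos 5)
The paper does not prove this theorem at all: it is quoted directly from \cite[Theorem~2]{LS90}, and the text immediately moves on to apply it. So there is no ``paper proof'' to compare against, and your sketch must be judged as a reconstruction of the Liebeck--Seitz argument. The top-level shape is right (pass to the Lie primitive case, analyse $F^*(H_0)$, split on the isomorphism type of the socle), but several steps as you have written them are wrong or circular.

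The central error is in your treatment of the local cases. You write that $O_p(H_0)\neq 1$ forces $H_0$ into a parabolic via Borel--Tits, and that this ``degenerates to the exotic local subgroups''. This conflates two disjoint situations. The Borel--Tits step with $O_p(H_0)\neq 1$ is an outright contradiction with Lie primitivity (a parabolic of $\bar{G}$ is positive-dimensional and $\sigma$-stable), so that case is simply eliminated, not redirected to (III). The exotic local subgroups of \cite{CLSS} arise from the \emph{other} local case: $O_p(H_0)=1$ but $F(H_0)\neq 1$, i.e.\ $O_r(H_0)\neq 1$ for some prime $r\neq p$ --- exactly as in the paper's own examples $2^3.\mathrm{SL}_3(2)<G_2(p)$ with $p$ odd, $3^3.\mathrm{SL}_3(3)<F_4(p)$ with $p\geqs 5$, and $5^3.\mathrm{SL}_3(5)<E_8(p)$ with $p\neq 2,5$. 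Your argument never disposes of that case, and it also asserts (incorrectly) that $O_p(H_0)=1$ already forces $F^*(H_0)$ to be a product of nonabelian simples. Two further problems: your ``direct centraliser argument'' ruling out $k\geqs 2$ cannot work in the generality you claim, since case (IV) is precisely a Lie primitive maximal subgroup whose socle is a product $\mathrm{Alt}_5\times\mathrm{Alt}_6$ of two nonabelian simples; your argument does not visibly distinguish this case and also glosses over the possibilities that $C_{\bar{G}}(L_2)$ is finite, or that $L_1\not\leqs C_{\bar{G}}(L_2)^0$, or that even if $L_1\leqs C_{\bar{G}}(L_2)^0$ this places $L_1$ rather than $H_0$ in a positive-dimensional subgroup. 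Finally, your claim that $S\in\mathrm{Lie}(p)$ forces either the subfield case (II) or a positive-dimensional overgroup is false: the paper's own Theorem~\ref{t:simples}(ii) explicitly lists Lie primitive almost simple subgroups with $S=H(q_0)\in\mathrm{Lie}(p)$ of low rank, which fall into case (V). That last error is harmless for the theorem since (V) is a catch-all with no constraint on the isomorphism type of $S$, but the overstatement should be removed.
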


In view of this result, for the remainder of the paper we will refer to Type I and Type II subgroups of $G$, etc. We direct the reader to Theorem \ref{t:simples} for more detailed information on Type V subgroups. 

We begin by handling the subgroups of Type III and IV, which are easily eliminated.

\begin{thm}\label{t:34}
If $H$ is a Type III or IV subgroup of $G$, then $G$ is not extremely primitive.
\end{thm}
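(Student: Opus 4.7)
The plan is to dispose of both Type III and Type IV subgroups by verifying that the structural conditions of Lemma \ref{l:structure} are violated in every case, so that no refined class or base-size analysis is required.

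The Type IV case is immediate: here $G_0 = E_8(q)$ with $p \geqs 7$ and $H_0 = ({\rm Alt}_5 \times {\rm Alt}_6).2^2$, so ${\rm soc}(H) = {\rm Alt}_5 \times {\rm Alt}_6$ is a direct product of two non-isomorphic nonabelian simple groups. Lemma \ref{l:structure}(v) therefore rules out extreme primitivity.

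For Type III, we consult the complete classification of exotic local maximal subgroups given in \cite{CLSS}. In every case $H_0 = N_{G_0}(R)$ with $R = F(H_0)$ an $r$-group for some prime $r$ coprime to $q$, and the list is short and fully explicit: it comprises $2^3.{\rm SL}_3(2)$ in $G_2(p)$, $3^3.{\rm SL}_3(3)$ in $F_4(p)$, $3^{3+3}.{\rm SL}_3(3)$ in $E_6^{\e}(p)$, together with $2^{5+10}.{\rm SL}_5(2)$, $5^3.{\rm SL}_3(5)$ in $E_8(p)$ and the corresponding exotic subgroups in $E_7(p)$. For those entries in which $R$ is nonabelian (the special groups $3^{3+3}$ in $E_6^{\e}$ and $2^{5+10}$ in $E_8$, as well as the $E_7$ example, where $F(H)$ is not even elementary abelian), Lemma \ref{l:structure}(ii) applies directly. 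In each of the remaining cases, $F(H) = r^d$ is elementary abelian with $d \in \{3,5\}$, and we use the explicit orders of $G_0$ and $H_0$ to check that $r^d$ does not divide $|\O|-1 = |G:H|-1$; then Lemma \ref{l:structure}(iii) finishes the case. The required congruence computations are elementary and can be carried out case by case.

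The main potential obstacle is simply the bookkeeping needed to run through the list from \cite{CLSS}, but because the list is finite and explicit and each entry falls into one of the two bands above (nonabelian $F(H)$, or elementary abelian $F(H)$ with an easy divisibility obstruction), no further ingredients are needed. If any borderline case were to escape both (ii) and (iii), Remark \ref{r:neww} supplies a backup: extreme primitivity would force $F(H) \cap K^g = 1$ for every $g \in G$ (where $K$ is the complement to $F(H)$ in $H$), and this can be violated by exhibiting an element of $R$ that is $G_0$-conjugate to an element of the ${\rm SL}_d(r)$ factor of $H_0/R$. This completes the proof of Theorem \ref{t:34}.
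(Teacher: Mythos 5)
Your treatment of the Type IV case and of the Type III cases where $F(H)$ is nonabelian is correct and matches the paper: both reduce immediately to Lemma \ref{l:structure}(ii) or (v). The genuine divergence, and the genuine gap, is in how you handle the three families left over with $F(H) = Z_r^d$ elementary abelian, namely $2^3.{\rm SL}_3(2) < G_2(p)$, $3^3.{\rm SL}_3(3) < F_4(q)$ and $5^3.{\rm SL}_3(5) < E_8(q)$.

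The paper does \emph{not} use a divisibility obstruction here: it proves $b(G,H)=2$ (Lemma \ref{l:base2}) by the probabilistic bound of Lemma \ref{l:base}, which is a single uniform estimate $\mathcal{Q}(G,H) < |H|^2/\min|x^G| < 1$ (together with a {\sc Magma} check via Theorem \ref{t:small} for $q \in \{3,5\}$ in the $G_2$ case, where $b(G,H)=3$). Your proposal instead invokes Lemma \ref{l:structure}(iii), i.e.\ checks $r^d \nmid |\O|-1$. This is a different route, and it is not ``a finite explicit list'': $|\O| \bmod r^d$ is a function of the prime power $q$ that must be controlled over all admissible $q$. For the $G_2$ family the modular arithmetic does collapse nicely (one can show $|\O|$ is even unless $q \equiv 3,5 \pmod 8$, in which case $|\O|\equiv 7 \pmod 8$, so $8\nmid|\O|-1$ always), but for $F_4(q)$ and $E_8(q)$ you need $|\O|\not\equiv 1 \pmod{27}$ and $|\O|\not\equiv 1\pmod{125}$ respectively across all residue classes of $q$ with $v_r(|\O|)=0$, and the proposal neither carries this out nor gives a reason it should hold uniformly. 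Spot checks ($q=5,7,11$ for $F_4$) happen to work, but that is not a proof, and there is no structural reason why the obstruction could not fail for some residue class of $q$. The paper's base-size bound sidesteps this entirely and is in fact the more elementary argument in these cases: $|H|$ is $O(\log q)$ while the smallest class size grows like $q^{16}$ or $q^{58}$.

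The backup via Remark \ref{r:neww} is a nice idea, and in the $G_2(q)$ family it actually gives a very clean alternative proof: for $q=p$ odd, $G_2(q)$ has a unique class of involutions, so any involution in a complement $K\cong{\rm SL}_3(2)$ is $G_0$-conjugate to a nontrivial element of $R=2^3$, forcing $F(H)\cap K^g\ne 1$, a contradiction. But as stated it does not transfer: in the $F_4$ and $E_8$ families, $F_4(q)$ with $p\geqs 5$ and $E_8(q)$ with $p\ne 2,5$ have several $G_0$-classes of semisimple elements of order $3$ and $5$, so the assertion that an element of $R$ is $G_0$-conjugate to an element of order $r$ in $K$ needs to be justified by identifying the relevant classes, which is not done. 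In short: the architecture of the argument is sound and the $G_2$ case is handled by a genuinely different (and arguably nicer) method than the paper's, but the $F_4$ and $E_8$ cases are not actually closed by what you have written.
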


\begin{proof}
In view of Lemma \ref{l:structure} (specifically, parts (ii) and (v)), we may assume that one of the following holds, where $H_0 = H \cap G_0$:
\begin{itemize}\addtolength{\itemsep}{0.2\baselineskip}
\item[{\rm (a)}] $G_0 = G_2(q)$, $H_0 = 2^3.{\rm SL}_{3}(2)$, $q=p \geqs 3$; 
\item[{\rm (b)}] $G_0 = F_4(q)$, $H_0 = 3^3.{\rm SL}_{3}(3)$, $p \geqs 5$; 
\item[{\rm (c)}] $G_0 = E_8(q)$, $H_0 = 5^3.{\rm SL}_{3}(5)$, $p \ne 2,5$. 
\end{itemize}

First consider case (a). For $q \in \{3,5\}$, we appeal to Theorem \ref{t:small} (note that $b(G,H) = 3$ when $q=3$). Now assume $q \geqs 7$, so $G=G_2(q)$ and $H = 2^3.{\rm SL}_{3}(2)$. We claim that $b(G,H)=2$. For $q=7$ we use {\sc Magma} to construct $G$ as a permutation group of degree $19608$ and we construct $H = N_G(L)$ by identifying an appropriate elementary abelian subgroup $L$ of order $8$ in a Sylow $2$-subgroup of $G$. It is then routine to find an element $g \in G$ by random search such that $H \cap H^g = 1$ (see \cite[Lemma 2.9]{BTh_comp} for more details). Now assume $q \geqs 11$ and let $x \in H$ be an element of prime order $r$, so $r \in \{2,3,7\}$ and we note that $i_2(H) = 91=a_1$, $i_3(H)=224=a_2$ and $i_7(H)=384=a_3$. If $r=2$ then $|x^G| = q^4(q^4+q^2+1)=b_1$ and similarly $|x^G| \geqs q^3(q^3-1)=b_2$ if $r=3$ and $|x^G|>\frac{1}{2}q^{10} = b_3$ if $r=7$. Therefore,
\[
\mathcal{Q}(G,H) < \sum_{i=1}^{3}a_i^2/b_i < q^{-1}
\]
and the result follows. 

In cases (b) and (c) we also claim that $b(G,H) =2$. For example, in (c) we note that 
$|H| \leqs \log_2q.5^3|{\rm SL}_{3}(5)| = a_1$ and $|x^G|>q^{58}=b_1$ for all nontrivial $x \in G$, which yields $\mathcal{Q}(G,H) < a_1^2/b_1< q^{-1}$. Case (b) is entirely similar, noting that $|x^G|>q^{16}$ for all nontrivial $x \in G$ (see Proposition \ref{p:bounds}).
\end{proof}

For the remainder of this section, we will focus on the subgroups arising in part (I) of Theorem \ref{t:types}. Following \cite[Theorem 8]{LS03}, we partition these subgroups into three cases:
\begin{itemize}\addtolength{\itemsep}{0.2\baselineskip}
\item[{\rm (a)}] $G_0 = E_7(q)$, $p \geqs 3$ and $\bar{H}_{\s} = (2^2 \times {\rm P\Omega}_{8}^{+}(q).2^2).{\rm Sym}_3$ or ${}^3D_4(q).3$;
\item[{\rm (b)}] $G_0 = E_8(q)$, $p \geqs 7$ and $\bar{H}_{\s} = {\rm PGL}_{2}(q) \times {\rm Sym}_{5}$; \hfill\refstepcounter{equation}(\theequation)\label{e:list0}
\item[{\rm (c)}] $(G_0, {\rm soc}(H_0))$ is one of the cases listed in \cite[Table 3]{LS03}.
\end{itemize}

Our main theorem is the following.

\begin{thm}\label{t:typeI}
If $H$ is a Type I subgroup of $G$, then $G$ is not extremely primitive.
\end{thm}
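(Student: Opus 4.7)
The plan is to work through the three families in \eqref{e:list0} separately, following the general strategy outlined in Remark \ref{r:strategy}.

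First, case (b) of \eqref{e:list0} is immediate: $H_0 = L_2(q) \times \text{Alt}_5$ has two non-isomorphic nonabelian composition factors, so $\operatorname{soc}(H)$ is not a direct product of isomorphic simple groups and Lemma \ref{l:structure}(v) applies. Next consider case (a). For $\bar{H}_{\sigma} = (2^2 \times \text{P}\Omega_8^+(q).2^2).\text{Sym}_3$ with $p \geqs 3$, the Fitting subgroup $F(H)$ coincides with the central $2^2$ direct factor, and since the quotient $H/F(H)$ centralizes $F(H)$, Lemma \ref{l:structure}(iv) rules out extreme primitivity (the faithful irreducibility requirement fails trivially). For $\bar{H}_{\sigma} = {}^3D_4(q).3$, Lemma \ref{l:structure} does not apply, and I would mirror the argument used in Lemma \ref{l:maxrank_f4_4}: take $K = G_2(q)$, the fixed point subgroup of the triality automorphism $\tau$ of $\operatorname{soc}(H_0)$, so that $N_{H_0}(K) = K \times \langle \tau \rangle$ is the unique maximal overgroup of $K$ in $H_0$ other than $\operatorname{soc}(H_0)$. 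Using the fact that $C_{\bar{G}}(G_2)^0 = A_1$ contains $\tau$ but not as a normal element, one can choose $g \in C_{G_0}(K)$ that fails to normalize both $N_{H_0}(K)$ and $\operatorname{soc}(H_0)$, and apply Lemma \ref{l:simple}.

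Case (c) is the main case and requires going through the cases $(G_0, \operatorname{soc}(H_0))$ in \cite[Table 3]{LS03}. I would partition these by the relative size of $H$. When $|H|$ is small compared to $|G|^{1/2}$, which covers most entries where $\operatorname{soc}(H_0)$ has rank much smaller than $G_0$ (for instance $A_1(q) < E_8(q)$, $A_2^{\e}(q) < E_8(q)$, $G_2(q) < E_6^{\e}(q)$), the goal is to prove $b(G,H)=2$ via Lemma \ref{l:base}: enumerate the $G$-classes of prime-order elements meeting $H$ using \cite{LieS, Lubeck}, and bound $\mathcal{Q}(G,H)$ using Proposition \ref{p:bounds} and Lemma \ref{l:calc}. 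For the larger subgroups (such as $F_4(q), C_4(q) \, (p=2) < E_8(q)$, $A_1(q)G_2(q)^2 < E_8(q)$, $G_2(q)F_4(q) < E_8(q)$, $F_4(q) < E_6^{\e}(q)$), the direct probabilistic approach typically fails, so instead I would identify a suitable $\sigma$-stable reductive subgroup $\bar{K} \leqs \bar{H}$ (usually a Levi or maximal subsystem subgroup) and set $K = (\bar{K}_{\sigma})'$, chosen so that the maximal overgroups of $K$ in $H_0$ are enumerable (via \cite{BHR} or directly at the algebraic group level) and all of the form $N_{H_0}(K)$ or $\operatorname{soc}(H_0)$. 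The existence of the required $g \in N_{G_0}(K)$ follows from comparing $C_{\bar{G}}(\bar{K})^0$ with $C_{\bar{H}}(\bar{K})^0$ using \cite[Tables 1--3]{LS94}: whenever the former is strictly larger, one can lift non-normalization to the finite group.

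The main obstacle will be the case-by-case bookkeeping in (c) — ensuring that each $K$ is $A$-stable, that the list of maximal overgroups of $K$ in $H_0$ is complete, and that the normalizer comparison $C_{\bar{G}}(\bar{K})^0 > C_{\bar{H}}(\bar{K})^0$ does yield an element $g \in N_{G_0}(K)$ violating the non-normalization condition required by Lemma \ref{l:simple}. A secondary obstacle is the collection of small-field cases (typically $q \leqs 4$) where the probabilistic bounds in Proposition \ref{p:bounds} are too weak or where the centralizers coincide; these I would handle either by direct computation in \textsc{Magma} as in Theorem \ref{t:small} and Section \ref{ss:comp}, or by applying Lemma \ref{l:char} using the \textsf{GAP} Character Table Library \cite{GAPCTL} when the relevant character tables and class fusions are available.
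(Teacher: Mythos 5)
Your overall strategy aligns with the paper's: eliminate cases with incompatible socle via Lemma~\ref{l:structure}(v), then split the remaining cases in Table~\ref{tab:typeI} into small ones handled by showing $b(G,H)=2$ and larger ones handled by Lemma~\ref{l:simple}. But there are several concrete problems with the details.

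First, for $\bar{H}_\s = {}^3D_4(q).3 < E_7(q)$, your claim that $C_{\bar{G}}(G_2)^0 = A_1$ is wrong. Here $G_2 < D_4 < D_4A_1^3 < E_7$, so the centralizer of $G_2$ contains $A_1^3$, and a dimension count on $\mathcal{L}(E_7){\downarrow}D_4A_1^3$ shows $\dim C_{E_7}(G_2)^0 = 21$; in fact $C_{E_7}(G_2)^0 = C_3$, reflecting the well-known maximal pair $G_2C_3 < E_7$ (compare with \cite[Table~3]{LS94}). A Lemma~\ref{l:simple} argument could still be salvaged by working with $\bar{J}_\s \cong {\rm Sp}_6(q)$ in place of ${\rm SL}_2(q)$, but as stated your argument is broken. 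The paper sidesteps this entirely by including $(E_7(q),{}^3D_4(q))$ in Proposition~\ref{p:typeIbase} and proving $b(G,H)=2$ in Lemma~\ref{l:typeI_1}, using a careful restriction of the adjoint module to bound the number of semisimple elements with centralizer of type $E_6T_1$.

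Second, the list of ``larger subgroups'' you give for $E_8(q)$ is incorrect. There is no maximal Type I subgroup of $E_8(q)$ of type $F_4(q)$ or $C_4(q)$ --- in particular $F_4$ is not maximal in $E_8$, being contained in $G_2F_4$. The subgroups $A_1(q)G_2(q)^2$ and $G_2(q)F_4(q)$ do arise, but their socles are products of non-isomorphic simple groups, so they are killed immediately by Lemma~\ref{l:structure}(v). After this reduction, the only Type I cases left for $E_8(q)$ are ${\rm L}_2(q)$ and $\Omega_5(q)$, both comfortably handled by the probabilistic method with the trivial bound $\mathcal{Q}(G,H)<|H|^2/b_1$.

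Third, and most significantly, your plan to treat $F_4(q)<E_6^\e(q)$ by Lemma~\ref{l:simple} glosses over a real difficulty: $C_{E_6}(F_4)$ is finite, so there is no obvious connected ``ambient'' centralizer to pull an element $g$ from, and one would need to choose a proper subgroup $K < F_4(q)$ very carefully. The paper avoids this by appealing to Lawther's explicit subdegree computations in \cite{Law93}: there is a subdegree $q^4(q^8-1)(q^{12}-1)$, forcing $|H_0\cap H_0^g| = q^{20}(q^2-1)(q^6-1)$ for some $g\in G_0$, and a routine comparison with the orders of maximal subgroups of $F_4(q)$ shows that no maximal subgroup has this order as its intersection with $H_0$. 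This is a much cleaner route than Lemma~\ref{l:simple}. The paper does use Lemma~\ref{l:simple} for the other hard $E_6^\e(q)$ case, ${\rm PSp}_8(q)$, via a Levi subgroup $K$ of type $A_3T_1$ and the embedding $A_3 < A_1A_5 < E_6$ from \cite[Table~8.3]{LS96}.

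Finally, you should note that everything else in Table~\ref{tab:typeI} --- including cases like $G_2(q)<E_6^\e(q)$ that you flagged as ``small'' --- is indeed dealt with by the trivial estimate $\mathcal{Q}(G,H)< |{\rm Aut}({\rm soc}(H))|^2/b_1$ with $b_1$ read off from Proposition~\ref{p:bounds}, together with the observation (Proposition~\ref{p:long}) that Type V subgroups contain no long root elements when $p>2$; no elaborate case analysis or {\sc Magma} is needed for the generic Type I cases.
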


Suppose $G$ is extremely primitive and $H$ is a Type I subgroup of $G$. By Lemma \ref{l:structure}(v), the socle of $H$ is a direct product of isomorphic simple groups. Therefore, by considering the groups in (a) and (b) above, together with the cases in \cite[Table 3]{LS03}, we deduce that the only subgroups of this type are the ones listed in Table \ref{tab:typeI}.

\begin{table}
\begin{center}
$$\begin{array}{ll}\hline
G_0 & {\rm soc}(H_0) \\ \hline
E_8(q) & {\rm L}_{2}(q) \, (\mbox{$3$ classes, $p \geqs 23,29,31$}), \Omega_5(q) \, (p \geqs 5) \\

E_7(q) &  {\rm L}_{2}(q) \, (\mbox{$2$ classes, $p \geqs 17,19$}),  {\rm L}_{3}^{\e}(q) \, (p \geqs 5), {\rm L}_{2}(q)^2 \, (p \geqs 5), {}^3D_4(q) \, (p \geqs 3) \\

E_6^{\e}(q) & {\rm L}_{3}^{\pm}(q) \, (p \geqs 5),  G_2(q) \, (p \ne 7), {\rm PSp}_{8}(q) \, (p \geqs 3), F_4(q),\,  \\

F_4(q) &  {\rm L}_{2}(q) \, (p \geqs 13),  G_2(q) \, (p=7)  \\

G_2(q) &  {\rm L}_{2}(q) \, (p \geqs 7) \\ \hline
\end{array}$$
\caption{Type I subgroups with a compatible socle}
\label{tab:typeI}
\end{center}
\end{table}

First we handle two special cases with $G_0 = E_6^{\e}(q)$.

\begin{lem}\label{t:e6typeI_1}
Suppose $G_0 = E_6^{\e}(q)$ and ${\rm soc}(H) = F_4(q)$ or ${\rm PSp}_{8}(q)$. Then $G$ is not extremely primitive.
\end{lem}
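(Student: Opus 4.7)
My plan is to combine the structural restriction of Lemma \ref{l:structure} (for the $F_4$ case) with a base-size analysis via Lemma \ref{l:base2} (for both cases), supplemented by the computational methods of Section \ref{ss:comp} when $q$ is small.

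First, in the $F_4(q)$ case, the algebraic subgroup $\bar{H} = F_4$ of $\bar{G} = E_6$ is by construction centralized by an involutory graph automorphism $\tau$ of $\bar{G}$, since it is precisely the graph-fixed subgroup. Hence whenever $G$ contains a graph automorphism of $G_0$, the maximal subgroup $H = N_G(\bar{H}_\sigma)$ takes the form $\bar{H}_\sigma \times \langle\tau\rangle$ (up to further extensions by field automorphisms), giving $Z(H) \neq 1$, and Lemma \ref{l:structure}(i) rules out extreme primitivity.

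Second, for the remaining configurations --- namely the $C_4$ case in full generality, and the $F_4$ case when $G \leqs {\rm Inndiag}(G_0).\langle\phi\rangle$ for a field automorphism $\phi$ --- my approach is to establish $b(G, H) = 2$ via Lemma \ref{l:base} and then invoke Lemma \ref{l:base2}. The size disparity is comfortable: $|H| \leqs 2f \cdot q^{52}$ when $H_0 = F_4(q)$ and $|H| \leqs 2f \cdot q^{37}$ when $H_0 = {\rm PSp}_{8}(q)$, whereas Proposition \ref{p:bounds} gives $|x^G| > q^{22}$ for long root elements of $G_0$ and $|x^G| > q^{32}$ for any other nontrivial element. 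To estimate $\mathcal{Q}(G, H)$ effectively via Lemma \ref{l:calc}, I would partition the prime-order elements of $H$ according to the $\bar{G}$-class of their image, using the known restriction of the adjoint module $\mathcal{L}(\bar{G}){\downarrow}\bar{H}$ (see \cite{Thomas}) to control the fusion. Proposition \ref{p:root} is a particularly useful input, confirming that long root elements of $G_0$ are either absent from $H$ (as in the $C_4$ case, where no simple factor is of type $D_n$) or contribute in a highly restricted manner inside $F_4$; for unipotent elements, the fusion can be extracted from \cite[Section 4.7]{Law09}, and for semisimple elements of prime order $r$ the possibilities for $C_{\bar{G}}(x)$ can be read off from \cite{Lubeck}. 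Summing the resulting contributions class by class should give $\mathcal{Q}(G, H) < 1$ for all sufficiently large $q$.

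The main obstacle will be handling the smallest fields, typically $q \in \{2, 3\}$, where the crude lower bounds on $|x^G|$ from Proposition \ref{p:bounds} are not sharp enough to force $\mathcal{Q}(G, H) < 1$. For such exceptional values of $q$ I would fall back on the computational framework of Section \ref{ss:comp}: either constructing $H$ explicitly inside a {\sc Magma} realisation of $G$ and exhibiting $b(G, H) = 2$ by a random search for an element $g$ with $H \cap H^g = 1$, or applying Lemma \ref{l:char} in combination with the character tables available in the \textsf{GAP} Character Table Library \cite{GAPCTL} to compare the rank of $G$ with the indices of the core-free maximal subgroups of $H_0 \in \{F_4(q), {\rm PSp}_{8}(q)\}$.
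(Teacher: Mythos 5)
The probabilistic strategy you propose in the second paragraph cannot succeed here, and this is the fundamental gap. When $H_0 = F_4(q)$, the subgroup $H_0$ contains roughly $q^{16}$ long root elements of $G_0 = E_6^{\e}(q)$ (indeed $F_4 = C_{E_6}(\tau)$ contains the long root subgroup $X_{\a_0}$ of $E_6$, since $\a_0$ is $\tau$-fixed). The class of long root elements in $G_0$ has size roughly $q^{22}$, so this single class contributes approximately $q^{22}\cdot (q^{16}/q^{22})^2 = q^{10}$ to $\mathcal{Q}(G,H)$, which grows without bound. More broadly, $|H_0| \approx q^{52}$ while $|\O| = |G:H| \approx q^{26}$, so $b(G,H) \geqs 3$ by a counting argument and Lemma \ref{l:base2} cannot be applied at all; this is not merely a small-$q$ failure that a computational fallback can patch. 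You also misread Proposition \ref{p:root} when treating the $C_4$ case: part (ii) gives a constraint on a long root element lying in $N_{\bar{G}}(\bar{M}) \setminus \bar{M}$ (requiring a $D_n$ factor or torus), not on $u \in \bar{M}$. Part (i) explicitly permits $u$ to lie inside $\bar{M}$ as a root element of a simple factor, so the absence of a $D_n$ factor in $C_4$ does not show that $H_0 = {\rm PSp}_{8}(q)$ avoids long root elements of $G_0$. (The paper itself does not attempt a probabilistic argument for either case.)

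What the paper actually does, after disposing of the graph-automorphism case via Lemma \ref{l:structure}(v) (your $Z(H)\neq 1$ argument via part (i) works equally well), is different in kind: it exhibits a specific $g \in G_0$ such that $H \cap H^g$ is \emph{non-maximal} in $H$, without claiming triviality. For $H_0 = F_4(q)$ it reads off from Lawther's subdegree tables in \cite{Law93} a subdegree $s(q) = q^4(q^8-1)(q^{12}-1)$, so some two-point stabilizer has order $q^{20}(q^2-1)(q^6-1)$; inspecting the known maximal subgroups of $F_4(q)$ shows none has an intersection with $H_0$ of this order, contradiction. For $H_0 = {\rm PGSp}_{8}(q)$ it constructs a subgroup $K < H_0$ with a unique maximal overgroup $M$ in $H_0$, finds $g \in C_{G_0}(K)$ with $M^g \not\leqs H_0$, and applies Lemma \ref{l:simple}. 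Both are structural arguments that avoid any base-size estimate. If you want to salvage your plan, you would need to replace the base-two criterion entirely with one of these approaches.
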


\begin{proof}
Set $H_0 = H \cap G_0$ and first assume ${\rm soc}(H) = F_4(q)$, so $H_0 = F_4(q)$ is the centralizer in $G_0$ of a graph automorphism. Note that ${\rm Inndiag}(G_0) \cap G = G_0$ by the maximality of $H$ in $G$. In addition, if $G$ contains a graph automorphism, then ${\rm soc}(H)$ is not a direct product of isomorphic simple groups and thus $G$ is not extremely primitive by Lemma \ref{l:structure}(v). Write $G = G_0.A$ and $H = H_0.A$.
Set $e=(3,q-\e)$.

The action of the quasisimple group $e.G_0$ on the set of cosets of $F_4(q)$ is studied in some detail by Lawther in \cite{Law93} and he computes all the subdegrees (see \cite[Table 1]{Law93} for $\e=+$ and \cite[Table 3]{Law93} for $\e=-$). In both cases, there is a subdegree $s(q) = q^{4}(q^8-1)(q^{12}-1)$. This is also a subdegree for the action of $G_0$, so there exists $g \in G_0$ such that 
\[
|H_0 \cap H_0^g| = |H_0| / s(q) = q^{20}(q^2-1)(q^6-1).
\]

Suppose $G$ is extremely primitive, so $M = H \cap H^g$ is a maximal subgroup of $H$ such that $|M \cap H_0| = |H_0 \cap H_0^g|$. First we observe that $M$ is non-parabolic since $|H_0 \cap H_0^g|$ is indivisible by $q^{24}$. Then since $|M|>q^{22}$, it follows that $M$ is one of the subgroups listed in \cite[Lemma 4.23]{BLS}. But none of these subgroups have order divisible by $q^{20}$, so we have reached a contradiction and 
we conclude that $G$ is not extremely primitive. 

To complete the proof, we may assume $p$ is odd and ${\rm soc}(H) = {\rm PSp}_{8}(q)$. As in the previous case, $H_0 = {\rm PGSp}_{8}(q) = \text{PSp}_8(q).2$ is the centralizer in $G_0$ of an involutory graph automorphism and we may assume that ${\rm Inndiag}(G_0) \cap G_0 = G_0$ and $G$ does not contain any graph automorphisms. Once again, write $G = G_0.A$ and $H = H_0.A$.

Set $\bar{H} = C_4 < \bar{G} = E_6$ and let $\bar{L}$ be a Levi subgroup of $\bar{H}$ of type $A_3T_1$. Then $N_{\bar{H}}(\bar{L}) = \bar{L}.2$ and the outer involution induces a graph automorphism on $\bar{K} = \bar{L}' = A_3$ and inverts the torus $Z(\bar{L})^0=T_1$. It is straightforward to check that $C_{\bar{H}}(\bar{K}) = Z(\bar{L})^0$. We claim that 
$\bar{K}.2$ is a subgroup of a maximal subsystem subgroup $A_1A_5$ of $\bar{G}$. 
By inspecting \cite[Table 8.3]{LS96}, we see that $\bar{G}$ has two conjugacy classes of $A_3$ subgroups, both of which are contained in a maximal subsystem subgroup $A_1A_5$. The subgroups in the two classes differ in the structure of their connected centralizers in $\bar{G}$, which is either $A_1$ or $A_1^2T_1$. By considering the composition factors of the $A_3$ subgroups on the adjoint module for $\bar{G}$ (see \cite[Table 8.3]{LS96}), we deduce that 
$C_{\bar{G}}(\bar{K})^0 = \bar{J}$ is of type $A_1$. It follows that $\bar{K}$ is contained in the $A_5$ factor of a subsystem subgroup $A_1A_5$, with $\bar{K}$ acting irreducibly on the natural module for $A_5$. In particular, $\bar{K}.2 < A_5 < A_1A_5$ and $C_{\bar{G}}(\bar{K}.2)^0 = \bar{J}$, which contains $Z(\bar{L})^0$ as a maximal torus.  

As usual, in order to descend from the algebraic groups discussed above to the corresponding finite groups we will take the fixed points of an appropriate Steinberg endomorphism $\s$. Specifically, we define $\s$ to be the product of the standard Frobenius map and a lift of the central involution in the Weyl group of $\bar{H}$. Referring to the subgroups introduced above, we may assume that $\bar{H}$, $\bar{L}$, $\bar{K}$ and $\bar{J}$ are all $\s$-stable. Then $(\bar{G}_{\s})' = G_0$, $\bar{H}_{\s} = H_0$ and $\bar{K}_\sigma = h.\text{U}_4(q).h$, where $h = (4,q+1)/2$. Set $K = (\bar{K}_{\s})' = h.\text{U}_4(q)$.
By considering \cite[Tables 8.48 and 8.49]{BHR}, we deduce that 
\[
M = N_{H_0}(K) = h.((q+1)/2h \times K).2^2.2
\]
is the unique maximal overgroup of $K$ in $H_0$. 

Now $C_{H_0}(K) = (Z(\bar{L})^0)_\sigma = Z_{q+1}$ is contained in the subgroup $J = \bar{J}_\sigma  = \text{SL}_2(q)$ of $C_{G_0}(K)$. Since $C_{H_0}(K)$ is non-normal in $J$, we can choose $g \in J$ which does not normalize $C_{H_0}(K)$. If $M^g \leqs H_0$ then $M^g$ is a maximal subgroup of $H_0$ containing $K$, so $M^g = M$ and thus $g$ normalizes $C_M(K) = C_{H_0}(K)$, which is a contradiction. Therefore, $M^g \not\leqs H_0$ and we complete the proof by applying Lemma \ref{l:simple}, noting that both $K$ and $M$ are $A$-stable.
\end{proof}

In each of the remaining cases, we claim that $b(G,H)=2$.

\begin{prop}\label{p:typeIbase}
If $(G,H)$ is one of the cases in Table \ref{tab:typeI} and $(G_0,{\rm soc}(H)) \ne (E_6^{\e}(q),F_4(q))$, $(E_6^{\e}(q),{\rm PSp}_{8}(q))$, then $b(G,H) = 2$. 
\end{prop}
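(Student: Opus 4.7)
The plan is to verify, for each remaining pair $(G,H)$ in Table~\ref{tab:typeI}, that the sum $\mathcal{Q}(G,H)$ defined in \eqref{e:QGH} is strictly less than $1$, so that $b(G,H) = 2$ by Lemma~\ref{l:base}. The first step is to establish that $H$ contains no long root elements of $G_0$. Since $\bar{H}^0$ is semisimple in every remaining case and $p \geqs 3$ throughout (indeed $p$ is large in every row), Proposition~\ref{p:root}(i) forces any long root element of $\bar{G}$ lying in $\bar{H}$ to be a root element of some simple factor of $\bar{H}$. Inspecting the explicit embeddings described in \cite[Theorem~8 and Table~3]{LS03} — in each case $\bar{H}$ is embedded via a tilting/irreducible module on which the ambient adjoint representation has no trivial summand aligning $\bar{H}$-root subgroups with $\bar{G}$-root subgroups — one checks that no root subgroup of a simple factor of $\bar{H}$ is a root subgroup of $\bar{G}$. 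Consequently, every element of prime order $x \in H \cap G_0$ satisfies $|x^G| \geqs \ell_2$ in the notation of Proposition~\ref{p:bounds}, while any prime order element $x \in H \setminus H \cap G_0$ satisfies $|x^G| \geqs \ell_5$.

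Next I would estimate $|H|$ from the structure given in \cite[Table~3]{LS03}. For every pair with $\mathrm{soc}(H) \in \{{\rm L}_2(q),\, {\rm L}_3^\e(q),\, {\rm L}_2(q)^2,\, \Omega_5(q)\}$, as well as the pair $\mathrm{soc}(H) = G_2(q) < E_6^\e(q)$, one has $|H|^2$ considerably smaller than $\ell_2$. An application of Lemma~\ref{l:calc} with $A = |H|$, $B = \ell_2$ for inner terms (and a separate term $|H|^2/\ell_5$ for outer elements of $H$) then yields $\mathcal{Q}(G,H) = O(q^{-c})$ for some $c>0$, which in particular implies the asymptotic statement that the probability that two random points form a base tends to $1$ as $q \to \infty$.

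The main obstacle is the two remaining cases where the crude bound $|H|^2 < \ell_2$ fails:
\begin{itemize}\addtolength{\itemsep}{0.2\baselineskip}
\item[(a)] $\mathrm{soc}(H) = {}^3D_4(q) < E_7(q)$, where $|H|^2 \sim q^{56}$ whereas $\ell_2 \sim q^{52}$;
\item[(b)] $\mathrm{soc}(H) = G_2(q) < F_4(q)$ with $p = 7$, where $|H|^2 \sim q^{28}$ whereas $\ell_2 \sim q^{22}$.
\end{itemize}
For these two pairs I would refine the analysis by partitioning the elements of prime order in $H$ according to their $\bar{G}$-class and using the tighter estimate $|x^G \cap H| \leqs |x^H|$ for each individual $H$-class. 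The regular classes of $H$, which are by far the largest, embed into near-regular classes of $\bar{G}$: in case (a), a regular unipotent of ${}^3D_4(q)$ has Jordan form on the adjoint module that identifies its $\bar{G}$-class as one of the classes with centralizer of dimension at most $\dim T + \text{small}$, giving $|x^G| \gtrsim q^{120}$, and similarly in case (b) the regular unipotent class of $G_2(q)$ embeds in the $\bar{G}$-class labelled $F_4$ or $F_4(a_1)$, with $|x^G| \gtrsim q^{46}$. The corresponding $\bar{G}$-classes are determined by computing the Jordan form on the adjoint module via \cite{Lawunip}; smaller $H$-classes are controlled using \cite{LieS} for unipotent classes and \cite{Lubeck} for semisimple classes, with Proposition~\ref{p:i23} providing the aggregate bounds on $i_2(H)$ and $i_3(H)$.

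The expected outcome is $\mathcal{Q}(G,H) \to 0$ as $q \to \infty$ in each case, with explicit numerical bounds less than $1$ valid for all $q$ in the allowed range; this also yields the asymptotic density statement parallel to the one in Proposition~\ref{p:mr_base}. The main technical burden is the bookkeeping in cases (a) and (b), where one must verify the embedding of each $H$-class into $\bar{G}$-classes; everything else reduces to an elementary comparison of polynomial bounds.
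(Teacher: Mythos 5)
Your broad strategy — estimate $\mathcal{Q}(G,H)$ and invoke Lemma \ref{l:base} — is the same as the paper's, but your reduction contains a central error. You claim that once long root elements are excluded, every prime-order $x \in H \cap G_0$ satisfies $|x^G| \geqs \ell_2$; but $\ell_2$ by definition only bounds the class of a unipotent non-root element, while semisimple classes are controlled by $\ell_3$ (involutions) and $\ell_4$ (odd prime order), and for $G_0 = F_4(q)$ one has $\ell_3 = q^{16} < \ell_2 = \tfrac{\alpha}{2}q^{22}$, while for $G_0 = G_2(q)$ one has $\ell_4 = \alpha q^6 < \ell_2 = \alpha q^8$. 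In fact $\ell_1 = \ell_3$ (resp.\ $\ell_1 = \ell_4$) in these two cases, so excluding long root elements buys nothing, and the paper's wrap-up argument simply uses $\ell_1$ with $a_1 = |{\rm Aut}({\rm soc}(H))|$ in every easy case, without establishing that $H$ avoids long root elements. Your assertion that ``$p \geqs 3$ throughout'' is also false: the $(E_6^{\e}(q), G_2(q))$ entry in Table \ref{tab:typeI} only excludes $p=7$, so $p=2$ is allowed there.

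Because of the first error your list of hard cases is incomplete. Beyond (a) and (b), the pair $(E_6^{\e}(q), G_2(q))$ requires separate treatment: $|H|^2 \sim q^{28}\log^2 q$ but $\ell_5 \sim q^{26}$, so the crude term $|H|^2/\ell_5$ for outer elements exceeds $1$; one must instead bound $i_2(H)$ via Proposition \ref{p:i23}(i). Similarly $(G_2(q), L_2(q))$ is marginal since $|H|^2/\ell_4 \sim \log^2 q > 1$; the paper resolves this with the explicit estimate $i_3(H_0) \leqs q(q+1)$ and the observation that elements of order $p$ in $H_0$ are regular in $G$. In your hard cases (a) and (b) the focus on the regular unipotent class also misses what actually drives the bound. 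For $(F_4(q), G_2(q))$ the binding constraint comes from the $A_1\tilde{A}_1$ unipotent class (so $|x^G| > q^{28}$, not $q^{46}$) and from semisimple involutions, where one must show $C_{\bar G}(x) = A_1C_3$ rather than $B_4$ (via the trace on the $26$-dimensional module) to obtain $|x^G| > q^{28}$ rather than $q^{16}$. For $(E_7(q), {}^3D_4(q))$ the key ingredients are Proposition \ref{p:i23}(i) bounds on $i_2(H_0)$ and $i_3(H_0)$ together with a separate argument ruling out regular semisimple elements with $C_{\bar G}(x)^0 = E_6T_1$; the regular unipotent class plays no role.
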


We will prove Proposition \ref{p:typeIbase} in a sequence of lemmas.

\begin{lem}\label{l:typeI_1}
If $G_0 = E_7(q)$ and ${\rm soc}(H) = {}^3D_4(q)$, then $b(G,H) = 2$.
\end{lem}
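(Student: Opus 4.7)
The plan is to apply Lemma \ref{l:base} and show $\mathcal{Q}(G,H)<1$. Here $G = G_0.A$ and $H = H_0.A$, where $H_0 = N_{G_0}(\bar{H}_\s)' \cap G_0$ with $\bar{H}_\s = {}^3D_4(q).3$, $p \geqs 3$ and $A$ is a (possibly trivial) group of field automorphisms. In particular, $|H| \leqs 3 |{}^3D_4(q)|\log_2 q < 3\log_2q \cdot q^{28}$. The lower bounds in Proposition~\ref{p:bounds} for $G_0 = E_7(q)$ give $|x^G| > q^{34}$ for long root elements and $|x^G| > q^{52}$ for all other nontrivial unipotent elements, and very much larger bounds for semisimple elements and for prime order elements outside ${\rm Inndiag}(G_0)$. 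So the dominant contribution to $\mathcal{Q}(G,H)$ will come from long root elements, provided $H$ contains any.

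First I would analyse the long root elements. Since $p \geqs 3$, Proposition~\ref{p:root}(ii) does not apply, so any long root element of $G_0$ contained in $H$ must be a root element of the simple factor ${}^3D_4(q)$ of $\bar{H}_{\sigma}$. Using the known order of the centralizer of a long root element in ${}^3D_4(q)$ (which is $q^{11}(q^3-1)$, giving a class of size $(q^8+q^4+1)(q^2-1)(q^3+1) < 2q^{13}$), we have $|x^G \cap H| < 2q^{13} = a_1$ and $|x^G| > q^{34} = b_1$, contributing at most $a_1^2/b_1 < 4q^{-8}$ to $\mathcal{Q}(G,H)$.

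For all remaining nontrivial elements of prime order in $H \cap G_0$, I would use the crude bound $|x^G \cap H| \leqs |H_0| < 3q^{28}$ together with $|x^G| > q^{52}$, contributing at most $9q^{56}/q^{52} = 9q^4$\ldots which of course is too large; so here I must split further. I would instead partition into (i) unipotent non-root elements, bounded via Proposition~\ref{p:i23}(ii) applied to ${}^3D_4(q).3$ (giving at most $q^{16+o(1)}$ such elements), (ii) involutions, where since $p$ is odd these are semisimple with $|x^G| > \frac{1}{2}(q-1)q^{54}$ and at most $i_2(H) \leqs |{}^3D_4(q)|/|{\rm SL}_2(q^3)| < q^{14}$ lie in $H$ (and at most $q^{14+o(1)}$ in $H \setminus H_0$), (iii) semisimple elements of odd prime order $r \leqs q+1$, again using Proposition~\ref{p:i23}(i) on ${}^3D_4(q).3$ (bound of order $q^{16}(q+1)$) combined with $|x^G| > (q-1)q^{54}$, and (iv) field automorphisms of prime order $r$, where $|H \cap G_0 x| < 3|{}^3D_4(q){:}{}^3D_4(q^{1/r})|$ and $|x^G| > \frac{1}{2}q^{133(1-1/r)/2}$. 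In each bracket the contribution to $\mathcal{Q}(G,H)$ is a negative power of $q$, tending to $0$ as $q \to \infty$.

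The main obstacle is making the numerics work uniformly for small~$q$, in particular $q=3$ where the polynomial margins are thinnest. If the bounds above do not close for $q=3$ (and possibly $q=5$), I would handle those cases computationally in the style of Section~\ref{ss:comp}: construct $H_0 = {}^3D_4(3).3$ as a subgroup of $E_7(3)$ via the {\sc Magma} \texttt{GroupOfLieType} framework, enumerate conjugacy classes of elements of prime order in $H_0$, determine their $G$-classes from their Jordan form or characteristic polynomial on the adjoint module $\mathcal{L}(\bar{G})$ using \cite{Lawunip} and \cite{BBR}, and then check $\mathcal{Q}(G,H)<1$ directly. Combining the asymptotic argument with this computational treatment of the residual small cases yields $b(G,H)=2$ in all cases.
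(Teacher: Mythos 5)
Your outline is on the right track in spirit (bound $\mathcal{Q}(G,H)<1$ by bracketing elements by class size), but there is a genuine gap and several numerical errors that prevent it from closing.

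The most serious issue is that you do not treat the semisimple elements $x$ of odd prime order $r\geqs 5$ with $C_{\bar G}(x)^0 = E_6T_1$. These have $|x^G|>\frac{1}{2}q^{54}$, which is the smallest relevant class size, and the crude bound $|x^G\cap H|\leqs |H_0| < 3q^{28}$ would give a contribution of order $q^{56}/q^{54} = q^2 \gg 1$, so the argument does not close as stated. The paper gets around this by a non-obvious observation: working with the decomposition $V{\downarrow}\bar H^0 = \mathcal{L}(D_4) \oplus V(2\l_1)\oplus V(2\l_3)\oplus V(2\l_4)$ of the adjoint module, one checks that a \emph{regular} semisimple element of $\bar H^0$ has $\dim C_{\bar G}(x) \leqs 19 < 79 = \dim E_6T_1$, so any $E_6T_1$-type element of $H_0$ is non-regular in ${}^3D_4(q)$; this gives the much stronger bound of fewer than $q^{25}$ such elements, which does close. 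Without this idea, the argument fails, and no amount of {\sc Magma} checking for $q\in\{3,5\}$ can rescue it because the failure is asymptotic, not a small-$q$ boundary effect.

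Secondly, your treatment of long root elements is based on a misconception. Proposition~\ref{p:root}(i) says a long root element of $G$ lying in a semisimple $\bar M$ is a root element of a simple factor; it does \emph{not} say the converse. In fact (as the paper notes, citing the proof of \cite[Proposition~5.12]{BGS}) $H_0$ contains \emph{no} long root elements of $G_0$ at all, because the $D_4 < E_7$ here is not a subsystem embedding and the root elements of ${}^3D_4(q)$ land in a larger $E_7$-class. This is exactly what gives the uniform bound $|x^G|>q^{52}$ for all unipotent $x\in H$, which is a key simplification you are missing. Your computation of class sizes also has errors: the number of unipotent elements in ${}^3D_4(q)$ is $q^{24}$ (by Proposition~\ref{p:i23}(ii) with $N=12$), not $q^{16+o(1)}$; the involution count in ${}^3D_4(q)$ is $q^8(q^8+q^4+1) \approx q^{16}$ (the centralizer is $(\mathrm{SL}_2(q^3)\circ\mathrm{SL}_2(q)).2$, not just $\mathrm{SL}_2(q^3)$), not $q^{14}$; and $\ell_4 = (q-1)q^{53}$ for $E_7(q)$, not $(q-1)q^{54}$. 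These do not all err on the safe side, so they need correcting even for the parts of the argument that do work.
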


\begin{proof}
Here $p \geqs 3$ and $H_0 = H \cap G_0 = {}^3D_4(q).3$. We proceed by estimating the various contributions to $\mathcal{Q}(G,H)$ from elements of prime order.

Let $x \in H_0$ be an element of prime order $r$. If $r=2$ then $|x^G \cap H| = q^8(q^8+q^4+1) = a_1$ and $|x^G|>\frac{1}{2}(q-1)q^{53} = b_1$ (note that $H_0$ has a unique conjugacy class of involutions). Similarly, if $x$ is unipotent then $x$ is not a long root element of $G_0$ (see the proof of \cite[Proposition 5.12]{BGS}), so $|x^G|>q^{52} = b_2$ and we note that $H_0$ contains fewer than $q^{24}=a_2$ elements of order $p$ (see Proposition \ref{p:i23}(ii)). 

Next assume $x \in H_0$ and $r \ne 2,p$. If $r=3$ then $|x^G|>(q-1)q^{53}=b_3$ and Proposition \ref{p:i23}(i) gives $i_3(H_0)< 2(q+1)q^{19}=a_3$. Similarly, if $r \geqs 5$ and $C_{\bar{G}}(x)^0 \ne E_6T_1$, then $|x^G|>\frac{1}{2}q^{66} = b_3$ and we note that $|{}^3D_4(q)|< q^{28}= a_3$. Now assume $C_{\bar{G}}(x)^0 = E_6T_1$, so $|x^G|>\frac{1}{2}q^{54}=b_4$. We claim that $x$ is not a regular element of ${}^3D_4(q)$, so there are fewer than $q^{25}=a_4$ such elements in $H_0$. Suppose $x$ is regular and let $V = \mathcal{L}(\bar{G})$ be the adjoint module for $\bar{G}$. Let $\l_1, \ldots, \l_4$ be fundamental dominant weights for $\bar{H}^0=D_4$, labelled in the usual manner. As noted in \cite[Table 4]{Thomas}, we have
\[
V{\downarrow}\bar{H}^0 = \mathcal{L}(\bar{H}^0) \oplus V(2\l_1) \oplus V(2\l_3) \oplus V(2\l_4),
\]
where $V(2\l_i)$ is an irreducible module for $\bar{H}^0$ with highest weight $2\l_i$. Now $V(2\l_i)$ is the unique nontrivial composition factor of $S^2(V(\l_i))$ and by considering the action of a regular semisimple element on this latter module we deduce that $\dim C_{V(2\l_i)}(x) \leqs 5$. Since $\dim C_{\mathcal{L}(\bar{H}^0)}(x) = 4$, it follows that $\dim C_{\bar{G}}(x) = \dim C_{V}(x) \leqs 19$ and the claim follows.

Finally, suppose $x \in G$ is a field automorphism. Then $|x^G|>\frac{1}{2}q^{133/2}=b_5$ and we note that $|H|<3\log_3q.q^{28} = a_5$. We conclude that
\begin{equation}\label{e:qgh5}
\mathcal{Q}(G,H) < \sum_{i=1}^{5}a_i^2/b_i < q^{-1}
\end{equation}
as required.
\end{proof}

\begin{lem}\label{l:typeI_2}
If $G_0 = F_4(q)$ and ${\rm soc}(H) = G_2(q)$, then $b(G,H) = 2$.
\end{lem}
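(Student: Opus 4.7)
The plan is to establish $b(G,H) = 2$ via the probabilistic criterion in Lemma \ref{l:base}, by showing that $\mathcal{Q}(G,H) < 1$. The structural feature that makes this approach straightforward is that $|H_0| = |G_2(q)| = q^6(q^6-1)(q^2-1) < q^{14}$ is tiny compared with $|G_0| \sim q^{52}$, so crude bounds on $|x^G \cap H|$ combined with the class-size lower bounds from Proposition \ref{p:bounds} should suffice. Write $G = G_0.A$ and $H = H_0.A$, where $A$ is a group of field automorphisms of $G_0$, and recall that $p = 7$.

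I would then partition the elements of prime order in $H$ by type and estimate each contribution to $\mathcal{Q}(G,H)$ exactly as in the proof of Lemma \ref{l:typeI_1}. For unipotent elements $x$ of order $p = 7$, Proposition \ref{p:i23}(ii) gives fewer than $q^{12}$ such elements in $H_0$. Proposition \ref{p:bounds} supplies $|x^G| > q^{16}$ if $x$ is a long or short root element of $\bar{G}$, and $|x^G| > \tfrac{1}{2}(q-1)q^{22}$ otherwise. The $\bar{G}$-class of a given unipotent element of $H_0$ can if needed be pinned down by computing its Jordan form on the adjoint module $\mathcal{L}(\bar{G})$ via the restriction data for $\mathcal{L}(\bar{G}){\downarrow}G_2$ in \cite{Thomas} together with the look-up table in \cite{Lawunip}, but in fact the trivial bound $|x^G \cap H| < q^{12}$ is already enough, since $q^{24}/q^{16} = q^{8}$ is absorbed by the subsequent division.

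For semisimple $x \in H_0$ of prime order $r$, if $r = 2$ then $G_2(q)$ has a unique class of involutions, of size at most $q^8$, and Proposition \ref{p:bounds} yields $|x^G| > q^{16}$. For $r = 3$, Proposition \ref{p:i23}(i) gives $i_3(H_0) < 2(q+1)q^7$ while $|x^G| > (q-1)q^{29}$. For $r \geqs 5$, the crude bounds $|H_0| < q^{14}$ and $|x^G| > (q-1)q^{29}$ suffice. Finally, for prime-order field automorphisms $x \in H \setminus H_0$, Proposition \ref{p:bounds} gives $|x^G| > q^{26}$, and $|H| < 3 \log_7 q \cdot q^{14}$. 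Summing all the contributions one obtains $\mathcal{Q}(G,H) < q^{-1}$, analogously to \eqref{e:qgh5} in Lemma \ref{l:typeI_1}, and the result follows.

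The main obstacle is essentially bookkeeping: one has to make sure the class fusion from $G_2(q)$ to $F_4(q)$ is compatible enough that root-element classes of $\bar{G}$ do not artificially inflate the unipotent contribution. Since $|H_0|$ is so small, however, even the coarsest bound $|x^G \cap H| \leqs |H_0| < q^{14}$ paired with $|x^G| > q^{16}$ for root elements gives a usable term, so no delicate analysis of the specific $G_2 < F_4$ embedding in characteristic $7$ is actually required.
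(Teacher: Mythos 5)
Your proposal correctly identifies the general strategy (Lemma \ref{l:base} plus the class-size lower bounds from Proposition \ref{p:bounds}), but the claim that ``no delicate analysis of the specific $G_2 < F_4$ embedding in characteristic $7$ is actually required'' is precisely where the argument breaks down, and this is the entire substance of the lemma.

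Concretely, consider your unipotent estimate. You propose $a = |x^G \cap H| < q^{12}$ against $b = |x^G| > q^{16}$ if root elements of $G_0$ were allowed to appear in $H_0$. Then the contribution via Lemma \ref{l:calc} is $a^2/b = q^{24}/q^{16} = q^{8}$, which is wildly larger than $1$; there is no ``subsequent division'' that rescues this. The even coarser $a = q^{14}$ gives $q^{12}$. So the probabilistic bound is useless unless one first shows that $H_0$ contains \emph{no} long or short root elements of $G_0 = F_4(q)$. That fact is exactly what the paper establishes by invoking Lawther's class-fusion computation for the $G_2 < F_4$ embedding \cite[Section 5.2]{Law09}: the smallest $\bar{G}$-class meeting $H_0$ is $A_1\tilde{A}_1$, giving $|x^G| > q^{28}$ and hence a contribution of order $q^{-4}$.

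The involution contribution has the same flaw. $H_0 = G_2(q)$ with $p = 7$ has a unique class of involutions of size $q^4(q^4+q^2+1) \approx q^8$. With your bound $|x^G| > q^{16}$ (which is the $B_4$-type lower bound), the contribution is $(q^8)^2/q^{16} \approx 1$, which is borderline at best and in fact slightly exceeds $1$ once the constants are tracked. The paper resolves this by computing the trace of such an involution on the $26$-dimensional module $V_{26}$ for $\bar{G}$: the trace is $2$, not $-6$, so $C_{\bar{G}}(x) = A_1C_3$ rather than $B_4$, yielding the much stronger $|x^G| > q^{28}$ and contribution $\approx q^{-12}$. Both of these fusion computations are irreducible ingredients, not optional refinements, and your proposal does not supply them. (As a very minor side point, your bound $i_3(H_0) < 2(q+1)q^7$ from Proposition \ref{p:i23}(i) is misquoted; for $G_2$ the exponent should be $N_3 - 1 = 9$, not $7$, though this term is harmless either way.)
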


\begin{proof}
Here $p=7$, $H_0 = G_2(q)$ and we proceed as in the proof of the previous lemma. To get started, let $x \in H_0$ be an element of prime order $r$. First assume $r=7$, so $x$ is unipotent. The $G$-class of $x$ is determined in \cite[Section 5.2]{Law09} and we deduce that $|x^G|>q^{28} = b_1$ (minimal if $x$ is contained in the class labelled $A_1\tilde{A}_1$), so the contribution to $\mathcal{Q}(G,H)$ is less than $a_1^2/b_1$, where $a_1 = q^{12}$ is the total number of unipotent elements in $H_0$ (see Proposition \ref{p:i23}(ii)). If $r \ne 2,7$ then Proposition \ref{p:bounds} gives $|x^G|>(q-1)q^{29}=b_2$ and we note that $|H_0|<q^{14}=a_2$. 

Now assume $r=2$, so $|x^G \cap H| = q^4(q^4+q^2+1)=a_3$ since $H_0$ has a unique class of involutions. There are two classes of involutions in $G_0$ and we claim that $C_{\bar{G}}(x) = A_1C_3$ rather than $B_4$. To see this, let $V_{26}$ be the
minimal module for $\bar{G}$. By considering the restriction of $V_{26}$ to $C_{G_2}(x) = A_1\tilde{A}_1$, one checks that $x$ has trace $2$ on $V$ (see \cite[Section 6.6]{Thomas}) and thus $C_{\bar{G}}(x) = A_1C_3$ as claimed (as noted in \cite[Proposition 1.2]{LS99}, the involutions $y \in \bar{G}$ with $C_{\bar{G}}(y) = B_4$ have trace $-6$ on $V_{26}$). Therefore, $|x^G|>q^{28} = b_3$.

Finally, let us assume $x \in G$ is a field automorphism of order $r$. If $r=2$ then $|x^G|>\frac{1}{2}q^{26} = b_4$ and $|x^G\cap H| = |G_2(q):G_2(q^{1/2})|<2q^{7} = a_4$. On the other hand, if $r\geqs 3$ then $|x^G| > \frac{1}{2}q^{34} = b_5$ and we note that $|H|<\log_7q.q^{14} = a_5$. 

By bringing these estimates together, we conclude that \eqref{e:qgh5} holds and the result follows.
\end{proof}

\begin{lem}\label{l:typeI_3}
If $(G_0, {\rm soc}(H)) = (E_6^{\e}(q), G_2(q))$ or $(G_2(q),{\rm L}_{2}(q))$, then $b(G,H)=2$.
\end{lem}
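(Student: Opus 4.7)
The plan is to apply Lemma \ref{l:base} and establish $\mathcal{Q}(G,H) < 1$ in both cases, mirroring the pattern of Lemmas \ref{l:typeI_1} and \ref{l:typeI_2}.

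For $(G_0, {\rm soc}(H)) = (E_6^{\e}(q), G_2(q))$ with $p \ne 7$, I would first use the structure of the embedding (which factors through a maximal $F_4$ subgroup of $\bar{G}$) together with Proposition \ref{p:root} to verify that $H_0 = G_2(q)$ contains no long root elements of $G_0$. Proposition \ref{p:bounds} then yields $|x^G| > q^{32}$ for every element $x \in H_0$ of prime order, so the contribution from such elements to $\mathcal{Q}(G,H)$ is bounded by $|H_0|^2/q^{32} = O(q^{-4})$. The contributions from field, graph-field and graph automorphisms are bounded in a standard fashion using the values in Proposition \ref{p:bounds}: a prime-order outer element $x$ satisfies $|x^G| > \frac{\a}{\gamma}q^{26}$ in the worst case (graph type), while $|x^G \cap H| \leqs c\log_2 q \cdot q^{14(r-1)/r}$ for a field automorphism of order $r$, and the sum of these contributions is $O(q^{-1})$. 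Combining gives $\mathcal{Q}(G,H) < 1$.

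For $(G_0, {\rm soc}(H)) = (G_2(q), {\rm L}_{2}(q))$ with $p \geqs 7$, the subgroup $H_0$ is a principal $A_1$-subgroup with $|H_0| = \frac{1}{2}q(q^2-1) < q^3$. The key observation is that the restriction of the $7$-dimensional module for $G_0$ to the principal $A_1$ is irreducible of highest weight $6$ (using $p \geqs 7$), forcing every nontrivial unipotent element of $H_0$ to act as a single Jordan block $J_7$ and hence to lie in the regular unipotent class of $G_0$. Consequently, unipotent elements $x$ in $H_0$ satisfy $|x^G| = q^4(q^2-1)(q^6-1) > \frac{1}{2}q^{12}$, and since $H_0$ contains at most $q^2-1$ unipotent elements, the contribution to $\mathcal{Q}(G,H)$ is $O(q^{-8})$. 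Semisimple involutions in $H_0$ lie in the unique $G_0$-class of involutions with $|x^G| > q^8$, contributing $O(q^{-4})$; semisimple elements of odd prime order $r \ne p$ satisfy $|x^G| \geqs (q-1)q^5$ by Proposition \ref{p:bounds}, and there are fewer than $|H_0| < q^3$ of them, contributing $O(q^{-3})$. Finally, field automorphisms contribute $O(q^{-1}\log^2 q)$, which can be sharpened for small $q$ by a direct count using $|x^G \cap H| \leqs |{\rm L}_{2}(q):{\rm L}_{2}(q^{1/r})|$ against $|x^G| = |G_2(q):G_2(q^{1/r})|$. Summing yields $\mathcal{Q}(G,H) < 1$ for all $q \geqs 7$.

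The main obstacle in both cases is the embedding analysis: in Case 1, confirming via Proposition \ref{p:root} (applied to the factorization $G_2 < F_4 < E_6$) that no long root element of $E_6^{\e}(q)$ lies in the $G_2(q)$ subgroup; in Case 2, verifying that the principal ${\rm L}_{2}(q)$ consists, apart from the identity, of regular unipotent and generic semisimple elements of $G_2(q)$, which amounts to inspecting the principal $A_1$-decomposition of the $7$- and $14$-dimensional modules for $G_0$. Once these geometric facts are in hand, the computation reduces to the routine bookkeeping used throughout Section \ref{s:mr} and the present section.
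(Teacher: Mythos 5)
Your outline for $(G_0, {\rm soc}(H)) = (E_6^{\e}(q), G_2(q))$ is essentially the paper's argument, though the paper establishes the absence of long root elements by citing Lawther \cite[Section 5.5]{Law09} rather than Proposition \ref{p:root}; note that Proposition \ref{p:root}(i) only tells you that a long root element of $E_6$ lying in $G_2$ would have to be a root element of $G_2$, so to finish you would still need to check that root elements of this particular $G_2 < E_6$ are not long root elements of $E_6$.

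In the second case, however, there is a genuine numerical gap. You bound the semisimple elements of odd prime order $r \ne p$ by $a < |H_0| < q^3$ and $b = (q-1)q^5$ (which is the bound $\ell_4$ from Proposition \ref{p:bounds}), and claim the contribution is $O(q^{-3})$. But $a^2/b < q^6/((q-1)q^5) = q/(q-1)$, which tends to $1$, not to $0$; indeed for $q = 7$ this quantity exceeds $1$, and the estimate $\mathcal{Q}(G,H) < 1$ does not follow. The bound $\ell_4 = (q-1)q^5$ is attained only by order-$3$ elements (with $C_{\bar{G}}(x) = A_2$ or $\tilde{A}_2$), so lumping all odd-order semisimple elements together with this loose lower bound on $|x^G|$, while using the crude count $|H_0|$, loses a factor of $q^3$. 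The paper avoids this by treating order-$3$ elements separately: $i_3(H_0) \leqs q(q+1)$, paired with $|x^G| \geqs q^3(q^3-1)$, gives a contribution $\approx q^{-2}$; everything else (including semisimple elements of order $\geqs 5$, whose classes actually have size $\gtrsim q^{10}$) is then bounded using $b = q^3(q+1)(q^3+1)$ (minimal for involutory field automorphisms) together with the trivial count $|H| \leqs q(q^2-1)\log_7 q$, giving $\approx \log^2 q / q$. Your argument would work if you made the analogous split, e.g., bounding $i_3(H_0)$ via Proposition \ref{p:i23}(i) and observing that semisimple elements of order $\geqs 5$ have $C_{\bar{G}}(x)$ of type $A_1 T_1$, $\tilde{A}_1 T_1$ or $T_2$ only, forcing $|x^G| \gtrsim q^{10}$.
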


\begin{proof}
First assume $G_0 = G_2(q)$ and ${\rm soc}(H) = {\rm L}_{2}(q)$, in which case $p \geqs 7$. If $x \in H_0$ has order $3$ then $|x^G| \geqs q^3(q^3-1) = b_1$ and we record that $i_3(H_0) \leqs q(q+1) = a_1$. As explained in \cite[Section 5.1]{Law09}, the elements of order $p$ in $H_0$ are regular in $G$. Therefore, if $x \in H$ has prime order and $x$ is not a semisimple element of order $3$, then $|x^G| \geqs q^3(q+1)(q^3+1)=b_2$ (minimal if $x$ is an involutory field automorphism) and we note that $|H| \leqs q(q^2-1).\log_7q = a_2$. This gives $\mathcal{Q}(G,H) < a_1^2/b_1+a_2^2/b_2<q^{-1/2}$ and we are done. 

Finally, let us assume $G_0 = E_6^{\e}(q)$ and ${\rm soc}(H) = G_2(q)$. From \cite[Section 5.5]{Law09}, we see that $H$ does not contain any long root elements of $G_0$. Let $x \in H$ be an element of prime order $r$. If $r \geqs 3$ then $|x^G|>(q-1)q^{31}=b_1$ and we note that $|H| < 2\log_2q.q^{14}=a_1$. On the other hand, if  $r=2$ then $|x^G|>\frac{1}{3}(q-1)q^{25}=b_2$ and Proposition \ref{p:i23}(i) implies that $i_2(H) \leqs i_2({\rm Aut}(G_2(q))) < 2(q+1)q^{7} = a_2$. Therefore, $\mathcal{Q}(G,H) < a_1^2/b_1+a_2^2/b_2<q^{-1/2}$ and the result follows.
\end{proof}

We are now in a position to complete the proof of Proposition \ref{p:typeIbase}.

\begin{proof}[Proof of Proposition \ref{p:typeIbase}]
In each of the remaining cases we find that the trivial bound $\mathcal{Q}(G,H) < a_1^2/b_1$ is sufficient, where $a_1 = |{\rm Aut}({\rm soc}(H))|$ is an upper bound on $|H|$ and $b_1$ is the size of the smallest nontrivial conjugacy class in $G$, which can be read off from Proposition \ref{p:bounds}. For example, if $G_0 = E_6^{\e}(q)$ and ${\rm soc}(H) = {\rm L}_{3}^{\pm}(q)$, then $p \geqs 5$ and the bounds
\[
|H| \leqs |{\rm Aut}({\rm L}_{3}^{\pm}(q))| < 2\log_5q.q^8 = a_1,\;\; |x^G| > (q-1)q^{21} = b_1
\]
are sufficient. 
\end{proof}

\section{Subfield subgroups}\label{s:part2}

In this section, we prove Theorem \ref{t:main} when $H$ is one of the subgroups in part (II) of Theorem \ref{t:types}. In particular, $G_0$ is one of the groups in \eqref{e:T} and $H$ is either a subfield subgroup corresponding to a maximal subfield of $\mathbb{F}_q$, or $H$ is a twisted subgroup of type ${}^2E_6(q^{1/2})$ or ${}^2F_4(q)$ with $G_0 = E_6(q)$ or $F_4(q)$ (for $q=2^{2m+1}$), respectively. The remaining subfield subgroups will be handled in Section \ref{s:part4}.

Our main result is the following.

\begin{thm}\label{t:sub}
If $H$ is a Type II subgroup of $G$, then $G$ is not extremely primitive.
\end{thm}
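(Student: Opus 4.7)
The strategy is to show that $b(G,H) = 2$ in every case, whence extreme primitivity is ruled out by Lemma \ref{l:base2}. Following Theorem \ref{t:types}(II), the Type II subgroups split into (a) standard subfield subgroups with $q = q_0^r$ for a prime $r$, where $H_0$ is of the same (possibly twisted) type as $G_0$ over $\mathbb{F}_{q_0}$, and (b) the twisted embeddings ${}^2E_6(q^{1/2}) < E_6(q)$ and ${}^2F_4(q) < F_4(q)$. In all cases $|H| \leqs c\log_p q \cdot q^{\dim \bar{G}/r}$ for a small absolute constant $c$, which is considerably smaller than $|G_0|$.

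For each $G$-class representative $x_i \in H$ of prime order, the standard fixed point ratio estimates for subfield-type subgroups (see \cite[Section 4]{LLS2}) yield $|x_i^G \cap H| \leqs \gamma(x_i) \cdot q^{\dim x_i^{\bar{G}}/r}$, where $\gamma(x_i)$ depends only on the type of $x_i$ and on the Weyl group of $\bar{G}$. Combined with $|x_i^G| \geqs q^{\dim x_i^{\bar{G}}}$ from Proposition \ref{p:bounds}, the contribution of $x_i^G$ to $\mathcal{Q}(G,H)$ in \eqref{e:QGH} is at most $\gamma(x_i)^2 \cdot q^{\dim x_i^{\bar{G}}(2/r - 1)}$. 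For $r \geqs 3$ the exponent is strictly negative, and summing over the types of prime-order classes using Proposition \ref{p:i23} together with the detailed class data in \cite{LieS, Lubeck} yields $\mathcal{Q}(G,H) \to 0$ as $q \to \infty$, so Lemma \ref{l:base} applies.

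The main obstacle is the case $r = 2$ (which covers both standard quadratic subfield subgroups and the twisted embeddings in (b)); here the exponent $\dim x^{\bar{G}}(2/r - 1)$ vanishes and a naive class count is insufficient. To handle this case the plan is to proceed as in Section \ref{s:mr}, partitioning $H$ by the type of $x$ (long or short root element, semisimple of small or large centralizer, and field or graph automorphism when present), and invoking sharper refinements of the fixed point ratio bound that exploit a genuine saving coming from the structure of $C_{\bar{G}}(x)$, together with direct counts of long root elements inside $H$ via Proposition \ref{p:root} and the formulas of \cite{LieS}. In this way the dominant terms are of the form $q^{-\delta}$ for some positive $\delta$ (bounded below independently of the case), and $\mathcal{Q}(G,H) < 1$ will follow for all but finitely many $q$.

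The remaining small-$q$ cases not covered by the asymptotic argument will be treated computationally following Section \ref{ss:comp} and \cite{BTh_comp}: for $G_0 \in \{G_2(q), {}^3D_4(q), F_4(q), {}^2F_4(q)\}$ at small $q$ one constructs $G$ and $H$ as permutation groups in {\sc Magma} and exhibits $g$ with $H \cap H^g = 1$ by random search, while for the larger exceptional groups $H$ is built Lie-theoretically inside an ambient group of Lie type, where Jordan forms of prime-order elements on the adjoint module give the precise control on $|x^G \cap H|$ needed to verify $\mathcal{Q}(G,H) < 1$. The most delicate case, and the one most likely to require individual attention, is $G_0 = E_6(q)$ with $H_0 = {}^2E_6(q^{1/2})$ at small $q$, where the ratio $|H|^2/|G|$ is closest to $1$ and the margin in the fixed point ratio argument is smallest.
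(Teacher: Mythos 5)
Your proposed strategy---show $b(G,H)=2$ in every case---works for the cases with $q=q_0^k$ and $k\geqs 3$ (and indeed the paper does exactly this for $G_2(q_0)<G_2(q)$ with $k\geqs 3$), but it cannot succeed when $k=2$, nor for the twisted embeddings ${}^2E_6(q^{1/2})<E_6(q)$ and ${}^2F_4(q)<F_4(q)$. In all of these cases one has $|H|\approx |G|^{1/2}$, and the contribution to $\mathcal{Q}(G,H)$ from long root elements $x$ is already $\Theta(1)$: indeed $|x^G\cap H| \approx q^{\dim x^{\bar{G}}/2}$ and $|x^G| \approx q^{\dim x^{\bar{G}}}$, so $|x^G|\cdot{\rm fpr}(x,G/H)^2 \approx 1$, and the implied constants do not push this below $1$. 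There is no ``sharper refinement exploiting the structure of $C_{\bar{G}}(x)$'' to be had here---the obstruction is genuine, not a weakness of the estimate. In fact, the paper does not claim $b(G,H)=2$ in these situations; Proposition \ref{p:mr_base} and the proofs in Section \ref{s:part2} pointedly avoid doing so, and for $G_2(q_0^2)$ one does not expect it.

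What the paper does instead is entirely non-probabilistic for the hard cases. For $G_2(q)$ with $k=2$ it reads Lawther's subdegree tables \cite{Law90} to find an explicit subdegree $\frac{1}{2}q^2(q^6-1)(q^2-1)$, giving $|H_0\cap H_0^g|=2q^4$ for some $g$, and then checks against \cite{BHR} that no maximal subgroup of $H$ has this intersection with $H_0$. For $F_4,E_6^{\e},E_7,E_8$ and for the twisted cases, the paper applies Lemma \ref{l:simple} with an explicitly constructed subgroup $K=T\circ Y$ built from the fixed points of a $\psi$-stable subsystem subgroup $\bar{M}=\bar{X}\bar{Y}$ of type $A_1 C_3$, $A_1 A_5$, $A_1 D_6$ or $A_1 E_7$, together with the identification of $\bar{M}_\psi\cap H_0$ (or ${\rm Sp}_8(q_0)$ if $(\bar{G},p)=(F_4,2)$) as the unique maximal overgroup of $K$ in $H_0$ (Lemma \ref{l:claim1}); the element $g\in\bar{X}_{\psi^k}$ centralizing $T$ but not normalizing $\bar{X}_\psi$ is then shown to move that overgroup off $H_0$. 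This structural argument applies uniformly to all primes $k$, including $k=2$, and is a different mechanism from anything in your proposal. To complete a proof you would need to abandon the base-size plan for $k=2$ and the twisted cases and replace it with an argument of this kind (or a subdegree argument where suitable tables exist).
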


We partition the proof of Theorem \ref{t:sub} into a sequence of lemmas.

\begin{lem}\label{g2:subfield}
The conclusion to Theorem \ref{t:sub} holds when $G_0 = G_2(q)$ and $H$ is of type $G_2(q_0)$.
\end{lem}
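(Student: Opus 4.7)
Write $q=q_0^r$ for a prime $r$ and set $H_0 = H\cap G_0 = G_2(q_0)$; by Theorem \ref{t:small} we may assume $q_0 \geqs 3$ (so $G_2(q_0)$ is simple and $q \geqs 9$). Write $H = H_0.A$, where $A$ is the group of outer automorphisms induced on $H_0$ by $G$, so $|A| \leqs 2\log_p q$ and $|H| \leqs 2\log_p q \cdot q_0^{14}$. The strategy is to show $b(G,H)=2$ via the probabilistic approach of Lemma \ref{l:base} and then invoke Lemma \ref{l:base2}; when this is not directly feasible, we fall back on a structural argument using Lemma \ref{l:simple}.

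For the main probabilistic step, suppose $x \in H$ has prime order. The key estimate is
\[
\frac{|x^G \cap H|^2}{|x^G|} \leqs \frac{(k\cdot |x^{H_0}|\cdot|A|)^2}{|x^G|},
\]
where $k$ is the (bounded) number of $H_0$-classes fused into $x^G \cap G_0$. For each of the (bounded number of) $G$-classes of unipotent or semisimple elements meeting $H_0$, one has $|x^{H_0}| \approx q_0^{d}$ and $|x^G| \gtrsim q^d (1-q^{-1})$, where $d$ is the dimension of the corresponding $\bar G$-class (given by Proposition \ref{p:bounds} and the tables in \cite{LieS, Lubeck}). Consequently each class contributes $O(q_0^{d(2-r)})$. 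For field and (when $p=3$) graph automorphisms in $H\setminus H_0$, a field-automorphism contribution is controlled via $|x^G|> \a q^{14(1-1/\ell)}$ and $|x^G \cap H| \leqs |H_0|$. For $r \geqs 3$ every class contributes $O(q_0^{-d}) = O(q_0^{-6})$, so summing gives $\mathcal{Q}(G,H) = O(q_0^{-2})$, which is less than $1$ for all admissible $q_0$; this handles $r \geqs 3$.

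The case $r=2$ is the main obstacle, because for every conjugacy class of prime-order elements in $H_0$ the bound above is only $O(1)$, and summing over the several classes of unipotent and semisimple elements in $G_2(q_0)$ can exceed $1$. To handle this case I would apply Lemma \ref{l:simple}. Let $K = {\rm SL}_{3}^{\e}(q_0)$ be contained in its normalizer $M = K.2$, a maximal subgroup of $H_0$ of type $A_2^\e(q_0)$; by inspection of the maximal subgroups of $G_2(q_0)$ in \cite{K88, Coop}, $M$ is the unique maximal overgroup of $K$ in $H_0$, and both $K$ and $M$ are $A$-stable. The subgroup $K$ is contained in the subsystem subgroup ${\rm SL}_{3}^{\e}(q).2$ of $G_0$, and $N_{{\rm SL}_{3}^{\e}(q)}(K) = K \cdot Z({\rm SL}_{3}^{\e}(q))$; an appropriate $g$ can then be produced by combining an element of the center $Z({\rm SL}_{3}^{\e}(q))$ (when it strictly extends $Z({\rm SL}_{3}^{\e}(q_0))$) with an element of ${\rm SL}_{3}^{\e}(q)$ that conjugates the graph-type involution in $M\setminus K$ outside $H_0$. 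Since $M$ is the unique maximal overgroup of $K$ in $H_0$, any $M^g\neq M$ automatically satisfies $M^g \not\leqs H_0$, and Lemma \ref{l:simple} gives the conclusion.

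The hard part will be carrying out the construction of $g$ in the $r=2$ case uniformly, particularly across the subcases $q_0 \equiv \pm \e \pmod{3}$ (which control the size of the center $Z({\rm SL}_{3}^{\e}(q))$) and $p=3$ (where the graph automorphism of $G_0$ intrudes). For a small number of cases with $q_0$ too small for the uniform argument, I would fall back on a direct \textsc{Magma} computation constructing $H$ inside $G$ and exhibiting either $g$ with $H \cap H^g = 1$ or the non-maximal intersection required by Lemma \ref{l:simple}, in the style of \cite[Section 2]{BTh_comp}.
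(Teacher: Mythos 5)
Your plan correctly splits into $r$ odd (probabilistic base-two argument) versus $r=2$ (where the probabilistic bound fails because $|H_0|^2 \approx |G_0|$), and your treatment of odd $r$ matches the paper's strategy in spirit (the paper's $k=3$ case requires a fairly delicate accounting of long/short root elements, the $(\tilde A_1)_3$ class, graph-field automorphisms, etc., which your $O(q_0^{-d})$ sketch glosses over, but that is a matter of bookkeeping). The genuine divergence is at $r=2$, and there your argument has a real gap.

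For $r=2$ the paper does not use Lemma \ref{l:simple} at all. Instead it reads off from Lawther's subdegree computation \cite[Tables 2--4]{Law90} that $H_0 = G_2(q_0)$ has an orbit on $G_0/H_0$ of length $\tfrac12 q_0^2(q_0^6-1)(q_0^2-1)$, hence there is $g\in G_0$ with $|H_0\cap H_0^g| = 2q_0^4$; then one checks against \cite[Tables 8.30, 8.41, 8.42]{BHR} that no maximal subgroup $M$ of $H$ satisfies $|M\cap H_0| = 2q_0^4$, so $H\cap H^g$ is non-maximal. This is uniform in $q_0$ and requires no construction of $g$. By contrast, your proposed Lemma \ref{l:simple} argument with $K={\rm SL}_3^\e(q_0)$ hinges on producing $g\in N_{G_0}(K)$ with $M^g\neq M$, and your description of how to obtain $g$ is not a proof: the strategy "combine an element of $Z({\rm SL}_3^\e(q))$ strictly extending $Z({\rm SL}_3^\e(q_0))$ with an element conjugating the graph involution outside $H_0$" fails whenever the centers coincide, which happens for entire congruence classes of $q_0$ (for instance $\e=+$ with $q_0\equiv 1\imod 3$, and all of $p=3$ where $Z(A_2)$ is trivial). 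In those cases $C_{G_0}(K)$ need not contain anything outside $Z(K)$, so it is not clear $N_{G_0}(K)$ contains anything beyond $M$ at all, and no alternative source of $g$ is offered. A {\sc Magma} fallback handles only finitely many $q_0$, not a residue class, so this is not a repair. You should replace the $r=2$ step with the subdegree argument from \cite{Law90}, which is what the paper does.
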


\begin{proof}
Here $H_0 = G_2(q_0)$ and $q=q_0^k$, where $k$ is a prime. For $k$ odd, we claim that $b(G,H) = 2$, so $G$ is not extremely primitive by Lemma \ref{l:base2}. 

First assume $k \geqs 5$. If $x \in G_0$ is either a long root element (or a short root element when $p=3$) or a semisimple element of order $3$ with $C_{\bar{G}}(x) = A_2$, then $|x^G|>(q-1)q^5=b_1$ and we note that $H$ contains fewer than $2q_0^6 \leqs 2q^{6/5}=a_1$ such elements. For all other nontrivial elements $x \in G$ we have $|x^G|>q^{7}=b_2$ and thus $\mathcal{Q}(G,H) < a_1^2/b_1+a_2^2/b_2$, where $a_2 = 2\log_2q.q^{14/5}$ is an upper bound on $|H|$. One checks that the given upper bound on $\mathcal{Q}(G,H)$ is always less than $1$ (and it tends to zero as $q$ tends to infinity).

A more detailed analysis is required when $k=3$. Let $x \in G_0$ be an element of prime order $r$ and first assume $r=2$. If $p \ne 2$ then $G_0$ and $H_0$ both have a unique conjugacy class of involutions and we get $|x^G \cap H|<2q^{8/3}=a_1$ and $|x^G|>q^8 = b_1$. Similarly, if $p=2$ and $x$ is a long root element, then $|x^G \cap H|<q^2 = a_2$ and $|x^G|>(q-1)q^5=b_2$, whereas $|x^G \cap H|<q^{8/3} = a_3$ and $|x^G|>(q-1)q^7=b_3$ if $x$ is a short root element. 

Next assume $r=p \geqs 3$. As above, the contribution from long root elements is less than $a_2^2/b_2$. Similarly, short root elements contribute at most $a_2^2/b_2$ if $p=3$ and at most $a_3^2/b_3$ if $p \geqs 5$. Since $a_3^2/b_3<a_2^2/b_2$, it follows that the combined contribution from long and short root elements is less than $2a_2^2/b_2$ for all $p$. If $p=3$ and $x$ is in the class labelled $(\tilde{A}_1)_3$ in \cite[Table 22.2.6]{LieS} then $|x^G\cap H|<a_3$ and $|x^G|>b_3$. For all other unipotent elements, we have $|x^G|>\frac{1}{7}q^{10}=b_4$ and we note that $H_0$ contains precisely $q^4 = a_4$ unipotent elements (see Proposition \ref{p:i23}(ii)). 

Now assume $x \in G_0$ is semisimple with $r \geqs 3$. If $r=3$ and $C_{\bar{G}}(x) = A_2$ then $|x^G \cap H| < 2q^2 = a_5$ and $|x^G|>(q-1)q^5=b_5$. In every other case, $|x^G|>(q-1)q^9 = b_6$ and we note that $|H_0|<q^{14/3}=a_6$.

To complete the analysis for $k=3$, we may assume $x$ is a field or graph automorphism of $G_0$. Suppose $x$ is a field automorphism of order $r$, so $q=q_1^r$. If $r=2$ then $|x^G|>q^7=b_7$ and there are fewer than $2q^{7/3}=a_7$ such elements in $H$. Similarly, if $r \geqs 5$ then $|x^G|>\frac{1}{2}q^{56/5}=b_8$ and we note that $|H|<2\log_2q.q^{14/3}=a_8$. If $r=3$ then $|x^G|>\frac{1}{2}q^{28/3}=b_9$ and we may assume $x$ centralizes $H_0$, so there are precisely $2(i_3(H_0)+1)$ such elements in $H$. By applying Proposition \ref{p:i23}(i), we deduce that 
$2(i_3(H_0)+1) \leqs 4(q^{1/3}+1)q^3=a_9$. Finally, if $x$ is an involutory graph-field automorphism of $G_0$ then $x$ acts as an involutory graph-field automorphism on $H_0$, whence $|x^G \cap H|<2q^{7/3} = a_{10}$ and $|x^G|>q^{7}=b_{10}$.

Bringing the above bounds together, we conclude that
\[
\mathcal{Q}(G,H) < a_2^2/b_2 + \sum_{i=1}^{10}a_i^2/b_i
\]
and one checks that this bound is sufficient. 

Finally, let us assume $k=2$. In \cite{Law90}, Lawther calculates the subdegrees for $G_0$ and by inspecting \cite[Tables 2-4]{Law90} we find that $H_0$ has an orbit of length $\frac{1}{2}q^2 (q^6-1)(q^2-1)$. In particular, there exists $g \in G_0$ such that $|H_0 \cap H_0^g| = 2q^4$. By inspecting \cite[Tables 8.30, 8.41 and 8.42]{BHR}, we see that $H$ does not have a maximal subgroup $M$ with $|M \cap H_0| = 2q^4$. Therefore, $H \cap H^g$ is a non-maximal subgroup of $H$ and we conclude that $G$ is not extremely primitive.
\end{proof}

\begin{lem}\label{g2:ree}
The conclusion to Theorem \ref{t:sub} holds when $G_0 = G_2(q)$ and $H$ is of type ${}^2G_2(q)$.
\end{lem}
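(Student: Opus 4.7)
Since $H$ is of type ${}^2G_2(q)$ we have $q = 3^{2m+1}$, and because the statement of Theorem \ref{t:main} excludes ${}^2G_2(3)' \cong {\rm L}_2(8)$, we may assume $m \geqs 1$, so $q \geqs 27$ and $H_0 := {}^2G_2(q)$ is simple. Write $H = H_0.A$, where $A \leqs {\rm Out}(H_0)$ is a cyclic group of field automorphisms of $3$-power order.

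The plan is to mirror the strategy used in the $k=2$ case of the proof of Lemma \ref{g2:subfield}. Indeed, attempting a direct probabilistic bound via Lemma \ref{l:base} fails here: the unique $H_0$-class of involutions fuses to the $G_0$-class of semisimple involutions with $|x^G| \sim 2q^4(q^4+q^2+1)$, and the long root class of order $3$ in $H_0$ (of $H_0$-size $(q^2-1)(q^2-q+1)$) fuses to the long root class of $G_0$ (of $G_0$-size $q(q+1)(q^6-1)$), each contributing $\Theta(1)$ to $\mathcal{Q}(G,H)$, so one cannot in general conclude $b(G,H) = 2$. Instead I would appeal to Lawther's computation in \cite{Law90} of the subdegrees of the primitive action of $G_2(q)$ on $G_2(q)/{}^2G_2(q)$: from his tables I would select a subdegree $s$ yielding $g \in G_0$ with $|H_0 \cap H_0^g| = |H_0|/s$, chosen so that this order is not of the form $|M \cap H_0|$ for any core-free maximal subgroup $M$ of $H$. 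The maximal subgroups of $H_0$ were determined by Kleidman \cite{K88}; they are the parabolic of shape $[q^3]{:}Z_{q-1}$, the involution centralizer $2 \times {\rm L}_2(q)$, the subgroup $(2^2 \times D_{(q+1)/2}){:}3$, the torus normalizers $(q \pm \sqrt{3q}+1){:}6$, and the subfield subgroups ${}^2G_2(q_0)$ for $q = q_0^r$ with $r$ an odd prime. Each core-free maximal subgroup of $H$ either meets $H_0$ in one of these, or contains $H_0$ with prime index in $A$; in either case $|M \cap H_0|$ is tightly constrained as a function of $q$. With the correct choice of $s$, no such $M$ satisfies $|M \cap H_0| = |H_0|/s$, so $H \cap H^g$ is a non-maximal subgroup of $H$ and $G$ is not extremely primitive.

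The main obstacle will be selecting the subdegree $s$ in Lawther's tables so that the resulting intersection order $|H_0|/s$ is provably distinguishable from $|M \cap H_0|$ for every maximal overgroup $M$ of $H_0 \cap M$ in $H$, uniformly in $q$. This reduces to a finite list of arithmetic divisibility checks involving the factors $q - 1$, $q + 1$, $q \pm \sqrt{3q} + 1$ and $q^3$. For the small cases $q = 27$ (and possibly $q = 243$), where the generic asymptotic argument may not apply cleanly, I would fall back on a direct computation in \textsc{Magma} along the lines of Section \ref{ss:comp} and \cite{BTh_comp}: construct $H$ as the normalizer in $G$ of ${}^2G_2(q)$ inside $G_2(q)$ via fixed points of the exceptional graph automorphism, and locate $g \in G_0$ by random search with $H \cap H^g$ non-maximal in $H$.
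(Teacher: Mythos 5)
Your strategy is exactly the one the paper uses: apply Lawther's subdegree computation \cite{Law90} for the action of $G_2(q)$ on $G_2(q)/{}^2G_2(q)$ to produce an explicit intersection $H_0 \cap H_0^g$, then check against Kleidman's list of maximal subgroups of ${}^2G_2(q)$ that this intersection has the wrong order to be $M \cap H_0$ for any core-free maximal $M$ of $H$. You also correctly diagnose why a base-size-$2$ argument via Lemma~\ref{l:base} would fail (long root elements of $G_0$ lying in $H_0$ give a $\Theta(1)$ contribution to $\mathcal{Q}(G,H)$), which is a useful sanity check.

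However, your proposal stops short at exactly the point that needs to be carried out: you write ``I would select a subdegree $s$\ldots'' and flag the selection and the arithmetic verification as ``the main obstacle,'' without resolving it. This is the whole content of the proof, so as written there is a genuine gap. To close it: \cite[Table 1]{Law90} records a subdegree $s = (q^3+1)(q-1)$, so there is $g \in G_0$ with $|H_0 \cap H_0^g| = |H_0|/s = q^3$. Comparing with the maximal subgroups of $H_0 = {}^2G_2(q)$ (Kleidman \cite{K88}, or \cite[Table 8.43]{BHR}) --- the parabolic $[q^3]{:}Z_{q-1}$ of order $q^3(q-1)$, the involution centralizer $2 \times {\rm L}_2(q)$ of order $q(q^2-1)$, $(2^2 \times D_{(q+1)/2}){:}3$ of order $6(q+1)$, the torus normalizers $(q \pm \sqrt{3q}+1){:}6$, and subfield subgroups ${}^2G_2(q_0)$ with $q = q_0^r$ --- none has order $q^3$, and every core-free maximal subgroup $M$ of $H = H_0.A$ with $M \cap H_0 \ne H_0$ meets $H_0$ in one of these, so $H \cap H^g$ is non-maximal in $H$. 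Once this is done there is no need for a \textsc{Magma} fallback at small $q$: the arithmetic argument is uniform, so that part of your plan is superfluous.
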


\begin{proof}
Here $p=3$ and $H_0 = {}^2G_2(q)$. The subdegrees for the action of $G_0$ are recorded in \cite[Table 1]{Law90} and we see that $H_0$ has an orbit of length $(q^3+1)(q-1)$. Therefore, there exists an element $g \in G_0$ such that $|H_0 \cap H_0^g| = q^3$. By inspecting \cite[Table 8.43]{BHR}, we deduce that there is no maximal subgroup $M$ of $H$ with $|M\cap H_0| = q^3$. The result follows.
\end{proof}

Now let us turn to the remaining subfield subgroups that we are handling in this section, so one of the following holds:
\begin{itemize}\addtolength{\itemsep}{0.2\baselineskip}
\item[{\rm (a)}] $G_0 = L(q)$ and $H_0$ is of type $L(q_0)$, where $L \in \{F_4,E_6^{\e},E_7,E_8\}$ and $q=q_0^k$ with $k$ a prime.
\item[{\rm (b)}] $G_0 = E_6(q)$, $q=q_0^2$ and $H_0$ is of type ${}^2E_6(q_0)$.
\item[{\rm (c)}] $G_0 = F_4(q)$, $q=2^{2m+1}$ and $H_0 = {}^2F_4(q)$.
\end{itemize}

First let us consider the cases in (a). As before, let $\bar{G}$ be the ambient simple algebraic group and fix a Steinberg endomorphism $\psi$ of $\bar{G}$ such that ${\rm soc}(H_0) = (\bar{G}_{\psi})'$ and $G_0 = (\bar{G}_{\psi^k})'$. Here $\psi = \sigma\tau$, where $\sigma$ is a standard Frobenius morphism corresponding to the map $\l \mapsto \l^{q_0}$ on $\mathbb{F}_q$ and either $\tau = 1$, or $G_0 = {}^2E_6(q)$, $k$ is odd and $\tau$ is an involutory graph automorphism of $\bar{G}$. 

\begin{rem}\label{r:h0}
Let us record that $H_0$ is simple, unless one of the following holds:
\begin{itemize}\addtolength{\itemsep}{0.2\baselineskip}
\item[{\rm (i)}] $G_0 = E_6^{\e}(q)$, $k=3$ and $q \equiv \e \imod{3}$, in which case $H_0 = {\rm Inndiag}(E_6^{\e}(q_0))$.
\item[{\rm (ii)}] $G_0 = E_7(q)$, $k=2$ and $q$ is odd, in which case $H_0 = {\rm Inndiag}(E_7(q_0))$.
\end{itemize}
\end{rem}

Set $\bar{X} = \la X_{\pm \a_0} \ra$ and $\bar{Y} = C_{\bar{G}}(\bar{X})^0$, where $\a_0$ is the highest root of $\bar{G}$. Then $\bar{M} = \bar{X}\bar{Y}$ is a maximal rank subgroup of $\bar{G}$ of type $A_1C_3$, $A_1A_5$, $A_1D_6$ or $A_1E_7$ for $\bar{G} = F_4, E_6, E_7$ or $E_8$, respectively.
Since $\bar{X}$ is $\psi$-stable, it follows that $\bar{M}$ is $\psi$-stable and by taking fixed points we get
\[
\bar{M}_\psi 
= \left\{\begin{array}{ll}
d.(\text{L}_2(q_0) \times \text{PSp}_6(q_0)).d & \mbox{if $\bar{G}=F_4$} \\
d.(\text{L}_2(q_0) \times \text{L}^\epsilon_6(q_0)).de & \mbox{if $\bar{G}=E_6$} \\
d.(\text{L}_2(q_0) \times \text{P}\Omega_{12}^+(q_0)).d & \mbox{if $\bar{G}=E_7$} \\
d.(\text{L}_2(q_0) \times E_7(q_0)).d & \mbox{if $\bar{G}=E_8$,} 
\end{array}\right.
\]
where $d=(2,q-1)$ and $e=(3,q-\e)$. By inspecting \cite[Table 5.1]{LSS}, it follows that $\bar{M}_{\psi}$ is a maximal subgroup of $\bar{G}_{\psi}$, unless $(\bar{G},p) = (F_4,2)$, in which case $\bar{M}_{\psi}<{\rm Sp}_{8}(q_0)<\bar{G}_{\psi}$. Set 
\[
M = \left\{\begin{array}{ll}
{\rm Sp}_{8}(q_0) & \mbox{if $(\bar{G},p) = (F_4,2)$} \\
\bar{M}_\psi \cap H_0 & \mbox{otherwise}
\end{array}\right.
\]
and $Y = (\bar{Y}_\psi)'$, so that $\text{SL}_2(q_0) \circ Y$ is a subgroup of $M$. We also set $K = T \circ Y$, where $T = Z_{q_0+1}$ is a maximal torus of ${\rm SL}_{2}(q_0)$. 

\begin{lem}\label{l:claim1}
The only maximal overgroup of $K$ in $H_0$ is $M$. In addition, if $(\bar{G},p) = (F_4,2)$ then $\bar{M}_\psi$ is the unique maximal overgroup of $K$ in $M$. 
\end{lem}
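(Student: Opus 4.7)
The plan is to reduce the problem to identifying all maximal overgroups in $H_0$ of the quasisimple subgroup $Y$, since $K = T \circ Y$ contains $Y$ and any overgroup of $K$ automatically contains $Y$. The point is that $Y$ is one of ${\rm PSp}_{6}(q_0)$, ${\rm L}_{6}^{\e}(q_0)$, ${\rm P\O}_{12}^{+}(q_0)$ or $E_7(q_0)$ according as $\bar{G} = F_4, E_6, E_7$ or $E_8$, and in each case $Y$ is a ``large'' subsystem-type quasisimple subgroup of $H_0 \sim \bar{G}_{\psi}$, so very few maximal subgroups of $H_0$ can contain it. Once the maximal overgroups of $Y$ are identified, I would then check that only $M$ contains the extra torus factor $T$ as well.

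More concretely, I would apply the classification of maximal subgroups of $\bar{G}_{\psi}$ (i.e.\ the analogue of Theorem \ref{t:types} with $q_0$ in place of $q$), going case by case through subgroups of types I, II, III, IV and V, together with parabolic and maximal rank subgroups, and ruling out every candidate that does not contain a subgroup isomorphic to $Y$. A parabolic subgroup is ruled out because $Y$ is semisimple of the wrong shape; Type II, III and IV subgroups are ruled out by orders (they are too small); Types I and V can be excluded by inspecting \cite[Table 3]{LS03} and \cite[Tables 10.1--10.4]{LS03} and comparing socles. Among the maximal rank subgroups listed in \cite[Tables 5.1 and 5.2]{LSS}, the only one containing $Y$ is $N_{\bar{G}_{\psi}}(\bar{Y})$, whose $H_0$-intersection is precisely $M$ (except in the $(\bar{G},p) = (F_4,2)$ case, where $N_{\bar{G}_{\psi}}(\bar{Y}) = \bar{M}_{\psi}$ is contained in the further maximal subgroup ${\rm Sp}_{8}(q_0) = M$). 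Since $T \leqs \bar{X}_{\psi} \leqs N_{\bar{G}_{\psi}}(\bar{Y})$, the subgroup $K$ actually sits inside $M$ as required, and the uniqueness follows.

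The main obstacle will be the bookkeeping in the two cases described in Remark \ref{r:h0}, where $H_0 = {\rm Inndiag}(\bar{G}_{\psi})$ rather than $\bar{G}_{\psi}$ itself. Here I would argue that any maximal overgroup $L$ of $K$ in $H_0$ satisfies $L \cap \bar{G}_{\psi} \geqs Y$, so $L \cap \bar{G}_{\psi}$ lies in one of the maximal overgroups of $Y$ already identified; the diagonal automorphisms normalize both $\bar{Y}$ and $\bar{X}$, so the corresponding normalizers in $H_0$ give back exactly $M$. For the second assertion, we work inside ${\rm Sp}_8(q_0)$ and observe that the maximal overgroups of $\bar{M}_{\psi} = ({\rm SL}_{2}(q_0) \circ {\rm Sp}_6(q_0)).d$ are easily listed via Aschbacher's theorem for symplectic groups (see \cite[Table 8.48]{BHR}): the only one is the stabilizer $\bar{M}_{\psi}$ itself of the orthogonal decomposition $V = V_2 \perp V_6$ into non-degenerate symplectic summands, so it is the unique maximal overgroup of $K$ in $M$.
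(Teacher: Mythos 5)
Your overall strategy mirrors the paper's quite closely: both proceed by ruling out parabolics, then cutting down the non-parabolic candidates to a short list and identifying $M$ as the only survivor, and both then handle the exceptional $(\bar{G},p)=(F_4,2)$ case separately. The reduction to overgroups of $Y$ rather than $K$ is a sensible simplification, and the sketch of the second assertion via \cite[Table 8.48]{BHR} is essentially the same route the paper takes.

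However, there is a genuine gap in your treatment of the case $(\bar{G},p)=(F_4,2)$. Here $H_0 = F_4(q_0)$ with $q_0$ even has \emph{two} $H_0$-classes of maximal subgroups isomorphic to ${\rm Sp}_8(q_0)$: one of type $C_4$ (this is your $M$, containing $\bar{M}_\psi = {\rm SL}_2(q_0)\times{\rm Sp}_6(q_0)$), and one of type $B_4$. Since ${\rm Sp}_6(q_0)\cong\Omega_7(q_0)$ embeds naturally in $\Omega_9(q_0)\cong{\rm Sp}_8(q_0)$, your subgroup $Y$ is contained in conjugates of \emph{both} classes, so reducing to overgroups of $Y$ does not by itself eliminate the $B_4$-type candidate, and your proposal never returns to ``check that only $M$ contains the extra torus factor $T$'' in this situation. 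The paper resolves this by comparing composition factors on the $26$-dimensional minimal module $V_{26}$: $K$ has composition factors of dimensions $14,6,6$, whereas the $B_4$-type subgroup acts with factors $16,8,1,1$, and these are incompatible. Some argument of this kind is needed to close the gap. A smaller point: the claim that Type II subgroups are excluded ``by orders (they are too small)'' is misleading — subfield subgroups such as $E_8(q_0^{1/2})$ are large; they are excluded because $|Y|$ (e.g.\ $|E_7(q_0)|\sim q_0^{133}$ vs.\ $|E_8(q_0^{1/2})|\sim q_0^{124}$) is too large to embed, which is an order comparison against $Y$, not an absolute smallness claim.
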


\begin{proof}
By construction, $K$ is contained in $M$, which in turn is a maximal subgroup of $H_0$ by \cite{LSS}. By inspection, we observe that $K$ is not contained in a parabolic subgroup of $H_0$. 

Suppose $K$ is contained in a non-parabolic maximal subgroup $J$ of $H_0$. If $H_0 = E_8(q_0)$ then $|J| \geqs |K|>q_0^{88}$ and thus \cite[Lemma 4.2]{BLS} implies that $J$ is of type $E_{8}(q_0^{1/2})$, $A_1(q_0)E_7(q_0)$ or $D_8(q_0)$ and we immediately deduce that $J$ must be $H_0$-conjugate to $M$. A very similar argument applies if $H_0 = E_7(q_0)$ or $E_6^{\e}(q_0)$, using \cite[Lemmas 4.6 and 4.13]{BLS}. For $H_0 = F_4(q_0)$ with $q_0 \geqs 3$ we appeal to \cite[Lemma 3.15]{Bur18}, noting that $|J| > q_0^{17}$, and we use \cite{NW} for $q_0=2$. Note that
if $H_0 = F_4(q_0)$ and $q_0$ is even, then $H_0$ has two classes of subgroups isomorphic to ${\rm Sp}_{8}(q_0)$, but $K$ is only contained in conjugates of $M$. To see this, first note that the subgroups in the other class are of the form $\bar{L}_{\psi}$, where $\bar{L}$ is of type $B_4$. Let $V = V_{26}$ be the irreducible $26$-dimensional module for $\bar{G} = F_4$ with highest weight $\l_4$ (in terms of the usual labelling). By \cite[Chapter 12, Table 2]{Thomas}, $K$ acts on $V$ with composition factors of dimension $14,6,6$ and this is incompatible with the action of $\bar{L}$, which has composition factors of dimension $16,8,1,1$. 

Next we claim that $K$ is contained in a unique conjugate of $M$. If $(\bar{G},p) \neq (F_4,2)$ then $Y$ is normal in $M$, which in turn is a maximal subgroup of $H_0$, so $M = N_{H_0}(Y)$. Since $N_{H_0}(K) \leqs N_{H_0}(Y)$, we quickly deduce that $K$ is contained in a unique conjugate of $M$. Indeed, suppose $K < M^g$, so $K, K^{g^{-1}} < M$. Then since $M$ contains a unique class of subgroups $H_0$-conjugate to $K$ we have $K = K^{mg}$ for some $m \in M$. But $mg \in N_{H_0}(K) \leqs M$ and thus $g \in M$.

Finally, let us assume that $(\bar{G},p) = (F_4,2)$, so 
\[
K = Z_{q_0+1} \times {\rm Sp}_{6}(q_0) < \bar{M}_{\psi} = {\rm SL}_{2}(q_0) \times {\rm Sp}_{6}(q_0) < M = {\rm Sp}_{8}(q_0) < H_0 = F_4(q_0).  
\]
We claim that $N_{H_0}(K) \leqs \bar{M}_\psi$ and the $H_0$-class of $K$ does not split in either $\bar{M}_\psi$ or $M$. Then by repeating the argument in the previous paragraph, we deduce that $M$ is the unique maximal overgroup of $K$ in $H_0$, and $\bar{M}_{\psi}$ is the unique maximal overgroup of $K$ in $M$, which completes the proof of the lemma.

To prove the claim, we first show that $M$ contains a unique class of subgroups isomorphic to $K$ and that $N_{M}(K) = K.2$. To do this, we use \cite[Tables 8.48 and 8.49]{BHR} to check that any maximal subgroup of $M$ containing a subgroup isomorphic to $K$ is necessarily conjugate to $\bar{M}_\psi$. It follows that $N_{M}(K)$ must be contained in a conjugate of $\bar{M}_\psi$ and since $N_{\bar{M}_\psi}(K) = K.2$ we conclude that 
$N_{M}(K) = K.2 \leqs \bar{M}_\psi$. Moreover, it is clear that $\bar{M}_\psi$ contains a unique class of subgroups isomorphic to $K$, so it follows that the $H_0$-class of $K$ does not split in $M$ nor in $\bar{M}_\psi$.  
Finally, let us consider $N_{H_0}(K)$. Since this is a proper subgroup of $H_0$ containing $K$, it is therefore contained in a conjugate of $M$. So $N_{H_0}(K) = N_{M^{g}}(K)$ for some $g \in H_0$ and we deduce that 
\[
N_{H_0}(K) = N_{M}(K^{g^{-1}})^g = (K^{g^{-1}}.2)^g = K.2 = N_{\bar{M}_\psi}(K) \leqs \bar{M}_{\psi}.
\]
This justifies the claim and the proof of the lemma is complete.
\end{proof} 

\begin{prop}\label{p:sub}
The conclusion to Theorem \ref{t:sub} holds if $G_0 = F_4(q)$, $E_6^{\e}(q)$, $E_7(q)$ or $E_8(q)$ and $H$ is of type $F_4(q_0)$, $E_6^{\e}(q_0)$, $E_7(q_0)$ or $E_8(q_0)$, respectively.
\end{prop}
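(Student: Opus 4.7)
The plan is to apply Lemma \ref{l:simple} with the subgroup $K = T \circ Y$ constructed immediately before the proposition. By Lemma \ref{l:claim1}, the set $\mathcal{M}$ of maximal overgroups of $K$ in $H_0$ is the singleton $\{M\}$, and both $K$ and $M$ are visibly $A$-stable since they are constructed from $\psi$- (and hence $\psi^k$-) stable algebraic-group data. It therefore suffices to produce an element $g \in N_{G_0}(K)$ with $M^g \not\leqs H_0$.

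The construction of $g$ exploits the fact that $C_{G_0}(Y) \supseteq \bar{X}_{\psi^k} = {\rm SL}_2(q)$ is strictly larger than $C_{H_0}(Y) \supseteq \bar{X}_\psi = {\rm SL}_2(q_0)$, since $k \geqs 2$. First I would fix a $\psi$-stable maximal torus $\bar{T}$ of $\bar{X}$ with $\bar{T}_\psi = T$; then $\bar{T}$ is also $\psi^k$-stable, the subgroup $\bar{T}_{\psi^k}$ is a maximal torus of $\bar{X}_{\psi^k}$ containing $T$, and $T$ is characteristic in $\bar{T}_{\psi^k}$ as its unique subgroup of order $q_0+1$. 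Consequently every element of $N = N_{\bar{X}_{\psi^k}}(\bar{T}_{\psi^k})$ (a group of order $2(q \pm 1)$) normalizes $T$, and since $N$ centralizes $\bar{Y}_{\psi^k} \supseteq Y$, every such element also normalizes $K = T \circ Y$.

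The final step, and the crux of the argument, is to pick $g \in N$ which does not normalize $M$. Assume for contradiction that $g \in N$ does normalize $M$. When $(\bar{G},p) \ne (F_4,2)$ we have $M = \bar{M}_\psi$, and identifying $\bar{X}_\psi$ intrinsically inside $\bar{M}_\psi$ as (essentially) the derived subgroup of $C_{\bar{M}_\psi}(Y)$ together with the fact that $g$ centralizes $Y$ forces $g$ to normalize $\bar{X}_\psi = {\rm SL}_2(q_0)$. In the case $(\bar{G},p)=(F_4,2)$, the second statement of Lemma \ref{l:claim1} propagates the hypothesis from $M$ to the unique maximal overgroup $\bar{M}_\psi$ of $K$ inside $M$, and the same intrinsic description of $\bar{X}_\psi$ yields the same conclusion. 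So it suffices to exhibit $g \in N$ lying outside $N_{\bar{X}_{\psi^k}}(\bar{X}_\psi)$. An order comparison of $|N| = 2(q \pm 1)$ against the $q_0$-polynomial order of $N_{\bar{X}_{\psi^k}}(\bar{X}_\psi)$ produces such a $g$ immediately except in tiny anomalous configurations (the worst being $q_0 = k = 2$), which I would dispose of by a direct computation inside the relevant ${\rm SL}_2(q)$ using the framework of Section \ref{ss:comp}.

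The main obstacle will be the intrinsic extraction of the first simple factor $\bar{X}_\psi$ from $M$ (or from $\bar{M}_\psi$ in the $F_4$, $p=2$ case): the central products, diagonal factors $d = (2,q_0-1)$ and $e = (3,q_0-\e)$ and possible outer $2$-extensions appearing in the explicit structure of $\bar{M}_\psi$ listed before Lemma \ref{l:claim1} vary from case to case, and one must verify in each row that $C_{\bar{M}_\psi}(Y)^{(\infty)} = \bar{X}_\psi$ (modulo a centre that is obviously normalized by $g$) so that normalizing $M$ really does entail normalizing $\bar{X}_\psi$.
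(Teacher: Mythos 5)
Your architecture matches the paper's proof --- Lemma \ref{l:simple} applied to $K = T \circ Y$ with $\mathcal{M} = \{M\}$ supplied by Lemma \ref{l:claim1}, and a search for $g \in N_{G_0}(K)$ moving $M$ outside $H_0$ --- but two of your steps fail.

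The assertion that $K$ and $M$ are ``visibly $A$-stable'' is false when $\bar{G} = F_4$, $p=2$ and $G$ contains a graph automorphism: the graph automorphism of $F_4$ swaps long and short roots, so it does not fix $\bar{X} = \langle X_{\pm\alpha_0}\rangle$ (built from the \emph{long} highest root), and hence stabilises neither $K = T \circ Y$ nor $M$. Lemma \ref{l:simple} is then simply inapplicable, and the paper has to treat this case by a separate direct argument at the start of the proof: if $H \cap H^g$ were maximal then, since $H_0$ is the only $A$-stable overgroup of $K$ in $H_0$ (a graph automorphism moves $Y = {\rm Sp}_6(q_0)$ to a non-conjugate copy, and the two copies generate $H_0$), one would get $H_0, H_0^g \leqs H^g$ and so $G_0 \leqs H^g$, which is absurd. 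You cannot omit this.

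The order comparison is also not what you think. For $k=2$ one has $|N| = 2(q_0^2 - 1)$ whereas $|N_{\bar{X}_{\psi^k}}(\bar{X}_\psi)| = a\,q_0(q_0^2-1)$ with $a \in \{1,2\}$, so $|N|$ is never larger, for \emph{any} $q_0$; the naive comparison never decides the $k=2$ case, and it is not a matter of ``tiny anomalous configurations''. What one actually needs (and what the paper implicitly uses, working with the cyclic torus $C_{\bar{X}_{\psi^k}}(T)$ of order $q_0^k - (-1)^k$ rather than the full normalizer $N$) is that this cyclic group meets $\bar{X}_\psi$ only in $C_{\bar{X}_\psi}(T) = T$, and hence cannot sit inside $\bar{X}_\psi.a$ once $q_0^k - (-1)^k > a(q_0+1)$. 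Even this deserves care for $q = 4,9$: in ${\rm SL}_2(4) \cong A_5$, for example, $C_{\bar{X}_{\psi^2}}(T) = T \leqs {\rm SL}_2(2)$, so the specific torus the paper uses does normalise $\bar{X}_\psi$ and a different choice of $g \in N_{G_0}(K)$ is required. Finally, your intrinsic recovery of $\bar{X}_\psi$ as $C_M(Y)^{(\infty)}$ (modulo centre) collapses exactly for $q_0 \leqs 3$, where ${\rm SL}_2(q_0)$ is solvable and the derived series terminates at the identity; the paper instead observes that the whole central product $\bar{X}_\psi \circ Y$ is characteristic in $M$, which sidesteps the need to isolate the small $A_1$ factor and works uniformly.
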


\begin{proof}
Write $G = G_0.A$, $H = H_0.A$ and let us adopt the notation introduced above. 

Suppose there exists $g \in N_{G_0}(K)$ that does not normalize $M$. Then 
$M^g \not\leqs H_0$ and the result now follows via Lemma \ref{l:simple}, unless $(\bar{G},p) = (F_4,2)$ and $G$ contains graph automorphisms (indeed, we may assume $K$ and $M$ are $A$-stable in all the other cases). Let us assume we are in the latter situation, so $K = Z_{q_0+1} \times Y$ with $Y = {\rm Sp}_{6}(q_0)$. Suppose $H \cap H^g$ is a maximal subgroup of $H$. We claim that $H \cap H^g = H_0.B$ 
for some maximal subgroup $B$ of $A$. Now $A$ contains a graph automorphism $\tau$ such that $Y$ and $Y^{\tau}$ are non-conjugate subgroups of $H_0$, so $\la Y, Y^{\tau}\ra = H_0$ and thus $H_0$ is the only $A$-stable subgroup of $H_0$ containing $K$. This justifies the claim.
Therefore, $H^g$ contains $H_0$ and $H_0^g$. But $H_0$ and $H_0^g$ generate $G_0$ since they are both maximal subgroups of $G_0$ and $g$ does not normalize $H_0$ (this follows from Lemma \ref{l:claim1}; if $g$ normalizes $H_0$ then it must also normalize the unique maximal overgroup of $K$ in $H_0$, which is $M$). This is absurd since $H^g$ does not contain $G_0$. So we have reached a contradiction and we conclude that $H \cap H^g$ is not maximal in $H$, whence $G$ is not extremely primitive.
 
In view of the above remarks, in order to prove the proposition it suffices to show that there exists $g \in N_{G_0}(K)$ with $M \ne M^g$. To do this, take an element $g \in \bar{X}_{\psi^k} = {\rm SL}_{2}(q)$ that centralizes $T = Z_{q_0+1}$ but does not normalize $\bar{X}_\psi = \text{SL}_2(q_0)$. Such an element $g$ exists because the centralizer of $T$ in $\bar{X}_{\psi^k}$ is a cyclic torus of order $q_0^k - (-1)^k$ which is clearly not a subgroup of $N_{\bar{X}_{\psi^k}}(\bar{X}_{\psi}) = \bar{X}_{\psi}.a$ (where $a \in \{1,2\}$).

Since $g \in \bar{X}$, it follows that $g$ centralizes $Y$ and $K = T \circ Y$. In addition, since $\bar{X}_\psi \circ Y$ is characteristic in $\bar{M}_\psi \cap H_0$, it follows that $g$ does not normalize $\bar{M}_\psi \cap H_0$. If $(\bar{G},p) \neq (F_4,2)$ then $\bar{M}_\psi \cap H_0 = M$ and we have proved the claim. So suppose that $(\bar{G},p) = (F_4,2)$. Then Lemma \ref{l:claim1} shows that $\bar{M}_\psi$ is the unique maximal overgroup of $K$ in $M$. Therefore, if $g$ normalizes $M$ then it must also normalize the unique maximal overgroup of $K$ in $M$. But we have just observed that $g$ does not normalize $\bar{M}_\psi \cap H_0 = \bar{M}_\psi$ and this completes the proof of the proposition.
\end{proof}

Finally, we handle the twisted maximal subgroups that arise when $G_0 = E_6(q)$ or $F_4(q)$ (with $p=2$ in the latter case). 

\begin{prop}\label{p:twist}
The conclusion to Theorem \ref{t:sub} holds if $G_0 = E_6(q)$ or $F_4(q)$ and $H$ is of type ${}^2E_6(q^{1/2})$ or ${}^2F_4(q)$, respectively.
\end{prop}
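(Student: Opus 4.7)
The plan is to adapt the strategy of Proposition \ref{p:sub}, applying Lemma \ref{l:simple} to a carefully chosen subgroup $K \leqs H_0$ whose unique maximal overgroup in $H_0$ is a subsystem-type subgroup $M$, together with an element $g \in N_{G_0}(K)$ satisfying $M^g \not\leqs H_0$. In both cases I would write $H_0 = (\bar{G}_\psi)'$ and $G_0 = (\bar{G}_{\psi^2})'$ for a suitable Steinberg endomorphism $\psi$ of $\bar{G}$: in the $E_6$ case, $\psi = \sigma\tau$ where $\sigma$ is the standard $q_0$-Frobenius with $q_0 = q^{1/2}$ and $\tau$ is an involutory graph automorphism of $\bar{G}$, while in the $F_4$ case with $q = 2^{2m+1}$, $\psi$ is a ``square root'' of the $q$-Frobenius involving the exceptional isogeny of $F_4$.

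For $G_0 = E_6(q)$, take $\bar{X} = \la X_{\pm \a_0} \ra$ and $\bar{Y} = C_{\bar{G}}(\bar{X})^0$ of type $A_5$; the subsystem $\bar{M} = \bar{X}\bar{Y}$ of type $A_1A_5$ is $\tau$-stable, hence $\psi$-stable, and by \cite[Table 5.1]{LSS} its fixed-point subgroup $M = \bar{M}_\psi \cap H_0$ is a maximal subgroup of $H_0$ of type $A_1(q_0)A_5^-(q_0)$. Setting $K = T \circ (\bar{Y}_\psi)'$, where $T \leqs \bar{X}_\psi = {\rm SL}_2(q_0)$ is a non-split torus of order $q_0+1$, the first step is to show that $M$ is the unique maximal overgroup of $K$ in $H_0$; this should follow from the classification of non-parabolic maximal subgroups of $H_0 = {}^2E_6(q_0)$ containing large-order subgroups in \cite[Lemma 4.13]{BLS}, combined with the uniqueness argument in Lemma \ref{l:claim1}, after ruling out parabolic overgroups via the action of $K$ on the minimal modules for $H_0$. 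Since $q_0+1$ divides $q-1$, the torus $T$ lies in the split torus $Z_{q-1}$ of $\bar{X}_{\psi^2} = {\rm SL}_2(q)$, and its centralizer $C_{{\rm SL}_2(q)}(T) = Z_{q-1}$ is strictly larger than its intersection with $N_{{\rm SL}_2(q)}(\bar{X}_\psi)$. Choosing $g \in Z_{q-1}$ which centralizes $K$ but does not normalize $\bar{X}_\psi$, we see that $g$ does not normalize $M$, and Lemma \ref{l:simple} finishes the case (noting that $K$ and $M$ are stable under the outer automorphisms contained in $H$).

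For $G_0 = F_4(q)$ with $H_0 = {}^2F_4(q)$, the same template is complicated by the fact that the exceptional isogeny of $F_4$ interchanges long and short roots, so no $\psi$-stable maximal rank subgroup splits as a commuting product with a root $A_1$ factor. Here I would instead take $\bar{M}$ of type $A_2\tilde{A}_2$, on which $\psi$ swaps the two factors, giving $M = \bar{M}_\psi$ a diagonally embedded subgroup of shape ${\rm SU}_3(q).2$, which is maximal in $H_0$ by the classification of maximal subgroups of ${}^2F_4(q)$. Setting $K = {\rm SU}_3(q) \leqs M$, the main obstacle is to verify that $M$ is the unique maximal overgroup of $K$ in $H_0$: this requires an inspection of the complete list of maximal subgroups of ${}^2F_4(q)$ in order to rule out parabolic overgroups as well as the remaining rank-one candidates such as ${}^2B_2(q) \wr {\rm Sym}_2$ and ${\rm Sp}_4(q).2$. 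Once uniqueness is in hand, a Weyl-group element $g \in \bar{M}_{\psi^2} \cap G_0$ swapping the two $A_2$ factors of $\bar{M}$ centralizes the diagonal $K$ but does not normalize $M$ (since the restriction of $\psi$ to a single factor is a non-trivial Frobenius), so Lemma \ref{l:simple} applies. As a contingency, if the uniqueness step proves too delicate, one can fall back on the subdegree method used in Lemma \ref{g2:ree}, seeking an orbit of $H_0$ on $G_0/H_0$ whose length does not match the index of any maximal subgroup of $H$.
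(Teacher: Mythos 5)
Your treatment of the $E_6(q)$ case is essentially the paper's proof: same $\psi$, same subsystem subgroup $\bar{M} = \bar{X}\bar{Y}$ of type $A_1A_5$, same $K$ built from a non-split torus of $\bar{X}_\psi$ together with the $\psi$-fixed points of $\bar{Y}$, and the same argument that the cyclic torus $C_{{\rm SL}_2(q)}(T) \cong Z_{q-1}$ escapes $N_{{\rm SL}_2(q)}({\rm SL}_2(q_0))$, so Lemma \ref{l:simple} applies.

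The $F_4(q)$ case, however, has a fatal gap. You propose $K = {\rm SU}_3(q)$ sitting diagonally in $\bar{M} = A_2\tilde{A}_2$ and a ``Weyl-group element $g \in \bar{M}_{\psi^2} \cap G_0$ swapping the two $A_2$ factors.'' But no such element exists inside $G_0 = F_4(q)$: the two factors of $A_2\tilde{A}_2$ consist of long roots and short roots respectively, and every element of the Weyl group (indeed every inner automorphism) of $F_4$ preserves root lengths. The only automorphism of $\bar{G}$ exchanging the long $A_2$ with the short $\tilde{A}_2$ is the exceptional isogeny $\tau$ itself, which is an outer automorphism; its image in the finite group lies in $F_4(q).2 \setminus F_4(q)$, whereas Lemma \ref{l:simple}(iii) requires $g \in N_{G_0}(K)$. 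Concretely, $\bar{M}_{\psi^2}$ is the usual maximal rank subgroup $({\rm SL}_3(q) \circ {\rm SL}_3(q)).(3,q-1).2$ of $F_4(q)$ where the outer involution acts as a simultaneous graph automorphism of the two factors and \emph{not} as a swap, so there is simply no $g$ of the kind you need. Moreover, since the diagonal $A_2$ has zero-dimensional fixed space on $\mathcal{L}(\bar{G})$ (cf.\ \eqref{e:a2a2}), $C_{\bar{G}}(K)^0 = 1$, so there is no positive-dimensional centralizer to exploit either. The paper instead takes $M = {}^2B_2(q) \wr {\rm Sym}_2$ and $K = {}^2B_2(q) \times Z_{q-1}$, which is a Levi of a maximal parabolic of $H_0$; this $K$ turns out to have \emph{three} maximal overgroups (including a pair of opposite parabolics), and the element $g$ is constructed explicitly as a toral element $h_{\beta_3}(c)$ acting on root subgroups, with the case $q=2$ handled separately in {\sc Magma}. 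Your fallback to the subdegree method is a sensible safety net, but it is not carried out and would itself be nontrivial for ${}^2F_4(q)$ in general $q$.
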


\begin{proof}
First assume $G_0 = E_6(q)$, where $q=q_0^2$. Set $\psi = \s\tau$, where $\s$ is a standard Frobenius morphism of $\bar{G} = E_6$ and $\tau$ is a graph automorphism such that $(\bar{G}_{\psi})' = {\rm soc}(H_0) = {}^2E_6(q_0)$ and $(\bar{G}_{\psi^2})' = G_0$. 

Set $\bar{M} = \bar{X}\bar{Y} = A_1A_5$, where $\bar{X} = \langle X_{\pm \alpha_0} \rangle$ and $\bar{Y} = C_{\bar{G}}(\bar{X})^0$. Then $\bar{M}$ is $\psi$-stable and 
\cite{LSS} implies that 
\[
M = \bar{M}_{\psi} \cap H_0 = ({\rm SL}_{2}(q_0) \circ d.{\rm U}_{6}(q_0)).d = d.(\text{L}_2(q_0) \times \text{U}_6(q_0)).d
\]
is a maximal subgroup of $H_0$, where $d=(2,q-1)$. Set $K = Z_{q_0+1} \circ \text{U}_6(q_0) < M$. By arguing as in the proof of Lemma \ref{l:claim1}, we deduce that $M$ is the unique maximal overgroup of $K$ in $H_0$. Similarly, by repeating the argument in the proof of Proposition \ref{p:sub}, we see that there exists $g \in N_{G_0}(K)$ such that $M \ne M^g$ and we conclude that $G$ is not extremely primitive by applying Lemma \ref{l:simple}. 

Finally, let us assume $G_0 = F_4(q)$ and $H_0 = {}^2F_4(q)$, where $q=2^{2m+1}$ and $m \geqs 0$. Let $\bar{G} = F_4$ and set $\psi = \sigma\tau$, where $\sigma$ is the standard Frobenius morphism of $\bar{G}$ corresponding to the map $\l \mapsto \l^{2^m}$ on $\mathbb{F}_q$ and $\tau$ is the standard graph automorphism of $\bar{G}$, which interchanges long root and short root subgroups. Then $G_0 = \bar{G}_{\psi^2}$ and $H_0 = \bar{G}_{\psi}$. 

The case $q=2$ can be handled in {\sc Magma} (see \cite[Lemma 2.10]{BTh_comp}). More precisely, we construct $H_0 < G_0$ as permutation groups and we use random search to find an element $g \in G_0$ such that $H_0 \cap H_0^g = 1$. Therefore, $b(G_0,H_0)=2$ and more generally we have $|H \cap H^g| \leqs 2$. In particular, $H \cap H^g$ is not a maximal subgroup of $H$ and thus $G$ is not extremely primitive. For the remainder, we will assume $q \geqs 8$. 

Fix a set of simple roots $\a_1,\a_2,\a_3,\a_4$ for $\bar{G}=F_4$ and consider the following roots
\[
\begin{array}{ll}
\beta_1 = \alpha_1 + \alpha_2 + \alpha_3 & \beta_2 = \alpha_2 + 2\alpha_3 + 2\alpha_4 \\
\beta_3 = \alpha_1 + 2\alpha_2 + 3\alpha_3 + 2\alpha_4 & \beta_4 = 2\alpha_1 + 3\alpha_2 + 4\alpha_3 + 2\alpha_4 = \alpha_0
\end{array}
\]
where $\beta_1, \beta_3$ are short and $\beta_2, \beta_4$ are long. Note that $\psi$ acts on root elements as follows
\[
x_{\beta_1}(c) \mapsto x_{\beta_2}(c^{2^{m+1}}),\; x_{\beta_2}(c) \mapsto x_{\beta_1}(c^{2^{m}}),\; x_{\beta_3}(c) \mapsto x_{\beta_4}(c^{2^{m+1}}), \; x_{\beta_4}(c) \mapsto x_{\beta_3}(c^{2^{m}})
\]
for all $c \in \bar{\mathbb{F}}_2$. Let $\bar{P} = P_{1,4}$ be the standard parabolic subgroup of $\bar{G}$ with $\bar{L}' = 
\la X_{\pm \alpha_2}, X_{\pm \alpha_3} \ra$ of type $B_2$, where $\bar{L}$ is a Levi factor of $\bar{P}$. Then $C_{\bar{G}}(\bar{L}')^0 = \langle X_{\pm \beta_1},X_{\pm \beta_2},X_{\pm \beta_3},X_{\pm \beta_4} \ra  = B_2$. 

Consider the maximal subgroup $M = {}^2B_2(q) \wr {\rm Sym}_2$ of $H_0$ (see \cite{Mal}) and set 
\[
K = {}^2B_2(q) \times Z_{q-1} < M.
\]
By choosing $M$ appropriately, we may assume that $K$ is a Levi subgroup of the maximal parabolic $P = \bar{P}_\psi$ of $H_0$. Let $\mathcal{M}$ be the set of maximal overgroups of $K$ in $H_0$. We claim that $\mathcal{M} = \{M,P,P^{\rm op}\}$, where $P^{\rm op} = (\bar{P}^{\rm op})_{\psi}$ is the opposite parabolic subgroup to $P$. 

Firstly, by inspecting \cite{Mal} we deduce that each subgroup in $\mathcal{M}$ is conjugate to $M$ or $P$. Let $X$ be the ${}^2B_2(q)$ factor in $K$ and set $N = {}^2B_2(q) \times {}^2B_2(q) <M$. Now $N \leqs N_{H_0}(X)$ since $X$ is normal in $N$, but $M$ is clearly the unique maximal overgroup of $N$ in $H_0$, and $X$ is not normal in $M$, so $N_{H_0}(X) = N$ and we deduce that $N_{H_0}(K) \leqs N$. By \cite[Proposition 3]{Suz}, the normalizer of the torus $Z_{q-1} < {}^2B_2(q)$ is a dihedral group $D_{2(q-1)}$ and so  
\[
N_{H_0}(K) = N_{N}(K) = K.2 = {}^2B_2(q) \times D_{2(q-1)}.
\]
Since $N_{H_0}(K) = K.2 < M$, it follows that $M$ is the only $H_0$-conjugate of $M$ containing $K$. Now let us turn to the conjugates of $P$ in $\mathcal{M}$. Here we proceed as in the proof of Lemma \ref{l:maxrank_e7_4} (see the argument for the case $q=2$), noting that if $K \leqs P \cap P^h$ for some $h \in H_0$, then $K$ and $K^{h^{-1}}$ are $P$-conjugate (this follows from \cite[Proposition 26.1(b)]{MT}).

By the claim, each subgroup in $\mathcal{M}$ contains $x_1 = x_{\beta_3}(1)x_{\beta_4}(1)$ or $x_2 = x_{-\beta_3}(1)x_{-\beta_4}(1)$.

The subgroup $K$ is contained in a Levi subgroup ${\rm Sp}_{4}(q) \times Z_{q-1}^2$ of the parabolic subgroup $\bar{P}_{\psi^2}$ of $G_0$, where $Z_{q-1}^2 = \la h_{\beta_1}(c), h_{\beta_2}(c') \,:\, c,c' \in \mathbb{F}_q^{\times}\ra$ in terms of the standard Lie notation. Since $X < {\rm Sp}_{4}(q)$ it follows that $Z_{q-1}^2 \leqs N_{G_0}(K)$. Fix $1 \ne c \in \mathbb{F}_q^\times$ and set 
$g = h_{\beta_3}(c) \in N_{G_0}(K)$ so $x_1^g = x_{\beta_3}(c^2)x_{\beta_4}(c^2)$ (here we are using the fact that $q \geqs 8$). Now
\[
\psi(x_{\beta_3}(c^2)x_{\beta_4}(c^2)) = x_{\beta_4}((c^2)^{2^{m+1}})x_{\beta_3}((c^2)^{2^{m}}) = x_{\beta_3}(c^{2^{m+1}}) x_{\beta_4}(c^{2^{m+2}})
\]
and thus $x_1^g$ is not fixed by $\psi$. In particular, $x_1^g \not\in H_0$. An entirely similar calculation shows that $\psi(x_2^g) \neq x_2^g$. Therefore, for each $J \in \mathcal{M}$ we have $J^g \not\leqs H_0$. Finally, since $K$ and the three subgroups in $\mathcal{M}$ are stable under all automorphisms of $G_0$, the desired conclusion follows from Lemma \ref{l:simple}.     
\end{proof}

\section{Almost simple subgroups}\label{s:part3}

In this section, we complete the proof of Theorem \ref{t:main} for the groups with socle  $G_0$ as in \eqref{e:T}. To do this, it remains to handle the subgroups of Type V in Theorem \ref{t:types}. Our main result is the following (we have been unable to determine the exact base size in (ii)).

\begin{thm}\label{t:typeV}
If $H$ is a Type V subgroup of $G$, then $G$ is not extremely primitive. Moreover, $b(G,H) = 2$ unless one of the following holds:
\begin{itemize}\addtolength{\itemsep}{0.2\baselineskip}
\item[{\rm (i)}] $G_0 = {}^2E_6(2)$, $S = {\rm Fi}_{22}$ and $b(G,H) = 3$;
\item[{\rm (ii)}] $G_0 = F_4(2)$, $S = {\rm L}_{4}(3)$ and $b(G,H) \leqs 3$.
\end{itemize}
\end{thm}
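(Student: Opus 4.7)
The strategy is to combine the classification of Type V maximal subgroups with the probabilistic base size method developed in Section \ref{ss:base2}, handling the two exceptional pairs separately at the end.

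\emph{Step 1: A priori bounds from the classification.} I would begin by invoking the description of Type V maximal subgroups of exceptional groups, as summarised in \cite[Theorem~8]{LS03} and refined by subsequent work of Liebeck--Seitz, Craven and Litterick. The outcome is that the socle $S$ of $H$ lies in a short explicit list: either $S$ is one of a finite set of sporadic groups, an alternating group of bounded degree, or a simple group of Lie type defined over $\mathbb{F}_{q'}$ with $q'$ bounded. In every case one has $|H| \leqslant |\mathrm{Aut}(S)|$, and a comparison with the lower bounds in Proposition \ref{p:bounds} shows that $|H|^2 \ll \ell_1$ in the asymptotic regime.

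\emph{Step 2: Base-two via the probabilistic criterion.} For each case outside the two listed exceptions, the plan is to establish $b(G,H) = 2$ by bounding $\mathcal{Q}(G,H) < 1$ and then applying Lemma \ref{l:base} together with Lemma \ref{l:base2}. The starting estimate is the one given by Lemma \ref{l:calc}: using the trivial bound $\sum_i |x_i^G \cap H| \leqslant |H|$ over a set of prime-order class representatives, together with the lower bound $|x_i^G| \geqslant \ell_1$ from Proposition \ref{p:bounds}, we get
\[
\mathcal{Q}(G,H) \leqslant \frac{|H|^2}{\ell_1},
\]
which is $< 1$ for all sufficiently large $q$ by Step 1. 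To cover the small-$q$ cases we split the sum by element type (unipotent, semisimple, field/graph automorphism), use the more refined bounds in Proposition \ref{p:bounds}, and control the prime-order element counts in $H$ via $i_r(H) \leqslant i_r(\mathrm{Aut}(S))$ together with Proposition \ref{p:i23}(i). For each socle $S$ on the list this reduces to a finite check: either the bound already suffices, or the class fusion data from the \textsf{GAP} Character Table Library \cite{GAPCTL} (when $S$ and $G_0$ both have stored tables) yields exact values of $|x^G \cap H|$ that close the remaining gap.

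\emph{Step 3: The two exceptional pairs.} For $(G_0,S) = ({}^2E_6(2), \mathrm{Fi}_{22})$ and $(F_4(2), \mathrm{L}_4(3))$ the argument of Step~2 cannot give $b(G,H) = 2$ (indeed, the base sizes are those stated, and for the first case this is already recorded in \cite[Table~12]{BLS}). Here I would invoke Lemma \ref{l:simple}: the plan is to identify a proper $A$-stable subgroup $K < H_0$ whose maximal overgroups in $H_0$ form an explicit short list $\mathcal{M}$, and then to exhibit $g \in N_{G_0}(K)$ with $M^g \not\leqslant H_0$ for every $M \in \mathcal{M}$. Natural choices of $K$ come from the maximal subgroups of $\mathrm{Fi}_{22}$ and of $\mathrm{L}_{4}(3)$ recorded in the ATLAS: for instance, a $2$-local subgroup or a subsystem subgroup of $G_0$ meeting $H_0$ in a known overgroup structure. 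The existence of a suitable $g$ can be verified via a short \textsc{Magma} computation in the spirit of Section \ref{ss:comp}, working with permutation representations of $G$ of moderate degree. The upper bound $b(F_4(2), \mathrm{L}_4(3).2) \leqslant 3$ is then established by the same direct search, while $b({}^2E_6(2), \mathrm{Fi}_{22}.\alpha) = 3$ follows from \cite[Table~12]{BLS}.

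\emph{Expected main obstacle.} The principal difficulty lies not in the generic argument of Step 2, which is essentially routine, but in making the small-$q$ estimates sharp enough for cases where $|H|$ is comparable to $\ell_1^{1/2}$. The worst offenders are expected to be the Lie type socles over $\mathbb{F}_2$ or $\mathbb{F}_3$ embedded in $F_4$, $E_6^{\epsilon}$, or $E_7$ (for example $S = \mathrm{L}_{3}(3)$ or $\mathrm{U}_{3}(3)$ in $F_4(q)$, or the sporadic socles $\mathrm{J}_{2}$, $\mathrm{M}_{12}$, $\mathrm{Th}$, $\mathrm{HS}$), where one must carefully distinguish unipotent classes of low-dimensional centralizer and identify the $\bar{G}$-class of each prime-order element of $H$ via its Jordan form on the adjoint or minimal module (cf.\ the techniques of Section \ref{ss:comp} and the tables in \cite{Lawunip}). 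The second source of difficulty is that the Type V classification itself is not completely settled for certain small configurations in $E_7$ and $E_8$; the plan is to absorb any residual ambiguity into the generic bound of Step 2, which only uses $|H| \leqslant |\mathrm{Aut}(S)|$ and so is insensitive to whether $H$ is known to be maximal in a specific overgroup.
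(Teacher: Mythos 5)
Your Steps 1 and 2 track the paper's argument: invoke Theorem \ref{t:simples} together with the refinements of Litterick and Craven (Theorem \ref{t:craven}), bound $\mathcal{Q}(G,H)$ via Lemma \ref{l:calc} and Proposition \ref{p:bounds}, and close the small-$q$ cases by identifying the $\bar{G}$-class of each prime order element of $H$ from its Jordan form on the adjoint or minimal module, exactly as in Lemmas \ref{l:ndef1}--\ref{l:ndef4} and \ref{l:liep}. One ingredient you omit that the paper leans on repeatedly is Proposition \ref{p:long}: for $p>2$ a Type V subgroup contains no long root element, which upgrades the available lower bound on $|x^G|$ from $\ell_1$ to $\ell_2$ (e.g.\ from $q^{58}$ to $q^{92}$ when $G_0 = E_8(q)$). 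Without this, the ``trivial'' bound $|H|^2/\ell_1<1$ genuinely fails for several small-$q$ non-generic socles (e.g.\ ${\rm Th}$ in $E_8(3)$), so you would want to make Proposition \ref{p:long} an explicit step rather than hoping the refinement by element type alone suffices.

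Your Step 3 is where you diverge from the paper in a substantive way, and this is the part I would flag as a gap. For $({}^2E_6(2),{\rm Fi}_{22})$ and $(F_4(2),{\rm L}_4(3))$, the paper does \emph{not} use Lemma \ref{l:simple}. Instead it uses Lemma \ref{l:char}: the character tables of $G$ and $H$ are available in the \textsf{GAP} library, so one computes the rank of $G$ on $G/H$ exactly (equal to $8$, $8$, $94$, $66$ respectively for the four pairs $(G,H)$), lists the indices of the core-free maximal subgroups of $H$ from the ATLAS, and checks that no non-negative integer combination with the right number of summands equals $|G:H|-1$. This is a clean arithmetic contradiction and requires no knowledge of $N_{G_0}(K)$ or the geometry of the embedding. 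Your proposed route via Lemma \ref{l:simple} would require exhibiting a subgroup $K<H_0$ whose maximal overgroups form a short controllable list, together with an element of $N_{G_0}(K)$ that moves every one of them outside $H_0$; for a sporadic socle like ${\rm Fi}_{22}$ inside ${}^2E_6(2)$ there is no obvious structural candidate for such a $K$, and you give no indication of what it would be. The Lemma \ref{l:char} method is the natural tool here precisely because the stored character tables give the rank for free, and because it sidesteps the need to understand $N_{G_0}(K)$. Also, for $(F_4(2),{\rm L}_4(3))$ the bound $b(G,H)\leqslant 3$ in the paper comes from the probabilistic estimate $\sum_i |x_i^G|\cdot{\rm fpr}(x_i,G/H)^3<1$ (i.e.\ the $c=3$ version of the argument in Section \ref{ss:base2}), not from a direct base search in a permutation representation, which would be computationally demanding at this degree.

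Finally, you should not forget that $G_0 = G_2(q)$ has to be handled as well (the paper does this in Proposition \ref{p:g2simple}); the list of Type V socles there is tiny and your Step 2 machinery covers it, but it is worth saying so explicitly since $G_2(q)$ is excluded from the lists in Theorem \ref{t:craven}.
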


The following theorem, which is part of \cite[Theorem 8]{LS03}, describes the possibilities for the socle of a Type V subgroup (note that the value of $u(E_8(q))$ in part (ii)(c) is taken from \cite{Laww}). In the statement, if $X$ is a simple group of Lie type then ${\rm rk}(X)$ is the (untwisted) Lie rank of $X$. In addition, ${\rm Lie}(p)$ denotes the set of finite simple groups of Lie type defined over fields of characteristic $p$. 

\begin{thm}\label{t:simples}
Let $G$ be an almost simple group with socle $G_0$, a simple exceptional group of Lie type over $\mathbb{F}_q$, where $q=p^f$ with $p$ a prime. Assume $G_0$ is one of the groups in \eqref{e:T} and let $H$ be a maximal almost simple subgroup of $G$ as in part (V) of Theorem \ref{t:types}, with socle $S$.  Then one of the following holds:
\begin{itemize}\addtolength{\itemsep}{0.2\baselineskip}
\item[{\rm (i)}] $S \not\in {\rm Lie}(p)$ and the possibilities for $S$ are described in \cite{LS99};
\item[{\rm (ii)}] $S  = H(q_0) \in {\rm Lie}(p)$, ${\rm rk}(S) \leqs \frac{1}{2}{\rm rk}(G_0)$ and one of the following holds: 
\begin{itemize}\addtolength{\itemsep}{0.2\baselineskip}
\item[{\rm (a)}] $q_0 \leqs 9$;
\item[{\rm (b)}] $S = {\rm L}_{3}^{\e}(16)$;
\item[{\rm (c)}] $S = {\rm L}_{2}(q_0)$, ${}^2B_2(q_0)$ or ${}^2G_2(q_0)$, where $q_0 \leqs (2,q-1)u(G_0)$ and $u(G_0)$ is defined in the following table.  
{\small
\renewcommand{\arraystretch}{1.2}
\[
\begin{array}{llllll} \hline
G_0 & G_2(q) & F_4(q) & E_6^{\e}(q) & E_7(q) & E_8(q) \\ \hline
u(G_0) & 12 & 68 & 124 & 388 & 1312 \\ \hline
\end{array}
\]
\renewcommand{\arraystretch}{1}}
\end{itemize}
\end{itemize}
\end{thm}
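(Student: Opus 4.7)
The plan is to split the analysis according to the defining characteristic of $S$, with the cross-characteristic case reducing essentially to an appeal to the literature and the same-characteristic case requiring more delicate work with the subgroup structure of exceptional algebraic groups.

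First I would dispose of case (i), where $S \not\in {\rm Lie}(p)$. Here $H$ is a maximal almost simple non-generic subgroup of $G$, and by definition of Type V, $H$ does not arise as $N_G(\bar{H}_\sigma)$ for any positive dimensional $\sigma$-stable subgroup $\bar{H}$ of $\bar{G}$. The classification of such subgroups is precisely the content of the main theorem of \cite{LS99}, which uses module-theoretic and character-theoretic methods (notably bounds on the dimensions of irreducible representations of quasisimple groups in cross characteristic, and fixed point ratio estimates for actions on the adjoint and minimal modules). So in case (i) I would simply cite \cite{LS99} and note that the possibilities for $S$ are recorded there in explicit tables.

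For case (ii), where $S = H(q_0) \in {\rm Lie}(p)$, I would first establish the rank bound ${\rm rk}(S) \leqs \tfrac{1}{2}{\rm rk}(G_0)$. The key input is the theorem of Liebeck--Seitz on closed connected subgroups of positive dimension in simple exceptional algebraic groups (for example \cite[Corollary 2]{LS90} or the main theorems of \cite{LS96, LS04}), combined with the fact that every almost simple subgroup $S$ of $G_0$ of sufficiently large rank in characteristic $p$ is contained in the fixed points of a proper $\sigma$-stable positive dimensional closed subgroup $\bar{H}$ of $\bar{G}$ (this is a consequence of the Borel--Tits theorem together with Seitz's embedding theorem \cite[Theorem 1]{Seitz}, which handles large-rank subgroups of Lie type). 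The maximality of $H$ in $G$, combined with $H$ being Type V (i.e.\ not of Types I--IV), forces the rank of $S$ to be small, giving the bound ${\rm rk}(S) \leqs \tfrac{1}{2}{\rm rk}(G_0)$ after inspecting the list of possible positive dimensional overgroups.

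Next I would bound $q_0$. The strategy is to show that if $q_0$ is large then $S$ is again forced inside a proper $\sigma$-stable positive dimensional subgroup, contradicting Type V. For subgroups of rank at least $2$ (the cases other than ${\rm L}_2$, ${}^2B_2$, ${}^2G_2$), one uses the bounds on $q_0$ established in \cite{LS98}, which exploit primitive prime divisors of $q_0^k - 1$ to force $S$ into a generic subgroup; the small exceptions ${\rm L}_3^{\epsilon}(16)$ and $q_0 \leqs 9$ then emerge as the genuine possibilities not eliminated by this argument. For subgroups of rank $1$ (the types ${\rm L}_2$, ${}^2B_2$, ${}^2G_2$ in case (c)), the rank $1$ structure means the primitive prime divisor argument is weaker, and instead one must appeal to the explicit Lie-algebraic bounds of Liebeck--Seitz on the order of such subgroups inside exceptional groups; these give the numerical bound $q_0 \leqs (2,q-1)u(G_0)$, with $u(G_0)$ computed from the dimensions of the minimal and adjoint modules of $\bar{G}$ and bounds on composition factor multiplicities.

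The main obstacle I expect is bounding $q_0$ in the rank-$1$ subcase (c). The rank bound and the cross-characteristic case are relatively clean appeals to established theory, but extracting the explicit constants $u(G_0)$ for exceptional groups requires a careful case-by-case analysis of possible embeddings of ${\rm PGL}_2(q_0)$, ${\rm Sz}(q_0)$ and ${\rm Ree}(q_0)$ into $\bar{G}_\sigma$, using detailed information about irreducible restrictions of small-dimensional modules and Frobenius twists. For $G_0 = E_8(q)$ the bound $u(E_8(q)) = 1312$ in particular relies on the refined analysis of Lawther \cite{Laww}, which corrects and sharpens earlier estimates, so in the write-up I would flag this citation as the source of the precise constant.
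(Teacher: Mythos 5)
Your proposal sets out to actually prove this theorem, but the paper does not: Theorem \ref{t:simples} is quoted verbatim as part of \cite[Theorem 8]{LS03}, with the single refinement that the constant $u(E_8(q))=1312$ is taken from Lawther \cite{Laww}. So the paper's ``proof'' is a citation, whereas yours is an outline of how the cited results are established in the primary sources. As such an outline it is broadly accurate: part (i) is indeed the content of \cite{LS99}, and part (ii) rests on the Liebeck--Seitz theory showing that a subgroup $S=H(q_0)\in{\rm Lie}(p)$ of large rank or over a large field is contained in a proper positive-dimensional $\s$-stable subgroup, which is incompatible with $H$ being of Type V. Two caveats. First, your description of where the constants $u(G_0)$ come from is not right: they are not extracted from ``dimensions of the minimal and adjoint modules and bounds on composition factor multiplicities'' but are defined lattice-theoretically, as the maximal torsion occurring in quotients of the root lattice of $\bar{G}$ by sublattices generated by root differences --- this is precisely what \cite{Laww} computes (and corrects for $E_8$), and it is the reason the paper singles out that reference. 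Second, the key references you lean on for the generic case (Seitz's embedding theorem and the 1998 Liebeck--Seitz Transactions paper) are not in this paper's bibliography; the paper channels all of that material through the survey \cite{LS03}, so if you wanted your sketch to stand in for the paper's citation you would need to supply those sources explicitly.
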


More recently, the list of possibilities for $S$ in parts (i) and (ii) of Theorem \ref{t:simples} has been significantly refined. For the so-called \emph{non-generic} subgroups arising in (i), we refer the reader to Litterick \cite{Litt} and Craven \cite{Craven17}. For instance, the main theorem of \cite{Craven17} states that $S = {\rm Alt}_n$ only if $n=6$ or $7$. Craven has also made substantial progress in eliminating many \emph{generic} subgroups in (ii). Indeed, by combining the main results of \cite{Cr1,Cr2}, we get the following theorem.

\begin{thm}[Craven]\label{t:craven}
Let $G$ be an almost simple group with socle $G_0$, a simple exceptional group of Lie type over $\mathbb{F}_q$, where $q=p^f$ with $p$ a prime. Assume $G_0$ is one of the groups in \eqref{e:T} and let $H$ be a maximal almost simple subgroup of $G$ as in part (V) of Theorem \ref{t:types}, with socle $S \in {\rm Lie}(p)$.  Then one of the following holds:
\begin{itemize}\addtolength{\itemsep}{0.2\baselineskip}
\item[{\rm (i)}] $G_0 = E_8(q)$ and either $S = {\rm L}_2(q_0)$ with $q_0 \leqs (2,q-1)u(G_0)$, or
\[
S \in \{ {\rm L}_3^{\e}(3), {\rm L}_3^{\e}(4), {\rm U}_3(8), {\rm PSp}_4(2)', {\rm U}_4(2), 
{}^2B_2(8)\};
\]
\item[{\rm (ii)}] $G_0 = E_7(q)$ and $S = {\rm L}_{2}(q_0)$ with $q_0 \in \{7,8,25\}$.
\end{itemize}
\end{thm}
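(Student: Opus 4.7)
The plan is to start from the classification in Theorem \ref{t:simples}(ii), which already restricts the socle $S = S(q_0) \in {\rm Lie}(p)$ to a short list: either $q_0 \leqs 9$, or $S = {\rm L}_3^{\e}(16)$, or $S$ is a rank-one group ${\rm L}_2(q_0)$, ${}^2B_2(q_0)$, ${}^2G_2(q_0)$ with $q_0$ bounded by $(2,q-1)u(G_0)$. For each such candidate and each of the five possibilities for $G_0$ in \eqref{e:T}, I would aim to show that either (a) no almost simple maximal subgroup of $G$ with socle $S$ exists at all, or (b) any purported embedding $S < \bar{G}$ lies in a proper positive-dimensional $\s$-stable subgroup of $\bar{G}$, which places $H$ in Type~(I) or~(II) of Theorem \ref{t:types} rather than Type~(V), contradicting maximality. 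The only surviving cases should be those listed in (i) and (ii); in particular, every candidate $S \in {\rm Lie}(p)$ must be eliminated when $G_0 = G_2(q)$, $F_4(q)$ or $E_6^{\e}(q)$.

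The main engine is the restriction of the adjoint module $V = \mathcal{L}(\bar{G})$ (and, where available, a minimal module such as $V_{27}$ for $E_6$ or $V_{56}$ for $E_7$) to a putative copy of $S$. For a fixed $S$ I would enumerate all $p$-modular irreducible representations of $S$ of dimension at most $\dim V$, using L\"ubeck's tables, the Modular Atlas, or direct computation, and then list the \emph{feasible decompositions} of $V{\downarrow}S$ that are consistent with the Brauer character of the identity, the Brauer characters of semisimple elements of $S$, and the $\bar{G}$-invariant Lie bracket on $V$. This is the approach developed by Liebeck--Seitz, refined by Litterick \cite{Litt} in the non-generic setting, and pushed systematically by Craven in \cite{Cr1, Cr2}.

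For each surviving feasible decomposition, I would attempt to realise the corresponding embedding $S \to \bar{G}$ explicitly (computationally where needed) and then test whether $S$ is forced into a proper reductive overgroup $\bar{M}$ of $\bar{G}$. A standard way to detect such containment is to produce an $S$-fixed vector or invariant subspace in $V$, $V \otimes V$, $S^2(V)$ or an analogous construction, which forces $S$ to normalise a non-trivial subgroup appearing in Theorem \ref{t:types}(I) (a Levi factor, a subsystem subgroup, or a maximal positive-dimensional reductive subgroup). Conversely, when such obstructions cannot be found one verifies directly, e.g.\ by computing $N_{\bar{G}}(S)$ in a suitable representation, that the embedding does give rise to an honest maximal subgroup; this is what happens for the specific $S$ listed in~(i) and~(ii).

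The hardest part will be the rank-one groups ${\rm L}_2(q_0)$ inside $E_7(q)$ and $E_8(q)$: the bound $q_0 \leqs (2,q-1)u(G_0)$ with $u(E_8)=1312$ admits a huge range of parameters, and ${\rm L}_2(q_0)$ has enough low-dimensional irreducible modules in every characteristic that the number of feasible decompositions of $\mathcal{L}(E_8)$ (dimension $248$) is enormous. Controlling this combinatorial explosion --- and, for each surviving decomposition, either ruling out the embedding by an obstruction on higher tensor invariants or constructing it and pinning down its algebraic closure to force containment in a proper positive-dimensional subgroup --- is exactly the technical core of \cite{Cr1, Cr2}, and is where essentially all of the work lies.
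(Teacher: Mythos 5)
First, a point of reference: the paper does not prove this statement. It is an attributed theorem, obtained ``by combining the main results of \cite{Cr1,Cr2}'', so the paper's own justification is a citation to Craven's work. Your proposal instead sketches how one would prove it from scratch, and the methodology you describe --- starting from the list in Theorem \ref{t:simples}(ii), enumerating feasible decompositions of the adjoint (and minimal) modules restricted to a putative copy of $S$ via Brauer characters and the invariant Lie bracket, and then for each surviving decomposition either exhibiting an obstruction or showing the embedding is contained in a positive-dimensional $\s$-stable subgroup, which moves $H$ out of Type (V) --- is an accurate description of what Craven actually does. To that extent the outline points in the right direction.

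As a proof, however, it has a decisive gap: no case is carried out, and the specific conclusions of the theorem cannot be recovered from the outline. Nothing in the sketch explains why, for $G_0=E_7(q)$, exactly the values $q_0\in\{7,8,25\}$ survive for $S={\rm L}_2(q_0)$; why the exceptional list for $E_8(q)$ is precisely $\{{\rm L}_3^{\e}(3), {\rm L}_3^{\e}(4), {\rm U}_3(8), {\rm PSp}_4(2)', {\rm U}_4(2), {}^2B_2(8)\}$; or why every candidate in ${\rm Lie}(p)$ is eliminated for $G_2$, $F_4$ and $E_6^{\e}$. Extracting these answers requires an enormous amount of explicit representation-theoretic and computational work (for instance, for ${\rm L}_2(q_0)<E_8$ the parameter $q_0$ ranges up to $(2,q-1)\cdot 1312=2624$ and $\mathcal{L}(E_8)$ has dimension $248$), and you concede as much when you write that this ``is exactly the technical core of \cite{Cr1,Cr2}, and is where essentially all of the work lies.'' In the context of this paper the correct and intended move is simply to quote Craven's theorems; if a self-contained argument were required, your proposal would need to be supplemented by essentially the full content of those two papers, so it cannot stand as a proof on its own.
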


We begin the proof of Theorem \ref{t:typeV} by handling the groups with socle $G_0 = G_2(q)$.

\begin{prop}\label{p:g2simple}
The conclusion to Theorem \ref{t:typeV} holds when $G_0 = G_2(q)$.
\end{prop}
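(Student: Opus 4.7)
The plan is to reduce to an asymptotic regime and then handle a short list of small cases by direct computation. First, I invoke Theorem \ref{t:small} to dispose of $G_0 \in \{G_2(2)', G_2(3), G_2(4), G_2(5)\}$; in particular, this absorbs the genuinely extremely primitive examples $(G_2(4),{\rm J}_2)$ and $(G_2(4).2,{\rm J}_2.2)$, as well as any Type V candidate defined over these tiny fields. So for the rest of the argument we may assume $q \geqs 7$.

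Next, I appeal to Kleidman \cite{K88} (for $p$ odd) and Cooperstein \cite{Coop} (for $p=2$), combined with the restrictions on Type V socles given by Theorems \ref{t:simples} and \ref{t:craven}, to enumerate the possible socles $S$ of a Type V maximal subgroup $H$ of $G$. Crucially, with $q \geqs 7$ and $G_0 = G_2(q)$ excluded from being one of the very small groups above, only socles of bounded order survive: typically $S \in \{{\rm L}_2(8), {\rm L}_2(13), {\rm U}_3(3), {\rm L}_3^{\e}(3), {\rm L}_2(q_0)\,(q_0 \leqs 24),\ldots\}$, together with the sporadic case ${\rm J}_1 < G_2(11)$ and a handful of other isolated embeddings. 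Note that the Ree subgroups ${}^2G_2(q)$ and the subfield groups $G_2(q_0)$ were already treated in Lemmas \ref{g2:subfield} and \ref{g2:ree} as Type II and so are not in scope here.

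Now I apply the probabilistic criterion. By Proposition \ref{p:bounds}, every nontrivial element $x \in G$ of prime order satisfies $|x^G| \geqs \a q^6$ with $\a = (q-1)/q$, while $|H| \leqs |{\rm Aut}(S)| < C$ for an absolute constant $C$ depending only on the (finite) list of possible socles. Feeding these bounds into Lemma \ref{l:calc} gives
\[
\mathcal{Q}(G,H) \;\leqs\; \frac{|H|^2}{\a q^6} \;<\; \frac{C^2 q}{(q-1)\,q^{6}},
\]
which is less than $1$ for all sufficiently large $q$. By Lemma \ref{l:base} this forces $b(G,H) = 2$, and hence $G$ is not extremely primitive by Lemma \ref{l:base2}. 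This establishes the proposition for all but finitely many $q$.

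The hard part is the remaining small values of $q$ (principally $q \in \{7,8,9,11,13,16\}$, together with a few other primes depending on the embedding), where the crude bound above is insufficient and certain exceptional almost simple subgroups really do occur; the most delicate case is ${\rm J}_1 < G_2(11)$, since $|{\rm J}_1| = 175560$ is not negligible compared with $|x^G|$ for some classes of small prime order. For each such case I plan to proceed by one of two routes: (i) refine the fixed-point ratio estimate by computing the precise fusion of $H$-classes of prime order elements into $G$, using \cite{LLS2} and the $G$-class information of Section \ref{ss:conj}, so as to force $\mathcal{Q}(G,H) < 1$; or, when this still fails, (ii) use the computational methods of Section \ref{ss:comp}, constructing $G$ and $H$ as permutation groups in {\sc Magma} and verifying $b(G,H) = 2$ by random search for an element $g \in G$ with $H \cap H^g = 1$, exactly as in the proof of Theorem \ref{t:small}. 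Since $b(G,H)=2$ never coincides with any exception in the statement of Theorem \ref{t:typeV} for $G_0 = G_2(q)$, the proposition follows.
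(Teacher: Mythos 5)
Your overall plan tracks the paper's proof: reduce to $q \geqs 7$ via Theorem \ref{t:small}, read off the Type V candidates from the Cooperstein/Kleidman classification of maximal subgroups of $G_2(q)$, kill the generic case with the probabilistic bound $\mathcal{Q}(G,H) < |H|^2/b_1$, and fall back on refined fixed-point-ratio or {\sc Magma} arguments for the small residues. That is exactly the structure the paper uses, so the approach is sound. But there are three points that need attention.

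First, the appeal to Theorem \ref{t:craven} is a misfire: that result is stated only for $G_0 = E_7(q)$ and $E_8(q)$ and says nothing about $G_2(q)$. For $G_2(q)$ with $q \geqs 7$ you should instead read the exact list of Type V subgroups off the Cooperstein/Kleidman classification (conveniently tabulated in \cite[Tables 8.30, 8.41, 8.42]{BHR}): there are precisely four cases, $S = {\rm L}_2(13)$, $S = {\rm L}_2(8)$, $S = {\rm U}_3(3)$ (with $q = p \geqs 7$), and $S = {\rm J}_1$ (only at $q = 11$), all with $S \notin {\rm Lie}(p)$. Your sample list is both too long (${\rm L}_3^{\e}(3)$ and a range of ${\rm L}_2(q_0)$'s do not occur as maximal subgroups here) and too vague — for $G_2(q)$ there are no surviving generic Type V candidates at all, so the $q_0 \leqs (2,q-1)u(G_0)$ discussion is moot.

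Second, and more importantly, the crude estimate you write down does not come close to finishing the argument in the cases that matter. For $S = {\rm U}_3(3)$ you have $|H| = 12096$, and with the bound $|x^G| > \alpha q^6$ the quantity $|H|^2/(\alpha q^6)$ exceeds $1$ for every prime $q \leqs 23$; but this case lives over all primes $q = p \geqs 7$, so the residual set is not a short fixed list that you can simply defer. What the paper actually does is split the sum defining $\mathcal{Q}(G,H)$ by the order of $x$ (using the fact that $i_2(H)$, $i_3(H)$, $i_7(H)$ are small and that $|x^G|$ is considerably larger than $\alpha q^6$ for each fixed order) to push the estimate down to $q \geqs 11$, and then handles $q = 7$ by identifying the $G$-class of each element of $H$ from its Jordan form on the $7$-dimensional minimal module (computed in {\sc Magma}). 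The ${\rm J}_1 < G_2(11)$ case requires a similar Jordan-form computation on $V$ to pin down the contribution from elements of order $3$ and $11$. Your proposal gestures at both of these refinements as "route (i)/(ii)" but carries out neither, and it is precisely here that the substance of the proof lies.

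Third, a small positive note: for cases (a) and (b) the congruence conditions on $q$ already force $q \geqs 17$, so there the crude bound does close the argument immediately, as you anticipate. Pinning this down explicitly would tighten your outline.
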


\begin{proof}
In view of Theorem \ref{t:small} (and Remark \ref{r:small}), we may assume $q \geqs 7$. By inspecting \cite{Coop, K88} (also see \cite[Tables 8.30, 8.41 and 8.42]{BHR}), we observe that there are four cases to consider (in each case $S \not\in {\rm Lie}(p)$):
\begin{itemize}\addtolength{\itemsep}{0.2\baselineskip}
\item[{\rm (a)}] $S = {\rm L}_{2}(13)$ and either $q = p \equiv 1,3,4,9,10,12 \imod{13}$, or $q=p^2$ and $p \equiv 2,5,6,7,8,11 \imod{13}$;
\item[{\rm (b)}] $S = {\rm L}_{2}(8)$ and either $q = p \equiv 1,8 \imod{9}$, or $q=p^3$ and $p \equiv 2,4,5,7 \imod{9}$;
\item[{\rm (c)}] $G = G_2(q)$, $H = {\rm U}_{3}(3){:}2 = G_2(2)$ and $q=p \geqs 7$;
\item[{\rm (d)}] $G = G_2(11)$ and $H = {\rm J}_{1}$.
\end{itemize} 
In all four cases, we claim that $b(G,H)=2$.

In (a) and (b), we have $|H| \leqs |{\rm PGL}_{2}(13)| = 2184=a_1$ and $|x^G| \geqs q^3(q^3-1) = b_1$ for all nontrivial $x \in G$. Moreover, the given conditions imply that $q \geqs 17$ and thus $\mathcal{Q}(G,H) < a_1^2/b_1<1$ (in addition, this upper bound is less than $q^{-1}$ if $q \geqs 23$).

Next consider (c), so $|H| = 12096$. Let $x \in H$ be an element of prime order $r$ and note that $r \in \{2,3,7\}$. Let us also observe that 
\[
i_2(H) = 315 = a_1,\; i_3(H) = 728 = a_2, \; i_7(H) = 1728=a_3.
\]
If $r=2$ then $G$ has a unique conjugacy class of elements of order $r$ and we have $|x^G| = q^4(q^4+q^2+1)=b_1$. Similarly, if $r=3$ then $|x^G| \geqs q^3(q^3-1)=b_2$ and for $r=7$ and $q \geqs 11$ we get $|x^G| \geqs q^5(q^3-1)(q^2-q+1) = b_3$. Putting these estimates together, we deduce that $\mathcal{Q}(G,H) < \sum_{i=1}^{3}a_i^2/b_i$, which is less than $1$ if $q \geqs 11$ (and it is less than $q^{-1}$ if $q \geqs 17$). 

To complete the proof in case (c), we may assume $q=7$. Here we need to be more careful when estimating the contributions to $\mathcal{Q}(G,H)$ from elements of order $3$ and $7$. To do this, let $V$ be the minimal $7$-dimensional module for $G$ over $\mathbb{F}_7$ and observe that $H$ acts irreducibly on $V$ (see \cite[Theorem A]{K88}, for example). With the aid of {\sc Magma} \cite{Magma}, we can compute the action of each $x \in H$ on $V$ (we refer the reader to \cite[Lemma 2.11]{BTh_comp} for the details of these computations). If $x$ has order $7$, then we find that $x$ has Jordan form $(J_7)$ on $V$ and thus $x$ is a regular unipotent element in $G$ (see \cite[Table 1]{Lawunip}). Therefore, the contribution to $\mathcal{Q}(G,H)$ from elements of order $7$ is precisely $a_2^2/b_2$, where $a_2 = 1728$ and $b_2 = 7^4(7^2-1)(7^6-1)$. 

Finally, suppose $x \in H$ has order $3$ and note that both $H$ and $G$ have two conjugacy classes of elements of order $3$. We will write \texttt{3A} and \texttt{3B} to denote the two $H$-classes (they have sizes $56=a_3$ and $672=a_4$, respectively). We find that \texttt{3A}-elements and \texttt{3B}-elements have Jordan form $(I_1, \omega I_3, \omega^2I_{3})$ and $(I_3, \omega I_2, \omega^2I_{2})$ on $V$, respectively, where $\omega \in \mathbb{F}_7$ is a primitive cube root of unity. In particular, we see that the two classes are not fused in $G$. Moreover, we deduce that if $x \in H$ is a \texttt{3A}-element then $C_G(x) = {\rm SL}_{3}(7)$, so $|x^G| = 7^3(7^3+1) = b_3$, whereas $|x^G| = 7^5(7^6-1)/6 = b_4$ for the elements in \texttt{3B}. Setting $a_1 = 315$ and $b_1 = 7^4(7^4+7^2+1)$ as before, we conclude that 
\[
\mathcal{Q}(G,H) = \sum_{i=1}^{4}a_i^2/b_i = \frac{4649}{103243} < 1.
\]

Finally, let us turn to case (d). Suppose $x \in H$ has prime order $r$, so $r \in \{2,3,5,7,11,19\}$ and we note that
\[
i_2(H) = 1463 = a_1, \; i_3(H) = 5852 = a_2,\; i_5(H) = 9704=a_3,
\]
\[
i_7(H) = 25080=a_4,\; i_{11}(H) = 27720 = a_5,\; i_{19}(H) = 27720 = a_6.
\]
If $r=2$ then $|x^G| = 11^4(11^4+11^2+1) = b_1$. Similarly, if $r \in \{5,7,19\}$ then $C_{\bar{G}}(x) = A_1T_1$ or $T_2$ and thus $|x^G| \geqs 11^5(11^3-1)(11^2-11+1)=b_3=b_4=b_6$. 

Now assume $r \in \{3,11\}$. Let $V$ be the minimal module for $G$ over $\mathbb{F}_{11}$ and note that $H$ acts irreducibly on $V$. Using {\sc Magma}, we can compute the action of $x$ on $V$ (see \cite[Lemma 2.11]{BTh_comp}). If $r=3$ then $x$ has Jordan form $(I_3, \omega I_2, \omega^2I_2)$ on $V \otimes \bar{\mathbb{F}}_{11}$, so $C_{\bar{G}}(x) \ne A_2$ and thus $|x^G| = 11^5(11^3-1)(11^2-11+1) = b_2$.  Finally, if $r=11$ then $x$ has Jordan form $(J_{7})$ on $V$, so $x$ is a regular unipotent element in $G$ and $|x^G| = 11^4(11^2-1)(11^6-1) = b_5$. 

We conclude that
\[
\mathcal{Q}(G,H) < \sum_{i=1}^{6}a_i^2/b_i < 1
\]
and the proof of the proposition is complete.
\end{proof}

Next we consider the two special cases highlighted in the statement of Theorem \ref{t:typeV}.

\begin{lem}\label{l:ndef2}
The conclusion to Theorem \ref{t:typeV} holds if $G_0= {}^2E_6(2)$ and $S = {\rm Fi}_{22}$.
\end{lem}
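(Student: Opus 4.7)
The plan is to prove both assertions computationally, exploiting the fact that $G_0 = {}^2E_6(2)$ is a specific finite group whose character table, together with that of every $G$ with $G_0 \leqslant G \leqslant {\rm Aut}(G_0)$, is available in the \textsf{GAP} Character Table Library \cite{GAPCTL}. The same is true for every $H$ with ${\rm Fi}_{22} \leqslant H \leqslant {\rm Aut}({\rm Fi}_{22})$, and the maximal subgroups of $H$ are recorded in the \textsc{Atlas} \cite{ATLAS}. Throughout I will write $\Omega = G/H$ and let $H = H_0.A$, $G = G_0.A$, where $A \leqslant {\rm Sym}_3$.

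First I will show that $G$ is not extremely primitive by applying Lemma \ref{l:char}. Using the command \texttt{FusionConjugacyClasses} in \textsf{GAP}, or \texttt{PossibleClassFusions} when the fusion is not already stored, I will determine the fusion map from $H$-classes to $G$-classes. In each possible case this pins down $|x^G \cap H|$ for every class $x^G$ meeting $H$, and hence the number of fixed points of each $h \in H$ on $\Omega$ via the identity $|C_\Omega(h)| = |h^G \cap H|\cdot |\Omega|/|h^G|$. The Orbit Counting Lemma then yields the rank $r$ of $G$. Next I will read off from the \textsc{Atlas} the indices $n_1, \ldots, n_k$ of representatives of the $H$-classes of core-free maximal subgroups of $H$ (here the relevant maximal subgroups of ${\rm Fi}_{22}$ are $2.\text{U}_6(2)$, $\text{O}_7(3)$, $\text{O}_8^{+}(2){:}{\rm Sym}_3$, $2^{10}{:}{\rm M}_{22}$, $2^6{:}{\rm Sp}_6(2)$, $(2 \times 2^{1+8}){:}(\text{U}_4(2){:}2)$, ${}^2F_4(2)'$, $2^{5+8}{:}({\rm Sym}_3 \times {\rm Sym}_3)$, $3^{1+6}{:}2^{3+4}{:}3^2{:}2$, $\text{S}_{10}$, $\text{M}_{12}$, and their extensions in ${\rm Fi}_{22}.2$). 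Finally I will verify, by a short integer programming check, that no non-negative solution $[a_1, \ldots, a_k]$ to the system $\sum_i a_i = r-1$ and $\sum_i a_i n_i = |\Omega|-1$ exists. Lemma \ref{l:char} then gives the desired non-extreme-primitivity.

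Next I will establish the base size statement $b(G,H) = 3$ using \textsc{Magma} \cite{Magma}. To show $b(G,H) \leqslant 3$, I will construct $G$ and $H$ as permutation groups (for example, via the \texttt{AutomorphismGroupSimpleGroup} command applied to $G_0 = {}^2E_6(2)$, then identifying $H$ as the normalizer of a suitable copy of ${\rm Fi}_{22}$) and search by random sampling for a triple $(\alpha_1, \alpha_2, \alpha_3)$ of points in $\Omega$ whose pointwise stabilizer is trivial; this is the standard approach used in \cite{BTh_comp} and is essentially a finite check. To show $b(G,H) \geqslant 3$, I will verify that $\mathcal{Q}(G,H) \geqslant 1$ (via Lemma \ref{l:base} applied in reverse, using the precise values of $|x^G \cap H|$ and $|x^G|$ gathered in the first step), or more directly that every $H$-orbit on $\Omega \setminus \{\alpha\}$ has a non-trivial point stabilizer; equivalently, that $H \cap H^g \neq 1$ for all $g \in G \setminus H$, which can be checked by examining the fixed point numbers computed above and confirming that every $H$-orbit has length strictly less than $|H|$. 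Since $G$ has only finitely many suborbits (this number being the rank $r$ from step one), the latter is a finite check.

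The main obstacle is primarily bookkeeping rather than conceptual: correctly handling each $(G,H)$ pair with $G_0 \leqslant G \leqslant G_0.{\rm Sym}_3$ and tracking the (potentially multiple) admissible fusion maps in \textsf{GAP}, together with the practical issue of constructing a tractable permutation representation of $G$ inside \textsc{Magma}, given that $|\Omega| = |G_0 : {\rm Fi}_{22}|$ is of moderate size. Since all these computations are standard and fall within the framework described in Section \ref{ss:comp}, the complete details will be deferred to \cite{BTh_comp}.
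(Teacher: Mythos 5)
Your proposal follows essentially the same strategy as the paper: both establish non-extreme-primitivity by computing the rank via the Orbit Counting Lemma from \textsf{GAP} character table data and fusion maps, then verifying arithmetically that the nontrivial orbit lengths cannot all be indices of maximal subgroups of $H$ (the paper writes this out directly rather than citing Lemma \ref{l:char}, but it is the same check). For the base size, the paper obtains $b(G,H)\geqslant 3$ via the cleaner order bound $|G|>|G:H|^2$ and cites \cite{BLS} for equality, whereas you propose a slightly more computational verification; both are valid.
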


\begin{proof}
Here $(G,H) = ({}^2E_6(2),{\rm Fi}_{22})$ or $({}^2E_6(2).2,{\rm Fi}_{22}.2)$. In both cases, we observe that 
\[
\frac{\log |G|}{\log |G:H|}>2
\]
which implies that $b(G,H) \geqs 3$ (in fact, we have $b(G,H)=3$, as noted in the proof of \cite[Proposition 4.21]{BLS}). 

First assume $G = {}^2E_6(2)$ and $H = {\rm Fi}_{22}$. Set $\O = G/H$. The character tables of $G$ and $H$ are available in the \textsf{GAP} Character Table Library \cite{GAPCTL}, together with the corresponding fusion map from $H$-classes to $G$-classes. This allows us to compute $|C_{\O}(x)|$ for each $x \in H$, where $C_{\O}(x)$ is the set of fixed points of $x$ on $\O$. In this way, via the Orbit Counting Lemma, we deduce that $H$ has $8$ orbits on $\O$ (see \cite[Lemma 2.12]{BTh_comp} for the details of this computation). Let $d_1, \ldots, d_7$ be the lengths of the nontrivial $H$-orbits, ordered so that $d_{i} \leqs d_{i+1}$ for each $i$.

Seeking a contradiction, suppose $G$ is extremely primitive. Then each $d_i$ must be the index of a maximal subgroup of $H = {\rm Fi}_{22}$ and by inspecting the Web-Atlas \cite{WebAt} we deduce that each $d_i$ is one of the following:
\[
3510,14080,61776,142155,694980,1216215,1647360,
\]
\[
3592512,3648645,12812800, 17791488,679311360.
\]
Since  
\[
1+6 \! \cdot \! 17791488+679311360<|\O|
\]
it follows that $d_6 = d_7 = 679311360$. But $1+2 \! \cdot \! 679311360>|\O|$ and we have reached a contradiction.

The case $G = {}^2E_6(2).2$ with $H = {\rm Fi}_{22}.2$ is entirely similar. Once again, by computing fixed points, we find that $H$ has $8$ orbits on $\O = G/H$ and by inspecting \cite{WebAt} we deduce that if $G$ is extremely primitive then the length of each nontrivial $H$-orbit is one of
\[
3510,61776,142155,694980,1216215,1647360,3612614,
\]
\[
3648645,5125120,
12812800,15206400,17791488.
\]
But $1+7  \! \cdot \! 17791488 < |\O|$, so $G$ is not extremely primitive.
\end{proof}

\begin{lem}\label{l:ndef20}
The conclusion to Theorem \ref{t:typeV} holds if $G_0 = F_4(2)$ and $S = {\rm L}_4(3)$.
\end{lem}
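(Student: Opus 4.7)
The plan is to mimic the approach of Lemma \ref{l:ndef2}. Here $H$ is one of the maximal subgroups $L_4(3).2$ of $G_0 = F_4(2)$ or its extension $L_4(3).2^2$ in $G = F_4(2).2$, both of index $|G:H| = 272957440$. To rule out extreme primitivity we combine the Orbit Counting Lemma with Lemma \ref{l:char}, and separately we exhibit a base of size at most $3$ to obtain the numerical bound $b(G,H) \leqs 3$.

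For the extreme primitivity part, the character tables of $F_4(2)$, $F_4(2).2$, and of both $L_4(3).2$ and $L_4(3).2^2$ are available in the \textsf{GAP} Character Table Library \cite{GAPCTL}. I would use \texttt{FusionConjugacyClasses} (or \texttt{PossibleClassFusions} if the stored fusion is not unique) to compute, for each $x \in H$ of prime order, the product
\[
|C_{\Omega}(x)| = \frac{|x^G \cap H|}{|x^G|}\cdot |\Omega|,
\]
and then apply the Orbit Counting Lemma to determine the rank $r$ of $G$ on $\Omega = G/H$. The indices of the core-free maximal subgroups of $H$ can be read off from the ATLAS listing of maximal subgroups of $L_4(3).2$ (and its outer extension), which I would sort into a list $\{n_1,\ldots,n_k\}$. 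I then check computationally that there is no $k$-tuple $[a_1,\ldots,a_k]$ of non-negative integers satisfying both $\sum_{i}a_i = r-1$ and $1 + \sum_{i}a_i n_i = |\Omega|$; Lemma \ref{l:char} then rules out extreme primitivity. If multiple fusion maps survive, each must be handled separately, but every possible rank yields the same conclusion.

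For the base-size bound I would construct $G$ and $H$ as permutation groups in \textsc{Magma}, using \texttt{AutomorphismGroupSimpleGroup} to obtain $F_4(2).2$ and identifying the relevant maximal subgroup isomorphic to $L_4(3).2$ (or $L_4(3).2^2$) via \texttt{MaximalSubgroups}. A random search over triples in $\Omega$ should quickly produce three points whose pointwise stabilizer is trivial. Since $b(G,H) \geqs 3$ follows immediately from the order bound $|H|^2 < |G|$, the computation confirms $b(G,H) = 3$ in the first case and $b(G,H) \leqs 3$ in the second.

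The main obstacle is the combinatorial verification at the end of the Lemma \ref{l:char} step. Because $L_4(3)$ has a fairly rich collection of maximal subgroups (giving up to ten or so distinct indices $n_i$), and because the rank $r$ may be modestly large (around $10$--$15$ in analogous cases), one must be confident that the resulting integer programming problem $\sum a_i = r-1$, $\sum a_i n_i = |\Omega|-1$ has no non-negative solution. In practice this reduces to a short search over bounded tuples, easily handled in \textsc{Magma} or \textsf{GAP}, but it is the step where care is needed to rule out coincidental partitions, exactly as in the final calculation of Lemma \ref{l:ndef2}.
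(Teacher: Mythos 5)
Your overall strategy for the extreme-primitivity part — compute the rank via the Orbit Counting Lemma using character-table fusion data, then apply Lemma \ref{l:char} — matches the paper exactly, and your remark about falling back to \texttt{PossibleClassFusions} is also what the paper does (the stored fusion is absent; two candidate fusions survive and give the same answer). One quantitative caveat: the rank here is actually $94$ for $G = F_4(2)$ and $66$ for $G = F_4(2).2$, not the $10$--$15$ you anticipated, but this does not hurt you; in fact the paper's killing step is simpler than your general integer-programming search, because every core-free maximal subgroup of $H$ has index at most $10530$, so the crude estimate $1 + (r-1)\cdot 10530 < |G:H|$ already rules out any solution to the system in Lemma \ref{l:char}.

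The genuine error is in your base-size argument. You assert that $b(G,H) \geqs 3$ ``follows immediately from the order bound $|H|^2 < |G|$,'' and conclude $b(G,H) = 3$. The implication runs the other way: the elementary lower bound is $b(G,H) \geqs \log|G|/\log|G:H|$, which forces $b(G,H) \geqs 3$ precisely when $|G:H|^2 < |G|$, equivalently $|H|^2 > |G|$. Here $|H|^2 < |G|$ (equivalently $\log|G| < 2\log|G:H|$), so this bound gives nothing beyond $b(G,H) \geqs 2$, and indeed the paper explicitly states that it cannot determine whether $b(G,H) = 2$ or $3$; the theorem claims only $b(G,H) \leqs 3$ in case (ii), in contrast to case (i) where $b = 3$ is pinned down. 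For the upper bound $b(G,H) \leqs 3$, the paper does not rely on random search but instead verifies the cubic probabilistic bound $\sum_i |x_i^G|\cdot{\rm fpr}(x_i,G/H)^3 < 1$, invoking \cite[Corollary 2.4]{BOW}; it also records that the quadratic analogue $\mathcal{Q}(G,H)$ exceeds $1$, so the usual two-point criterion fails, which is why the $b=2$ question is left open. You should drop the claim $b(G,H) = 3$ and the incorrect order-theoretic justification, and replace the random-search step with either the probabilistic cubic bound or an explicit verified triple.
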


\begin{proof}
First assume $G = F_4(2)$, so $H = {\rm L}_{4}(3).2$ (see \cite{NW}). The character tables of $G$ and $H$ are available in the \textsf{GAP} Character Table Library \cite{GAPCTL}. Although the precise fusion of $H$-classes in $G$ is not available in \cite{GAPCTL}, we can use \texttt{PossibleClassFusions} to compute 
\[
{\rm fpr}(x,G/H) = \frac{|x^G \cap H|}{|x^G|}
\]
for all $x \in G$ of prime order (there are two possible fusion maps and they both give the same fixed point ratios). This allows us to compute $\mathcal{Q}(G,H)$ precisely and we find that $\mathcal{Q}(G,H)>1$ (indeed, just the contribution from involutions is greater than $1$). If $x_1, \ldots, x_k$ are representatives of the $G$-classes of elements of prime order, then 
\[
\sum_{i=1}^{k}|x_i^G|\cdot {\rm fpr}(x_i,G/H)^3 < 1
\]
and thus $b(G,H) \leqs 3$ by \cite[Corollary 2.4]{BOW}. Since $\log |G|< 2\log |G/H|$, we cannot rule out $b(G,H) = 2$ and we have been unable to determine the exact base size in this case. 

To show that $G$ is not extremely primitive, we can argue as in the proof of the previous lemma. By applying the Orbit Counting Lemma, we deduce that $H$ has $94$ orbits on $G/H$. However, if $M$ is a core-free maximal subgroup of $H$ then $|H:M| \leqs 10530$ and we have $1+93  \! \cdot \! 10530<|G:H|$. We conclude that $G$ is not extremely primitive. (See \cite[Lemma 2.13]{BTh_comp} for further details on the computation.)

The case $G = F_4(2).2$, $H = {\rm L}_{4}(3).2^2$ is entirely similar and we omit the details (here $H$ has $66$ orbits on $G/H$).
\end{proof}

\begin{lem}\label{l:special}
The conclusion to Theorem \ref{t:typeV} holds when $G_0 = E_7(2)$, $E_6^{\e}(2)$ or $F_4(2)$.
\end{lem}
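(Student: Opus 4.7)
The plan is to handle all remaining cases computationally using {\sc Magma}, since $q=2$ means the ambient groups $G_0 \in \{E_7(2), E_6^{\e}(2), F_4(2)\}$ have manageable permutation representations and their maximal subgroups have been completely classified in \cite{BBR}, \cite{KW}, \cite{ATLAS,Wil2} and \cite{NW} respectively. First I would compile the list of almost simple maximal subgroups $H$ of Type V with socle $S$ (as in Theorem \ref{t:simples}) for each of the three socles, excluding the cases $(G_0,S) = ({}^2E_6(2),{\rm Fi}_{22})$ and $(F_4(2),{\rm L}_4(3))$ already treated in Lemmas \ref{l:ndef2} and \ref{l:ndef20}. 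In view of Lemma \ref{l:base2}, it suffices to establish $b(G,H)=2$ in each remaining case.

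The main strategy for proving $b(G,H)=2$ will be to construct $G$ and $H$ explicitly as permutation groups (using for example the standard permutation representations available via \texttt{AutomorphismGroupSimpleGroup}, or the representations in the Web Atlas \cite{WebAt}), then embed $H$ as a subgroup of $G$ via $N_G(S)$ or by matching a generating set of $H$ to elements of $G$ of appropriate orders, and finally conduct a short random search for $g \in G$ with $H \cap H^g = 1$. For subgroups whose order is small relative to $|G|^{1/2}$, as is typical when $S$ is a sporadic group or small alternating/Lie-type group, one can alternatively verify the bound $\mathcal{Q}(G,H)<1$ from Lemma \ref{l:base} by computing the fixed point ratios directly from the character tables in \cite{GAPCTL} (using \texttt{FusionConjugacyClasses} or \texttt{PossibleClassFusions} as was done in the proofs of Lemmas \ref{l:ndef2} and \ref{l:ndef20}), thereby avoiding any explicit random search.

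The main obstacle will be the subgroup $H$ of largest order, since there $|H|$ is closest to $|G|^{1/2}$ and a random search may fail to find a regular pair quickly; in such cases it may be necessary to work with specific, carefully chosen elements $g$ constructed from the structure of $H$ (for example, $g$ lying in a maximal subgroup of $G$ meeting $H$ trivially on its socle), or to fall back on the probabilistic estimate. For each case the explicit computation, together with the precise construction of $H$ inside $G$, will be carried out in the companion article \cite{BTh_comp}, and the conclusion $b(G,H)=2$ will follow directly. Combined with Lemmas \ref{l:ndef2} and \ref{l:ndef20}, this completes the proof of Theorem \ref{t:typeV} for these three socles.
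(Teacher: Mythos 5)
Your approach matches the paper's: consult the known classifications of maximal subgroups of $E_7(2)$, $E_6^{\e}(2)$ and $F_4(2)$ in \cite{BBR,KW,NW,Wil2} and then verify $b(G,H)=2$ computationally via \cite{GAPCTL}. The step you leave implicit is the crucial one: carrying out that inspection shows that, after removing the two pairs treated in Lemmas \ref{l:ndef2} and \ref{l:ndef20}, the \emph{only} Type V subgroup that actually arises is $(G_0,S) = ({}^2E_6(2),\Omega_7(3))$ (in particular, none occur for $E_7(2)$ or $E_6(2)$), and for this single case $\mathcal{Q}(G,H)<1$ holds directly, so no random search or delicate treatment of large subgroups is required.
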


\begin{proof}
By inspecting \cite{BBR, KW, NW, Wil2}, we see that $(G_0,S) = ({}^2E_6(2),\O_7(3))$, $({}^2E_6(2),{\rm Fi}_{22})$ or $(F_4(2),{\rm L}_{4}(3))$. The latter two cases were handled in Lemma \ref{l:ndef2} and \ref{l:ndef20}, so we may assume that $G_0 = {}^2E_6(2)$ and $S = \O_7(3)$, so by \cite{Wil2} we have $(G,H) = ({}^2E_6(2), \O_7(3))$ or $({}^2E_6(2).2, {\rm SO}_{7}(3))$. In both cases, by proceeding as in the proof of Lemma \ref{l:ndef20} we can compute $\mathcal{Q}(G,H)$ precisely and we deduce that $\mathcal{Q}(G,H)<1$ (see \cite[Lemma 2.14]{BTh_comp}). Therefore, $b(G,H) = 2$.
\end{proof}

For the remainder of this section, we will assume
\begin{equation}\label{e:list}
G_0 \in \{E_8(q), E_7(q), E_6^{\e}(q), F_4(q)\} \setminus \{E_7(2),E_6^{\e}(2),F_4(2)\}.
\end{equation}
We partition the remainder of the proof of Theorem \ref{t:typeV} into two parts, according to the cases $S \not\in {\rm Lie}(p)$ and $S \in {\rm Lie}(p)$. Before launching into the details, let us record the following result on long root elements, which will be useful in the subsequent analysis.

\begin{prop}\label{p:long}
Suppose $p>2$ and let $H$ be a Type V subgroup of $G$. Then $H$ does not  
contain a long root element of $G$.
\end{prop}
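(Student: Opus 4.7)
The plan is to argue by contradiction. Suppose $H$ is a Type V subgroup of $G$ containing a long root element $x$ of $G$. Then $x$ has order $p$ and lies in a long root subgroup $X_{\a}$ of $\bar{G}$. Consider the Zariski closure $\bar{X}$ in $\bar{G}$ of the subgroup generated by the $H$-conjugates of $X_{\a}$. By construction $\bar{X}$ is a closed positive-dimensional subgroup of $\bar{G}$ generated by long root subgroups, and $H$ normalizes $\bar{X}$.

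Since $p \geqs 3$, Proposition \ref{p:root}(i) applies uniformly: any long root element contained in a connected reductive subgroup is a long root element in one of the simple factors. Combined with the theorem of Liebeck--Seitz classifying subgroups generated by long root subgroups in simple algebraic groups, this implies $\bar{X}$ is a closed connected reductive subsystem subgroup of $\bar{G}$, each simple factor of which is generated by $\bar{G}$-conjugate long root subgroups.

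If $\bar{X}$ is a proper subgroup of $\bar{G}$, then $\bar{X}$ may be taken $\sigma$-stable, and $N_G(\bar{X}_\sigma)$ is a proper closed subgroup of $G$ containing $H$. Since $\bar{X}$ is reductive of maximal rank in $\bar{G}$, this normalizer is a maximal rank subgroup of $G$, of the type already treated in Theorem \ref{t:maxrank}. This contradicts the hypothesis that $H$ is a maximal subgroup of Type V. So it remains to rule out $\bar{X} = \bar{G}$. For this we appeal to the explicit classification of Type V subgroups in Theorems \ref{t:simples} and \ref{t:craven}, refined in \cite{Litt, Craven17}. For each candidate $S = {\rm soc}(H)$, one verifies that no element of $S$ of order $p$ can act on the adjoint module $\mathcal{L}(\bar{G})$ with the Jordan structure of a long root element as recorded in \cite{Lawunip}. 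For $S \not\in {\rm Lie}(p)$ (the sporadic, alternating, and cross-characteristic cases), this follows from inspection of $p$-modular Brauer character tables; for the generic cases $S = {\rm L}_{2}(q_0)$ in Theorem \ref{t:simples}(ii)(c) one uses the known $L_2(q_0)$-composition factors of the embedding $S \into \bar{G}$.

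The main obstacle is this final case-by-case verification that $\bar{X} \ne \bar{G}$, but the list of candidates is short after the refinements of Craven, and in each case the constraint imposed by Proposition \ref{p:root}(i) together with the Jordan structure of a long root element is too rigid to be met. The hypothesis $p > 2$ is essential at two points: it allows Proposition \ref{p:root}(i) to apply without the exceptional case (ii), and it excludes the characteristic-two degenerations of root subgroups that would admit additional embeddings containing long root elements.
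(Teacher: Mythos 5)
Your argument takes a genuinely different route from the paper, and it has a significant gap. The paper's proof is a two-line citation: if $x\in H$ is a long root element, then since $p>2$ the square $x^2$ is again a nontrivial long root element, and \cite[Corollary 6.2]{LS94} then says directly that $H$ is contained in $N_G(\bar H_\sigma)$ for some positive-dimensional maximal closed subgroup $\bar H$, contradicting the definition of a Type V subgroup. All of the work — in particular, the possibility that long root elements of $H$ together generate all of $\bar G$ — is absorbed into that citation. Your plan, by contrast, essentially reconstructs Corollary 6.2 from the classification of subgroups generated by root elements, which is more labour for the same payoff.

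The gap is in your final step. You reduce to ruling out $\bar X = \bar G$, where $\bar X$ is the closure of $\langle X_\alpha^H\rangle$, and then say this requires a case-by-case check of Jordan block structures on $\mathcal{L}(\bar G)$ across the list of Type V socles, describing this as ``the main obstacle'' and leaving it undone. That verification \emph{is} the content of the proposition: the whole point is to avoid checking long-root membership socle-by-socle in Sections 7.1 and 7.2. As written, the proposal does not prove the statement; it reduces it to a finite computation that is never carried out, and which — were you to carry it out — would make the proposition redundant, since you could just record the long-root exclusion directly in each case of Lemmas \ref{l:ndef1}--\ref{l:liep}. There are also smaller imprecisions worth flagging: the subgroup $\bar X$ generated by long root subgroups need not be a subsystem subgroup of maximal rank (it could, for example, lie in a parabolic, or be a diagonal of a subsystem), so the appeal to Theorem \ref{t:maxrank} is not quite justified; the correct conclusion in the proper case is simply that $H$ lies in a positive-dimensional closed subgroup, which already contradicts Type V without invoking the maximal rank classification. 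You should replace the final case analysis by a direct appeal to \cite[Corollary 6.2]{LS94}, noting the role of $p>2$ is exactly to guarantee $x^2\neq 1$ so that the corollary's hypotheses are met.
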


\begin{proof}
This follows immediately from \cite[Corollary 6.2]{LS94}. Indeed, if $x \in H$ is a long root element of $G$, then $x^2 \in H$ is also a long root element and thus 
\cite[Corollary 6.2]{LS94} implies that $H \leqs N_G(\bar{H}_{\s}) < G$, where $\bar{H}$ is a $\s$-stable positive dimensional maximal closed subgroup of $\bar{G}$. But this is incompatible with the definition of a Type V subgroup.  
\end{proof}

\begin{rem}
For $p=2$, it is worth noting that the conclusion to Proposition \ref{p:long} is false in general. For example, $G = {}^2E_6(2)$ has a maximal subgroup $H = {\rm Fi}_{22}$ (this case was handled in Lemma \ref{l:ndef2}) and we find that the \texttt{2A}-involutions in $H$ embed in $G$ as long root elements. 
\end{rem}

\subsection{Non-generic subgroups}\label{ss:ndef}

In this section we handle the non-generic subgroups arising in part (i) of Theorem \ref{t:simples}, where $S \not\in {\rm Lie}(p)$. 

\begin{lem}\label{l:ndef1}
The conclusion to Theorem \ref{t:typeV} holds if $S = {\rm Alt}_n$.
\end{lem}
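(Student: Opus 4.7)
The plan is to exploit the fact that $|H|$ is bounded by an absolute constant while the minimal conjugacy class size in $G$ grows polynomially in $q$, and then apply the probabilistic base-size criterion in Lemma~\ref{l:base}.

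First I would invoke the main theorem of \cite{Craven17}, which implies that under the hypothesis $S = {\rm Alt}_n$ we must have $n \in \{6,7\}$. Consequently $H \leqs {\rm Aut}(S)$ is almost simple with socle of order at most $|{\rm Alt}_7|=2520$, so
\[
|H| \leqs |{\rm Aut}({\rm Alt}_7)| = 5040.
\]
In particular the bound on $|H|$ is independent of $q$ and of the choice of $G_0$.

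Next I would estimate $\mathcal{Q}(G,H)$ using the trivial bound $|x^G \cap H| \leqs |H|$. By Proposition~\ref{p:bounds}, for every nontrivial $x \in G$ of prime order we have $|x^G| > b_0$, where $b_0 = q^{58}$, $q^{34}$, $(q-1)q^{21}$, $q^{22}$ or $q^{16}$ according to whether $G_0 = E_8(q)$, $E_7(q)$, ${}^2E_6(q)$, $E_6(q)$ or $F_4(q)$. Summing over the $G$-classes of elements of prime order meeting $H$ and applying Lemma~\ref{l:calc} with $A = |H| \leqs 5040$ and $B = b_0$ gives
\[
\mathcal{Q}(G,H) \leqs |H|^2 / b_0 \leqs 5040^2 / b_0.
\]
The worst case is $G_0 = F_4(q)$, where $b_0 = q^{16}$, and here a direct check shows $5040^2 < q^{16}$ already for $q \geqs 3$. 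For the groups in \eqref{e:list} with $q=2$ only $G_0 = E_8(2)$ survives, and there $b_0 \geqs 2^{58}$, which trivially dominates $5040^2$. In every case $\mathcal{Q}(G,H) < 1$, so Lemma~\ref{l:base} yields $b(G,H) = 2$, and Lemma~\ref{l:base2} then gives that $G$ is not extremely primitive.

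There is no genuine obstacle: the reduction to $n \in \{6,7\}$ via \cite{Craven17} makes $|H|$ a constant, and after that the base-size computation is immediate. The only mild subtlety is noting that Proposition~\ref{p:long} (which would rule out long root elements in $H$ for $p$ odd) is not actually needed here, because even the smallest possible class size $q^{16}$ in $F_4(q)$ already beats $|H|^2$ under the standing assumption $q \geqs 3$ coming from \eqref{e:list}.
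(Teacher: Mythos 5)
Your proof is correct and follows essentially the same route as the paper: reduce to $n \in \{6,7\}$ via Craven's theorem, bound $|H| \leqs 7! = 5040$, use the worst-case minimal class size $q^{16}$ from Proposition~\ref{p:bounds} together with Lemma~\ref{l:calc} to get $\mathcal{Q}(G,H) < 5040^2/q^{16} < 1$ for $q \geqs 3$, and handle $q=2$ separately by observing that $E_8(2)$ is the only surviving case (where $|x^G| > 2^{58}$ makes the bound trivial). The only cosmetic difference is that you deduce the restriction to $E_8(2)$ for $q=2$ directly from \eqref{e:list}, whereas the paper also cites Craven at that point; both are valid.
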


\begin{proof}
By the main theorem of \cite{Craven17}, we may assume $n \in \{6,7\}$, so $|H|$ is at most $7! = a_1$ and we note that $|x^G|>q^{16}=b_1$ for all nontrivial $x \in G$. This gives $\mathcal{Q}(G,H)<a_1^2/b_1$, which is less than $1$ for all $q \geqs 3$ (and it is less than $q^{-1}$ for $q \geqs 4$). Finally, if $q=2$ then $G = E_8(2)$ is the only option (see \cite[Theorem 1]{Craven17}) and $|x^G|>2^{58}$ for all $1 \ne x \in G$. The result follows as before. 
\end{proof}

\begin{lem}\label{l:ndef3}
The conclusion to Theorem \ref{t:typeV} holds if $S$ is a sporadic simple group.
\end{lem}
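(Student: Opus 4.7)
The plan is to show that $b(G,H) = 2$ in every case, so that $G$ fails to be extremely primitive by Lemma \ref{l:base2}. Recall that by hypothesis $G_0$ is one of the groups in \eqref{e:list}, so the special configurations with $G_0 = {}^2E_6(2)$, $E_6(2)$, $E_7(2)$ or $F_4(2)$ are excluded (these were dispatched in Lemmas \ref{l:ndef2}, \ref{l:ndef20} and \ref{l:special}). The approach is entirely via the probabilistic bound $\mathcal{Q}(G,H) < 1$ of Lemma \ref{l:base}, combined with the class size estimates from Proposition \ref{p:bounds}.

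First I would read off from \cite{LS99} and \cite{Litt} the (very short) list of pairs $(G_0, S)$ with $S$ sporadic that arise as Type V maximal subgroups of $G$. In every such case $|S|$ is a fixed finite number, and $|H| \leqs |{\rm Aut}(S)|\cdot |{\rm Out}_{G_0}(H_0)|$ is bounded by an absolute constant, while any field-automorphism contribution is at most $\log_2 q$. For each possibility I would apply the crude estimate
\[
\mathcal{Q}(G,H) < |H|^2 / b_1
\]
where $b_1$ is the lower bound on the smallest nontrivial $G$-class size supplied by Proposition \ref{p:bounds} (namely $q^{16}$ for $F_4(q)$, $q^{22}$ or $\a q^{22}$ for $E_6^{\e}(q)$, $q^{34}$ for $E_7(q)$, and $q^{58}$ for $E_8(q)$). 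For all but a handful of small values of $q$ this bound is already less than $1$, and indeed tends to $0$ as $q \to \infty$, giving both halves of the claim uniformly.

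The slightly more delicate step is the sharpening needed for small $q$, particularly when $p = 2$ and a sporadic group of moderate order (such as $M_{12}$, $J_2$, $J_3$ or one of the Fischer groups appearing in \cite{LS99}) sits inside an exceptional group. Here I would invoke Proposition \ref{p:long}: when $p > 2$, $H$ contains no long root elements of $G$, so every class $x^G$ meeting $H$ satisfies the larger lower bound $|x^G| > \ell_2$ from Table \ref{tab:cbds}, and the crude estimate improves accordingly. When $p = 2$ the long root elements must be controlled directly, so for the remaining finite list of $(G_0, S, q)$ with $q$ small I would refine the computation of $\mathcal{Q}(G,H)$ by splitting elements of prime order in $H$ into $G$-classes, using the character tables of $S$ and of $G_0$ in the \textsf{GAP} Character Table Library \cite{GAPCTL} (as in Lemmas \ref{l:ndef2} and \ref{l:ndef20}) to determine (or bound over \texttt{PossibleClassFusions}) the fusion and hence $|x^G \cap H|$; alternatively, when the permutation representation is tractable, a direct \textsc{Magma} search for $g \in G$ with $H \cap H^g = 1$ finishes the job as in Remark \ref{r:small}.

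The main obstacle is thus the bookkeeping for the small-field cases rather than the asymptotics: one must be sure that the trivial $|H|^2/b_1$ bound suffices in all but finitely many $(G_0, S, q)$, and then that the refined fusion-based or computational argument closes out each remaining case without stumbling over a fusion ambiguity. Once this is done, $\mathcal{Q}(G,H) < 1$ throughout, so $b(G,H) = 2$ and Lemma \ref{l:base2} rules out extreme primitivity.
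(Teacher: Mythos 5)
Your overall strategy matches the paper's: after reading off the possibilities for $S$ from \cite{LS99}, refined by Litterick \cite{Litt}, one shows $b(G,H)=2$ in every case via $\mathcal{Q}(G,H)<1$, using the crude estimate $|H|^2/b_1$ together with Proposition \ref{p:bounds}, and sharpened by Proposition \ref{p:long} in odd characteristic (which is exactly how the paper dispatches ${\rm Th}$ in $E_8(q)$ with $p=3$). The reduction of the $J_2$ and $J_3$ cases to tiny fields and then ruling them out or handling them directly is also in the same spirit.

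Where your proposal has a genuine gap is in the refinement for the hardest small-field case, namely $S={\rm Fi}_{22}$ with $G_0=E_6(4)$ (which survives the crude bound). You propose to handle such cases either by computing fusion from the character tables of $S$ and $G_0$ in the GAP Character Table Library, or by a direct \textsc{Magma} base search on a permutation representation. Neither is viable here: the character table of $E_6(4)$ is not in \cite{GAPCTL} (the CTL technique in Lemmas \ref{l:ndef2} and \ref{l:ndef20} worked precisely because ${}^2E_6(2)$ and $F_4(2)$ \emph{are} stored), and the minimal permutation degree of $E_6(4)$ is far beyond the reach of random base search. The paper instead uses a module-theoretic workaround: ${\rm Fi}_{22}.2$ has a unique $78$-dimensional irreducible module over $\mathbb{F}_4$, which is identified with the adjoint module of $\bar{G}=E_6$, and one computes with \textsc{Magma} the Jordan form (for unipotent and graph-type elements) or the fixed-point dimension (for semisimple elements) of class representatives on this module; the $G$-class is then read off from Lawther's tables \cite{Lawunip} and from \cite[Table 22.2.3]{LieS}, yielding the data in Table \ref{tab:fi22} and hence explicit $a_i,b_i$ with $\sum a_i^2/b_i<1$. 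Without this (or an equivalent class-identification tool that does not rely on a stored character table of $G_0$), your argument would stall at exactly the one case that actually needs work.
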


\begin{proof}
The possibilities for $G_0$ and $S$ are recorded in \cite[Table 10.2]{LS99}, which is further refined in \cite[Theorem 8]{Litt} to give the list of cases recorded in Table \ref{tab:spor}. Recall that we may assume $G_0$ is one of the groups in \eqref{e:list}. In each case, we claim that $b(G,H)=2$.

\begin{table}
\begin{center}
\[
\begin{array}{ll}\hline
S & \bar{G} \\ \hline 
{\rm M}_{11} & E_6 \, (p=3,5),\; E_8 \, (p=3,11) \\
{\rm M}_{12} & E_6\, (p=5) \\
{\rm M}_{22} & E_7 \, (p=5) \\
{\rm J}_{1} & E_6 \, (p=11) \\
{\rm J}_{2} & E_6\, (p=2),\; E_7 \, (p=2) \\
{\rm J}_{3} & E_6 (p=2),\; E_8 \, (p=2) \\
{\rm Ru} & E_7 \, (p=5) \\
{\rm Fi}_{22} & E_6 \, (p=2) \\
{\rm HS} & E_7 \, (p=5) \\
{\rm Th} & E_8 \, (p=3) \\ \hline
\end{array}
\]
\caption{The possibilities for $\bar{G}$ and $S$, where $S$ is sporadic}
\label{tab:spor}
\end{center}
\end{table}

The cases with $S \in \{{\rm M}_{11}, {\rm M}_{12}, {\rm M}_{22}, {\rm J}_{1}, {\rm Ru}, {\rm HS}\}$ are very straightforward; we have $|H| \leqs |{\rm Aut}(S)|=a_1$ and by applying Proposition \ref{p:bounds} we deduce that $|x^G|>f(q)=b_1$ for all $x \in G$ of prime order, where $f(q) =q^{34}$ if $S = {\rm M}_{22}$, ${\rm Ru}$ or ${\rm HS}$, otherwise $f(q)  = (q-1)q^{21}$. One checks that this gives $\mathcal{Q}(G,H)<a_1^2/b_1<q^{-1}$ for all $q$ satisfying the restrictions on $p$ in Table \ref{tab:spor}. The case $S = {\rm Th}$ is also straightforward. Here $\bar{G}=E_8$ and $p=3$, so Proposition \ref{p:long} implies that there are no long root elements in $H$. In particular, if $x \in H$ has prime order, then $|x^G|>q^{92}=b_1$ and we deduce that $\mathcal{Q}(G,H)< a_1^2/b_1< q^{-1}$, where $a_1 = |S|$. 

If $S = {\rm J}_{2}$ then the same argument reduces the problem to the cases $G_0 = E_6^{\e}(2), E_7(2)$, but by inspecting \cite{BBR,KW,Wil2} we see that none of these groups have a maximal subgroup with socle ${\rm J}_{2}$. Similarly, if $S = {\rm J}_{3}$ then we may assume $G_0 = E_6^{\e}(4)$. Let $x \in G$ be an element of prime order $r$. If $r=2$ then $|x^G|>(4-1)4^{21} = b_1$ and we note that $i_2(H) \leqs i_2(S.2) = 46683=a_1$. On the other hand, if $r>2$ then $|x^G|>(4-1)4^{31}=b_2$. Setting $a_2 = |S|$, it follows that $\mathcal{Q}(G,H)<a_1^2/b_1+a_2^2/b_2<1$. 

Finally, let us assume $S = {\rm Fi}_{22}$. Here $\bar{G}=E_6$, $p=2$ and we may assume $q \geqs 4$.  Let $x \in G$ be an element of prime order $r$. If $r=2$ then $|x^G|>(q-1)q^{21} = b_1$ and we note that $i_2(H) \leqs i_2({\rm Fi}_{22}.2) = 79466751 = a_1$. For $r>2$ we have $|x^G|>(q-1)q^{31}=b_2$ and it follows that $\mathcal{Q}(G,H)<a_1^2/b_1+a_2^2/b_2$, where $a_2 = |S|$. This yields $\mathcal{Q}(G,H)<q^{-1}$ if $q \geqs 8$.

The case $q=4$ requires special attention. First observe that $|^2E_6(4)|$ is indivisible by $11$, so $G_0 = E_6(4)$ is the only option.  Let $V$ be the adjoint module for $G_0$ and note that $S$ acts irreducibly on $V$ (see \cite[p.27]{Litt}). In fact, $V$ is the unique $78$-dimensional irreducible module for $S.2$ (over $\mathbb{F}_4$) and we can use {\sc Magma} to compute the action on $V$ of a set of conjugacy class representatives in $H$ (see \cite[Lemma 2.11]{BTh_comp} for the details).

Let $x \in H$ be an element of prime order $r$. First assume $r \in \{2,3\}$. If $r=2$ and $x \in S$, then we compute the Jordan form of $x$ on $V$ and we identify $x^G$ by inspecting \cite[Table 6]{Lawunip}. There are $3$ classes of involutions in $S.2\setminus S$ and we note that $G_0.2$ has two classes of involutory graph automorphisms, represented by $\tau$ and $\tau'$, where 
$C_{G_0}(\tau) = F_4(4)$. As in previous cases, we can identify the corresponding $G$-class of each involution in $S.2\setminus S$ by computing the Jordan form on $V$. Indeed, if $x$ has Jordan form $(J_2^{26},J_1^{26})$ then $x$ is conjugate to $\tau$, whereas the graph automorphisms in the other class have Jordan form $(J_2^{36},J_1^6)$ on $V$. Finally, if $r=3$ then $\dim C_V(x) = \dim C_{\bar{G}}(x)$ and this uniquely determines $C_{\bar{G}}(x)$. The results are summarised in Table \ref{tab:fi22}. (Here we use the notation from \cite{ATLAS} for the classes in ${\rm Fi}_{22}.2$, while the unipotent classes in $G_0$ are labelled as in \cite{LieS}. For elements of order $3$, we give the structure of $C_{\bar{G}}(x)^0$.) 

\begin{table}
\begin{center}
\[
\begin{array}{lllllll} \hline
\texttt{2A} & 3510 & A_1 & & \texttt{3A} & 3294720 & A_5T_1 \\ 
\texttt{2B} & 1216215 & A_1^2  & & \texttt{3B} & 25625600 & A_2^3 \\
\texttt{2C} & 36486450 & A_1^3 & & \texttt{3C} & 461260800  & D_4T_2 \\
\texttt{2D} & 61776 & \tau & & \texttt{3D} & 3690086400 & A_2^3 \\
\texttt{2E} & 19459440 & \tau' & & &  & \\
\texttt{2F} & 22239360 & \tau' & & & & \\ \hline
\end{array}
\] 
\caption{Elements of order $2$ and $3$ in ${\rm Fi}_{22}.2 < E_6(4).2$}
\label{tab:fi22}
\end{center}
\end{table}

In each case, it is easy to determine a lower bound on $|x^G|$ and we deduce that the contribution to $\mathcal{Q}(G,H)$ from elements of order $2$ or $3$ is less than $\sum_{i=1}^{8}a_i^2/b_i$, where
\[
a_1 = 3510,\; a_2 = 1216215, \; a_3 = 36486450,\; a_4 = 61776,
\]
\[
a_5 = 41698800,\; a_6 = 3294720,\; a_7 = 3715712000,\; a_8 = 461260800
\]
and
\[
b_1 = 3.4^{21},\; b_2 = 3.4^{31},\; b_3 = \frac{1}{2}4^{40},\; b_4 = \frac{1}{6}4^{26}, \; 
b_5 = b_6 = \frac{1}{6}4^{42},\; b_7 = \frac{1}{6}4^{54},\; b_8 = \frac{1}{6}4^{48}.
\]
Finally, if $r \geqs 5$ then we find that $\dim C_V(x) = \dim C_{\bar{G}}(x) \leqs 18$ and thus $|x^G|>\frac{1}{6}4^{60}=b_9$. By setting $a_9 = |S|$, we conclude that 
\[
\mathcal{Q}(G,H) < \sum_{i=1}^{9}a_i^2/b_i <1
\]
and thus $b(G,H) = 2$.
\end{proof}

\begin{lem}\label{l:ndef4}
The conclusion to Theorem \ref{t:typeV} holds if $S \not\in {\rm Lie}(p)$ is a simple group of Lie type. 
\end{lem}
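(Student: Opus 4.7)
The plan is to handle the remaining possibilities for $S$ case by case, leveraging the short list of cross-characteristic Lie type subgroups available from \cite[Tables~10.3--10.4]{LS99} and its refinement in \cite[Theorem~8]{Litt}. In every remaining case we aim to show that $b(G,H) = 2$ via the probabilistic criterion of Lemma \ref{l:base}, which then gives the non-extreme-primitivity conclusion through Lemma \ref{l:base2}. The key observation, valid whenever $p$ is odd, is that by Proposition \ref{p:long} the subgroup $H$ contains no long root elements of $G$; consequently, for every element $x \in H$ of prime order we have the improved lower bound $|x^{G}| > q^{N_2}$ with $N_2$ the exponent in the second column of Table \ref{tab:cbds}, rather than the weaker long-root bound. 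Combined with the trivial bound $|H| \leqs |\mathrm{Aut}(S)|$, this gives $\mathcal{Q}(G,H) < |H|^{2}/q^{N_2}$, which for most pairs $(G_0,S)$ is already less than $1$ since $S$ is one of a small list of simple groups and the rank of $G_0$ makes $N_2$ comfortably large.

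Concretely, the first step will be to read off from \cite[Tables~10.3--10.4]{LS99} and \cite[Theorem~8]{Litt} the finite list of possibilities for $(G_0,S)$ with $S \not\in \mathrm{Lie}(p)$ a simple group of Lie type, and to eliminate the obvious ones. For each such $(G_0,S)$ the defining characteristic $p'$ of $S$ is fixed and the possibilities for $q = p^f$ are restricted (either by a congruence condition on $p$, or by $q$ being small). When $|H| < q^{N_2/2}$ the trivial estimate $\mathcal{Q}(G,H) < |H|^2/q^{N_2} < 1$ suffices; this disposes of all but a handful of pairs. Where it does not suffice (typically $G_0 \in \{F_4(q), E_6^{\pm}(q)\}$ paired with the largest $S$ in the list, or small $q$), I would refine the estimate by splitting the sum $\mathcal{Q}(G,H) = \sum_i |x_i^G|\cdot \mathrm{fpr}(x_i, G/H)^2$ according to the prime order of $x_i$, using Proposition \ref{p:i23} to bound $i_2(H)$ and $i_3(H)$ separately, and the trivial bound $|x^G \cap H| \leqs |H|$ on the larger-centralizer classes together with the stronger $|x^G|$ lower bounds from \cite{LieS,Lubeck} when $x$ is not a root element.

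The main obstacle will be the case $p=2$, since Proposition \ref{p:long} no longer applies and long root elements may appear in $H$; the pairs in question are the handful of cross-characteristic entries in \cite[Tables~10.3--10.4]{LS99} with $p=2$ (they involve groups such as $\mathrm{L}_3(3)$, $\mathrm{U}_3(3)$, $\mathrm{Sp}_4(3)$, $\mathrm{L}_2(r)$ for small odd $r$, and a few others, embedded in $E_6^{\pm}(q)$, $E_7(q)$ or $E_8(q)$). For these I would identify the root-element classes of $G$ that meet $H$ by inspecting the embedding of $S$ (using Litterick's composition-factor data on the adjoint module, or explicit Brauer character data in \cite{Jansen}) and contribute those classes separately to $\mathcal{Q}(G,H)$ with a sharp bound on $|x^G \cap H|$ of the form $|x^G \cap H| \leqs i_2(S)$, while for every other $x \in H$ of prime order I use $|x^G| > q^{N_2}$; in every relevant case $|S|$ is very small compared to $q^{N_2/2}$ so the resulting bound still gives $\mathcal{Q}(G,H) < 1$.

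Finally, a small number of boundary cases where $q$ is genuinely small (for example $G_0 = E_6^{\epsilon}(4)$ or $E_8(4)$ paired with the largest $S$) may resist the purely class-counting estimates. For those, I would fall back on the computational approach of Section \ref{ss:comp}: realise $H$ as a subgroup of $G$ in \textsc{Magma} via its irreducible action on a minimal module for $\bar{G}$, identify the $\bar{G}$-class of each element of prime order in $H$ using its Jordan form (via \cite{Lawunip}) or its centralizer dimension on the adjoint module (via \cite{Lubeck,BBR}), and then compute $\mathcal{Q}(G,H)$ exactly as in the proofs of Lemmas \ref{l:ndef2} and \ref{l:ndef20}. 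In every case the outcome is $b(G,H) = 2$, completing the proof of Theorem \ref{t:typeV} together with the earlier lemmas of this section.
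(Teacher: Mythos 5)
Your general strategy is the same as the paper's: read off the finite list of candidate pairs $(G_0,S)$ from \cite[Tables 10.3 and 10.4]{LS99}, aim to show $b(G,H)=2$ via Lemma \ref{l:base} using trivial estimates on $|H|$ and the class-size lower bounds in Table \ref{tab:cbds}, and fall back on finer class-by-class counting or computation for the small number of pairs that survive. However, there is a genuine gap in your ``key observation''. You claim that when $p$ is odd, Proposition \ref{p:long} forces $|x^G| > \ell_2$ (the second-column bound) for \emph{every} element $x \in H$ of prime order. That is false: Proposition \ref{p:long} only rules out long root elements and hence only improves the bound for \emph{unipotent} $x$. Semisimple involutions in $F_4(q)$ (with $C_{\bar{G}}(x)=B_4$) have $|x^G| \approx q^{16}$ while $\ell_2 \approx \frac{1}{2}q^{22}$, and involutory graph automorphisms of $E_6^{\e}(q)$ with $C_{\bar G}(x)=F_4$ give $|x^G| \approx q^{26}$ while $\ell_2 = q^{32}$. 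So the asserted estimate $\mathcal{Q}(G,H) < |H|^2/q^{N_2}$ is not a valid upper bound for these groups, independent of the size of $q$. You later flag $F_4(q)$ and $E_6^{\pm}(q)$ as the cases ``where it does not suffice'', but you attribute this to $|H|$ or $q$ being small, rather than to the bound itself being wrong; this matters because the fix is not a sharper count but a different decomposition.

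The paper sidesteps this from the outset by splitting off involutions: set $a_1 = i_2(\mathrm{Aut}(S))$, $a_2 = |\mathrm{Aut}(S)|$, take $b_1 = \min\{\ell_3,\ell_5\}$ when $p>2$ (resp.\ $b_1=\ell_1$ when $p=2$) for the involution contribution, and use $b_2 = \ell_2$ (resp.\ $\ell_4$) only for the remaining elements. This yields $\mathcal{Q}(G,H) < a_1^2/b_1 + a_2^2/b_2$, which is the correct version of the two-term bound you are reaching for, and it disposes of all but a handful of pairs (which the paper then treats individually, exactly as you anticipate). Your proposed refinement via Proposition \ref{p:i23} and the computational fallback would also work for those, so the strategy is repairable, but as written the first step does not establish what you claim.
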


\begin{proof}
The possibilities for $S$ are recorded in \cite[Tables 10.3 and 10.4]{LS99} and we may assume $G_0$ is one of the groups in \eqref{e:list}. In each case, set
\[
a_1 = i_2({\rm Aut}(S)), \;\; a_2 = |{\rm Aut}(S)|,\;\; b_1 = \left\{\begin{array}{ll} \ell_1 & \mbox{if $p=2$} \\ \min\{\ell_3,\ell_5\} & \mbox{if $p>2$} \end{array}\right., \; b_2 = \left\{\begin{array}{ll} \ell_4 & \mbox{if $p=2$} \\ \ell_2 & \mbox{if $p>2$} \end{array}\right.
\]
where the $\ell_i$ are defined in Table \ref{tab:cbds}. Then by applying Propositions \ref{p:bounds} and \ref{p:long}, we deduce that 
\[
\mathcal{Q}(G,H) < a_1^2/b_1 + a_2^2/b_2.
\]
It is routine to check that this upper bound is less than $1$ unless $(G_0,S)$ is one of the following:
\[
(F_4(3), {}^3D_4(2)),\; (F_4(5), {}^3D_4(2)), \;
(E_6^{\e}(3), {}^3D_4(2)),\; (E_6^{\e}(3), {}^2F_4(2)'),\; (E_6^{\e}(4), \O_7(3)).
\]

First assume $(G_0,S) = (F_4(5), {}^3D_4(2))$. Here $b_1 = 5^{16}$ and since $|H|$ is indivisible by $5$ we can take $b_2 = \ell_4 = 4.5^{29}$ as a lower bound on $|x^G|$ for all $x \in H$ of odd prime order. One checks that $a_1^2/b_1+a_2^2/b_2<1$.

Next assume $G_0 = E_6^{\e}(3)$ and $S = {}^3D_4(2)$ or ${}^2F_4(2)'$. Let $x \in H$ be an element of prime order $r$. If $r=2$ then $|x^G|>2.3^{25} = b_1$. Similarly, if $r=3$ then $|x^G|>2.3^{31} = b_2$ since $H$ does not contain long root elements by Proposition \ref{p:long}. Now assume $r \geqs 5$. Since $r$ divides $|Z(C_{G_0}(x))|$ it follows that $C_{\bar{G}}(x)^0 \ne D_5T_1$ or $A_5T_1$, whence  $|x^G|>\frac{1}{6}3^{48} = b_3$ and we conclude that
\[
\mathcal{Q}(G,H) < \sum_{i=1}^{3}a_i^2/b_i < 1,
\]
where $a_1 = i_2({\rm Aut}(S))$, $a_2 = i_3({\rm Aut}(S))$ and $a_3 = |{\rm Aut}(S)|$.

Now assume $(G_0,S) = (F_4(3), {}^3D_4(2))$, so $G = F_4(3)$ and $H = {}^3D_4(2)$ or ${}^3D_4(2).3$. Here we proceed as we did in the proof of Lemma \ref{l:ndef3} for the case $S = {\rm Fi}_{22}$ with $G_0=E_6(4)$. First, with the aid of {\sc Magma}, we observe that ${}^3D_4(2).3$ has a unique $52$-dimensional irreducible module $V$ over $\mathbb{F}_3$, which we may identify with the adjoint module for $G_0$ (as noted in \cite[Table 6.36]{Litt}, $S$ acts irreducibly on $V$). Suppose $x \in {}^3D_4(2).3$ has prime order $r$. If $r=3$, then we can compute the Jordan form of $x$ on $V$ and use \cite[Table 4]{Lawunip} to determine the $\bar{G}$-class of $x$ up to one of two possibilities. Similarly, if $r \in \{2,7,13\}$ then we can compute $\dim C_V(x) = \dim C_{\bar{G}}(x)$, which yields a lower bound on $|x^G|$. See \cite[Lemma 2.11]{BTh_comp} for further details on the computation.

For example, suppose $r=3$ and $x$ is in the $H$-class labelled \texttt{3A} in \cite{ATLAS}, so $|x^H| = 139776$. Then we calculate that $x$ has Jordan form $(J_3^{15},J_1^7)$ on $V$, which implies that $x$ is either in the $\bar{G}$-class labelled $A_2$ or $\tilde{A}_2$. In particular, $|x^G|>\frac{1}{4}3^{30}$. Similarly, if $r=7$ then $\dim C_V(x) = 10$ and we deduce that $|x^G|>\frac{1}{2}3^{42}$. 

In this way, by considering each $H$-class of prime order elements in turn, we obtain an upper bound on $\mathcal{Q}(G,H)$ which allows us to conclude that $\mathcal{Q}(G,H)<1$. We leave the reader to check this details. 

Finally, let us assume $(G_0,S) = (E_6^{\e}(4), \O_7(3))$. By Lagrange's Theorem, we see that $\e=+$ is the only possibility. Now ${\rm SO}_{7}(3)$ has a unique $78$-dimensional irreducible module $V$ over $\mathbb{F}_4$, which we identify with the adjoint module for $G_0$. We can now proceed as in the previous case, using {\sc Magma} to compute $\dim C_V(x)$ for each $x \in H$ of prime order (see \cite[Lemma 2.11]{BTh_comp}). As before, this information translates into a lower bound on $|x^G|$ and this allows us to determine an upper bound on $\mathcal{Q}(G,H)$. In this way, one checks that $\mathcal{Q}(G,H)<1$ and the result follows. 
\end{proof}

\subsection{Generic subgroups}\label{ss:def}

To complete the proof of Theorem \ref{t:typeV}, we may assume $G_0$ is one of the groups in \eqref{e:list} and $S$ is in ${\rm Lie}(p)$, as in part (ii) of Theorem \ref{t:simples}. In view of Craven's theorem (see Theorem \ref{t:craven}), there are very few possibilities for $G_0$ and $S$ and it is a straightforward exercise to verify Theorem \ref{t:typeV} in these cases.

\begin{lem}\label{l:liep}
The conclusion to Theorem \ref{t:typeV} holds if $S \in {\rm Lie}(p)$.
\end{lem}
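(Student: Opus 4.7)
My plan is to invoke Theorem \ref{t:craven} to reduce to an explicit short list of pairs $(G_0,S)$ and then establish $b(G,H)=2$ in each case by the probabilistic method of Lemma \ref{l:base}, which automatically rules out extreme primitivity via Lemma \ref{l:base2}. Since $G_0$ is constrained by \eqref{e:list}, Theorem \ref{t:craven} leaves only three families: (a) $G_0=E_8(q)$ with $S$ in the short list $\{{\rm L}_3^\e(3), {\rm L}_3^\e(4), {\rm U}_3(8), {\rm PSp}_4(2)', {\rm U}_4(2), {}^2B_2(8)\}$; (b) $G_0=E_8(q)$ with $S={\rm L}_2(q_0)$ and $q_0\leqs(2,q-1)\cdot 1312$; (c) $G_0=E_7(q)$ with $S={\rm L}_2(q_0)$ and $q_0\in\{7,8,25\}$.

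For (a), $|{\rm Aut}(S)|$ is bounded by an absolute constant well below $2^{29}$, so the trivial estimate $\mathcal{Q}(G,H)\leqs |H|^2/q^{58}$ afforded by Proposition \ref{p:bounds} and Lemma \ref{l:calc} is already less than $1$ for every $q\geqs 2$. For (b) and (c) with $p$ odd, Proposition \ref{p:long} ensures that no prime order $x\in H$ is a long root element of $G$, whence Proposition \ref{p:bounds} upgrades the lower bound on class sizes to $|x^G|\geqs q^{92}$ in (b) or $|x^G|\geqs q^{52}$ in (c). Combined with $|H|\leqs |{\rm Aut}({\rm L}_2(q_0))|\leqs q_0(q_0^2-1)\log_2 q_0$, these bounds yield $\mathcal{Q}(G,H)<1$ at once, since $q_0$ is bounded by an absolute constant.

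The substantive remaining task is (b) when $p=2$, where unipotent involutions of $S$ may embed as long root elements of $G$ and the available lower bound drops to $|x^G|\geqs q^{58}$. Here I would split $\mathcal{Q}(G,H)$ according to the order of the contributing elements and apply Lemma \ref{l:calc} separately to each summand: the involution contribution is bounded using $i_2({\rm Aut}({\rm L}_2(q_0)))\leqs 2(q_0+1)q_0\log_2 q_0$ from Proposition \ref{p:i23} together with the long root bound $|x^G|\geqs q^{58}$; all remaining prime order elements give $|x^G|\geqs q^{92}$ by a direct centralizer dimension estimate, which easily dominates the crude count $|H|-1\leqs q_0^3\log_2 q_0$. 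Straightforward arithmetic with $q_0\leqs 1312$ then closes the bound for every even $q\geqs 4$.

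The main obstacle will be the residual finite collection of very small cases, most acutely $G_0=E_8(2)$ paired with $S={\rm L}_2(q_0)$ for $q_0$ close to $1312$, where neither side of the crude inequality has much slack. For these I would appeal to the detailed embedding information in Liebeck--Seitz \cite{LS99} and Craven \cite{Cr1,Cr2}, which pins down the $G$-class of the involutions of $S$ and so replaces the long root estimate by a genuinely smaller one; failing that, only finitely many candidate pairs remain and each can be verified by direct computation in {\sc Magma}, mirroring Theorem \ref{t:small} and the techniques of Section \ref{ss:comp}.
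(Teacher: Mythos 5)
Your approach is essentially the same as the paper's: reduce via Theorem \ref{t:craven} and then apply Lemmas \ref{l:base2}, \ref{l:base} and \ref{l:calc} with the class-size bounds of Proposition \ref{p:bounds}. The extra refinements you introduce (splitting by the parity of $p$ and invoking Proposition \ref{p:long} to discard long root elements) are harmless but unnecessary; the paper simply bounds the long root contribution by the global estimate $i_2({\rm Aut}(S)) < 2s(s+1)$ with $s=2621$ from Proposition \ref{p:i23} and closes every case, $p$ even or odd, in the same calculation.

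The last paragraph of your proposal identifies a difficulty that does not actually exist. For $S={\rm L}_2(q_0)\in{\rm Lie}(p)$ with $G_0=E_8(q)$ and $p=2$, the value $q_0$ must be a power of $2$ at most $u(E_8(q))=1312$, hence $q_0\leqs 1024$, not ``close to $1312$''. Even in the worst case $G_0=E_8(2)$, $q_0=2^{10}$, the crude bounds are comfortable: $i_2({\rm Aut}(S))<2\cdot 1025\cdot 1024\approx 2^{21}$ gives $a_1^2/q^{58}\approx 2^{42}/2^{58}\ll 1$, and $|{\rm Aut}({\rm L}_2(2^{10}))|\approx 2^{33.3}$ gives $a_2^2/q^{92}\approx 2^{67}/2^{92}\ll 1$. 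So there is ample slack for all $q$ and all admissible $q_0$, and the paper handles the entire $E_8$ family uniformly in three lines. In fact the global maximum of $|{\rm Aut}(S)|$ over all of part (ii) of Theorem \ref{t:craven} is $|{\rm Aut}({\rm L}_2(3^7))|\approx 7.3\times 10^{10}$, and $i_2({\rm Aut}(S))$ is maximised near the largest prime $\leqs 2\cdot 1312$, namely $2621$ --- both quantities the paper feeds directly into the bound. Your proposed fallback (pinning down $G$-classes of involutions of $S$, or direct {\sc Magma} verification in $E_8(2)$) would in any case not be straightforward --- Type V subgroups of $E_8(2)$ are not amenable to such computation at present --- but fortunately it is never needed: the arithmetic closes on its own and the lemma is correctly proved without it.
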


\begin{proof}
By Theorem \ref{t:craven}, we have $G_0 = E_8(q)$ or $E_7(q)$. First assume $G_0= E_8(q)$. By inspecting the possibilities for $S$ in Theorem \ref{t:craven} and by applying Proposition \ref{p:i23}, we deduce that
\[
i_2({\rm Aut}(S)) \leqs 2s(s+1)=a_1,\;\; |{\rm Aut}(S)| \leqs |{\rm Aut}({\rm L}_{2}(3^7))| = a_2,
\]
where $s=2621$. Setting $b_1 = q^{58}$ and $b_2 = q^{92}$, we deduce that $\mathcal{Q}(G,H) < a_1^2/b_1+a_2^2/b_2<1$ and thus $b(G,H) = 2$. Similarly, if $G_0 = E_7(q)$ then $|{\rm Aut}(S)| \leqs 31200=a_1$ and we have $|x^G|>q^{34}=b_1$ for all nontrivial $x \in G$, whence $\mathcal{Q}(G,H) < a_1^2/b_1<1$ and the result follows.
\end{proof}

This completes the proof of Theorem \ref{t:typeV}.

\section{Twisted groups}\label{s:part4}

In this final section of the paper, we complete the proof of Theorem \ref{t:main} by handling the remaining almost simple primitive groups with socle 
\begin{equation}\label{e:list2}
G_0 \in \{  {}^3D_4(q), {}^2F_4(q)', {}^2G_2(q)' \, (q \geqs 27), {}^2B_2(q) \}.
\end{equation}
Our main result is the following.

\begin{thm}\label{t:twist}
If $G_0$ is one of the groups in \eqref{e:list2}, then $G$ is not extremely primitive.
\end{thm}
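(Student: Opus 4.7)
The plan is to proceed by inspection of the classifications of maximal subgroups for each family: Suzuki's theorem for ${}^2B_2(q)$, Kleidman's results for ${}^2G_2(q)'$ and ${}^3D_4(q)$, and Malle's classification for ${}^2F_4(q)'$. By Theorem \ref{t:small} the cases $G_0 \in \{{}^2B_2(8), {}^2B_2(32), {}^2F_4(2)', {}^3D_4(2)\}$ are already settled, and parabolic and maximal rank subgroups are covered by Theorems \ref{t:parab} and \ref{t:maxrank}. So it remains to handle the non-parabolic, non-maximal-rank subgroups, of which there are very few.

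First I would dispatch the non-subfield residual cases. For ${}^2G_2(q)'$ with $q \geqs 27$, the only such maximal subgroup is $(2^2 \times D_{(q+1)/2}).3$; here $F(H)$ fails to be elementary abelian (since for $q \geqs 27$ the factor $D_{(q+1)/2}$ contains a cyclic subgroup of order exceeding $2$), so Lemma \ref{l:structure}(ii) applies. For ${}^2F_4(q)'$ with $q \geqs 8$, Malle's list contains no further cases beyond those already treated in Section \ref{s:mr}. For ${}^3D_4(q)$ with $q \geqs 3$, the outstanding case is $H_0 = G_2(q)$: I would take $K$ to be an appropriate subsystem subgroup of $G_2(q)$ (for example, of type $A_2$ or a long-root $\mathrm{SL}_2(q)$), use \cite[Table 2]{LS94} to verify that $C_{\bar G}(\bar K)^0$ is a positive-dimensional subgroup of $\bar G = D_4$ not contained in the fixed-point subgroup of the triality automorphism, and then invoke Lemma \ref{l:simple} with a conjugating element $g \in C_{G_0}(K) \setminus C_{H_0}(K)$ that fails to normalize the unique maximal overgroup of $K$ in $G_2(q)$.

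The bulk of the work concerns the subfield subgroups $H_0$ of type ${}^3D_4(q_0)$, ${}^2F_4(q_0)$, ${}^2G_2(q_0)$ or ${}^2B_2(q_0)$, where $q = q_0^r$ with $r$ prime. For the Suzuki and Ree families a crude base-size calculation via Lemma \ref{l:base} should suffice: using $|H| \leqs 2\log_p q \cdot |H_0|$ (with $|H_0| \leqs q^{c/r}$ for an explicit constant $c$) together with the lower bounds on $|x^G|$ supplied by Proposition \ref{p:bounds}, the trivial estimate $\mathcal{Q}(G,H) < a_1^2/b_1$ with $a_1 = |H|$ and $b_1$ the minimum class size will force $b(G,H) = 2$ for all but finitely many $q_0$, after which Lemma \ref{l:base2} yields the claim. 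For ${}^3D_4(q_0)$ and ${}^2F_4(q_0)$ the same scheme works, but with a finer split on the type of element (unipotent, semisimple, field automorphism) analogous to Lemma \ref{g2:subfield}.

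The hardest step will be the ${}^3D_4(q_0) < {}^3D_4(q)$ case with $r=2$ or $r=3$ and $q_0$ small, where the crude base-size bound is insufficient. For these boundary cases I would mimic the subdegree argument of Lemma \ref{g2:subfield}: exhibit a specific subdegree of the $G_0$-action on $G_0/H_0$ (obtained from Kleidman's orbit-length data or, for $q_0$ very small, from a direct \textsc{Magma} computation modelled on Theorem \ref{t:small}), and then verify against Kleidman's list of maximal subgroups of ${}^3D_4(q_0)$ that no maximal subgroup $M$ of $H$ satisfies $|M \cap H_0| = |H_0 \cap H_0^g|$, so $H \cap H^g$ is non-maximal. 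The analogous boundary case for ${}^2F_4(q_0)$ with small $q_0$ would be handled identically using Malle's list, completing the proof.
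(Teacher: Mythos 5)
Your overall plan — reduce to non-parabolic, non-maximal-rank subgroups via Theorems \ref{t:small}, \ref{t:parab}, \ref{t:maxrank} and then inspect Suzuki/Kleidman/Malle — matches the paper, and your treatment of the case $G_0 = {}^3D_4(q)$ with $H_0 = G_2(q)$ via Lemma \ref{l:simple} with a subsystem $A_2$ is essentially the argument the paper gives (the paper takes $K = {\rm SL}_3(q)$, notes $M = K.2$ is the unique maximal overgroup of $K$ in $H_0$, and uses an element of the cyclic torus $Z_{q^2+q+1}$ centralizing $K$ to move $M$ off $H_0$). The subfield cases for ${}^2B_2$, ${}^2G_2$, ${}^2F_4$ are indeed disposed of by base-size estimates, with the first two imported from \cite{BLS}.

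However, there are two genuine issues. First, you have omitted the residual case $G_0 = {}^3D_4(q)$, $H_0 = {\rm PGL}_3^{\e}(q)$ with $q \equiv \e \imod 3$: this is a non-parabolic, non-maximal-rank maximal subgroup appearing in Kleidman's list (and in \cite[Table 8.51]{BHR}), and the paper treats it separately in Lemma \ref{l:3d4pgl3} by showing $b(G,H) = 2$ via a careful case split on element type (unipotent, semisimple, graph and field automorphisms), working with the adjoint representation to pin down $G$-classes. Your proposal never acknowledges this case, so it is a real gap, not just an omitted routine check. Second, and less seriously, the subgroup $(2^2 \times D_{(q+1)/2}).3$ of ${}^2G_2(q)$ is the normalizer of a maximal torus and is therefore already disposed of by Theorem \ref{t:maxrank} (see Lemma \ref{l:maxrank_2g2_1}); listing it as a residual non-maximal-rank case is a misclassification, though harmless since your proposed argument happens to work.

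For the ${}^3D_4(q_0) < {}^3D_4(q)$ subfield case you take a different route than the paper: you propose a base-size estimate for large $q_0$, backed up by a subdegree argument à la Lemma \ref{g2:subfield} for small $q_0$. The paper instead runs the uniform Lemma \ref{l:simple} machinery of Proposition \ref{p:sub} (using a maximal rank subgroup $\bar X \bar Y$ of type $A_1^4$ in $D_4$, setting $K = Z_{q_0+1} \circ Y$, and moving the unique maximal overgroup by an element of the large centralizing ${\rm SL}_2(q)$). The paper's approach has the advantage of avoiding base-size estimates entirely, and it also avoids your fallback subdegree argument, which would be problematic because Lawther's subdegree computations in \cite{Law90} cover $G_2(q_0) < G_2(q)$ but not ${}^3D_4(q_0) < {}^3D_4(q)$; you would have to compute those subdegrees yourself, either by ad hoc calculations or by {\sc Magma}, for each small $(q_0,k)$. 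Your approach could in principle be made to work, but it is not as clean and you should verify the small cases carefully, particularly $q_0 = 2$, $k = 2$ and $q_0 = 3$, $k = 2$.
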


Let $G$ be an almost simple primitive group with point stabilizer $H$ and socle $G_0$ as in \eqref{e:list2}. Recall that we handled the special cases 
\[
G_0 \in \{{}^2B_2(8), {}^2B_2(32), {}^2F_4(2)', {}^3D_4(2) \}
\] 
in Theorem \ref{t:small}, so for the remainder of this section we will assume $G_0$ is not one of these groups. Furthermore, in view of Theorems \ref{t:parab} and \ref{t:maxrank}, we may assume that $H$ is neither a parabolic nor a maximal rank subgroup of $G$. Then by inspecting \cite{Mal} and \cite[Tables 8.16, 8.43 and 8.51]{BHR}, it follows that either
\begin{itemize}\addtolength{\itemsep}{0.2\baselineskip}
\item[{\rm (a)}] $H$ is a subfield subgroup; or
\item[{\rm (b)}] $G_0 = {}^3D_4(q)$ and $H_0 = H \cap G_0$ is either $G_2(q)$ or ${\rm PGL}_{3}^{\e}(q)$ with $q \equiv \e \imod{3}$. 
\end{itemize}

First we handle the subfield subgroups in (a).

\begin{lem}\label{l:3d4sub}
If $G_0 = {}^3D_4(q)$ and $H$ is a subfield subgroup, then $G$ is not extremely primitive.
\end{lem}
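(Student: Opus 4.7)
The subfield subgroups to consider are $H_0 = {}^3D_4(q_0)$ with $q = q_0^r$ for a prime $r$ coprime to $3$. The plan is to mirror the argument of Proposition \ref{p:sub}, constructing a subgroup $K \leqs H_0$ whose unique maximal overgroup $M$ in $H_0$ fails to be $g$-stable for some $g \in N_{G_0}(K)$, and then invoking Lemma \ref{l:simple}.

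First I would set up the relevant subgroups at the algebraic level. Let $\bar{G} = D_4$, let $\psi$ be the twisted Steinberg endomorphism defining $H_0$, and let $\phi$ be the one defining $G_0$. Set $\bar{X} = \la X_{\pm \a_0}\ra$, the long root $A_1$ associated to the highest root $\a_0$; since $\a_0$ is supported on the central node of the $D_4$ diagram, $\bar{X}$ is fixed by the triality automorphism entering $\psi$, and $\bar{Y} = C_{\bar{G}}(\bar{X})^0$ is of type $A_1^3$ with its three factors cyclically permuted by triality. Both $\bar{X}$ and $\bar{Y}$ are $\psi$-stable and $\phi$-stable, and from \cite[Table 5.1]{LSS} the fixed-point subgroup $\bar{M}_\psi \cap H_0$ is a maximal subgroup of $H_0$ of type $A_1(q_0)A_1(q_0^3)$. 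Setting $T = Z_{q_0+1} \leqs \bar{X}_\psi = \mathrm{SL}_2(q_0)$ a maximal torus and $Y = (\bar{Y}_\psi)' = \mathrm{SL}_2(q_0^3)$, I take $K = T \circ Y$ and $M = \bar{M}_\psi \cap H_0$.

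The key structural claim, analogous to Lemma \ref{l:claim1}, is that $M$ is the unique maximal overgroup of $K$ in $H_0$. Since $|K| > q_0^{12}$, $K$ is not contained in any parabolic of $H_0$, and $Y$ is a characteristic subgroup of $K$, I would combine an order estimate with Kleidman's classification of the maximal subgroups of ${}^3D_4(q_0)$ to rule out every candidate other than $M$; uniqueness within the $H_0$-class of $M$ will follow from the chain of normalizers $N_{H_0}(K) \leqs N_{H_0}(Y) = M$. For the witness element, observe that $\bar{X}_\phi = \mathrm{SL}_2(q)$ lies in $G_0$ and centralizes $Y$. Since $q_0+1$ divides $q+1$ when $r$ is odd and divides $q-1$ when $r=2$, the torus $T$ embeds in a maximal torus of $\bar{X}_\phi$ of order $q \pm 1$. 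Provided $q_0 \geqs 3$, or $q_0 = 2$ and $r$ is odd, this centralizer strictly exceeds $N_{\bar{X}_\psi}(T)$ together with its outer field-automorphism extension, so I can pick $g$ in $C_{\bar{X}_\phi}(T)$ that does not normalize $\bar{X}_\psi$. Such $g$ lies in $N_{G_0}(K)$, but it fails to normalize $\bar{X}_\psi \circ Y$, a characteristic subgroup of $M$, so $M \ne M^g$ and hence $M^g \not\leqs H_0$ by uniqueness. Lemma \ref{l:simple}, applied after checking that $K$ and $M$ are stable under the outer automorphisms of $H_0$ arising from $G$, then gives the conclusion.

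The main obstacle is the residual small case $q_0 = 2$, $r = 2$, where $H_0 = {}^3D_4(2) < G_0 = {}^3D_4(4)$ and $C_{\mathrm{SL}_2(4)}(Z_3) = Z_3$ leaves no room for the witness $g$. For this isolated case I plan a direct {\sc Magma} computation as in Section \ref{ss:comp}: construct $G$ and $H$ as permutation groups and exhibit $g \in G$ with $H \cap H^g = 1$, so that $b(G,H) = 2$ and non-extreme-primitivity follows from Lemma \ref{l:base2}. As a fallback, the character tables of $G$ and $H$ are available in \cite{GAPCTL}, so Lemma \ref{l:char} can be applied by computing the rank of $G$ on $G/H$ and the indices of the core-free maximal subgroups of $H$.
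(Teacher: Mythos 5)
Your approach mirrors the paper's almost exactly: you take $\bar M = \bar X\bar Y$ of type $A_1^4$ in $D_4$, set $K = T\circ Y$ with $T = Z_{q_0+1}$ a nonsplit torus of $\bar X_\psi = \mathrm{SL}_2(q_0)$ and $Y = (\bar Y_\psi)' = \mathrm{SL}_2(q_0^3)$, argue via \cite{Kl3} that $M = \bar M_\psi\cap H_0$ is the unique maximal overgroup of $K$ in $H_0$, and produce a witness $g\in C_{\bar X_\phi}(T)$ that fails to normalize $\bar X_\psi$. The paper does the same and then defers to the proof of Proposition~\ref{p:sub} for the existence of $g$. You deserve credit for spotting that the paper's ``clearly not a subgroup'' needs scrutiny for small $q_0$, and for proposing a {\sc Magma} workaround for $q_0=2$, $r=2$.

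However, your sufficiency criterion for the existence of the witness $g$ is wrong, and as stated your proof still has a gap at $q_0=3$, $r=2$. The group you should be comparing $C_{\bar X_\phi}(T)$ against is not $N_{\bar X_\psi}(T)$ extended by field automorphisms, but rather $N_{\bar X_\phi}(\bar X_\psi)$, and the correct test is whether $C_{\bar X_\phi}(T)\leqs N_{\bar X_\phi}(\bar X_\psi)$. Working this out explicitly: the centralizer $C_{\bar X_\phi}(T)$ equals $\bar T_{\psi^r}$, a cyclic torus of order $q_0^r-(-1)^r$, and since $\psi$ acts on $\bar T$ as $x\mapsto x^{-q_0}$, an element $g\in\bar T_{\psi^r}$ normalizes $\bar X_\psi$ if and only if $g\psi(g)^{-1} = g^{q_0+1}\in C_{\bar X}(\bar X_\psi) = Z(\bar X)$. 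For $r=2$ the image of the $(q_0+1)$-power map on $\bar T_{\psi^2}\cong Z_{q_0^2-1}$ is $Z_{q_0-1}$, and $Z(\bar X) = Z_{(2,q_0-1)}$; hence a suitable $g$ exists precisely when $q_0-1 > (2,q_0-1)$, i.e.\ when $q_0\geqs 4$. In particular, for $q_0=3$, $r=2$ every $g\in C_{\bar X_\phi}(T)\cong Z_8$ satisfies $g^4\in\{\pm I\}$ and therefore normalizes $\bar X_\psi$; the witness you need does not exist in $\bar X_\phi$. So you must also treat the case $H_0 = {}^3D_4(3) < G_0 = {}^3D_4(9)$ separately (by {\sc Magma}, by Lemma~\ref{l:char}, or by locating a witness outside $\bar X_\phi$). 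Note in passing that the same issue is present in the paper's own proof, which invokes Proposition~\ref{p:sub} where the ``clearly'' in the witness argument is only valid for $q_0\geqs 4$ when $k=2$; your reflex to check these small configurations directly is the right instinct, but you stopped one case short.
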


\begin{proof}
We proceed as in the proof of Proposition \ref{p:sub}. Write $q=q_0^k$, where $k \ne 3$ is a prime and set $\bar{G} = D_4$. Fix a Steinberg endomorphism $\psi = \s\tau$ of $\bar{G}$, where $\s$ is a standard Frobenius morphism of $\bar{G}$ corresponding to the map $\l \mapsto \l^{q_0}$ on $\mathbb{F}_q$ and $\tau$ is the standard triality graph automorphism of $\bar{G}$. Then $H_0 = \bar{G}_{\psi} = {}^3D_4(q_0)$ and $G_0 = \bar{G}_{\psi^k} = {}^3D_4(q)$.

Let $\a_0$ be the highest root in the root system of $\bar{G}$ and let $X_{\a}$ be the root subgroup of $\bar{G}$ corresponding to the root $\a$. Consider the $\psi$-stable maximal rank subgroup $\bar{M}  = \bar{X}\bar{Y}$ of $\bar{G}$, where $\bar{X} = \la X_{\pm \a_0} \ra$ and $\bar{Y} = C_{\bar{G}}(\bar{X})^0$. Then $\bar{M}$ is of type $A_1^4$ and we set 
\[
M = \bar{M}_{\psi} = ({\rm SL}_{2}(q_0) \circ Y).d = d.({\rm L}_{2}(q_0) \times {\rm L}_{2}(q_0^3)).d,
\]
where $Y = (\bar{Y}_{\psi})'$ and $d = (2,q-1)$. By \cite{LSS}, $M$ is a maximal subgroup of $H_0$ and we focus our attention on the subgroup $K = T \circ Y \leqs M$, where $T = Z_{q_0+1}$ is a maximal torus of ${\rm SL}_{2}(q_0)$. 

Then by arguing as in the proof of Lemma \ref{l:claim1}, using \cite{Kl3} for information on the maximal subgroups of $H_0$, we deduce that $M$ is the unique maximal overgroup of $K$ in $H_0$. Writing $G = G_0.A$ and $H = H_0.A$, we can now repeat the argument in the proof of Proposition \ref{p:sub} to show that $G$ is not extremely primitive, noting that $K$ and $M$ are $A$-stable.   
\end{proof}

\begin{lem}\label{l:twisted_sub}
Suppose $G_0 \in \{ {}^2F_4(q), {}^2G_2(q), {}^2B_2(q)\}$  and $H$ is a subfield subgroup of $G$. Then $b(G,H) = 2$.
\end{lem}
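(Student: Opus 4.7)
The plan is to apply Lemma \ref{l:base} by establishing $\mathcal{Q}(G,H) < 1$. Write $q = q_0^k$ where $k$ is an odd prime dividing the exponent $2m+1$ in $q = p^{2m+1}$, so that $H_0 = X(q_0)$ is of the same twisted type $X \in \{{}^2B_2, {}^2G_2, {}^2F_4\}$ as $G_0$. Then $|H| \leq k \log_p q \cdot |H_0|$ with $|H_0| < q_0^d = q^{d/k}$ for $d = 5, 7, 26$ respectively. Note that the cases $G_0 \in \{{}^2B_2(8), {}^2B_2(32), {}^2F_4(2)'\}$ have been excluded via Theorem \ref{t:small}, and that the hypothesis $q \geq 27$ for ${}^2G_2(q)$ keeps $q_0$ large enough that $H_0$ is simple.

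First I would handle the contribution from field automorphisms $x \in H \setminus H_0$ of order $k$, which is the only type of outer prime-order element that can arise (since $\mathrm{Out}(G_0)$ is cyclic). Since $H_0 = C_{G_0}(x)$, the element $x$ centralizes $H_0$, so $H_0\langle x\rangle \cong H_0 \times Z_k$ inside $H$. An element $xh$ of this subgroup (with $h \in H_0$) is $G$-conjugate to $x$ only if $C_{G_0}(xh) = C_{H_0}(h)$ is $G_0$-conjugate to $H_0$, forcing $h \in Z(H_0) = 1$. Hence $|x^G \cap H| \leq k-1$, and since $|x^G| = |G_0|/|H_0|$, the contribution is at most $(k-1)^2 |H_0|/|G_0| \ll 1$.

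Next I would estimate the contribution from elements $x \in G_0$ of prime order using Proposition \ref{p:bounds}. For $x$ a root element, the sharper bound $|x^G \cap H| \leq k \cdot |u^{H_0}|$, where $u$ is a long root element of $H_0$, gives $|x^G \cap H| < k\, q_0^{d-2}$, much smaller than $|H_0|$. For all other elements of prime order in $G_0$, the crude bound $|x^G \cap H| \leq |H|$ suffices. Combining with the lower bounds in Table \ref{tab:cbds}, each contribution is of the form $(k\log_p q)^2 \cdot q^{2d/k - e_i}$ for suitable exponents $e_i$ (e.g.\ $e_1 = 3, e_4 = 4$ for $X = {}^2B_2$), and since $k \geq 3$ each exponent $2d/k - e_i$ is negative.

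The main obstacle is that the estimates are tightest when $k = 3$, particularly for ${}^2B_2(q)$ where $|H_0|^2 \sim q^{10/3}$ is comparable to the smallest relevant class size. However, the explicit bounds above yield $\mathcal{Q}(G,H) = O(q^{-\epsilon})$ for some $\epsilon > 0$, so $\mathcal{Q}(G,H) < 1$ holds beyond a small threshold; indeed it tends to $0$, giving the asymptotic base-two property as a byproduct. For the handful of small values of $q_0$ not covered by the explicit inequality, one invokes the class-size estimates of this type in \cite{BLS} for ${}^2B_2$ and ${}^2G_2$ (or verifies $b(G,H) = 2$ by a direct {\sc Magma} calculation along the lines of Section \ref{ss:comp}, searching for a random $g \in G$ with $H \cap H^g = 1$).
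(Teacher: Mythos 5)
The high-level strategy is right --- the paper also bounds $\mathcal{Q}(G,H)$ via Lemma \ref{l:base} --- but the paper simply invokes \cite[Propositions 4.38 and 4.40]{BLS} to dispose of the cases $G_0 = {}^2B_2(q)$ and ${}^2G_2(q)$ immediately, leaving only ${}^2F_4(q)$ to be estimated by hand. Your attempt to treat all three families uniformly contains a genuine error, plus some looseness that you would need to repair.

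The main error is in the count of field automorphisms. You assert that if $x$ is a field automorphism of order $k$ with $C_{G_0}(x) = H_0$ and $h \in H_0$, then $C_{G_0}(xh) = C_{H_0}(h)$, so that $xh \sim_{G} x$ forces $h \in Z(H_0) = 1$ and hence $|x^G \cap H| \leq k-1$. This is false: what is true is only that $C_{G_0}(xh) \cap H_0 = C_{H_0}(h)$, and $C_{G_0}(xh)$ typically contains many elements of $G_0 \setminus H_0$. In fact, since $\mathrm{Outdiag}(G_0)$ is trivial for the three families here, \emph{every} prime-order element of a coset $x^i H_0$ ($1 \leq i \leq k-1$) is a field automorphism and is $G_0$-conjugate to $x$, so the correct count is $|x^G \cap H| = (k-1)\bigl(i_k(H_0)+1\bigr)$ --- this is precisely what the paper uses for ${}^2F_4(q)$, where it records $2\bigl(i_3({}^2F_4(q_0))+1\bigr)$ such elements. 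Your bound of $k-1$ happens to be correct for ${}^2B_2(q)$ with $k = 3$, but only because $3 \nmid |{}^2B_2(q_0)|$ (as $q_0 \equiv 2 \bmod 3$), not for the reason you give; for ${}^2G_2$ and ${}^2F_4$ with $k=3$ we have $i_3(H_0) > 0$ and your stated bound is badly off, even if the eventual contribution remains small.

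Two smaller issues. First, your claim that $q \geq 27$ ensures $H_0$ is simple for ${}^2G_2(q)$ fails: $q = 27$, $k = 3$ gives $H_0 = {}^2G_2(3) \cong {\rm P\Gamma L}_2(8)$, which is not simple. Second, your statement that ``since $k \geq 3$ each exponent $2d/k - e_i$ is negative'' is not true as written: for ${}^2B_2(q)$ with $d = 5$, $k = 3$ and $e_1 = 3$ (root elements), one gets $10/3 - 3 = 1/3 > 0$. You do give a sharper bound for root elements separately, so this case survives, but the blanket claim about the exponents should be restricted to the non-root classes (and even then one must check the boundary case $k=3$ numerically, as you yourself flag). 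Given these subtleties, the cleaner route --- and the one the paper takes --- is to cite the existing $\mathcal{Q}(G,H)$ bounds in \cite{BLS} for ${}^2B_2(q)$ and ${}^2G_2(q)$ rather than redoing them.
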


\begin{proof}
In view of \cite[Propositions 4.38 and 4.40]{BLS}, we may assume $G_0 = {}^2F_4(q)$. Let $H$ be a subfield subgroup of $G$ with $H_0 = H \cap G_0 = {}^2F_4(q_0)$, where $q = q_0^k$ for some odd prime $k$.

First assume $k \geqs 5$, so $q \geqs 32$. If $x \in G_0$ is an involution of type $u_1$ in the notation of \cite[Table II]{Shin}, then
\[
|x^G\cap H| = (q_0^3+1)(q_0^2-1)(q_0^6+1) < q^{11/5} = a_1,\;\; |x^G| > (q-1)q^{10} = b_1.
\]
For all other nontrivial elements in $G$, we have $|x^G|>(q-1)q^{13}=b_2$ and we note that $|H|<\log_2q.q^{26/5}=a_2$. It follows that $\mathcal{Q}(G,H)<a_1^2/b_1+a_2^2/b_2$ and one checks that this upper bound is less than $q^{-1}$ for all $q \geqs 32$.

Now suppose $k=3$. Let $x \in G$ be an element of prime order $r$. First assume $r=2$, so $x$ is conjugate to $u_1$ or $u_2$ in the notation of \cite{Shin}. As above, if $x = u_1$ then $|x^G \cap H|<q^{11/3}=a_1$ and $|x^G|>(q-1)q^{10}=b_1$. Similarly, if $x=u_2$ then $|x^G \cap H|<q^{14/3}=a_2$ and $|x^G|>(q-1)q^{13}=b_2$. Next assume $x$ is semisimple. Both $H_0$ and $G_0$ have a unique conjugacy class of elements of order $3$ (represented by the element $t_4$ in \cite[Table IV]{Shin}) and we get $|x^G\cap H|< q^6=a_3$ and $|x^G|>(q-1)q^{17} = b_3$. For $r \geqs 5$, we have $|x^G|>\frac{1}{3}q^{20}=b_4$ and we note that $|H_0|< q^{26/3}=a_4$. Finally, suppose $x \in G$ is a field automorphism. If $r =3$ then $|x^G|>\frac{1}{2}q^{52/3}=b_5$ and we observe that $H$ contains precisely $2(i_3({}^2F_4(q_0))+1) < 2q^6 = a_5$ field automorphisms of order $3$. And for $r \geqs 5$, we get $|x^G|>\frac{1}{2}q^{104/5} = b_6$ and we note that $|H|<\log_2q.q^{26/3}=a_6$. We conclude that 
\[
\mathcal{Q}(G,H)< \sum_{i=1}^{6}a_i^2/b_i < q^{-1}
\]
and the result follows. 
\end{proof}

Finally, let us turn to the two remaining cases with $G_0 = {}^3D_4(q)$.

\begin{lem}\label{l:3d4g2}
If $G_0 = {}^3D_4(q)$ and $H_0 = G_2(q)$, then $G$ is not extremely primitive.
\end{lem}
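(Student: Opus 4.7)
The plan is to apply Lemma~\ref{l:simple} with $K = {\rm SU}_{3}(q)$, a maximal subgroup of $H_0 = G_2(q)$ (see \cite{Coop, K88}). First I would verify that $M = N_{H_0}(K) = {\rm SU}_{3}(q).2$ is the unique maximal overgroup of $K$ in $H_0$: since $|K|$ is divisible by $q^2-q+1$, a short inspection of the maximal subgroups of $G_2(q)$ rules out every alternative, as the parabolics, the maximal-rank subgroups of type $A_2(q)$ and $A_1(q)^2$, the subfield subgroups, and the sporadic small subgroups either fail divisibility or are simply too small to contain $K$.

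Next, by \cite[Table~5.1]{LSS}, $G_0 = {}^3D_4(q)$ contains a maximal-rank subgroup $L$ of type ${}^2A_2(q)(q^2-q+1)$ with structure $({\rm SU}_{3}(q) \times Z_{(q^2-q+1)/d}).d$, where $d = (3,q+1)$. Conjugating if necessary, I would identify $K$ with the ${\rm SU}_{3}(q)$ factor of $L$, so that the complementary cyclic group $Z_{(q^2-q+1)/d}$ lies in $N_{G_0}(K)$ and centralizes $K$. For every $q \geqs 3$ one checks that $(q^2-q+1)/d$ has a prime divisor $r \geqs 5$, and I would take $g$ to be an element of order $r$ in this complement.

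The decisive step is to show that $g$ does not normalize $M$, which by uniqueness reduces to showing $g \notin N_{G_0}(H_0)$. If $g$ were to normalize $H_0$, conjugation by $g$ would induce an automorphism of $H_0 = G_2(q)$ centralizing $K$. However, $C_{{\rm Aut}(G_2(q))}(K) = Z(K) = Z_d$: the inner contribution is $C_{H_0}(K) = Z(K)$, no non-trivial field automorphism centralizes the non-subfield subgroup $K$, and the graph automorphism available when $p=3$ moves $K$ to a different maximal subgroup (interchanging the short-root and long-root $A_2$ subgroups of $G_2$). Combined with $C_{G_0}(H_0) = 1$, which follows from the decomposition of the natural $D_4$-module under $G_2$ as $7 + 1$, this would force $g \in Z(K)$, contradicting $|g| = r \geqs 5 > |Z(K)|$. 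Since $K$ and $M$ are stable under the field automorphisms of $G_0$ (the only outer automorphisms of ${}^3D_4(q)$), Lemma~\ref{l:simple} applies. The main obstacle is cleanly pinning down the centralizer identity $C_{{\rm Aut}(G_2(q))}(K) = Z(K)$ for all $q$ in question; this should be verifiable by combining the information in \cite{Coop, K88} with standard facts about automorphisms of $G_2(q)$, and any remaining very small cases are subsumed by Theorem~\ref{t:small}.
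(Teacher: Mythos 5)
Your overall strategy is sound and closely parallels the paper's: you take a subsystem $A_2$-type subgroup $K$ of $H_0 = G_2(q)$, observe that its normalizer $M = K.2$ is the unique maximal overgroup of $K$ in $H_0$, embed $K$ as a factor of a maximal rank subgroup of $G_0$, and pick $g$ in the complementary torus. The paper uses $K = {\rm SL}_3(q)$ and the torus $Z_{q^2+q+1}$; you use the dual choice $K = {\rm SU}_3(q)$ and $Z_{q^2-q+1}$, and your divisibility check that $(q^2-q+1)/d$ always has a prime divisor $r \geqs 5$ for $q \geqs 3$ is correct. Your identification of $\mathcal{M} = \{M\}$ is also correct for $q \geqs 3$, for essentially the reason you give.

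However, there is a genuine gap at the decisive step. You write that showing $g \notin N_{G_0}(M)$ ``by uniqueness reduces to showing $g \notin N_{G_0}(H_0)$.'' This reduction requires the inclusion $N_{G_0}(M) \leqs N_{G_0}(H_0)$, which is not justified by the uniqueness of $M$ as a maximal overgroup of $K$ \emph{in $H_0$}: uniqueness there only tells you that $M^g \leqs H_0$ forces $M^g = M$, i.e.\ $g \in N_{G_0}(M)$, and then you still need to rule this out. In fact $M$ is visibly contained in at least two non-conjugate maximal subgroups of $G_0$ (namely $H_0$ and the maximal rank subgroup $N_{G_0}(K) = ({\rm SU}_3(q) \circ Z_{q^2-q+1}).d.2$), so $H_0$ is certainly not the unique maximal overgroup of $M$ in $G_0$, and the inclusion $N_{G_0}(M) \leqs H_0$ — while I believe it is true — is not an automatic consequence of anything you have established. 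As a result your centralizer computation, which correctly shows $g \notin N_{G_0}(H_0) = H_0$, is directed at the wrong target.

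The gap is repairable without changing the framework. You should argue directly that $g$ does not normalize $M$. The cleanest route is the one the paper uses with ${\rm SL}_3(q)$, and it transfers verbatim to your ${\rm SU}_3(q)$: pick $x \in M \setminus K$, so $x$ induces a graph automorphism of $K$ and lies in the $.2$ part of $N_{G_0}(K)$, which inverts $L = Z_{q^2-q+1}$; hence $g^x = g^{-1}$ and $[g,x] = g^{-2}$. If $g$ normalized $M$ then $[g,x] \in M$, and since $g^{-2}$ generates the same (odd-order) cyclic group as $g$, we would get $\langle g\rangle \leqs M \cap L \leqs K \cap L = Z(K) = Z_d$, contradicting $|g| = r \geqs 5 > d$. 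Alternatively, your centralizer argument works if you replace $C_{{\rm Aut}(G_2(q))}(K)$ and $C_{G_0}(H_0)$ by $C_{{\rm Aut}(M)}(K) \cong Z(K)$ (an automorphism of $M$ fixing $K$ pointwise sends $x \mapsto xz$ with $z \in Z(K)$) and $C_{G_0}(M) = C_L(x) = 1$ (as $x$ inverts the odd-order torus $L$). Either repair closes the gap and makes your proof correct. Finally, a small remark on $A$-stability: you should note (as the paper does) that if $G$ contains the order-three ``triality'' outer automorphism centralizing $H_0$, then $Z(H) \neq 1$ and Lemma~\ref{l:structure} already applies, so one may assume $A$ consists of genuine field automorphisms.
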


\begin{proof}
Here $H_0$ is the centralizer in $G_0$ of a triality graph automorphism. Therefore, if $G$ contains a graph automorphism then ${\rm soc}(H)$ will be a direct product of non-isomorphic simple groups and thus $G$ is not extremely primitive by Lemma \ref{l:structure}(v). This allows us to assume that $G = G_0.A$ and $H = H_0.A$, where $A$ is a group of field automorphisms.

Through the work of Cooperstein \cite{Coop} and Kleidman \cite{K88}, the maximal subgroups of $H_0$ are known for all $q$. In particular, we note that $H_0$ has a maximal subgroup $M = K.2$, where $K=\text{SL}_3(q)$ and the outer involution acts as a graph automorphism on $K$. Clearly, $M$ is the unique maximal overgroup of $K$ in $H_0$. By inspecting \cite{Kl3}, we see that 
\[
N_{G_0}(K) = N_{G_0}(L) = (K \circ L).(3,q^2+q+1).2
\]
is a maximal subgroup of $G_0$, where $L = Z_{q^2+q+1}$. Here the outer involution induces a graph automorphism on $K$ and inverts $L$. Write $L = \la g \ra$ and note that $L = \la g^{-2} \ra$ and $g \in C_{G_0}(K)$. Seeking a contradiction, suppose $g$ normalizes $M$ and choose an element $x \in M \setminus K$. Then $g^x = g^{-1}$, so $[g,x] = g^{-2} \in M$ and thus $L < K = M'$ since $L$ has odd order. But $K \cap L = Z(K) = Z_{(3,q-1)}$ and so we have reached a contradiction. Therefore, $g$ does not normalize $M$ and the result now follows from Lemma \ref{l:simple}, noting that $K$ and $M$ are both $A$-stable.
\end{proof}

\begin{lem}\label{l:3d4pgl3}
If $G_0 = {}^3D_4(q)$ and $H_0 = {\rm PGL}_{3}^{\e}(q)$, where $q \equiv \e \imod{3}$, then $b(G,H) = 2$.
\end{lem}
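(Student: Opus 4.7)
The plan is to apply Lemma \ref{l:base} to show that $\mathcal{Q}(G,H)<1$, which forces $b(G,H)=2$. We may assume $q>2$ (the case $q=2$ with $\e=-$ was handled in Theorem \ref{t:small}), and for the small cases such as $q\in\{4,5\}$ permitted by the constraint $q \equiv \e \imod{3}$, I would verify the claim computationally in \textsc{Magma} by constructing $H$ inside $G$ using the tools in \cite{BTh_comp} and exhibiting $g \in G_0$ with $H_0 \cap H_0^g = 1$.

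For the remaining values of $q$, I would bound $\mathcal{Q}(G,H)$ by splitting the prime-order elements of $H$ into three families. Writing $q=p^f$, we have $|H_0|=q^3(q^2-1)(q^3-\e)<q^8$ and $|H| \leqs 3f\,|H_0|$. The unipotent elements of $H_0$ come from transvections and, when $p \geqs 3$, regular unipotents in ${\rm SL}_3^{\e}(q)$; there are $(q+\e)(q^3-\e)<2q^4$ transvections and at most $q^6$ regular unipotents, so Proposition \ref{p:bounds} (which gives $|x^G|>(q-1)q^9$ for long root elements and $|x^G|>q^{16}$ for other unipotents of $G_0$) yields a total unipotent contribution of order $q^{-2}$. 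For the inner semisimple elements: if $p$ is odd, the involutions in $H_0$ total at most $q^4$ and contribute order $q^{-8}$ via $|x^G|>q^{16}$; the semisimple elements of odd prime order $r \ne p$ total at most $|H_0|<q^8$ and contribute order $q^{-3}$ via $|x^G|>(q-1)q^{18}$.

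The outer prime-order elements require more care. Each such $x \in H \setminus H_0$ is a power of a generator of ${\rm Out}(G_0) \cong Z_{3f}$ and restricts to a field-type automorphism of $H_0 = {\rm PGL}_3^{\e}(q)$ of the same order $r$, with $q=q_0^r$. A Lang-Steinberg count bounds the number of $G$-conjugates of $x$ in a given outer coset $H_0 x$ by $|H_0|/|C_{H_0}(x)| \leqs |{\rm PGL}_3^{\e}(q)|/|{\rm PGL}_3^{\e}(q_0)| < 2q^{8(1-1/r)}$; combining with the lower bound $|x^G| \geqs |{}^3D_4(q)|/|{}^3D_4(q_0)| > q^{28(1-1/r)}/c$ for a bounded constant $c$ (noting that the $r=2$ case recovers the bound $|x^G|>q^{14}$ of Proposition \ref{p:bounds}), the contribution from elements of order $r$ is of order $q^{-12(1-1/r)}$. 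Summing over prime divisors of $3f$ gives an outer contribution of order $q^{-6}\log q$, and altogether $\mathcal{Q}(G,H)<1$ for $q$ exceeding a small threshold.

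The principal difficulty is obtaining a sufficiently sharp upper bound on the outer-automorphism contribution, since the crude estimate $(3f|H_0|)^2/q^{14}$ is not less than $1$; the Lang-Steinberg sharpening described above is essential. A related subtlety is that when $3 \nmid f$, the outer elements of order $3$ arise from the cyclic structure of ${\rm Out}(G_0) \cong Z_{3f}$ rather than from an honest field automorphism of $\mathbb{F}_q$, and one must confirm that they nonetheless act as order-$3$ automorphisms of $H_0$ with a centralizer of size comparable to $|{\rm PGL}_3^{\e}(q_0)|$ for $q_0=q^{1/3}$; this follows from analyzing the centralizer structure of outer-order-$3$ elements in ${\rm Aut}({}^3D_4(q))$ via the standard description of this automorphism group.
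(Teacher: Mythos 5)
Your overall strategy coincides with the paper's: bound $\mathcal{Q}(G,H)$ using Lemma \ref{l:base}, Proposition \ref{p:bounds} and explicit class estimates, treating unipotent, inner semisimple, and outer elements separately. The handling of unipotent and inner semisimple elements is essentially sound, apart from a minor slip: Proposition \ref{p:bounds} gives $\ell_4 = \a q^{18} = (q-1)q^{17}$ for ${}^3D_4(q)$, not $(q-1)q^{18}$, so the inner semisimple contribution decays like $q^{-2}$, not $q^{-3}$ (this does not affect the conclusion).

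There is, however, a genuine gap in the treatment of the outer elements of order $3$. You claim that every prime-order $x \in H \setminus H_0$ induces a field-type automorphism of $H_0 = {\rm PGL}_3^\e(q)$ with $|C_{H_0}(x)|$ comparable to $|{\rm PGL}_3^\e(q^{1/r})|$, and you invoke a Lang--Steinberg (Shintani) count to get $|H_0x \cap x^G| < 2q^{8(1-1/r)}$. For $r=3$ this is false. By construction $H_0 = C_{G_0}(\tau)$ for a triality graph automorphism $\tau$, so $\tau$ \emph{centralizes} $H_0$ rather than acting on it as a field automorphism; in particular $C_{H_0}(\tau) = H_0$, not a subfield copy over $q^{1/3}$. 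Consequently the order-$3$ elements of the coset $H_0\tau$ are exactly $\{g\tau : g \in H_0,\ g^3=1\}$, and the total number of graph automorphisms in $H$ is $2(i_3(H_0)+1)$, which the paper computes to be $4q^2(q^4+2q^2+3\e q+2)+2 \approx 4q^6$ --- an order of magnitude larger than the $\approx 2q^{16/3}$ your uniform Shintani count would predict. With the valid lower bound $|x^G|>q^{14}$ this contributes on the order of $q^{-2}$, not $q^{-8}$ as your estimate suggests, and the threshold beyond which $\mathcal{Q}(G,H)<1$ is not as small as you indicate. The ``subtlety'' you flag at the end --- that the outer order-$3$ elements have centralizer in $H_0$ of size comparable to $|{\rm PGL}_3^\e(q^{1/3})|$ --- is precisely where the argument breaks; that assertion is wrong for the triality. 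The paper avoids this by treating the graph automorphisms separately from the genuine field automorphisms, computing $i_3({\rm PGL}_3^\e(q))$ explicitly for the former, and reserving the Lang--Steinberg style count (where $C_{H_0}(x)$ really is a subfield subgroup) for field automorphisms of order $r\mid f$.
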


\begin{proof}
Here $H_0 = C_{G_0}(\tau)$, where $\tau$ is a triality graph automorphism of $G_0$. Since $q \ne 3$ and we handled the case $q=2$ in Theorem \ref{t:small}, we may assume $q \geqs 4$. We claim that $\mathcal{Q}(G,H)<1$. 

Let $x \in H$ be an element of prime order $r$ and write $q=p^f$ with $p$ prime. Let $U$ and $V$ be the natural modules for $H_0$ and $\bar{G}=D_4$, respectively, where $\bar{G}$ is the ambient simple algebraic group. Note that the embedding of $H_0$ in $G_0$ arises from the embedding of $H_0$ in ${\rm P\O}_{8}^{+}(q^3)$ through the action of $H_0$ on its adjoint module. In particular, we can work with the adjoint representation to determine the Jordan form on $V$ for each $x \in H_0$.

First assume $x \in H_0$ and $r=p$. If $p=2$ then $H_0$ has a unique class of involutions and we calculate that $x$ has Jordan form $(J_2^4)$ on $V$, so $x$ is contained in the $G_0$-class labelled $3A_1$ in \cite[Section 0.5]{Spal}. In particular,
\[
|x^G \cap H| = i_2(H_0) = (q+\e)(q^3-\e)=a_1,\;\; |x^G| = q^2(q^6-1)(q^8+q^4+1) = b_1.
\]
Now assume $p \ne 2$ (so $p \geqs 5$). If $x$ has Jordan form $(J_2,J_1)$ on $U$, then $x$ acts on $V$ as $(J_3,J_2^2,J_1)$, which implies that $x$ is in the $G_0$-class $3A_1$. Similarly, if $x$ has Jordan form $(J_3)$ on $U$, then it acts as $(J_5,J_3)$ on $V$, which places $x$ in the $G_0$-class labelled $D_4(a_1)$. In both cases, this allows us to compute $|x^G \cap H|$ and $|x^G|$ precisely and we conclude that the total contribution to $\mathcal{Q}(G,H)$ from unipotent elements is at most $a_1^2/b_1+a_2^2/b_2$, where
\[
a_2 = q(q^2-1)(q^3-\e),\;\; b_2 = q^6(q^2-1)(q^6-1)(q^8+q^4+1).
\]

Next assume $x \in H_0$ is semisimple. If $r=2$ then 
\[
|x^G \cap H| = i_2(H_0) = q^2(q^2+\e q+1) = a_3,\;\; |x^G| = i_2(G_0) = q^8(q^8+q^4+1)=b_3.
\]
If $r>2$, then $|x^G|>(q-1)q^{17} = b_4$ and we note that $|H_0|<q^8 = a_4$.

Next suppose $x \in G$ is a graph automorphism. Here $|x^G|>q^{14}=b_5$ and the total number of graph automorphisms in $H$ is equal to 
\[
2(i_3({\rm PGL}_{3}^{\e}(q))+1) = 4q^2(q^4+2q^2+3\e q+2) +2= a_5.
\]
Finally, suppose $x \in G$ is a field automorphism of order $r$. If $r=2$, then 
\[
|x^G \cap H| \leqs \frac{|{\rm PGU}_{3}(q)|}{|{\rm SL}_{2}(q)|} = q^2(q^3+1) = a_6,\;\; |x^G|>\frac{1}{2}q^{14} = b_6.
\]
Similarly, if $r \geqs 3$ then $|x^G|>\frac{1}{2}q^{56/3} = b_7$ and we note that $H$ contains fewer than $\log_2q.q^8 = a_7$ field automorphisms of $G_0$.

Putting these estimates together, we conclude that
\[
\mathcal{Q}(G,H) < \sum_{i=1}^{7}a_i^2/b_i,
\]
which is less than $1$ for all $q \geqs 4$ (and it is less than $q^{-1}$ for $q \geqs 11$).
\end{proof}

This completes the proof of Theorem \ref{t:twist}. By combining this result with Theorems \ref{t:small}, \ref{t:parab}, \ref{t:maxrank}, \ref{t:34}, \ref{t:typeI}, \ref{t:sub} and \ref{t:typeV}, we conclude that the proof of Theorem \ref{t:main} is complete.

\end{document}